\newtheorem{Theorem}{Theorem}[section]
\newtheorem{Proposition}[Theorem]{Proposition}
\newtheorem{Lemma}[Theorem]{Lemma}
\newtheorem{Corollary}[Theorem]{Corollary}
\theoremstyle{definition}
\newtheorem{Definition}[Theorem]{Definition}
\newtheorem{Remark}[Theorem]{Remark}
\newcommand{\bD}{\mathbb D}
\newcommand{\bI}{\mathbb I}
\newcommand{\bS}{{\mathbb S}}
\newcommand{\bDu}{\bD(\vu)}
\newcommand{\up}{{\rm up}}
\newcommand{\Up}{{\rm Up}}
\newcommand{\Fup}{{\rm F}_h^{\eps}}
\newcommand{\vrh}{\vr_h}
\newcommand{\vuh}{\vu_h}
\newcommand{\vb}{\mathbf{b}}
\newcommand{\vB}{\mathbf{B}}
\newcommand{\vC}{\mathbf{C}}
\newcommand{\vBh}{\vB_h}
\newcommand{\vCh}{\vC_h}
\newcommand{\p} {\partial}
\newcommand{\co}[2]{{\rm co}\{ #1 , #2 \}}
\newcommand{\Ov}[1]{\overline{#1}}
\newcommand{\us}{ u_\sigma }
\newcommand{\ve}{\mathbf{e}}
\newcommand{\vei}{{\ve}_i}
\newcommand{\vu}{\mathbf{u}}
\newcommand{\vv}{\vc{v}}
\newcommand{\nG}{\vn_{\sigma}}
\newcommand{\vh}{\vv_h}
\newcommand{\bfx}{x}
\newcommand{\xx}{\bfx}
\newcommand {\Qh} {Q_h}
\newcommand {\bQh}{Q_h^d} 
\newcommand {\Wh} {W_h}
\newcommand {\Whi} {\Wh^{(i)}}
\newcommand {\bWh} {{\bf W}_{h}}
\newcommand{\bfphi}{\boldsymbol{\phi}}
\newcommand{\PiQ}{\Pi_Q}
\newcommand{\PiW}{\Pi_W}
\newcommand{\PiWi}{\PiW^{(i)}}
\newcommand{\PiF}{\Pi_{\faces}}
\newcommand{\PiFi}{\PiF^{(i)}}
\newcommand{\PiB}{\Pi_B}
\newcommand{\PiBi}{\Pi_B^{(i)}}
\newcommand{\ith}{ i^{\rm th}}
\newcommand{\sumi}{\sum_{i=1}^d}
\newcommand{\sumK}{ \!\! \sum_{K \in \mesh} \!\!}
\newcommand{\sumSKIn}{ \!\! \sum_{\sigma \in \facesK \cap \facesint} \!\!}
\newcommand{\sumSi}{ \!\! \sum_{\sigma \in \facesi} \!\!}
\newcommand{\ds}{\,{\rm d}S_x}
\newcommand{\mesh}{\mathcal{T}}
\newcommand{\meshd}{{\cal D}}
\newcommand{\meshdi}{\meshd_i}
\newcommand{\grid}{\mathcal{T}}
\newcommand{\dgrid}{\meshd}
\newcommand{\Di}{\dgrid_i}
\newcommand{\TS}{\Delta t}
\newcommand{\aleq}{\stackrel{<}{\sim}}
\newcommand{\vr}{\varrho}
\newcommand{\tvr}{\widetilde \vr}
\newcommand{\tvu}{{\widetilde \vu}}
\newcommand{\tvm}{{\widetilde \vm}}
\newcommand{\tvB}{\widetilde \vB}
\newcommand{\vm}{\vc{m}}
\newcommand{\vn}{\vc{n}}
\newcommand{\vc}[1]{{\bf #1}}
\newcommand{\Div}{{\rm div}}
\newcommand{\Grad}{\nabla}
\newcommand{\Curl}{{\rm curl}}
\newcommand{\Lap}{\Delta}
\newcommand{\Gradh}{\nabla_h}
\newcommand{\Divh}{{\rm div}_h}
\newcommand{\Curlh}{{\rm curl}_h}
\newcommand{\Laph}{\Delta_h}
\newcommand{\Gradd}{\nabla_\faces}
\newcommand{\dx}{\,{\rm d} {x}}
\newcommand{\dt}{\,{\rm d} t }
\newcommand{\jump}[1]{\left\llbracket#1\right\rrbracket}
\newcommand{\abs}[1]{\left\lvert#1\right\rvert}
\newcommand{\avs}[1]{\left\{\hspace{-3pt}\left\{ #1 \right\} \hspace{-3pt}\right\} }
\newcommand{\avg}[1]{\left\{\hspace{-3pt}\left\{ #1 \right\} \hspace{-3pt}\right\} }
\newcommand{\norm}[1]{\left\lVert#1\right\rVert}
\newcommand{\dxdt}{\dx \dt}
\newcommand{\intTn}{\int_0^{t^n} }
\newcommand{\intQ}[1]{\int_{Q} #1 \dx }
\newcommand{\intTnO}[1]{\int_0^{t^n} \int_{Q} #1 \dxdt}
\newcommand{\intTauO}[1]{\int_0^\tau \int_{Q} #1 \dxdt}
\newcommand{\intTO}[1]{\int_0^T \int_{Q} #1 \dxdt}
\newcommand{\intQB}[1]{  \int_{Q} \left( #1 \right) \dx}
\newcommand{\intK}[1]{\int_{K} #1  \dx}
\newcommand{\intfacesint}[1]{\int_{\facesint}  #1 \ds}
\newcommand{\intfacesext}[1]{\int_{\facesext}  #1 \ds}
\newcommand{\facesint}{\mathcal{E}_{\rm int}}
\newcommand{\facesext}{\mathcal{E}_{\rm ext}}
\newcommand{\ep}{\varepsilon}
\newcommand{\R}{\mathbb{R}}
\newcommand{\prst}{\Bbb{P}}
\newcommand{\newcom}{\newcommand}
\newcommand{\beq}{\begin{equation}}
\newcommand{\eeq}{\end{equation}}
\newcom{\ben}{\begin{eqnarray}}
\newcom{\een}{\end{eqnarray}}
\newcom{\beno}{\begin{eqnarray*}}
\newcom{\eeno}{\end{eqnarray*}}
\newcom{\bali}{\begin{aligned}}
\newcom{\eali}{\end{aligned}}
\newcommand{\f}{\frac}
\def\softd{{\leavevmode\setbox1=\hbox{d}%
          \hbox to 1.05\wd1{d\kern-0.4ex{\char039}\hss}}}
\definecolor{Cgrey}{rgb}{0.85,0.85,0.85}
\definecolor{Cblue}{rgb}{0.50,0.85,0.85}
\definecolor{Cred}{rgb}{1,0,0}
\definecolor{fancy}{rgb}{0.10,0.85,0.10}
\definecolor{forestgreen}{rgb}{0.13, 0.55, 0.13}
\newcommand{\cred}{\color{Cred}  }
\newcommand{\cblue}{\color{blue}}
\newcommand{\pdedge}{\eth_\faces}
\newcommand{\pdedgei}{\eth_\faces^{(i)}}
\newcommand{\pdmesh}{\eth_\grid}
\newcommand{\pdmeshi}{\eth_\grid^{(i)}}
\newcommand{\Divmesh}{{\rm div}_{\mathcal{T}}}
\date{}
\newcommand{\D}{{\rm d}}
\newcommand{\pd}{\partial}
\newcommand{\pdt}{\pd _t}
\newcommand{\RE}{\mathcal{R}_E}
\newcommand{\Hc}{\mathcal{P}}
\newcommand{\eps}{\varepsilon}
\newcommand{\faces}{\mathcal{E}}
\newcommand{\facesi}{\faces^i}
\newcommand{\facesK}{\faces(K)}
\newcommand{\expe}[1]{ \mathbb{E} \left[ #1 \right] }
\newcommand{\expeB}[1]{ \mathbb{E} \Big[ #1 \Big] }
\newcommand{\br}{ \nonumber \\ }
\newcommand{\Dev}[1]{ \mbox{Dev} \left[ #1 \right] }
\begin{document}

\title{
Error analysis of the Monte Carlo  method for compressible magnetohydrodynamics
}

\author{Eduard Feireisl\thanks{The work of E.F.  was  supported by the
Czech Sciences Foundation (GA\v CR), Grant Agreement 24--11034S.
The Institute of Mathematics of the Academy of Sciences of
the Czech Republic is supported by RVO:67985840.
\newline
\hspace*{1em} $^\clubsuit$The work of M.L. was supported by the Deutsche Forschungsgemeinschaft (DFG, German Research Foundation) - project number 233630050 - TRR 146 and  project number 525853336 -  SPP 2410 ``Hyperbolic Balance Laws: Complexity, Scales and Randomness".
She is also grateful to  the Gutenberg Research College and  Mainz Institute of Multiscale Modelling  for supporting her research.
\newline
\hspace*{1em} $^\spadesuit$The work of  B.S. was supported by
National Natural Science Foundation of China under grant No. 12201437.
\newline
\hspace*{1em} $^\dagger$The work of  Y.Y. was supported by Natural Science Foundation of Jiangsu Province under grant No. BK20241364.
}
\and M\' aria Luk\' a\v cov\' a -- Medvi\softd ov\' a$^{\clubsuit}$
\and Bangwei She$^{\spadesuit}$
\and Yuhuan Yuan$^{\dagger}$
}


\maketitle

\centerline{$^*$ Institute of Mathematics of the Academy of Sciences of the Czech Republic}
\centerline{\v Zitn\' a 25, CZ-115 67 Praha 1, Czech Republic}
\centerline{feireisl@math.cas.cz}

\medskip

\centerline{$^\clubsuit$Institute of Mathematics, Johannes Gutenberg-University Mainz}
\centerline{Staudingerweg 9, 55 128 Mainz, Germany}
\centerline{lukacova@uni-mainz.de}

\medskip

\centerline{$^\spadesuit$Academy for Multidisciplinary studies, Capital Normal University}
\centerline{West 3rd Ring North Road 105, 100048 Beijing, P. R. China}
\centerline{bangweishe@cnu.edu.cn}

\medskip

\centerline{$^\dagger$School of Mathematics, Nanjing University of Aeronautics and Astronautics}
\centerline{Jiangjun Avenue No. 29, 211106 Nanjing, P. R. China}
\centerline{yuhuanyuan@nuaa.edu.cn}

\begin{abstract}
We study random compressible viscous magnetohydrodynamic flows. Combining the Monte Carlo method with a deterministic finite volume method we solve the random system numerically.
Quantitative error estimates including statistical and deterministic errors are analyzed up to a stopping time of the exact solution. On the life-span of an exact strong solution we prove the convergence of the numerical solutions.
Numerical experiments illustrate rich dynamics of random viscous compressible magnetohydrodynamics.
\end{abstract}

{\bf Keywords: }{magnetohydrodynamic fluids, weak-strong uniqueness, convergence, dissipative weak solution, consistent approximation, divergence-free constraint}

{\bf Mathematics Subject Classification:} {76W05, 35R06, 97N40}


\section{Introduction}
\label{i}

There is a rapidly growing number of examples that originally smooth solutions of
models of compressible fluids may develop singularities in a finite time. This is less surprising for models of ideal (inviscid) fluids, where this type of behaviour is usually associated with nonlinear fluxes and the development of shock waves. The models of real (viscous) fluids are partially parabolic, where possible singularities, if any, must be of implosion (blow-up) type of the pressure, cf. the regularity criterion established first by Sun, Wang, and Zhang \cite{Sun} and followed by many others. Recently, Buckmaster et al. \cite{BuCLGS} and Merle et al. \cite{MeRaRoSz}, \cite{MeRaRoSzbis} rigorously identified a class of data giving rise to this scenario for the compressible isentropic Navier--Stokes system considered on the full physical space $\Bbb \R^3$.
Moreover, the results of Cao-Labora et al. \cite{CLGSShiSta} indicated that the blow-up may occur in a general bounded domain of $\Bbb \R^3$.

Still solutions of the compressible viscous fluid models are approached numerically, and the results typically coincide with the expected behaviour of the observed real fluids.
A possible explanation, adopted in the present paper, is that singularities are statistically negligible.
In principle, the data for numerical simulations cannot be exactly identified and random errors always occur. The Monte Carlo method discussed in the present paper is based on the Strong Law of Large Numbers applied both to the exact solutions and to their numerical approximations. Given a random set of data, the method identifies the expected value of the associated solution. Our goal in this paper is to perform a rigorous numerical analysis of this approach applied to the compressible magnetohydrodynamics (MHD) system.

Numerical approximation of compressible MHD is challenging even in deterministic case. In addition to satisfying balance equations for the density, momentum and the magnetic field, the divergence of magnetic field  has to vanish.
For exact (strong or weak) solutions the divergence
constraint is imposed only on the initial data and then it is preserved by time evolution.
It expresses the fact that there are no magnetic monopoles.
In numerical discretizations the preservation of such intrinsic constraint
is not attained automatically in general. As demonstrated by  Brackbill and Banes \cite{BB}
in compressible MHD calculations, divergence errors can be  generated and amplified by
possible discontinuities. These errors may accumulate yielding  a
breakdown of classical numerical schemes, making it impossible to calculate MHD
solutions with those methods.
Consequently, one of the major tasks in numerics for MHD
is the control of divergence errors.

In the past decades, several finite element, finite volume, and discontinuous Galerkin methods have been developed to solve numerically compressible MHD models, and the Maxwell equations in particular, see, e.g., Chen et al.~\cite{ChWuXi}, Chertock et al.~\cite{ChKuReWu}, Houston at el.~\cite{Per}, Hu et al.~\cite{HMX}, Torrilhon~\cite{T}, Wu et al.~\cite{WuJiShu}. However, besides the divergence-free constraint, a numerical solution has to preserve the positivity of the density and dissipation of  the energy. These properties are often not controlled explicitly in standard numerical schemes. Consequently, rigorous convergence/error analysis for compressible MHD is missing in general. In this context, we refer to some available convergence results for weak (or dissipative weak) solutions of compressible MHD system presented by Ding and Mao~\cite{Ding}, Li and She~\cite{LiShe,LiShe1} using a finite element method and a combined finite element-finite volume method, respectively.

In the present paper we study random compressible MHD by means of a Monte Carlo finite volume method. The results presented here seem to be optimal as we prove the convergence of the numerical  method on the life-span $[0, T_{\rm max})$ of exact strong solutions, and quantitative error estimates on any interval $[0,T]$, $T < T_{\rm max}$.

The following are the main difficulties to be addressed:
\begin{itemize}
\item The \emph{error estimates} can be obtained only in the class of regular solutions that may not exist globally-in-time.

\item The norm of regular solutions that controls the multiplicative constant in the error estimates blows up
at $t \to T_{\rm max}$.

\item The data, the solutions, and $T_{\rm max}$ are random variables, meaning Borel measurable quantities with respect to suitable topologies.

\item
To the best of our knowledge, numerical stability results for the compressible MHD system under non-homogeneous boundary conditions are not available, even in the deterministic case, and a suitable numerical method has to be identified. Further, error estimates both for \emph{deterministic} as well as \emph{random data} need to be derived.

\end{itemize}

\subsection{Compressible MHD system}
\label{iI}
The compressible MHD system describes the time evolution of viscous, compressible, and electrically conducting fluids in the presence of a magnetic field. We deliberately ignore the effect of thermal changes to incorporate possible blow-up solutions resulting from the superlinear isentropic equation of state (EOS),
$p(\vr) =a \vr^{\gamma} + b \vr$, where $a > 0, b \geq 0$, $\gamma > 1$ are constant.
Accordingly, the compressible MHD system reads:
\begin{subequations}\label{pde}
\begin{equation}\label{i1}
\pd_t \vr  + \Div (\vr \vu ) = 0,
\end{equation}
\begin{equation}\label{i2}
\pd_t (\vr \vu)  + \Div (\vr \vu \otimes \vu  ) + \Grad p= \Div \bS+\Curl\vB\times \vB +\vr \Grad G,
\end{equation}
\begin{equation}\label{i3}
 \pd_t \vB=\Curl (\vu \times \vB)-  \Curl  (\zeta \Curl \vB),\quad  \Div \vB=0
\end{equation}
\end{subequations}
in the time-space domain $(0,T)\times Q$, where $Q = \mathbb{T}^2 \times [-1,1]$ is a slab periodic in the horizontal direction.
The main state variables are the fluid density $\vr = \vr(t,x)$, the velocity field $\vu = \vu(t,x)$, or, alternatively,  the momentum $\vm = \vr \vu$, and the magnetic field $\vc{B} = \vB(t,x)$. The symbol $p = p(\vr)$ denotes the pressure. The fluid is Newtonian, with the viscous stress tensor
\[
\bS=\bS(\bDu )= 2\mu \bDu  + \lambda \Div \vu \bI \quad \mbox{with} \quad  \bDu=\frac{\Grad \vu + \Grad^t \vu}{2},
\]
where $\mu>0$ and $\lambda$, $\lambda + \frac{2}{3} \mu \geq 0$, are constant viscosity coefficients. Similarly, we suppose that the magnetic resistivity
 $\zeta > 0$ is a positive constant.
%
The vector $\Grad G$ represents the gravitational force. For the sake of simplicity, we consider
\[
	\Grad {G} = \vc{g} \in\Bbb \R^3 \ \mbox{-- a constant vector.}
\]

The system of equations \eqref{pde} is supplemented by the
Dirichlet type conditions on the lateral boundary,	
\begin{equation}\label{bc}
	\vu|_{x_3 = \pm 1} = 0, \quad
	(\vn \times \vB)|_{x_3 = \pm 1} = \vb^{\pm},
\end{equation}
where $\vb^{\pm}= (b_{1}^{\pm}, b_{2}^{\pm}, 0)$ is a constant tangential vector. Moreover, setting
\begin{align}\label{BB}
	\vB_B=\frac12\Big(-b_{2}^{-}(1- x_3) + b_{2}^{+} (1+x_3), \,  b_{1}^{-} (1-x_3) - b_{1}^{+}(1+x_3), \, 0\Big)
\end{align}
we easily observe $\vb^{\pm} = \vn \times \vB_B|_{x_3 = \pm 1}$.

To close the system we prescribe the initial conditions
\begin{equation}\label{ini}
	(\vr, \vu,\vB)|_{t=0}=(\vr_0,\vu_0,\vB_0)
	\text{ with } \vr_0 >0  \text{ and } \Div \vB_0=0.
\end{equation}

\begin{Remark} \label{R1}
	Note that the system \eqref{pde} can be easily reformulated in the 2D setting with
	\begin{align}
		\Curl \vc{B} \times \vc{B} &= \Div \left( \vc{B} \otimes \vc{B} - \frac{1}{2}|\vc{B}|^2 \bI  \right), \br
		\Curl (\vc{B} \times \vc{u} ) &= \Div \Big( \vc{B} \otimes \vu - \vu \otimes \vc{B} \Big), \br
		\Curl (\zeta \Curl \vc{B}) &= \Div \left( \zeta \Big( \Grad^t \vc{B} - \Grad \vc{B} \nonumber
	\Big) \right),
	\end{align}
	or using following notations
	\begin{align*}
		& \Curl \vv := \pd_{x_1} v_2 -\pd_{x_2} v_1, \quad \vv := (v_1, v_2) \in \R^2, \\
		& \Curl v = {\nabla^\perp v} := (\pd_{x_2} v, -\pd_{x_1} v), \quad v \in \R, \\
		& \vv \times \vc{w} = v_1 w_2 -  v_2 w_1,\quad \vc{w} := (w_1, w_2) \in \R^2.
	\end{align*}
\end{Remark}

\begin{Remark}
As $\zeta$ is constant, it is customary to use the identity
\[
\Curl ( \Curl \vc{B}) = \Grad \Div \vc{B} - \Delta \vc{B} = - \Delta \vc{B}
\]
converting \eqref{i3} into a conventional parabolic equation

\[
\partial_t \vB = \zeta \Delta \vc{B} + \Curl (\vu \times \vB).
\]
Note carefully that the solenoidality condition $\Div \vB = 0$ can be enforced by the initial data as long as the boundary condition
$\Div \vc{B}|_{\partial Q} = 0$ is imposed.
\end{Remark}

\subsection{Strategy and main results}

The quantity $D = (\vr_0, \vu_0, \vB_0; \vc{b}^\pm, \vc{g}, \mu, \lambda, \zeta)$
represents the \emph{data} for the compressible MHD system. Given data $D$ enjoying certain regularity, the problem \eqref{pde}--\eqref{ini} admits a unique
regular solution $(\vr, \vu, \vc{B}) = (\vr, \vu, \vc{B})[D]$ defined on a maximal time interval $[0, T_{\rm max})$, $T_{\rm max} > 0$, see Section \ref{S}. As pointed out above, we anticipate the possibility $T_{\rm max} < \infty$.

The time $T_{\rm max}$ is {\it a priori} characterized as a blow-up time of a suitable norm determined by the regularity of the initial data. Note that the compressible MHD system is of mixed hyperbolic-parabolic type, and no smoothing effect is expected, at least at the level of the density $\vr$.

Our approach leans essentially on the conditional regularity result proved
recently in \cite{FeiKwon23}, namely the life-span $T_{\rm max}$ is characterized by the
property
\begin{equation} \label{i5}
	\limsup_{t \to T_{\rm max}- } \| (\vr, \vu, \vB) (t, \cdot) \|_{L^\infty(Q; \R^7)}
	= \infty.
\end{equation}

We will work with a structure-preserving finite volume scheme producing a family of \emph{approximate} solutions $(\vr_h, \vu_h, \vB_h) =
(\vr, \vu, \vB)_h [D]$ depending on the data $D$ and a discretization parameter $h > 0$.
In accordance with \eqref{i5}, the error estimates
on the distance $\| (\vr, \vm, \vB)_h - (\vr, \vm, \vB) \|_X$ in a suitable Banach space $X$
will depend only on the $L^\infty-$norm of the limit solution and the norm of the data, see Section \ref{ERE} for details.

Finally,  we introduce a stopping time for the exact solution,
\begin{align}
	T_M = \sup_{\tau \in [0, T_{\rm max}) } \left\{ \sup_{t \in [0,\tau]}
	\| (\vr,\vu,\vB)(t, \cdot) \|_{L^\infty (Q; \R^7)}
	< M \right\} = \inf_{\tau {\in [0, T_{\rm max})}}  \Bigg\{ \| (\vr,\vu,\vB)(\tau, \cdot) \|_{L^\infty (Q; \R^7)} \geq M  \Bigg\},
	\nonumber
\end{align}
where $M > 0$ is an arbitrary constant. Similarly to $T_{{\rm max}}$, the time
$T_M$ can be viewed as a function of the data, $T_M = T_M[D]$.

The Monte Carlo method relies on repeated numerical treatment of large
samples of random data yielding statistical solutions to problems that may be
deterministic in principle. The data $D$ are considered as vector valued random
variables on a standard probability basis. For a given
sequence $(D_n)_{n=1}^\infty$ of i.i.d.\ copies of $D$, the numerical
method provides the associated sequence of (random) approximate solutions
$(\vr, \vu, \vc{B})_h [D_n]$, $h > 0$, $n=1, \dots$

\begin{itemize}
	\item
Our first result concerns the error estimates of the Monte Carlo finite volume method up to the stopping time $T_M$. Specifically, we show that
\begin{equation} \label{i6}
	\expe{ \left\|	\frac{1}{N} \sum_{n=1}^N (\vr, \vm, \vB)_h (t \wedge T_M) [D_n] -
		\expe{ (\vr, \vm, \vB) (t \wedge T_M) [D] } \right\|_X } \leq c(M) \left( N^{-\frac{1}{2}} + \ell(h,\Delta t) \right), 	
\end{equation}
where the factor $N^{-\frac{1}{2}}$ reflects the statistical error, while
$\ell(h, \Delta t) \to 0$ as $h \to 0,\ \Delta t \to 0$ is the (deterministic) discretization error depending on space and time discretization parameters $h$ and $\TS (= \mathcal{O}(h))$, see Section~\ref{G}. Here, $X$ stands for the associated \emph{energy space} specified in Section \ref{G}, and $\mathbb{E}$ denotes the expected value.  The result is optimal in the sense that the discretization error depends on the norm of the limit solution $(\vr, \vu, \vB)$ in higher order Sobolev norm controlled by the amplitude of $(\vr, \vu, \vB)$ in $L^\infty$.

\item

Second, we show convergence up to the maximal existence time $T_{\rm max}$,
specifically,
\begin{equation} \label{i7}
	\expe{ \left\|	\frac{1}{N} \sum_{n=1}^N (\vr, \vm, \vB)_h \mathds{1}_{[0, T_{\rm max})} [D_n] -
		\expe{ (\vr, \vm, \vB)  \mathds{1}_{[0, T_{\rm max})} [D] } \right\|_X } \to 0 
\end{equation}
as $N \to \infty,\ h \to 0, \ \Delta t \to 0,$ see Section \ref{G}.

\end{itemize}
\begin{Remark} \color{black}
The fact that the estimate \eqref{i6} depends on the stopping time
$T_M$ related to the limit (strong) solution may seem awkward at the first glance. Indeed a more ``natural'' stopping time should be related to the approximate solutions $(\vr, \vu, \vB)_h$. This alternative approach has been exploited in our companion paper \cite{FeLmShe2024}, where the concept of ``boundedness in probability'' has been introduced. Accordingly, the present paper can be seen as complementary to \cite{FeLmShe2024}.
\end{Remark}

\medskip

The present paper is organized in the following way. In the next section we summarize some theoretical results on the local-in-time existence of the strong solution of \eqref{pde}, the conditional regularity result and formulate the energy inequality together with the relative energy. Statistical error estimates in the corresponding spaces up to a stopping time and on the life-span are discussed in Section~\ref{r}. Section~\ref{Sec4} is devoted to a deterministic structure-preserving finite volume  method. We prove its structure-preserving properties, such as the conservation of mass, positivity of density, discrete energy balance as well as the divergence-free property. Furthermore, we prove that the numerical method is consistent, which means that its numerical solution satisfies weak formulation of \eqref{pde} modulo some consistency errors. The latter vanish as the discretization parameters vanish, $h \to 0, \ \Delta t \to 0.$ In Section~\ref{ERE} we derive deterministic error estimates via the relative energy. Combining the latter with the statistical error estimates we finally obtain the convergence and error estimates for the Monte Carlo finite volume method in Section~\ref{G}. The paper is closed with a series of numerical simulations for random MHD system with various initial configurations illustrating rich dynamics of the compressible MHD and convergence rates of the Monte Carlo finite volume method. In the appendices A, B, C some helpful results are summarized.

\section{Strong solutions to the compressible MHD system}
\label{S}

The compressible MHD system \eqref{pde} shares a similar structure with the isentropic Navier--Stokes system. Specifically, it can be viewed as a partially dissipative perturbation of a symmetric hyperbolic system in the sense of Kawashima and Shizuta \cite{KawShi} or Serre \cite{Serr2}.

\subsection{Local existence and conditional regularity}

The problem of existence of local-in-time regular solutions to the compressible
MHD and similar systems is nowadays well understood. There are two basic frameworks available in the literature.  One is the Hilbert space (energy) approach
based on the scale of Sobolev spaces $W^{k,2}$, $k=1,2,\dots$ used in the
work of Matsumura and Nishida \cite{MANI}, and later developed by
Valli \cite{Vall1}, \cite{Vall2}, Valli and Zajaczkowski \cite{VAZA}, among others.
 Another is the $L^p-$approach proposed in the seminal paper of Solonnikov \cite{SoloI},
later adapted by Danchin \cite{Danch2010}, Kotschote \cite{KOT6} and, quite recently, generalized by Danchin and Tolksdorf \cite{DanTol}. Although a vast part of the above references addresses only the compressible Navier--Stokes system, the technique is easy to adapt to the MHD setting, see e.g. Fan and Yu \cite[Theorem 3.1]{FanYu}.

We adopt the local existence result by Kagei and Kawashima \cite[Theorem 2.4]{KagKaw} stated in the Hilbert space framework.

\begin{Proposition}[{\bf Local existence, smooth data}] \label{SP3}
Let $Q \subset \R^3$ be a bounded domain of class $C^{k+1}$, $k \geq 3$.
Suppose the initial data belong to the class
\begin{equation}\label{S7} 	
(\vr_0,\vu_0,\vB_0) \in W^{k,2}(Q;\R^7), \ \inf_Q \vr_0>0, \ \Div \vB_0 =0, \  \vu_0|_{\pd Q} = 0, \ \vn \times \vB_0|_{\pd Q} = \vb^{\pm}
\end{equation}
	and satisfy the compatibility conditions,
	\begin{align}
		\partial^{(j)}_t \vu_0 |_{\partial Q} =
		\partial^{(j)}_t \vc{B}_0|_{\partial Q} = 0,\ j = 1,\dots, \left[ \frac{k-1}{2} \right].
		\label{S8}
	\end{align}	
	
	Then there exists $0 < T_{\rm max} \leq \infty$ such that the compressible MHD system \eqref{pde}, with the boundary conditions \eqref{bc},
	admits a strong solution $(\vr, \vu, \vc{B})$
	in $[0, T_{\rm max}) \times Q$ unique in the
	class
	\begin{align}
	 &\partial^{(j)}_t (\vr,\vu,\vB)  \in C([0,T]; W^{k-2j,2}(Q; \R^7)),\ j = 0,\dots, \left[ \frac{k}{2} \right],\br
	&	\partial^{(j)}_t (\vu,\vc{B}) \in L^2([0,T]; W^{k-2j+1,2}(Q;\Bbb \R^6)),\ j = 0,\dots, \left[ \frac{k+1}{2} \right], 	
		\label{S9}
	\end{align}
	for any $0 < T < T_{\rm max}$.
	Moreover, if $T_{\rm max} < \infty$, then
	\begin{equation} \label{S10}
		\|  (\vr,\vu,\vB)(t, \cdot) \|_{W^{k,2}(Q; \R^7)} + \| \vr^{-1} (t, \cdot) \|_{L^\infty(Q)} \to \infty \ \mbox{as}\ t \to T_{\rm max}.	
	\end{equation}
\end{Proposition}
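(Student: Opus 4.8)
The plan is to follow the classical energy (Hilbert-space) method for symmetric hyperbolic--parabolic systems, adapted to the MHD coupling, exactly along the lines of \cite{KagKaw}. First I would recast the system in a form amenable to the energy method: keep the continuity equation \eqref{i1} as a transport equation for $\vr$, divide the momentum balance \eqref{i2} by $\vr$ so that its principal part becomes the uniformly parabolic Lam\'e operator $\vr^{-1}\Div\bS(\bDu)$ acting on $\vu$, and use the reformulation $\pd_t\vB = \zeta\Lap\vB + \Curl(\vu\times\vB)$ of the induction equation \eqref{i3}, which is parabolic once $\Div\vB=0$ is propagated. In these variables the problem is a partially dissipative perturbation of a symmetric hyperbolic system in the sense of Kawashima--Shizuta: the density carries no smoothing, while $(\vu,\vB)$ are parabolically regularized.

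Next I would construct the solution by a linearized iteration scheme. Given an approximation $(\vr^{(m)},\vu^{(m)},\vB^{(m)})$, freeze the coefficients and the lower-order nonlinear terms (in particular the Lorentz force $\Curl\vB\times\vB$ together with the convective and induction terms) at level $m$, and define the next iterate as the solution of the resulting \emph{linear} problem: a linear transport equation for $\vr^{(m+1)}$ solved along the characteristics of $\vu^{(m)}$, which simultaneously propagates the positivity $\inf_Q\vr_0>0$, coupled with a linear parabolic system for $(\vu^{(m+1)},\vB^{(m+1)})$ carrying the boundary data \eqref{bc}. The compatibility conditions \eqref{S8} are precisely what is needed for the parabolic iterates to acquire the time-regularity asserted in \eqref{S9} up to $t=0$.

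The analytical core is a uniform a priori estimate in the high-order norm. I would differentiate the equations up to order $k$, pair with the natural symmetrizer, and integrate over $Q$. The symmetric structure of the hyperbolic block controls the top-order density derivatives without dissipation (the dangerous commutator terms being absorbed using $\inf\vr>0$ and the $W^{k,2}$ bound), while the parabolic dissipation of $(\vu,\vB)$ furnishes the coercive $W^{k+1,2}$ spatial contributions appearing in \eqref{S9}; the boundary integrals produced by integration by parts vanish or are controlled thanks to \eqref{bc} and \eqref{S8}. The MHD-specific terms are handled by Moser-type Sobolev product and commutator estimates, the Lorentz and induction nonlinearities being quadratic in $(\vu,\vB)$ and their gradients. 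Closing this estimate by a Gronwall/bootstrap argument on a short interval $[0,T_0]$ yields bounds uniform in $m$, whence weak-$*$ compactness extracts a limit; a contraction estimate in a lower-order norm (e.g.\ $L^2$ or $W^{k-1,2}$) identifies the limit as a strong solution and simultaneously delivers uniqueness.

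Finally, the maximal interval $[0,T_{\rm max})$ is obtained by gluing local solutions, and the blow-up alternative \eqref{S10} follows from a continuation argument: the existence length $T_0$ in the a priori estimate depends only on $\|(\vr,\vu,\vB)\|_{W^{k,2}}$ and $\|\vr^{-1}\|_{L^\infty}$, so if this quantity remained bounded as $t\to T_{\rm max}$ the solution could be restarted and extended past $T_{\rm max}$, a contradiction. I expect the main obstacle to be closing the top-order energy estimate: reconciling the absence of smoothing for $\vr$ with the quasilinear coupling forces a careful choice of symmetrizer and precise commutator bounds, and the boundary/compatibility bookkeeping for the parabolic block must be tracked with care. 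Since the Navier--Stokes part reproduces the estimates of \cite{KagKaw}, the genuinely new work is confined to the magnetic coupling, which is quadratic and of lower order relative to the principal parts and therefore fits the same scheme.
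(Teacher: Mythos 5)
The paper does not actually reprove this proposition: it is adopted from Kagei and Kawashima \cite[Theorem 2.4]{KagKaw}, and the only MHD-specific work the authors do is the reduction of the inhomogeneous boundary conditions \eqref{bc} to that homogeneous-Dirichlet framework, carried out in Remark \ref{SR2}. Your sketch essentially reconstructs the proof of the cited theorem — linearized transport--parabolic iteration, $k$-th order energy estimates with a symmetrizer, contraction in a lower-order norm, continuation — and on the fluid part it is sound in outline; the blow-up alternative via a continuation argument, with $\| \vr^{-1} \|_{L^\infty}$ included in the continued quantity, also matches \eqref{S10}.

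There is, however, a concrete gap precisely where this proposition differs from the generic quasilinear theory: the boundary conditions for $\vB$. The condition \eqref{bc} fixes only the tangential trace $\vn \times \vB$, i.e.\ two scalar conditions, whereas your reformulated induction equation $\partial_t \vB = \zeta \Delta \vB + \Curl(\vu \times \vB)$ is a second-order parabolic system for three components and is not a well-posed boundary value problem until a third boundary condition is supplied. Relatedly, your statement that the reformulation ``is parabolic once $\Div \vB = 0$ is propagated'' is circular: taking the divergence yields the heat equation for $w = \Div \vB$, and $w \equiv 0$ follows from $w(0) = 0$ only if one additionally knows $w|_{\partial Q} = 0$ (or some boundary condition forcing uniqueness of the zero solution). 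As Remark \ref{SR2} makes precise, the correct completion --- after shifting by the affine extension $\vB_B$ of \eqref{BB} --- is homogeneous Dirichlet for the two tangential components \eqref{dbc1} together with homogeneous \emph{Neumann} for the normal component \eqref{dbc2}; in the flat slab geometry these imply $\Div \widetilde{\vB}|_{\partial Q} = 0$, so solenoidality is inherited from the initial data. Imposing Dirichlet data on the normal component instead, which is what a generic iteration scheme would default to, gives in general an \emph{ill-posed} problem for \eqref{i3}. Your iteration and top-order energy estimates cannot close without this choice being made explicit, and the compatibility conditions \eqref{S8} must likewise be read for the shifted homogeneous problem, so the reduction via $\vB_B$ is a necessary structural step, not mere bookkeeping.
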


\begin{Remark} \label{SR1}
	The ``time derivatives'' of the initial data in \eqref{S8} are evaluated recursively from the field equations \eqref{pde}.
	
\end{Remark}

\begin{Remark} \label{SR2}
As a matter of fact, the local existence result in \cite{KagKaw} is formulated for problems with homogeneous Dirichlet boundary conditions. 	
Introducing a new variable $\widetilde{\vB} = \vc{B} - \vc{B}_B$, where $\vc{B}_B$
is the extension of the tangential boundary data \eqref{BB}, we can rewrite
the induction equation \eqref{i3} as a conventional parabolic system
\[
\partial \widetilde{\vB} = \zeta \Delta  \widetilde{\vB} +
\Curl (\vu \times (\widetilde{\vB} + \vc{B}_B )),
\]
with the homogeneous Dirichlet boundary conditions
\begin{equation} \label{dbc1}
\widetilde{B}_1|_{\partial Q} = \widetilde{B}_2|_{\partial Q} = 0, 	
\end{equation}
supplemented with the homogeneous Neumann boundary condition for $\widetilde{B}_3$,
\begin{equation} \label{dbc2}
\Grad \widetilde{B}_3 \cdot \vc{n}|_{\partial Q} =
\partial_{x_3} \widetilde{B}_3 |_{x_3 = \pm 1} = 0.
\end{equation}
Given the flat geometry of the slab $Q$, the boundary conditions \eqref{dbc1} and \eqref{dbc2} yield $\Div \widetilde{\vB}|_{\partial Q}=0$. In particular,
the solenoidality condition $\Div \widetilde{\vB} = 0$ is inherited by the solution from the initial data. At this moment, we would like to point out that replacing \eqref{dbc2} by a Dirichlet boundary condition would in general give rise to an \emph{ill-posed} problem.

\end{Remark}

\begin{Proposition}[{\bf Conditional regularity}] \label{SP4}
	Let $(\vr, \vu, \vc{B})$ be the strong solutions of the compressible MHD system in $[0, T_{\rm max}) \times Q$.
	Under the hypotheses of Proposition \ref{SP3}, we have 	
	\begin{align}
		&\sup_{t \in [0,T]}  \Big(	\|  (\vr,\vu,\vB)(t, \cdot) \|_{W^{k,2}(Q;\R^7)} + \| \vr^{-1} (t, \cdot) \|_{L^\infty(Q)} \Big) \br
		&\leq \Lambda \left( \frac1{\mu}, \mu, \lambda,\frac1{\zeta}, \zeta, |\vc{b}^{\pm}|, |\vc{g}|, \| (\vr_0, \vu_0, \vB_0) \|_{W^{2,k}(Q;\R^7)}, \| \vr_0^{-1} \|_{L^\infty(Q)},
 \sup_{t \in (0,T)} \| (\vr, \vu, \vc{B})(t, \cdot) \|_{L^\infty(Q;\Bbb \R^{7})} , T \right)\label{S12}	
\end{align}
	for any $0 < T < T_{\rm max}$, where $\Lambda$ is bounded for bounded arguments.			
\end{Proposition}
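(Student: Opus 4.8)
The plan is to establish an \emph{a priori} estimate and then invoke the blow-up characterization \eqref{S10} of $T_{\rm max}$. Concretely, assuming the strong solution exists on $[0, T_{\rm max})$ and that the quantity $M_\infty(T) := \sup_{t\in(0,T)}\norm{(\vr,\vu,\vB)(t,\cdot)}_{L^\infty(Q;\R^7)}$ is finite, I would derive a bound on $\sup_{t\in[0,T]}\big(\norm{(\vr,\vu,\vB)(t,\cdot)}_{W^{k,2}} + \norm{\vr^{-1}(t,\cdot)}_{L^\infty}\big)$ in which every constant depends only on the data and on $M_\infty(T)$. By \eqref{S10} such a bound excludes blow-up, yielding \eqref{S12}. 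I would first pass to the shifted field $\tvB = \vB - \vB_B$ as in Remark \ref{SR2}, so that $\vu$ satisfies a homogeneous Dirichlet condition and $\tvB$ the mixed condition \eqref{dbc1}--\eqref{dbc2}; the lift $\vB_B$ is smooth and produces only controlled lower-order source terms. The target is a differential inequality for a high-order energy functional, closed by Grönwall.

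The density must be handled separately, since the continuity equation \eqref{i1} is a pure transport equation with no smoothing. Writing it along characteristics gives $\vr(t,X(t)) = \vr_0 \exp\big(-\int_0^t \Div\vu\,{\rm d}s\big)$, whence both the upper bound on $\vr$ and the lower bound encoded in $\norm{\vr^{-1}}_{L^\infty}$ are controlled by $\norm{\vr_0^{-1}}_{L^\infty}$ and by $\exp\big(\int_0^t\norm{\Div\vu}_{L^\infty}\,{\rm d}s\big)$. Differentiating \eqref{i1} up to order $k$ and testing with $\Grad^j\vr$ yields, after Grönwall, a bound on $\norm{\vr(t)}_{W^{k,2}}$ carrying the same exponential factor $\exp\big(c\int_0^t\norm{\Grad\vu}_{L^\infty}\,{\rm d}s\big)$, the remaining nonlinear terms being absorbed using $M_\infty(T)$. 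The crucial point is that $\int_0^t\norm{\Grad\vu}_{L^\infty}\,{\rm d}s$ will be controlled not by $L^\infty$ itself but by the parabolic \emph{dissipation} generated in the velocity estimates below.

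For the velocity and magnetic field I would run successive energy estimates. The principal elliptic part of the momentum equation \eqref{i2} is the Lamé operator $-\mu\Lap\vu - (\mu+\lambda)\Grad\Div\vu$, while \eqref{i3} reduces to the heat operator $\pd_t\tvB - \zeta\Lap\tvB$; both are parabolic, and differentiating the equations and testing with the matching derivatives of $\vu$ and $\tvB$ produces good dissipation controlling $\int_0^t\norm{(\vu,\tvB)}^2_{W^{k+1,2}}\,{\rm d}s$. The nonlinearities -- convection $\Div(\vr\vu\otimes\vu)$, the Lorentz force $\Curl\vB\times\vB = \Div(\vB\otimes\vB-\tfrac12|\vB|^2\bI)$, the induction term $\Curl(\vu\times\vB)$, and the pressure gradient $\Grad p(\vr)$ -- are all products in which the bound $M_\infty(T)$ supplies the undifferentiated factors, so that by Gagliardo--Nirenberg interpolation only the top-order derivative survives and can be absorbed into the dissipation or treated by Grönwall. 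Boundary terms from integration by parts vanish or are controlled thanks to the homogeneous conditions on $\vu,\tvB$, the compatibility conditions \eqref{S8}, and the flat slab geometry. Finally, Sobolev embedding gives $\norm{\Grad\vu}_{L^\infty}\lesssim\norm{\vu}_{W^{k+1,2}}$ (valid for $k\ge 3$ in $\R^3$), so $\int_0^t\norm{\Grad\vu}_{L^\infty}\,{\rm d}s \le \sqrt{T}\,\big(\int_0^t\norm{\vu}_{W^{k+1,2}}^2\,{\rm d}s\big)^{1/2}$ is dominated by the velocity dissipation, closing the loop with the density estimate.

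The main obstacle is precisely this coupling: the density carries no smoothing, so its $W^{k,2}$ norm and the lower bound $\norm{\vr^{-1}}_{L^\infty}$ can only be propagated through transport estimates whose Grönwall exponents involve $\int_0^t\norm{\Grad\vu}_{L^\infty}$, and the scheme succeeds only because this quantity is absorbed by the parabolic dissipation of the velocity rather than requiring an independent $L^\infty$ gradient bound. Assembling all orders into a single functional $Y(t) = \norm{(\vr,\vu,\vB)(t)}_{W^{k,2}}^2 + \norm{\vr^{-1}(t)}_{L^\infty}$ and verifying that no constant depends on anything beyond the data and $M_\infty(T)$ is the technical heart of the argument; once the coupled Grönwall inequality is in place, \eqref{S12} follows, with the dependence on $T$ entering only through the length of the integration interval.
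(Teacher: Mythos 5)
The first thing to note is that the paper contains no proof of Proposition \ref{SP4} at all: the result is imported from \cite[Theorem 2.1]{FeiKwon23}, where it is proved for $k=2$, and Remark \ref{SR3} merely asserts that the extension to higher $k$ is straightforward, following \cite{FeNoSu2014}. So your attempt is being measured against a citation rather than an in-paper argument; judged as a self-contained proof sketch of the underlying conditional regularity theorem, it captures the right philosophy (a priori estimates under the assumed $L^\infty$ control, transport for $\vr$, parabolic dissipation for $\vu$ and $\widetilde{\vB}$) but contains a genuine gap at exactly the point you identify as ``the technical heart''.

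The gap is in closing the Gr\"onwall loop. You propose to control $\int_0^t \| \Grad \vu \|_{L^\infty} \,{\rm d}s$ by $\sqrt{T}\,\bigl( \int_0^t \| \vu \|^2_{W^{k+1,2}} \,{\rm d}s \bigr)^{1/2}$, i.e.\ by the square root of the top-order dissipation, which is itself part of the functional $Y$ you are estimating. Since this integral enters the density (and $\vr^{-1}$) estimates \emph{exponentially}, the resulting inequality has the shape $Y(T) \leq C(M,\mathrm{data})\, \exp\bigl( c \sqrt{T}\, Y(T)^{1/2} \bigr)$, which does not close for data of arbitrary size --- no continuity or bootstrap argument rescues $Y \leq C e^{c\sqrt{Y}}$. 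This is precisely why the actual conditional-regularity proofs (\cite{Sun}, \cite{FeNoSu2014}, \cite{FeiKwon23}) do not run a single coupled high-order Gr\"onwall: they first establish $\int_0^T \| \Grad \vu \|_{L^\infty}\,{\rm d}t$ (and hence the lower bound on $\vr$, which, note, is \emph{not} part of the assumed $L^\infty$ control) from \emph{low-order} quantities only --- the energy bounds, $\int \vr |\dot{\vu}|^2$, the effective viscous flux $G = (2\mu + \lambda) \Div \vu - p(\vr)$ together with its magnetic analogue, maximal $L^p$ parabolic regularity, and a logarithmic (Beale--Kato--Majda type) interpolation inequality --- with constants depending only on $M_\infty(T)$ and the data, and only then propagate the $W^{k,2}$ norms, at which stage the inequalities are linear in the highest-order quantity. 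A secondary but real omission: under Dirichlet boundary conditions only tangential and time derivatives can be used directly in the high-order energy estimates; normal derivatives at $x_3 = \pm 1$ must be recovered from the equations by elliptic regularity, so your claim that ``boundary terms vanish or are controlled'' glosses over a necessary (if standard, given the slab geometry where horizontal derivatives are tangential) step.
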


\begin{Corollary} \label{SC1}

The maximal existence time $T_{\rm max}$ in Proposition \ref{SP3} is 	
the same for any $k \geq 2$, and it is characterized by the property
\begin{equation} \label{S13}
\limsup_{ t \to T_{\rm max}-} \|  (\vr,\vu,\vB)(t, \cdot) \|_{L^\infty(Q;\Bbb \R^7)} = \infty.	
	\end{equation}

	\end{Corollary}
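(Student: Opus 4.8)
The plan is to treat the two assertions separately, first deriving the blow-up characterization \eqref{S13} and then using it to remove the dependence on $k$. Throughout, fix $k \geq 3$ and write $T_{\rm max}^{(k)}$ for the maximal existence time furnished by Proposition \ref{SP3} for data in $W^{k,2}$. The crucial observation is that the quantity appearing in the blow-up alternative \eqref{S10}, namely $\|(\vr,\vu,\vB)(t,\cdot)\|_{W^{k,2}} + \|\vr^{-1}(t,\cdot)\|_{L^\infty}$, is exactly the quantity that Proposition \ref{SP4} controls \emph{by the $L^\infty$-norm of $(\vr,\vu,\vB)$ alone}, together with the fixed data and $T$. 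This is what lets the $L^\infty$-norm act as a single controlling quantity.

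First I would prove \eqref{S13} by contradiction. Assume $T_{\rm max}^{(k)} < \infty$ but $\limsup_{t \to T_{\rm max}^{(k)}-} \|(\vr,\vu,\vB)(t,\cdot)\|_{L^\infty} = M_0 < \infty$. Then there is $M > M_0$ with $\sup_{t \in [0,T]} \|(\vr,\vu,\vB)(t,\cdot)\|_{L^\infty} \leq M$ for every $T < T_{\rm max}^{(k)}$. Feeding this bound, together with the fixed data norms and $T \leq T_{\rm max}^{(k)} < \infty$, into the conditional regularity estimate \eqref{S12} and using that $\Lambda$ is bounded for bounded arguments, I obtain a finite bound
\[
\sup_{0 \le t < T_{\rm max}^{(k)}} \Big( \|(\vr,\vu,\vB)(t,\cdot)\|_{W^{k,2}} + \|\vr^{-1}(t,\cdot)\|_{L^\infty} \Big) \le \Lambda(\dots, M, T_{\rm max}^{(k)}) < \infty,
\]
uniform in $T$, which directly contradicts \eqref{S10}. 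Hence $T_{\rm max}^{(k)} < \infty$ forces the $L^\infty$-norm to blow up, which is \eqref{S13}; the converse implication — that a finite blow-up of the $L^\infty$-norm at some $T^\ast$ forces $T_{\rm max}^{(k)} \le T^\ast$ — is immediate, since on $[0, T_{\rm max}^{(k)})$ the solution lies continuously in $W^{k,2}(Q) \hookrightarrow L^\infty(Q)$ on every compact subinterval.

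Next I would show the maximal time is independent of $k$. For $k \ge k' \ge 3$, the inclusion $W^{k,2}(Q) \subset W^{k',2}(Q)$ together with uniqueness in the class \eqref{S9} shows the $W^{k,2}$-solution is also the $W^{k',2}$-solution on their common interval, whence $T_{\rm max}^{(k)} \le T_{\rm max}^{(k')}$; thus $T_{\rm max}^{(k)}$ is non-increasing in $k$. If this inequality were strict for some pair, then on the closed interval $[0, T_{\rm max}^{(k)}]$, lying strictly inside $[0, T_{\rm max}^{(k')})$, the $W^{k',2}$-solution would be continuous into $L^\infty$, so $\sup_{[0, T_{\rm max}^{(k)}]} \|(\vr,\vu,\vB)\|_{L^\infty} =: M < \infty$. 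As the two solutions coincide on $[0, T_{\rm max}^{(k)})$, the $L^\infty$-norm of the $W^{k,2}$-solution would stay $\le M$ up to its own finite maximal time $T_{\rm max}^{(k)}$, contradicting the criterion \eqref{S13} just established at level $k$. Therefore $T_{\rm max}^{(k)} = T_{\rm max}^{(k')}$ for all $k, k' \ge 3$.

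Finally, the endpoint $k = 2$ is reached by the same scheme: since $Q \subset \R^3$ we have $W^{2,2}(Q) \hookrightarrow L^\infty(Q)$, so the $L^\infty$-criterion remains the controlling quantity, and the conditional regularity and local existence extend to $k = 2$ (or, alternatively, one propagates the regularity of the data under the a priori $L^\infty$-bound, upgrading a $W^{2,2}$-solution to $W^{k,2}$ whenever the data allow). I expect the main obstacle to be precisely the interplay between the two distinct quantities in \eqref{S10} and \eqref{S13}: the blow-up alternative of Proposition \ref{SP3} involves the \emph{inverse density} $\|\vr^{-1}\|_{L^\infty}$ in addition to the high Sobolev norm, whereas \eqref{S13} is phrased solely through $\|(\vr,\vu,\vB)\|_{L^\infty}$. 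Controlling the lower bound on the density by the $L^\infty$-norm of $(\vr,\vu,\vB)$ is not elementary — it is exactly what is built into $\Lambda$ in \eqref{S12}, morally by transporting the density bound along the flow using $L^\infty$-control of $\Div \vu$ — and checking that this term is genuinely absorbed, along with justifying the borderline $k=2$ where Proposition \ref{SP3} is stated only for $k \ge 3$, is the delicate part of the argument.
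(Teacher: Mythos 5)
Your proposal is correct and takes essentially the same route as the paper: Corollary \ref{SC1} is stated there without a separate proof precisely because it follows, exactly as you argue, by feeding the assumed $L^\infty$ bound into the conditional regularity estimate \eqref{S12} of Proposition \ref{SP4} (which already absorbs the inverse-density term appearing in the blow-up alternative \eqref{S10}) to contradict \eqref{S10}, with the $k$-independence then obtained from uniqueness, the Sobolev embedding, and the $L^\infty$ criterion. Your closing caveat about the borderline case $k=2$ also matches the paper's own handling, which delegates the $k=2$ conditional regularity to \cite{FeiKwon23} via Remark \ref{SR3} rather than proving it in the text.
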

	
\begin{Remark} \label{SR3}
	
As a matter of fact, the conditional regularity result stated in \cite[Theorem 2.1]{FeiKwon23} was proved for $k=2$. Extension to higher order derivatives claimed in Proposition \ref{SP4} is straightforward, cf.~\cite{FeNoSu2014}.
	
	\end{Remark}	

\subsection{Data dependence}

Motivated by Proposition \ref{SP4}, we introduce a family of stopping times,
\begin{align}
T_M &= \sup_{\tau \in [0, T_{\rm max}) } \left\{ \sup_{t \in [0,\tau]}
\|  (\vr,\vu,\vB)(t, \cdot)  \|_{L^\infty (Q;\Bbb \R^7)}
< M \right\} \br &= \inf_{\tau \in [0, T_{\rm max})}  \Bigg\{  \|  (\vr,\vu,\vB)(\tau, \cdot)  \|_{L^\infty (Q;\Bbb \R^7)} \geq M  \Bigg\} ,
\label{S14}
\end{align}
where $(\vr, \vu, \vc{B})$ is a strong solution of the compressible MHD system. In accordance with the conditional regularity result stated in Proposition \ref{SP4}, we have
\begin{equation} \label{S15}
	0 \leq T_M < T_{\rm max},\ T_M \nearrow T_{\rm max} \ \mbox{as}\ M \to \infty.
\end{equation}	

Next, we introduce the data space
\begin{align}
	X_D = &\left\{ (\vr_0, \vu_0, \vc{B}_0; \vc{b}^{\pm}, \vc{g}; \mu, \lambda, \zeta)\ \Big| \right. \br
	&\left. (\vr_0, \vu_0, \vc{B}_0) \in W^{k,2}(Q;\Bbb \R^7), \ \vc{b}^{\pm} \in \Bbb \R^3,\  \vc{g} \in\Bbb \R^3,\
	\mu > 0,\ {\lambda + \frac{2}{3} \mu}  \geq 0,\ \zeta > 0
	\right\}.
	\label{S16}
\end{align}
As the strong solution is uniquely determined by the data $D \in X_D$, we may consider the mapping
\[
D \in X_D \mapsto T_M[D] \in [0, \infty).
\]
Exactly as in \cite[Theorem 2.1]{FeiLuk2022}, we deduce
\begin{equation} \label{S17}
	D \in X_D \mapsto T_M[D] \ \mbox{is lower semi-continuous},
\end{equation}	
and, by virtue of \eqref{S15}, $T_{\rm max} = \sup_{M > 0} T_M$; whence
\begin{equation} \label{S18}
D \in X_D \mapsto T_{\rm max}[D] \ \mbox{is lower semi-continuous.}
\end{equation}

\subsection{Energy equality, relative energy}
\label{REE}

Smooth solutions of the compressible MHD system satisfy the energy equality
\begin{align}
\frac{\D }{\dt} &\intQ{ \left[ \frac{1}{2} \vr |\vu|^2 + \mathcal{P}(\vr) + \frac{1}{2}
|\vc{B} - \vc{B}_B|^2  \right] } 
+ \intQ{ \Big[ \mathbb{S}(\Grad \vu): \Grad \vu + \zeta |\Curl \vc{B}|^2 \Big] } \br
&= \intQ{ \Big[ (\vc{B} \times \vu )\cdot \Curl \vc{B}_B + \zeta \Curl \vc{B} \cdot \Curl \vc{B}_B + \vr \vc{g} \cdot \vu \Big] },
	\label{S20}
\end{align}
where $\vc{B}_B$ is the solenoidal extension of the boundary data introduced in \eqref{BB},
and $\mathcal{P}$ is the pressure potential related to the barotropic pressure through
the formula
\begin{equation}\label{cpp}
\mathcal{P}'(\vr) \vr - \mathcal{P}(\vr) = p(\vr).
\end{equation}

In addition, we introduce the \emph{relative energy},
\begin{equation} \label{S21}
\RE\big((\vr,\vm,\vB)\,\big|\,(\tvr,\tvm,\tvB)\big)=
\frac{1}{2} \vr \left|\frac{\vm}{{\vr}} - \frac{\tvm}{{\tvr}} \right|^2 + \mathcal{P}(\vr) - \mathcal{P}'(\tvr)(\vr - \tvr)
	- \mathcal{P}(\tvr) + \frac{1}{2}|\vc{B} - \widetilde{\vc{B}} |^2	
	\end{equation}
expressed in terms of the conservative variables $\vr, \vm = \vr \vu$, and  $\vc{B}$. The relative energy can be identified with the so-called Bregman distance associated to the convex energy
\[
E ( \vr, \vm , \vc{B} ) = \frac{1}{2} \frac{|\vm|^2 }{\vr} + \mathcal{P}(\vr) + \frac{1}{2} |\vc{B}|^2.
\]
Indeed
the function $E$ can be extended to a convex l.s.c. function on $\R^7$ by setting
\begin{align}
E ( \vr, \vm , \vc{B} ) &= \infty \ \mbox{if}\ \vr < 0 \ \mbox{or}\ \vr = 0, \ \vm \ne 0, \br
E ( \vr, \vm , \vc{B} ) &= \frac{1}{2} |\vc{B}|^2 \ \mbox{if}\ \vr = 0,\ \vm = 0.
\label{S21a}
\end{align}

\section{Solutions with random data}
\label{r}
Here and hereafter, the symbol $\big(\Omega, \mathfrak{M}, \Bbb P \big)$ denotes a probability basis with a $\sigma-$algebra of measurable sets $\mathfrak{M}$, and a complete probability measure $\Bbb P$.

We consider random data, meaning a Borel measurable mapping
\[
\omega \in \Omega \mapsto D(\omega) \in X_D.
\]
For the sake of simplicity, we restrict ourselves to deterministically bounded data, meaning
\begin{align}
& \| (\vr_0, \vu_0, \vB_0) \|_{W^{2,k}(Q;\R^7)} + \|\vr_0^{-1} \|_{L^\infty(Q)} \leq K,\
| \vc{b}^{\pm} | \leq K,\ |\vc{g}| \leq K, \br
&\frac{1}{K} \leq \mu \leq K,\
0 \leq {\lambda + \frac{2}{3} \mu } \leq K,\ \frac{1}{K} \leq \zeta \leq K,
	\label{Br1}	
	\end{align}
a.s., where $K > 0$ is a deterministic constant.

\subsection{Strong Law of Large Numbers applied up to a stopping time}

One of our  main tools is the Hilbert space version of the Strong Law of Large Numbers (SLLN), see Ledoux and Talagrand
\cite{LedTal}. 
Consider $(D_n)_{n=1}^\infty$ to be a sequence of i.i.d.\ copies of random data $D$, bounded as in \eqref{Br1}. In accordance with the conditional regularity estimates from Proposition~\ref{SP4}, the associated sequence of solutions
\[
t \mapsto (\vr, \vu, \vc{B})(t \wedge T_M)[D_n], \ n = 1,\dots
\]
is bounded in the Hilbert space $W^{k,2}(Q;\Bbb \R^7)$ by a deterministic constant
depending on $M$ and $K$. Applying the Hilbert space version of SLLN
(Ledoux, Talagrand \cite[Corollary 7.10]{LedTal})
we conclude
\begin{equation} \label{r2}
\frac{1}{N} \sum_{n=1}^N (\vr, \vu, \vc{B})(t \wedge T_M)[D_n] \to
	\expeB{ (\vr, \vu, \vc{B})(t \wedge T_M)[D] }\ \mbox{in}\ {W^{k,2}(Q;\Bbb \R^7)}
	\ \prst-\mbox{a.s.}
\end{equation}
for any $t \geq 0$.
Actually, using the fact that i.i.d.\ copies are uncorrelated we obtain a convergence rate
\begin{equation} \label{r3}
	\expe{ \left\| \sum_{n=1}^N \Big( (\vr, \vu, \vc{B})(t \wedge T_M)[D_n] -
	\expeB{ (\vr, \vu, \vc{B})(t \wedge T_M)[D] } \Big) \right\|_{W^{k,2}(Q;\Bbb \R^7)}^2 }  \leq c(K,M) N
\end{equation}
for any $t \geq 0$ yielding
\begin{equation} \label{r3a}
	\expe{  \left\| \frac{1}{N} \sum_{n=1}^N (\vr, \vu, \vc{B})(t \wedge T_M)[D_n] -
		\expeB{ (\vr, \vu, \vc{B})(t \wedge T_M)[D] }\right\|_{W^{k,2}(Q;\Bbb \R^7)} }  \leq c(K,M) N^{-\frac{1}{2}}
\end{equation}
for any $t \geq 0$, cf.\ \cite[Theorem 10.5]{LedTal}.

\subsection{Strong Law of Large Numbers on the life-span}

Unlike in the previous part, integrability of the initial data does not imply boundedness of moment of the energy norm up to the time $T_{\rm max}$. Instead,
the energy balance \eqref{S20} must be used. In view of the deterministic boundedness of the data stated in \eqref{Br1}, we easily deduce
\begin{equation} \label{r4}
\expe{ \Big( \| \sqrt{\vr} \vu \|_{L^2(Q;\Bbb \R^3)}^2 + \| \vr \|^\gamma_{L^\gamma(Q)} + \| \vc{B} \|^2_{L^2(Q;\Bbb \R^3)} \Big)^m } \aleq 1 	
	\end{equation}
for any $m \geq 0$. Here, $a \aleq b$ means $a \leq C b$ for some positive constant $C$.
Similarly to the preceding section, we conclude
\begin{equation} \label{r5}
	\frac{1}{N} \sum_{n=1}^N (\vr, \vm, \vc{B}) \mathds{1}_{[0, T_{\rm max)}} [D_n] \to
	\expe{ (\vr, \vm, \vc{B})\mathds{1}_{[0, T_{\rm max)}}[D] }\ \mbox{in}\
	L^\gamma \times L^{\frac{2 \gamma}{\gamma + 1}} \times L^2(Q;\Bbb \R^7),
	\ \mathcal{P}-\mbox{a.s.}
\end{equation}
for any $t \geq 0$, where $\vm = \vr \vu$. Here, we have used H\"older's inequality
\[
\| \vm \|_{L^{\frac{2\gamma}{\gamma + 1}}(Q; \Bbb \R^3)} \leq
\| \sqrt{\vr} \vu \|_{L^2(Q; \Bbb \R^3)} \| {\vr} \|_{L^{\gamma}(Q)}^{\frac{1}{2}}.
\]

\begin{Remark} \label{Rss1}
Similarly to \eqref{r3a}, we can derive a convergence rate in \eqref{r5}
in terms of $N$ and the convexity properties of the norm (type) in the
Banach spaces $L^\gamma$, $L^{\frac{2 \gamma}{\gamma + 1}}$, and $L^2$, respectively, see Ledoux and Talagrand \cite[Chapter 9]{LedTal}.
\end{Remark}

\section{Structure-preserving finite volume method}
\label{Sec4}

The aim of this section is to introduce an upwind finite volume  method for \eqref{pde}--\eqref{ini} and show its structure-preserving properties, such as the positivity of density and discrete energy balance. Further, we will study its stability and consistency including the divergence-free property of the magnetic field. These are fundamental for convergence and error analysis of a numerical scheme.

\subsection{Notations}
We present our results on the convergence and error estimates for the Monte Carlo FV method for $d=3$. Note that the results for $d=2$ will directly follow. For completeness numerical method will be described in what follows for both cases, $d=2,3.$
We begin with the introduction of space and time discretizations, function spaces, and necessary notations.

\vspace{-0.4cm}
\paragraph{Mesh.}  Let $\grid$ be a uniform structured mesh formed by cubes for $d=3$ (or squares for $d=2$)
with mesh size $h\in(0,1)$ such that $Q = \bigcup_{K \in \grid} K$.
We denote by $\faces$ the set of all faces of $\grid$, $\facesint$ the set of all interior faces, $\facesext$ the set of all exterior faces, and by $\facesi$ the set of all faces that are orthogonal to the $\ith$ basis vector $\ve_i$ of the canonical system.  The set of all faces of an element $K$ is denoted by $\facesK$.
Moreover, we write $\sigma= K|L$ if $\sigma \in \faces$ is the common face of elements $K$ and~$L$.
Further, we denote by $|K|$  (resp. $|\sigma|$) the Lebesgue measure of an element $K\in\grid$ associated with the  barycenter $x_K$ (resp. $x_\sigma$).

\vspace{-0.4cm}
\paragraph{Dual mesh.} For any $\sigma=K|L\in \facesint$, we define a dual  cell  $D_\sigma := D_{\sigma,K} \cup D_{\sigma,L}$, where the  polygon $D_{\sigma,K}$ (resp.~$D_{\sigma,L}$) is  a half of $K$ (resp. $L$), i.e.,
\begin{align*}
& D_{\sigma,K} = \left\{ x\in K\mid  x_i \in \co{(x_K)_i}{(x_\sigma)_i} \right\} \quad \mbox{for any } \sigma \in \facesi, i = 1, \dots, d,
\end{align*}
where $\co{A}{B} \equiv [ \min\{A,B\} , \max\{A,B\}]$.
For any $\sigma \in \facesext \cap \facesK$,  we define the dual cell $D_\sigma := D_{\sigma,K}$.
Further, we denote the $\ith$ dual grid $\Di$ as
$$
\Di = \left\{D_{\sigma} \mid \sigma \in \facesi \right\}. 
$$

\vspace{-0.6cm}
\paragraph{Function spaces.}\label{sec:spaces}
We define a piecewise constant function space $Q_h$ on $\mesh$ associated with the projection operator
\begin{equation*}
\PiQ: L^1(Q) \to Q_h, \quad
\PiQ  \phi (x) = \sumK  \frac{ \mathds{1}_{K}(x)}{|K|} \int_K \phi \dx , \quad  \mathds{1}_{K}(x) = \begin{cases}1 & x \in K ,\\0 & \mbox{otherwise.}\end{cases}
\end{equation*}
Analogously, we define $\Whi$ as the space of piecewise constants on the $\ith$ dual grid $\meshdi$ associated with the projection operators $\PiFi$ and $\PiWi$
\begin{align*}
& \PiFi:  
W^{1,1}(Q) \to \Whi, \quad
\PiFi \phi(x) = \sumSi \frac{\mathds{1}_{D_\sigma}(x)}{|\sigma|} \int_{\sigma} \phi \, \ds,\\
& \PiWi: 
 L^1(Q) \to \Whi, \quad
\PiWi \phi (x) = \sumSi \frac{\mathds{1}_{D_\sigma}(x)}{|D_\sigma|} \int_{D_\sigma} \phi \dx.
\end{align*}
Further, we denote $\bWh =\Wh^{(1)} \times \cdots \times \Wh^{(d)}$,  $\PiF \vv = \left(\PiF^{(1)}v_1, \cdots, \PiF^{(d)}v_d \right),$ and $\PiW \vv = \left(\PiW^{(1)}v_1, \cdots, \PiW^{(d)}v_d \right)$, where $\vv = (v_1,\dots, v_d)$.

\vspace{-0.4cm}
\paragraph{Time discretization.}
We suppose $\TS \approx h$ and denote  $t^k= k\TS$ for $k=0,\ldots,N_T(=T/\TS)$. Then we denote a discrete function $v_h$ at time $t^k= k\TS$ by $v_h^k$ and write $v_h \in L_{\TS}(0,T;\Qh)$ if $v_h^k \in \Qh$ for all $k= 0,\dots, N_T$ with  \[ v_h(t,\cdot) =v_h^0 \mbox{ for } t \leq 0,\ v_h(t,\cdot)=v_h^k \mbox{ for } t\in [k\TS,(k+1)\TS),\ k= 0,\ldots,N_T.
\]
We define the discrete time derivative by the backward Euler method
\[
 D_t v_h = \frac{v_h (t) - v_h^{\triangleleft}(t)}{\TS}  \mbox{ for } t\in(0,T) \quad \mbox{ with } \quad   v_h^{\triangleleft}(t) =  v_h(t - \Delta t) .
\]
\paragraph{Jump and average operators.}
For any piecewise continuous function $f$, we define its trace on a generic edge 
as
\begin{equation*}
\begin{aligned}
f^{\rm in}|_{\sigma} = \lim_{\delta \rightarrow 0^+} f(\xx -\delta \nG ), \ \forall\  \sigma \in \faces,
\qquad
f^{\rm out}|_{\sigma} = \lim_{\delta \rightarrow 0^+} f(\xx +\delta \nG ), \ \forall\  \sigma \in \facesint.
\end{aligned}
\end{equation*}
Note that $f^{\rm out}|_{\facesext}$ is determined by the boundary condition.
Further, we define the jump and average operators at an edge $\sigma \in \faces$ respectively as
\begin{equation}\label{op_diff}
\jump{f}_{\sigma} = f^{\rm out} - f^{\rm in} \mbox{ and }
\avg{f} = \frac{f^{\rm out} + f^{\rm in}}{2} .
\end{equation}
\paragraph{Diffusive upwind flux.}
Using the above notations,  we introduce the diffusive upwind flux for any function $r_h \in Q_h$ at a generic face  $\sigma \in \facesint$
\begin{equation}\label{num_flux}
\Fup(r_h,\vuh)
=\Up[r_h, \vuh] -  h^\eps \jump{ r_h }, \quad
\Up [r_h, \vuh]   =r_h^{\rm up}\us
=r_h^{\rm in} [\us]^+ + r_h^{\rm out} [\us]^-,  \quad \eps > -1,
\end{equation}
where $\vuh \in \Qh$ is the velocity field and
\begin{equation*}
\us =\avs{\vuh }_\sigma \cdot \vn_\sigma, \quad
[f]^{\pm} = \frac{f \pm |f| }{2} \quad \mbox{and} \quad
r_h^{\rm up} =
\begin{cases}
 r_h^{\rm in} & \mbox{if } \ \us \geq 0, \\
r_h^{\rm out} & \mbox{if } \ \us < 0.
\end{cases}
\end{equation*}

\vspace{-0.6cm}
\paragraph{Discrete derivative operators.} Next, we introduce several discrete  derivative operators for generic functions $r_h\in Q_h$ and $\vh=(v_{1,h}, \dots, v_{d,h}) \in Q_h^d$
\begin{align*}
&\pdedgei: Q_h \to \Whi, \quad \pdedgei r_h(x) = \sumSi \left( \pdedgei r\right)_{D_\sigma}  \mathds{1}_{D_\sigma} (x),\quad  (\pdedgei r_h)_{D_{\sigma}} =  \frac{\jump{r_h}}{h},
\\
& \Gradd:  Q_h \to \bWh, \quad  
 \Gradd r_h  = \left(\pdedge^{(1)} r_h, \dots, \pdedge^{(d)} r_h \right),
\\
&\Gradh: Q_h \to Q_h^d, \quad \Gradh r(x)  =  \sumK \left( \Gradh r\right)_K  \mathds{1}_K{(x)},  \quad (\Gradh r_h)_K =
\frac{1}{|K|}\sum_{\sigma\in \facesK}|\sigma| \vn  \avg{r_h},
\\
& \pdmeshi: \Whi \to Q_h, \quad \pdmeshi r_h(x) = \sumK \left(\pdmeshi r_h \right)_K \mathds{1}_K{(x)} , \quad
(\pdmeshi r_h)_K := \frac{r_h|_{\sigma_{K,i+}} - r_h|_{\sigma_{K,i-}}}{h}, 
\\
& \Divmesh: \bWh \to Q_h, \quad \Divmesh v_h = \sumi \pdmeshi v_{i,h},
\\
& \Divh: Q_h^d \to Q_h, \quad \Divh \vh(x) = \sumK \left( \Divh \vh\right)_K \mathds{1}_K{(x)} , \quad (\Divh \vh)_K =
\frac{1}{|K|}\sum_{\sigma\in \facesK}|\sigma| \vn \cdot \avg{\vh},
\\
& \Curlh:  Q_h^d \to Q_h^d, \quad  \Curlh \vh(x) = \sumK \left( \Curlh \vh\right)_K \mathds{1}_K{(x)}, \quad (\Curlh \vh)_K =
\frac{1}{|K|}\sum_{\sigma\in \facesK}|\sigma| \vc{n} \times \avg{\vh} ,
\\
&\Laph: Q_h \to Q_h, \quad \Laph r_h = \sumK \left( \Laph r_h\right)_K \mathds{1}_K{(x)}, \quad \left( \Laph r_h\right)_K = \frac{1}{|K|}\sum_{\sigma\in \facesK}|\sigma| \frac{\jump{r_h}}{h},
\end{align*}
where $\sigma_{K,i-}$ (resp. $\sigma_{K,i+}$) is the left (resp. right) edge of $K$ in the $i^{\rm th}$ direction.

\medskip

It is easy to check that
\begin{align*}
& \Gradh r_h = \left( \pdmesh^{(1)}\PiW^{(1)}r_h, \dots,  \pdmesh^{(d)}\PiW^{(d)}r_h\right),\quad
\Divh \vv_h = \sumi \pdmeshi\PiWi  v_{i,h},
\\
& \Curlh \vv_h =
\begin{cases}
\pdmesh^{(1)}\PiW^{(1)} v_{2,h} - \pdmesh^{(2)}\PiW^{(2)} v_{1,h}, & d=2, \\
\left(
\pdmesh^{(2)}\PiW^{(2)} v_{3,h} - \pdmesh^{(3)}\PiW^{(3)} v_{2,h},\
\pdmesh^{(3)}\PiW^{(3)} v_{1,h} - \pdmesh^{(1)}\PiW^{(1)} v_{3,h},\
\pdmesh^{(1)}\PiW^{(1)} v_{2,h} - \pdmesh^{(2)}\PiW^{(2)} v_{1,h}
\right), & d = 3.
\end{cases}
\end{align*}
Consequently, it holds for any $\vh \in Q_h^d$ that
\begin{align}\label{div-curl}
\Divh \Curlh \vh =0 \mbox{ and } \Curlh \Gradh \vh =0.
\end{align}
Moreover, for any $f_h,v_h \in Q_h, \ \vc{f}_h, \vh \in Q_h^d,$ the following integration by parts formulae hold
\begin{itemize}
\item
\begin{subequations}\label{InByPa}
\begin{align}\label{InByPa-1}
 \intQ{ \Gradh f_h \cdot \vh} + \intQ{  f_h \cdot \Divh\vh} = 0, 
\end{align}
if $\jump{f_h}|_{\facesext} = \avs{\vh}\cdot \vn|_{\facesext} = 0$  or $\avs{f_h}|_{\facesext} = \jump{\vh}\cdot \vn|_{\facesext} = 0$.

\item
\begin{align}\label{InByPa-2}
&\intQ{ \Laph f_h \cdot v_h} = -\intQ{\Gradd f_h \cdot \Gradd v_h}, 
\end{align}
if $\avs{v_h}|_{\facesext} = 0$  or $\jump{f_h}|_{\facesext} = 0$.

\item
\begin{align}\label{InByPa-4}
&\intQ{ \Curlh \vc{f}_h \cdot \vh } - \intQ{ \vc{f}_h \cdot \Curlh  \vh } \br
&
= \intfacesext{ \vn \times \avs{\vc{f}_h} \cdot \vh^{in} } - \frac12 \intfacesext{ \vn \times \jump{\vh} \cdot \vc{f}_h^{in} } \br
&= -\intfacesext{ \vn \times \avs{\vh} \cdot \vc{f}_h^{in} } + \frac12 \intfacesext{ \vn \times \jump{\vc{f}_h} \cdot \vh^{in} }. 
\end{align}
%
\end{subequations}
\end{itemize}

\vspace{-0.6cm}
\paragraph{Divergence-free projection.}
In order to keep divergence-free constraint of the initial data at the discrete level, meaning $\Divh \vB_h^0=0$, it is necessary to introduce a suitable projection operator that is compatible with $\Divh$. To this end, we introduce for $\vB=(B_1,\cdots, B_d)$ the following projection operator
\begin{align}\label{eqpib}
&\PiB: W^{1,1}(Q;\R^d) \to Q_h^d, \quad  \PiB \vB = ( \PiB^{(1)}  B_1,\cdots, \PiB^{(d)}  B_d ),\\
&\PiBi: W^{1,1}(Q) \to Q_h, \quad  \PiBi  \phi(x)  = \sumK \mathds{1}_K(x) (\PiBi  \phi)_K, \quad
(\PiBi  \phi)_K =  \frac1{|\ell_K^{(i)} |}  \int_{\ell_K^{(i)}} \phi \ds. \nonumber
\end{align}
Here, 
\begin{align*}
\ell_K^{(i)} = \Big\{ x \mid  x_i = (x_K)_i, \, x_j \in \left[(x_K)_j - h, \, (x_K)_j + h \right] ,\,  j \neq i\Big\}, \quad i = 1, \dots, d,
\end{align*}
is a two-dimensional square of the size $(2h)^2$ if $d=3$ and a one-dimensional line of the length $2h$ if $d=2$.
It is orthogonal to a segment parallel to $\vei$ connecting  $x_K$ and the barycenters of  neighbouring elements,  see for example Figure~\ref{figpib1}  for the 2D case.

\begin{figure}[hbt]
\centering
\begin{subfigure}{0.45\textwidth}
\raggedleft
\begin{tikzpicture}[scale=1]
\draw[-,very thick](0,0)--(6,0)--(6,6)--(0,6)--(0,0);
\draw[-,very thick](0,2)--(6,2);
\draw[-,very thick](0,4)--(6,4);
\draw[-,very thick](2,0)--(2,6);
\draw[-,very thick](4,0)--(4,6);
\draw[dashed, very thick, red](3,1)--(3,5);
\draw[dashed, very thick, blue](1,3)--(5,3);
\path node at (4.4,3.4) {\cblue $\ell_K^{(2)}$};
\path node at (3.4,4.4) {\cred $\ell_K^{(1)}$};

\fill (3,3) circle (2pt) node[below right]{$x_K$};
\fill (1,3) circle (2pt) node[left]{$x_W$};
\fill (5,3) circle (2pt) node[right]{$x_E$};
\fill (3,1) circle (2pt) node[below]{$x_S$};
\fill (3,5) circle (2pt) node[above]{$x_N$};
\end{tikzpicture}\caption{}\label{figpib1}
\end{subfigure}
\hfill
\begin{subfigure}{0.45\textwidth}
\raggedright
\begin{tikzpicture}[scale=1.]
\draw[-,very thick](0,0)--(6,0)--(6,6)--(0,6)--(0,0);
\draw[-,very thick](0,2)--(6,2);
\draw[-,very thick](0,4)--(6,4);
\draw[-,very thick](2,0)--(2,6);
\draw[-,very thick](4,0)--(4,6);
\draw[-,fill=blue!20,very thick, red=90!, pattern=north east lines, pattern color=red!30](1,1)--(1,5)--(5,5)--(5,1)--(1,1);
\fill (1,3) circle (2pt) node[below left]{$x_W$};
\fill (5,3) circle (2pt) node[below right]{$x_E$};
\fill (3,1) circle (2pt) node[below right]{$x_S$};
\fill (3,5) circle (2pt) node[above right]{$x_N$};
\path (1,3) node [above left] { \cred $\ell_{W}^{(1)}$};
\path (5,3) node [above right] { \cred $\ell_{E}^{(1)}$};
\path (3,5) node [above left] { \cred $\ell_{N}^{(2)}$};
\path (3,1) node [below left] {\cred  $\ell_{S}^{(2)}$};
\fill (3,3) circle (2pt) node[below right]{$x_K$};
\path (3,3)  node[above left]{\cred $K_\dagger$};
\path (3,0)  node[below]{$\pd K_\dagger = \ell_{W}^{(1)} \cup \ell_{S}^{(2)} \cup \ell_{E}^{(1)} \cup \ell_{N}^{(2)}$};
\end{tikzpicture}\caption{}\label{figpib2}
\end{subfigure}
\caption{Extended mesh in 2D.}\label{figpib}
\end{figure}
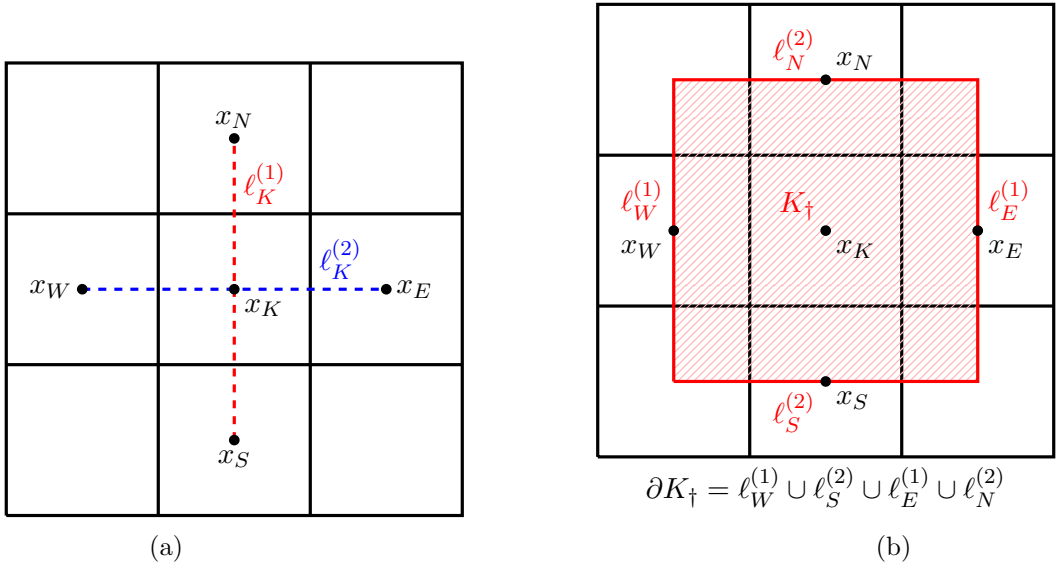

\newpage
\begin{Lemma}
Let $\PiB$ be defined as in \eqref{eqpib}.
\begin{itemize}
\item[$i)$] Let $\vB_B$ be defined as in \eqref{BB}. Then there hold $\Divh \PiB \vB_B =0$ and $\avs{\PiB \vB_B}|_{\facesext} = \vB_B|_{\facesext}$.

\item[$ii)$] Let $\vC \in C^1(Q;\R^d)$ satisfy the divergence-free constraint $\Div\, \vC = 0$. Then it holds $\Divh \PiB \vC =0$.
\end{itemize}

\end{Lemma}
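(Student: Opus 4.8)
The plan is to reduce the discrete divergence to a centred difference of the projected normal components and then to recognise that difference as a surface flux over a ``doubled'' cell, to which the Gauss--Green theorem applies. First I would unfold $\Divh$ through the identity $\Divh \vv_h = \sumi \pdmeshi \PiWi v_{i,h}$ recorded above. For a cell-centred field $g_h \in Q_h$ and an interior face $\sigma = K|L \in \facesi$, the dual cell $D_\sigma$ splits into the two equal half-cells $D_{\sigma,K}, D_{\sigma,L}$, so $\PiWi g_h|_{D_\sigma} = \tfrac12\big(g_h|_K + g_h|_L\big)$. Inserting this into the definition of $\pdmeshi$ gives, at a cell $K$ with neighbours $K_i^{\pm}$ in the $\pm\vei$ directions,
\[
\big(\pdmeshi \PiWi g_h\big)_K = \frac{g_h|_{K_i^+} - g_h|_{K_i^-}}{2h}, \qquad \text{hence} \qquad (\Divh \PiB \vC)_K = \sumi \frac{(\PiBi C_i)_{K_i^+} - (\PiBi C_i)_{K_i^-}}{2h}.
\]
This centred-difference form is the common starting point for both assertions.

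For part $i)$ I would argue directly from this formula, exploiting the very special structure of $\vB_B$ in \eqref{BB}: its first component $B_{B,1}$ depends only on $x_3$, its second $B_{B,2}$ only on $x_3$, and its third vanishes. Since $\PiBi$ averages over the stencil $\ell_K^{(i)}$, which is symmetric about $x_K$, and each $B_{B,i}$ is affine, $\PiBi B_{B,i}$ returns the centre value $B_{B,i}(x_K)$; in particular $\PiB^{(1)} B_{B,1}$ is independent of $x_1$, $\PiB^{(2)} B_{B,2}$ is independent of $x_2$, and $\PiB^{(3)} B_{B,3} \equiv 0$. Each centred difference above therefore vanishes, giving $\Divh \PiB \vB_B = 0$. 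The boundary identity $\avs{\PiB \vB_B}|_{\facesext} = \vB_B|_{\facesext}$ rests on the same affineness: at an exterior face the average is the half-sum of the interior centre value and the ghost value furnished by the affine extension of $\vB_B$ across $x_3 = \pm 1$, and as these two centres are symmetric about the face, affineness reproduces $\vB_B$ exactly at the face.

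For part $ii)$ the key geometric observation is that the stencils entering the centred difference are precisely the faces of the doubled cube $K_\dagger$ of side $2h$ centred at $x_K$ (the extended cell of Figure~\ref{figpib2}): the set $\ell_{K_i^+}^{(i)}$ is the face of $K_\dagger$ with outward normal $+\vei$, and $\ell_{K_i^-}^{(i)}$ the one with normal $-\vei$. Writing $(\PiBi C_i)_{K_i^\pm} = (2h)^{-(d-1)} \int_{\ell_{K_i^\pm}^{(i)}} C_i \ds$ and summing, the centred differences assemble into the outward flux of $\vC$ across $\partial K_\dagger$,
\[
(\Divh \PiB \vC)_K = \frac{1}{(2h)^d} \int_{\partial K_\dagger} \vC \cdot \vn \ds = \frac{1}{|K_\dagger|} \int_{K_\dagger} \Div \vC \dx = 0,
\]
by the Gauss--Green theorem and $\Div \vC = 0$. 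For interior cells $K_\dagger \subset Q$ and this is conclusive.

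The main obstacle I expect is the treatment of cells adjacent to the slab boundary $x_3 = \pm 1$, where $K_\dagger$ (and already the transverse stencils $\ell_K^{(1)}, \ell_K^{(2)}$) protrude past $Q$ and the centred difference calls for ghost values. Periodicity in $x_1, x_2$ removes any difficulty in the horizontal directions, so the issue is confined to the vertical faces. There I would close the flux cancellation by fixing the exterior values of $\PiB$ through the boundary data --- exactly the convention rendered consistent by the second identity of part $i)$, $\avs{\PiB \vB_B}|_{\facesext} = \vB_B|_{\facesext}$ --- that is, by using the divergence-free extension of the field across $x_3 = \pm 1$, so that the surface integral over the protruding face of $K_\dagger$ is still supplied by a field of vanishing divergence and the Gauss--Green computation goes through unchanged.
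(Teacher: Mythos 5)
Your proposal is correct and follows essentially the same route as the paper: part $i)$ from the affine profile of $\vB_B$ (the symmetric face averages reproduce cell-centre values, which are constant in the transverse directions), and part $ii)$ by assembling the centred differences of the face averages $(\PiBi C_i)$ into the outward flux of $\vC$ over the doubled cell $K_\dagger$ and applying the Gauss--Green theorem, which is exactly the paper's computation. Your extra discussion of cells adjacent to $x_3=\pm 1$, where the stencils $\ell_K^{(i)}$ protrude past $Q$ and a divergence-free ghost extension is needed to make $\PiB$ and the flux argument well defined, addresses a point the paper's proof leaves implicit and is a sensible refinement rather than a different approach.
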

\begin{proof}
The first argument $i)$ directly follows from the linear profile of $\vB_B$.
Next, we show the proof of $ii)$ in 2D, analogous calculations can be done in 3D.

Let $W, E, S, N$ be respectively the left, right, lower and upper neighbours to $K$, and let us denote $K_\dagger$ the domain closed by $\ell_{W}^{(1)}$, $\ell_{S}^{(2)}$, $\ell_{E}^{(1)}$, and $\ell_{N}^{(2)}$, see Figure \ref{figpib2}.
Then  letting $\vC = (C_1,C_2)$ we have
\begin{align*}
&(\Divh \PiB  \vC)_K =   \frac{1}{|K|}\sum_{\sigma\in \facesK}|\sigma| \vn \cdot \avg{\PiB  \vC}
= \frac{ (\PiB^{(1)} C_1 )_E  -  (\PiB^{(1)} C_1)_W } {2h}  +\frac{ (\PiB^{(2)} C_2)_N  -  (\PiB^{(2)} C_2)_S } {2h}
\\& =\frac{1}{|K_\dagger|}  \int_{\pd K_\dagger}   \PiB \vC \cdot \vn \ds=\frac{1}{|K_\dagger|}  \int_{\pd K_\dagger}   \vC \cdot \vn \ds
= \frac{1}{|K_\dagger|} \int_{ K_\dagger}  \Div \vC\dx=0,
\end{align*}
which completes the proof.
\end{proof}

\subsection{Finite volume method}\label{FV}

We are now ready to formulate our upwind finite volume (FV) method.

\begin{Definition}[{\bf FV method}]\label{def_fv}
Let $(\vrh^0, \vuh^0, \vBh^0) =(\PiQ \vr_0, \PiQ \vu_0, \PiB \vB_0)$. We say that  a triple of discrete functions $(\vrh, \vuh, \vBh) \in L_{\TS}(0, T;\Qh^{2d+1}) $
is a numerical approximation of the compressible MHD problem \eqref{pde}--\eqref{ini} if
the following holds:
\begin{subequations}\label{scheme1}
\begin{equation}\label{schemeD1}
(D_t \vrh)  _K + \sumSKIn \frac{|\sigma|}{|K|} \Fup(\vrh ,\vuh ) =0,
\end{equation}
\begin{equation}\label{schemeM1}
\begin{aligned}
 D_t (\vrh  \vuh )_K + \sumSKIn \frac{|\sigma|}{|K|}
 \Fup(\vrh  \vuh ,\vuh )
 =& \mu (\Laph \vuh)_K + \nu (\Gradh \Divh \vuh)_K - (\Gradh p_h)_K \\&+ (\Curlh \vBh  \times \vBh)_K + (\vrh)_K \, g,
\end{aligned}
\end{equation}
\begin{equation}\label{schemeB1}
( D_t \vBh )_K =  \Curlh (\vuh  \times \vBh  - \zeta \Curlh \vBh)_K
\end{equation}
for all $K \in \mesh$ with $\nu = \lambda +\mu $ and $p_h = p(\vrh)$. The above system is accompanied with the discrete boundary conditions
\begin{equation}\label{bc-num}
\avs{\vuh}_{\sigma} = 0, \quad \vn \times \avs{\vBh}_{\sigma } =  \vb^{\pm},  \quad \vn \cdot \jump{\vBh}_{\sigma } = 0, \quad
\sigma \in \facesext
\end{equation}
and artificial boundary conditions
\begin{equation}\label{extension}
 \jump{\Divh \vuh}_{\sigma} = 0, \quad \jump{p_h}_{\sigma} = 0, \quad
 \vn \times \jump{ \vuh\times \vBh - \zeta \Curlh \vBh} _{ \sigma} = 0, \quad \sigma \in \facesext.
\end{equation}
\end{subequations}
\end{Definition}

\noindent
We proceed by presenting an equivalent formulation of above FV method.

\begin{Proposition}
A triple $(\vrh,\vuh,\vBh) \in L_{\TS}(0, T;\Qh^{2d+1})$ is a solution of the FV method \eqref{scheme1} if and only if $(\vrh,\vuh,\vBh)$ solves the following weak formulation
\begin{subequations}\label{scheme}
\begin{equation}\label{schemeD}
\intQ{ D_t \vrh  \phi_h } - \intfacesint{  \Fup(\vrh  ,\vuh  )
\jump{\phi_h}   } = 0 \quad \mbox{for all } \phi_h \in Q_h,
\end{equation}
\begin{equation}\label{schemeM}
\begin{aligned}
  \intQ{ D_t (\vrh  \vuh) \cdot  \vh   } -
  \intfacesint{  \Fup(\vrh   \vuh,\vuh  ) \cdot
\jump{\vh}   }
+  \mu \intQ{ \Gradd \vuh : \Gradd \vh  } +  \nu \intQ{ \Divh \vuh  \,  \Divh \vh  }   \\
- \intQ{p_h  \Divh \vh  }  - \intQ{ (\Curlh \vBh   \times  \vBh ) \cdot \vh}
 =\intQ{\vrh \vh \cdot \vc{g}}  \quad \mbox{for all }  \vh \in Q_h^d, \ \avs{\vh}|_{\facesext} = 0,  
\end{aligned}
\end{equation}
\begin{equation}\label{schemeB}
\intQ{ D_t \vBh  \cdot  \vCh}
- \intQ{ (\vuh  \times \vBh - \zeta \Curlh \vBh) \cdot   \Curlh \vCh
 }=0 \quad \mbox{for all } \vCh \in Q_h^d, \ \vn \times \avs{\vCh}|_{\facesext} = 0 
\end{equation}
 with the discrete boundary conditions
\begin{equation}
\avs{\vuh}_{\sigma} = 0, \quad \vn \times \avs{\vBh}_{\sigma } = \vb^{\pm},  \quad \vn \cdot \jump{\vBh}_{\sigma } = 0, \quad
\sigma \in \facesext.
\end{equation}
\end{subequations}
\end{Proposition}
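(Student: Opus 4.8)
The plan is to prove both implications by the standard \emph{test-and-sum} procedure, exploiting that the cellwise identities \eqref{schemeD1}--\eqref{schemeB1} hold for every $K\in\mesh$ while $Q_h$ is spanned by the indicator functions $\{\mathds{1}_K\}_{K\in\mesh}$. The passage between the two formulations is governed by the discrete integration-by-parts formulae \eqref{InByPa}, which are exact algebraic identities; they become reversible once the boundary contributions are removed by means of the discrete and artificial boundary conditions \eqref{bc-num}, \eqref{extension}.

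For the implication \eqref{scheme1}$\Rightarrow$\eqref{scheme} I would start from the density balance \eqref{schemeD1}, multiply it by $|K|(\phi_h)_K$ with $\phi_h\in Q_h$, and sum over $K\in\mesh$. The discrete time derivative produces $\intQ{D_t\vrh\,\phi_h}$ directly. In the convective part $\sum_K\sum_{\sigma\in\facesK\cap\facesint}|\sigma|\Fup(\vrh,\vuh)(\phi_h)_K$ every interior face $\sigma=K|L$ is visited from both adjacent cells; since the diffusive upwind flux is antisymmetric under reversal of the orientation $\nG$---both the upwind part $\Up$ and the artificial term $-h^\eps\jump{\vrh}$ change sign---the two visits combine into $-|\sigma|\Fup(\vrh,\vuh)\jump{\phi_h}$, and summation reproduces the face integral $-\intfacesint{\Fup(\vrh,\vuh)\jump{\phi_h}}$ of \eqref{schemeD}.

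The momentum and induction equations follow the same pattern. Testing \eqref{schemeM1} with $|K|(\vh)_K$, $\vh\in Q_h^d$, $\avs{\vh}|_{\facesext}=0$, the convective flux is treated as above; the viscous term is converted by \eqref{InByPa-2} into $+\mu\intQ{\Gradd\vuh:\Gradd\vh}$, its boundary contribution vanishing because $\avs{\vh}|_{\facesext}=0$; the terms $\nu\Gradh\Divh\vuh$ and $-\Gradh p_h$ are converted by \eqref{InByPa-1} into the weak contributions $+\nu\intQ{\Divh\vuh\,\Divh\vh}$ and $-\intQ{p_h\Divh\vh}$, the required boundary hypotheses being exactly $\jump{\Divh\vuh}|_{\facesext}=0$, $\jump{p_h}|_{\facesext}=0$ from \eqref{extension} together with $\avs{\vh}\cdot\vn|_{\facesext}=0$; the Lorentz and gravitational terms pass over unchanged. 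This yields \eqref{schemeM}. Testing \eqref{schemeB1} with $|K|(\vCh)_K$, $\vn\times\avs{\vCh}|_{\facesext}=0$, and applying the curl identity \eqref{InByPa-4} to $\vc{f}_h=\vuh\times\vBh-\zeta\Curlh\vBh$ in its second form, the two boundary integrals are annihilated respectively by $\vn\times\avs{\vCh}|_{\facesext}=0$ and by the artificial condition $\vn\times\jump{\vuh\times\vBh-\zeta\Curlh\vBh}|_{\facesext}=0$ of \eqref{extension}, leaving precisely \eqref{schemeB}.

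The converse \eqref{scheme}$\Rightarrow$\eqref{scheme1} is obtained by reversing these manipulations. Since the formulae \eqref{InByPa} hold verbatim (boundary terms included) for arbitrary discrete fields and those boundary terms vanish under the shared conditions \eqref{bc-num}, \eqref{extension}, each step above is an equality in both directions; testing \eqref{schemeD} with $\phi_h=\mathds{1}_K$ and \eqref{schemeM}, \eqref{schemeB} with $\vei\mathds{1}_K$ for every $K\in\mesh$ and every $i=1,\dots,d$---all admissible, as the indicator of a single cell has vanishing trace on $\facesext$---recovers the pointwise balances \eqref{schemeD1}--\eqref{schemeB1} after division by $|K|$. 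I expect the main obstacle to be the bookkeeping of the boundary terms in the curl identity \eqref{InByPa-4}: one must pick the form whose two boundary integrals are killed respectively by the test-space constraint on $\vCh$ and by the artificial flux condition, and confirm that this same choice renders the reverse implication valid. The residual work---the orientation-antisymmetry of $\Fup$ and the sign accounting in the viscous, grad-div and pressure terms---is routine.
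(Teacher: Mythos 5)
Your proposal is correct and follows essentially the same route as the paper's proof: multiply the cellwise balances \eqref{schemeD1}--\eqref{schemeB1} by discrete test functions and sum over cells, convert the strong terms via the integration-by-parts formulae \eqref{InByPa} combined with the boundary conditions \eqref{bc-num}, \eqref{extension} (in particular \eqref{InByPa-4} in its second form for the induction equation), and recover \eqref{scheme1} from \eqref{scheme} by testing with characteristic functions. One cosmetic correction: for a boundary cell $K$ the admissibility of $\vei \mathds{1}_K$ is not because its trace on $\facesext$ vanishes (the interior trace equals one there), but because the constraint $\avs{\vh}|_{\facesext}=0$ is realized through the ghost-value extension $\vh^{\rm out} = -\vh^{\rm in}$, exactly as the paper prescribes extensions for its test functions.
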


\begin{Remark}
We point out that FV method in the strong form, i.e.\ \eqref{scheme1}, is simpler for the implementation, whereas the weak form  \eqref{scheme} is more suitable for analysis.
\end{Remark}

\begin{proof}
We first show that if $(\vrh, \vuh, \vBh)$ is a solution of \eqref{scheme1} then $(\vrh, \vuh, \vBh)$ fulfils the weak formulation \eqref{scheme}.
It is easy to get \eqref{schemeD}  and \eqref{schemeM}, respectively, by multiplying \eqref{schemeD1}, \eqref{schemeM1} with discrete test functions and then summing over all mesh cells:  $\sumK \intK{\eqref{schemeD1} \phi_h}$ and $\sumK \intK{\eqref{schemeM1} \cdot \vh}$.   Analogously, setting $\vc{f}_h = \vuh  \times \vBh  - \zeta \Curlh \vBh$ we have
\begin{align*}
&\sumK\intK{ D_t \vBh  \cdot  \vC_h} =\sumK\intK{ \Curlh \vc{f}_h \cdot \vC_h }
\\&
= \intQ{ \vc{f}_h \cdot \Curlh  \vC_h } -\intfacesext{ \vn \times \avs{\vC_h} \cdot \vc{f}_h^{in} } +
 \frac12 \intfacesext{ \vn \times \jump{\vc{f}_h} \cdot \vC_h^{in} }
 \\&
 =\intQ{ \vc{f}_h \cdot \Curlh  \vC_h } .
\end{align*}
Here we have used the integration by parts formula \eqref{InByPa-4}, the boundary condition \eqref{extension}, and  the condition $\vn \times \avs{\vC_h}|_{\facesext} =0$.
 Consequently, we obtain  \eqref{schemeB}
\[
\intQ{ D_t \vBh  \cdot  \vC_h}
= \intQ{ \Curlh \vc{f}_h \cdot    \vC_h}= \intQ{  \vc{f}_h \cdot    \Curlh \vC_h},
\]
which implies that $(\vrh, \vm_h, \vBh)$ fulfills \eqref{scheme}.

Now, let us assume that $(\vrh, \vm_h, \vBh)$ satisfies weak formulation \eqref{scheme}.
Choosing characteristic functions as test functions in \eqref{scheme}, $\phi_h = \mathds{1}_K$, $\vv_h =\mathds{1}_K,$ $\vCh =\mathds{1}_K$ for any $K \in \mesh$, we obtain the elementwise FV formulation of \eqref{schemeD}, \eqref{schemeM} and \eqref{schemeB}. Note that in order to obtain \eqref{schemeM} we have to apply discrete integration by parts formulae \eqref{InByPa-1}, \eqref{InByPa-2} and boundary condition \eqref{bc-num} for velocity. By the same token, integration by parts formula \eqref{InByPa-4}, boundary conditions \eqref{bc-num} and \eqref{extension} lead to \eqref{schemeB}.
\end{proof}

\begin{Lemma}[Existence of a numerical solution]\label{lem-existence}
Suppose that $\vr_0>0$. Then for any $k=1,\dots,N_T,$ there exists a solution $(\vrh,\vuh,\vBh) \in L_{\TS}(0, T;\Qh^{2d+1})$ to the discrete problem \eqref{scheme}.
\end{Lemma}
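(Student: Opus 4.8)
The plan is to solve the fully implicit scheme \eqref{scheme1} one time level at a time. Fix $k\in\{1,\dots,N_T\}$ and assume, by induction on $k$, that the previous iterate $(\vrh^{k-1},\vuh^{k-1},\vBh^{k-1})\in\Qh^{2d+1}$ is known with $\vrh^{k-1}>0$; I seek the new iterate $(\vrh,\vuh,\vBh):=(\vrh^k,\vuh^k,\vBh^k)$. Since $\Qh$ is finite dimensional, this is a system of finitely many nonlinear algebraic equations, and I will produce a solution by a topological (Brouwer/Leray--Schauder) degree argument. First I eliminate the density: for an arbitrary velocity $\vw\in\Qh^d$ the discrete continuity equation \eqref{schemeD1} is \emph{linear} in $\vrh$, of the form $A(\vw)\vrh=\vrh^{k-1}/\TS$, and the upwind flux $\Fup$ together with the artificial diffusion $-h^\eps\jump{\cdot}$ makes $A(\vw)$ a strictly diagonally dominant M-matrix. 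Hence $A(\vw)$ is invertible, the solution $\vrh=\vrh[\vw]$ is unique, strictly positive (inheriting positivity from $\vrh^{k-1}>0$), and depends continuously on $\vw$. This removes $\vrh$ as an independent unknown and, crucially, keeps $\vrh>0$ throughout the argument, so that the pressure potential $\Hc(\vrh)$ and the energy stay finite.

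After substituting $\vrh[\vw]$, the remaining unknown is $(\vuh,\vBh)\in\Qh^d\times\Qh^d\cong\R^{2d\#\mesh}$, and I encode \eqref{schemeM1}--\eqref{schemeB1} as a zero-finding problem $\mathcal{F}(\vuh,\vBh)=0$. I then introduce a homotopy: for $\tau\in[0,1]$ I multiply every convective flux, the pressure gradient, the gravity force, the Lorentz force $\Curlh\vBh\times\vBh$, and the induction coupling $\vuh\times\vBh$ by $\tau$, while leaving the discrete time-derivative (mass) terms and the dissipative terms $\mu\Laph$, $\nu\Gradh\Divh$, $-\zeta\Curlh\Curlh$ unscaled. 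The M-matrix structure of the continuity operator survives for all $\tau\in[0,1]$, so $\vrh[\vw]>0$ persists. At $\tau=0$ the problem decouples into $\vrh=\vrh^{k-1}$, the linear elliptic system $D_t(\vrh^{k-1}\vuh)=\mu\Laph\vuh+\nu\Gradh\Divh\vuh$, and $D_t\vBh=-\zeta\Curlh\Curlh\vBh$; each is uniquely solvable because the associated bilinear forms are coercive via \eqref{InByPa-2}, \eqref{InByPa-4} together with $\vrh^{k-1}>0$, $\mu>0$, $\nu=\lambda+\mu\ge\mu/3>0$, $\zeta>0$. Thus $\mathcal{F}_0$ is an invertible affine map and $\deg(\mathcal{F}_0,B_R,0)=\pm1\neq0$ for $R$ large.

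The heart of the matter is a $\tau$-\emph{uniform} a priori bound on all zeros of $\mathcal{F}_\tau$. To obtain it I test the three scaled relations by $\Hc'(\vrh)$, $\vuh$, and $\vBh-\PiB\vB_B$, respectively, and sum. Because the \emph{same} factor $\tau$ multiplies the convective fluxes in \eqref{schemeD1} and \eqref{schemeM1}, the renormalized continuity identity and the skew-symmetry of the upwind convection still produce exact cancellation of the kinetic-energy flux; likewise the common factor $\tau$ on $\Curlh\vBh\times\vBh$ and $\vuh\times\vBh$ makes the magnetic energy-exchange terms cancel. What remains is a discrete analogue of the energy balance \eqref{S20}; absorbing the indefinite coupling and boundary contributions (notably those generated by the inhomogeneous magnetic data $\vb^\pm$ through $\PiB\vB_B$) into the dissipation $\mu\norm{\Gradd\vuh}^2+\zeta\norm{\Curlh\vBh}^2$ by Young's inequality and a discrete Grönwall step yields bounds on $\norm{\sqrt{\vrh}\vuh}_{L^2}$, $\norm{\vrh}_{L^\gamma}$, and $\norm{\vBh}_{L^2}$ depending only on the data and on $(\vrh^{k-1},\vuh^{k-1},\vBh^{k-1})$, not on $\tau$. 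With $\vrh>0$ and equivalence of norms on the finite-dimensional $\Qh$, all zeros of $\mathcal{F}_\tau$ lie in a fixed ball $B_R$. Homotopy invariance then gives $\deg(\mathcal{F}_1,B_R,0)=\deg(\mathcal{F}_0,B_R,0)\neq0$, so $\mathcal{F}=\mathcal{F}_1$ has a zero, i.e.\ the desired $(\vrh^k,\vuh^k,\vBh^k)$; iterating over $k$ finishes the proof.

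I expect the \textbf{main obstacle} to be precisely this a priori estimate: one must check that the energy cancellations genuinely survive the homotopy scaling (so the bound is $\tau$-independent), that the non-sign-definite Lorentz and boundary terms can be absorbed into the viscous and resistive dissipation, and that the strict lower bound on $\vrh$ furnished by the M-matrix is maintained uniformly in $\tau$. The remaining ingredients — continuity and properness of $\mathcal{F}_\tau$, invertibility at $\tau=0$, and the M-matrix positivity of the continuity solve — are routine.
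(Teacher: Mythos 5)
Your proposal is correct and follows the same overall architecture as the paper's proof, which invokes a degree-based fixed point theorem of Gallou\"et--Maltese--Novotn\'y (restated as Theorem \ref{fixedpoint}): the paper builds the same homotopy in a parameter $\xi \in [0,1]$ switching off the convective fluxes, the pressure, the Lorentz force, the gravity term and the induction coupling while keeping the time-derivative and the $\mu$- and $\zeta$-dissipation; it derives $\xi$-uniform bounds from the discrete energy identity \eqref{VNS_q5B}, obtained by testing exactly as you propose \emph{except} that the continuity equation is tested with $\Hc'(\vrh) - \frac{|\vuh|^2}{2}$ --- the $-|\vuh|^2/2$ part is what produces the kinetic-energy cancellation you invoke, so testing with $\Hc'(\vrh)$ alone would not close the estimate; and it checks unique linear solvability at $\xi = 0$ via a symmetric positive definite system and Lax--Milgram, as you do. The one genuinely different ingredient is your treatment of the density: you eliminate it by solving the linear upwind continuity system $A(\vw)\vrh = \vrh^{k-1}/\TS$ and invoking inverse-positivity of the M-matrix (note the strict dominance holds by \emph{columns}, via conservativity of the fluxes $\Fup$ --- the row sums involve a discrete divergence and may be negative), whereas the paper keeps $\vrh$ among the unknowns of the degree map and proves strict positivity of any zero by a discrete minimum principle at the cell where $\vrh^k$ attains its minimum, yielding the explicit lower bound $\vrh^k \geq \vrh^{k-1} h^d/(1 + \TS\, \xi\, C_2)$ in Step 2. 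Your elimination buys a smaller unknown and positivity for free, at the price of verifying invertibility and continuity of the map $\vw \mapsto \vrh[\vw]$ along the whole homotopy; the paper's version fits directly into the cited fixed point theorem, whose set $W$ is designed precisely to encode the positivity constraint. A final cosmetic difference: the paper also scales the $\nu\,\Divh$ term by $\xi$, which is immaterial since $\nu = \lambda + \mu > 0$. Both routes deliver the lemma, and your remaining steps (mass conservation giving the upper bound on $\vrh$, energy bounds on $\vuh$ and $\vBh$, nonzero degree at $\tau = 0$) mirror Steps 1--3 of the appendix proof.
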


\begin{proof}
The proof can be done analogously as in \cite{Ding} and \cite[Lemma 11.3]{FeLMMiSh} via the theorem of topological degree,  for more details see Appendix \ref{exist}.
\end{proof}

\subsection{Stability}
In this section, we present the stability of the FV method \eqref{scheme}, including the mass conservation, positivity of the density, the energy balance as well as the a priori uniform bounds.

\begin{Lemma}[{Mass conservation, renormalized continuity, and positivity of density\ \cite{FLMS_FVNS}}]\label{lem_b}
Let $\vrh,\vuh$ satisfy the equation \eqref{schemeD} with the initial data satisfying $\vr_0>0$.
Then there hold
\begin{enumerate}
\item {\bf Mass conservation}
\begin{equation}\label{Mcon}
\intQ{ \vrh (t) } = \intQ{ \vrh (0) } = \intQ{ \vr_0 }   \quad \mbox{ for all }\;  t \in[0,T].
\end{equation}

\item {\bf Renormalized continuity equation}
\begin{equation}\label{r1}
\begin{aligned}
&\intQ{\left(D_t b(\vrh ) -\big(\vrh  b'(\vrh )-b(\vrh )\big) \Divh \vuh  \right) }
\\&\quad\quad =
- \frac{\TS}2 \intQ{ b''(\xi_1)|D_t \vrh |^2  }
- \intfacesint{ b''(\xi_2) \jump{  \vrh  } ^2 \left(h^\eps  +  \frac12 | \us | \right) }
\end{aligned}
\end{equation}
for any $b=b(\vr)\in C^2(0,\infty)$, where $\xi_1 \in \co{\vrh^{\triangleleft}}{\vrh }$ and  $\xi_2 \in \co{\vrh^{\rm in}}{\vrh^{\rm out}}$.
\item {\bf Positivity of the density}
\begin{equation}\label{posit}
\vrh(t)>0  \quad \mbox{ for all }\;  t \in (0,T).
\end{equation}	
\end{enumerate}
\end{Lemma}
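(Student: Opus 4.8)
The plan is to prove the three claims in the natural order, deriving mass conservation and positivity as consequences of the renormalized continuity equation, which is the technical heart of the lemma.

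First I would establish the \textbf{mass conservation} \eqref{Mcon}. Taking $\phi_h = \mathds{1}_Q = \sum_{K\in\mesh}\mathds{1}_K$ as test function in the weak continuity equation \eqref{schemeD}, the jump term $\intfacesint{\Fup(\vrh,\vuh)\jump{\phi_h}}$ vanishes because $\jump{\mathds{1}_Q}_\sigma = 0$ on all interior faces. This leaves $\intQ{D_t\vrh} = 0$, i.e.\ $\intQ{\vrh(t)} = \intQ{\vrh^{\triangleleft}(t)}$, and iterating over time levels from $t=0$ gives \eqref{Mcon}. This step is routine.

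The main work is the \textbf{renormalized continuity equation} \eqref{r1}. The approach is to test \eqref{schemeD} with $\phi_h = b'(\vrh)$ and then perform a careful discrete chain-rule computation, where the corrector terms on the right-hand side arise from Taylor expansions. For the time-derivative contribution I would use the pointwise identity $D_t b(\vrh) = b'(\vrh) D_t \vrh + \frac{\TS}{2} b''(\xi_1)|D_t\vrh|^2$ with $\xi_1 \in \co{\vrh^{\triangleleft}}{\vrh}$, which is the second-order Taylor expansion of $b$ around $\vrh$ evaluated at $\vrh^\triangleleft$, rearranged. For the convective flux term, I would decompose the diffusive upwind flux \eqref{num_flux} into its upwind part $\Up[\vrh,\vuh]$ and the artificial diffusion part $-h^\eps\jump{\vrh}$, and on each interior face expand $b'(\vrh)$ against the jump $\jump{b'(\vrh)}$, again invoking a second-order Taylor remainder with $\xi_2 \in \co{\vrh^{\rm in}}{\vrh^{\rm out}}$. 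Summing over faces, using $\sum_\sigma$ algebra together with the structure $[\us]^+ + [\us]^- = \us$ and $[\us]^+ - [\us]^- = |\us|$, the leading terms reassemble into the $(\vrh b'(\vrh) - b(\vrh))\Divh\vuh$ term while the quadratic remainders produce the dissipative face integral $-\intfacesint{b''(\xi_2)\jump{\vrh}^2(h^\eps + \frac12|\us|)}$. The hard part is bookkeeping the face-by-face summation correctly so that the upwind bias and the $h^\eps$ diffusion combine into exactly the claimed coefficient $h^\eps + \frac12|\us|$; this is where sign errors are easy to make, and I would rely on the cited reference \cite{FLMS_FVNS} where this renormalization was first carried out for the barotropic Navier--Stokes system, since the continuity equation here is identical.

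Finally, \textbf{positivity of the density} \eqref{posit} follows from the scheme structure rather than from \eqref{r1}. The cleanest argument is to rewrite the elementwise continuity scheme \eqref{schemeD1} at a fixed time level as a linear system for $\{(\vrh)_K\}_{K}$ in terms of the previous level $\{(\vrh^\triangleleft)_K\}$. Because the implicit upwind flux with added diffusion $h^\eps$ yields a system matrix that is strictly diagonally dominant with positive diagonal and nonpositive off-diagonal entries (an M-matrix), the solution operator is monotone: if $\vrh^\triangleleft > 0$ everywhere then $\vrh > 0$ everywhere. Starting from $\vrh^0 = \PiQ\vr_0 > 0$, which holds since $\vr_0 > 0$ and the projection $\PiQ$ preserves positive averages, an induction over $k = 1,\dots,N_T$ yields \eqref{posit}. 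I would either give the M-matrix argument directly or, alternatively, note that it appears already in \cite{FLMS_FVNS}, whose continuity discretization coincides with ours. The only subtlety is confirming the sign of the off-diagonal coupling, which is guaranteed precisely by the upwind mechanism together with the strictly positive artificial diffusion $h^\eps$.
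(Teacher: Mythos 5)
Your outline matches, in parts 1 and 2, the standard argument of \cite{FLMS_FVNS}, which is exactly what this paper relies on: the lemma is stated here without proof, by citation. Testing \eqref{schemeD} with $\phi_h \equiv 1$ (jumps of a constant vanish, and $\intQ{\PiQ \vr_0} = \intQ{\vr_0}$) gives \eqref{Mcon}, and testing with $\phi_h = b'(\vrh)$ plus second-order Taylor remainders in time and across faces is precisely how the renormalized identity \eqref{r1} is derived in the cited reference. One sign slip should be fixed, though: expanding $b$ around the \emph{implicit} value $\vrh$ gives the identity $D_t b(\vrh) = b'(\vrh) D_t \vrh - \frac{\TS}{2} b''(\xi_1) |D_t \vrh|^2$ with $\xi_1 \in \co{\vrh^\triangleleft}{\vrh}$, not with a plus sign as you wrote; your version (with ``$+$'') would hold only if one expanded around $\vrh^\triangleleft$, but then the multiplier would be $b'(\vrh^\triangleleft)$ rather than the test function $b'(\vrh)$ actually produced by the implicit scheme. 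With the plus sign the $\TS$-term in \eqref{r1} would come out with the wrong (anti-dissipative) sign, so this is not cosmetic, even though the rest of your bookkeeping (using $[\us]^+ + [\us]^- = \us$, $[\us]^+ - [\us]^- = |\us|$) is on track.

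For positivity your route genuinely differs from what this paper actually does. In Appendix A (Step 2 of the existence proof) the authors use a discrete minimum principle: at a cell $K$ realizing $\min_L \vr_L^k$ all jumps $\jump{\vrh^k}$ over $\facesK \cap \facesint$ are nonnegative, and testing with $\mathds{1}_K$ yields the explicit bound $\vr_K^k \geq \vr_K^{k-1}/\big(1 + \TS\, |(\Divh \vuh^k)_K|\big) > 0$ --- short and quantitative, with induction from $\vrh^0 > 0$. Your M-matrix argument is a legitimate alternative, but as stated it contains a flaw: the system matrix for $\vrh^k$ is in general \emph{not} strictly diagonally dominant by rows, since the row sums equal $\frac{|K|}{\TS} + \sum_{\sigma \in \facesK \cap \facesint} |\sigma| \us$, which can be negative where the discrete velocity field is strongly compressive. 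The strict dominance holds by \emph{columns}: the column sums equal $\frac{|K|}{\TS}$ (this is exactly the discrete mass-conservation structure, since the outflow coefficient $|\sigma|([\us]^+ + h^\eps)$ on the diagonal of row $K$ reappears with opposite sign as the off-diagonal entry in the neighbouring row). Column dominance together with positive diagonal and nonpositive off-diagonal entries still yields a nonsingular M-matrix, hence the monotone solution operator you need, so the argument survives once you dominate by columns. With these two corrections your proposal is sound and recovers the lemma.
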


 \begin{Lemma}[Energy balance]\label{EnBa}
 Let $(\vrh, \vuh, \vBh)$ be a solution obtained by the FV scheme \eqref{scheme}. 
Then it holds
  \begin{equation}\label{energy_stability}
 \begin{aligned}
&	 D_t \intQ{ \left(\frac{1}{2}  \vrh  |\vuh |^2  + \Hc(\vrh) +\frac12 |\vBh - \PiB \vB_B|^2 \right) }
	 +  \mu \norm{\Gradd \vuh }_{L^2(Q;\R^{d\times d})}^2  + \nu \norm{ \Divh \vuh }_{L^2(Q)}^2
	\\&
	+  \zeta \norm{\Curlh \vBh }_{L^2(Q;\R^{d})}^2 + D_{num}
=
-\intQ{ (\vuh  \times \vBh - \zeta \Curlh \vBh) \cdot   \Curlh \PiB\vB_B} + \intQ{\vrh \vuh \cdot \vc{g}},
\end{aligned}
\end{equation}
where $\vB_B$ is given by \eqref{BB} and $D_{num} > 0$ represents the numerical dissipation, which reads
\begin{align*}
D_{num} = &  \frac{\TS}{2} \intQB{ \Hc''(\xi_1)|D_t \vrh |^2 + \vrh^\triangleleft|D_t \vuh |^2  + |D_t \vBh|^2 }
	+\intfacesint{ \Hc''(\xi_2)  \left(h^\eps  +  \frac12 | \us | \right) \jump{  \vrh  } ^2} \br
	&+  \intfacesint{ \left( \frac12 \vrh^{\rm up} |\us | +h^\eps \avs{ \vrh  } \right)      \abs{\jump{\vuh}}^2   }
\end{align*}
with $ \xi_1 \in \co{\vrh^{\triangleleft}}{\vrh }$ and  $\xi_2 \in \co{\vrh^{\rm in}}{\vrh^{\rm out}}$ for any $\sigma \in \facesint$.
\end{Lemma}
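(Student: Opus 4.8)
The plan is to derive the discrete energy balance by testing the weak formulation \eqref{scheme} with the numerical solution itself, in close analogy to how one obtains the continuous energy equality \eqref{S20}. Concretely, I would choose the test functions $\phi_h$, $\vh$, and $\vCh$ in \eqref{schemeD}, \eqref{schemeM}, \eqref{schemeB} to reproduce the kinetic, pressure-potential, and magnetic contributions: take $\vh = \vuh$ in the momentum equation \eqref{schemeM}, take $\phi_h = \mathcal{P}'(\vrh) - \tfrac12|\vuh|^2$ (the Bernoulli-type multiplier) in the continuity equation \eqref{schemeD} to manufacture the $\Hc(\vrh)$ time derivative, and take $\vCh = \vBh - \PiB\vB_B$ in the induction equation \eqref{schemeB} to produce the magnetic energy and resistive dissipation. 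Summing these three identities should assemble the left-hand side of \eqref{energy_stability}, while the right-hand side collects the forcing terms coming from the gravitational source $\vrh\vuh\cdot\vc{g}$ and the boundary-data contribution through $\PiB\vB_B$.

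The crucial algebraic input is the discrete chain rule encoded in the renormalized continuity equation \eqref{r1} from Lemma \ref{lem_b}: applying it with $b = \mathcal{P}$ converts the tested continuity equation into $D_t\!\int_Q \Hc(\vrh)$ plus precisely the density-dissipation terms $\tfrac{\TS}{2}\Hc''(\xi_1)|D_t\vrh|^2$ and the interior-face term with $\Hc''(\xi_2)$ that appear in $D_{num}$. The kinetic-energy part requires a discrete product rule for $D_t(\vrh\vuh)\cdot\vuh$; here I would use the standard identity
\[
D_t(\vrh\vuh)\cdot\vuh = D_t\!\left(\tfrac12\vrh|\vuh|^2\right) + \tfrac12\vrh^\triangleleft|D_t\vuh|^2 - \tfrac12 D_t(\vrh)|\vuh|^2,
\]
the last term of which is cancelled by the $-\tfrac12|\vuh|^2$ piece of the continuity multiplier, leaving the numerical dissipation $\tfrac{\TS}{2}\vrh^\triangleleft|D_t\vuh|^2$. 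The viscous terms $\mu\|\Gradd\vuh\|^2 + \nu\|\Divh\vuh\|^2$ drop out directly from the symmetric bilinear forms in \eqref{schemeM}, using the boundary condition $\avs{\vuh}|_{\facesext}=0$.

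The main obstacle, I expect, is the treatment of the upwind convective flux to extract the positive face dissipation $\big(\tfrac12\vrh^{\rm up}|\us| + h^\eps\avs{\vrh}\big)|\jump{\vuh}|^2$ in $D_{num}$. This requires a careful summation-by-parts on the interior faces combining the momentum flux $\Fup(\vrh\vuh,\vuh)\cdot\jump{\vuh}$ with the continuity-multiplier contribution, and then rewriting the resulting quadratic face expression as a perfect square modulo the kinetic-energy flux; the diffusive $-h^\eps\jump{\cdot}$ part of \eqref{num_flux} contributes the $h^\eps\avs{\vrh}$ factor while the upwind part contributes $\tfrac12\vrh^{\rm up}|\us|$. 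A second delicate point is the coupling between the Lorentz force $\Curlh\vBh\times\vBh$ in \eqref{schemeM} tested against $\vuh$ and the advection term $\vuh\times\vBh$ in \eqref{schemeB} tested against $\vBh$: these must cancel exactly via the algebraic identity $(\Curlh\vBh\times\vBh)\cdot\vuh = -(\vuh\times\vBh)\cdot\Curlh\vBh$ together with the discrete integration-by-parts formula \eqref{InByPa-4}, so that only the resistive dissipation $\zeta\|\Curlh\vBh\|^2$ and the boundary term involving $\PiB\vB_B$ survive. Verifying that the boundary terms organize exactly into the right-hand side of \eqref{energy_stability}, using the properties of $\PiB\vB_B$ from the preceding Lemma and the artificial boundary conditions \eqref{extension}, is the final bookkeeping step.
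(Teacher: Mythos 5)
Your plan coincides with the paper's own proof: the authors likewise test the momentum equation with $\vh=\vuh$, the induction equation with $\vCh=\vBh-\PiB\vB_B$, and the continuity equation with (in effect) $\Hc'(\vrh)-\tfrac12|\vuh|^2$, invoking the renormalized continuity equation \eqref{r1} with $b=\Hc$ together with the same upwind summation-by-parts identity to extract the face dissipation in $D_{num}$, and the same algebraic cancellation $(\Curlh\vBh\times\vBh)\cdot\vuh=-(\vuh\times\vBh)\cdot\Curlh\vBh$ for the Lorentz/advection coupling. The only flaws are two slips in your displayed product rule, which should read $D_t(\vrh\vuh)\cdot\vuh = D_t\big(\tfrac12\vrh|\vuh|^2\big) + \tfrac{\TS}{2}\vrh^\triangleleft|D_t\vuh|^2 + \tfrac12 |\vuh|^2 D_t\vrh$: the last term must carry a plus sign for your claimed cancellation against the $-\tfrac12|\vuh|^2$ multiplier to go through, and the dissipation term needs the factor $\TS$ (which your prose, but not your display, states correctly).
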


\begin{proof}
First, summing up \eqref{schemeD} and \eqref{schemeM}  respectively with the test functions $\phi_h = -\frac{\abs{\vuh}^2}{2}$ and $\vh = \vuh$ implies
\begin{equation}\label{ke1}
\begin{aligned}
&D_t \intQ{\frac12 \vrh  \abs{\vuh}^2   }
+  \frac{\TS}{2} \intQ{ \vrh^{\triangleleft} \abs{D_t \vuh }^2}
 + \intfacesint{ \left(\frac12 \vrh^{\up} \abs{\us}
+h^\eps \avg{\vrh}   \right)\abs{\jump{\vuh}}^2 }
\\ &
\quad +\mu  \norm{\Gradd \vuh}_{L^2(Q;\R^{d\times d})}^2 +  \nu  \norm{\Divh \vuh}_{L^2(Q)}^2  - \intQ{p_h  \Divh \vuh  } - \intQ{\vrh \vuh \cdot \vc{g}}
\\&
=  \intQ{ ( \Curlh \vBh  \times  \vBh ) \cdot \vuh }
=- \intQ{ \Curlh \vBh   \cdot (\vuh  \times \vBh ) },
\end{aligned}
\end{equation}
where we have used the following equalities
 \begin{equation*}
\begin{aligned}
& \intfacesint{ \left(\Fup(\vrh \vuh,\vuh) \cdot \jump{\vuh} -  \Fup(\vrh ,\vuh) \jump{ \frac{|\vuh|^2}{2}}  \right)}
 = - \intfacesint{ \left(\frac{1}2 \vrh^{\up} \abs{\us}
+h^\eps \avg{\vrh}   \right)\abs{\jump{\vuh}}^2 }
\end{aligned}
\end{equation*}
and
\[
 D_t (\vrh \vuh)  \cdot \vuh-  \frac{|\vuh|^2}{2} D_t \vrh
= D_t \Big(\frac12 \vrh  |\vuh |^2 \Big)   + \frac{\TS}2 \vrh^{\triangleleft}  |D_t\vuh |^2 .
 \]
%
%
%
Next, by setting $\vCh =\vBh -\PiB\vB_B $ in \eqref{schemeB} we obtain
\begin{equation*}
\intQ{ D_t \vBh  \cdot  (\vBh -\PiB\vB_B)}
- \intQ{ (\vuh  \times \vBh - \zeta \Curlh \vBh) \cdot   \Curlh (\vBh -\PiB\vB_B)} =0,
\end{equation*}
which can be reformulated as
\begin{equation}\label{ke2}
\begin{aligned}
& \intQB{ ( \vuh \times \vBh ) \cdot   \Curlh \vBh  - \zeta | \Curlh  \vBh |^2 }
\\& =\intQ{ D_t \vBh  \cdot  (\vBh-\PiB\vB_B)  }
+ \intQ{ (\vuh  \times \vBh - \zeta \Curlh \vBh) \cdot   \Curlh \PiB\vB_B}
\\&
=\intQB{ D_t \frac{|\vBh - \PiB\vB_B|^2}{2}  + \frac{\TS}2 |D_t \vBh|^2    }
+ \intQ{ (\vuh  \times \vBh - \zeta \Curlh \vBh) \cdot   \Curlh \PiB\vB_B}.
\end{aligned}
\end{equation}
Note that in the last equality we have used the fact that $\vB_B$ is independent of time $t$.

\medskip

Further, setting $b(\vr)=\Hc(\vr)$ in the renormalized continuity equation~\eqref{r1} and noticing the equality \eqref{cpp},
we obtain the balance of internal energy
\begin{equation}\label{ke3}
\intQ{ \left( D_t \Hc(\vrh )  - p_h  \Divh \vuh   \right) }
=
- \frac{\TS}2 \intQ{\Hc''(\xi_1)|D_t \vrh |^2  }
- \intfacesint{ \Hc''(\xi_2) \jump{  \vrh  } ^2 \left(h^\eps  + \frac12 |\us| \right) },
\end{equation}
where  $\xi_1 \in \co{\vrh^{\triangleleft}}{\vrh }$ and  $\xi_2 \in \co{\vrh^{\rm in} }{\vrh^{\rm out} }$.

Combining \eqref{ke1}, \eqref{ke2}, and \eqref{ke3}  completes the proof.
\end{proof}

 \begin{Lemma}[Uniform bounds]\label{UnBo}
 Let $(\vrh, \vuh, \vBh)$ be a solution obtained by the FV scheme \eqref{scheme}. Then there hold
\begin{subequations}\label{AP}
\begin{align}\label{ap1}
& \norm{\vrh}_{L^\infty(0,T; L^\gamma(Q))}  + \norm{\vrh \vuh }_{L^\infty (0,T; L^{\frac{2\gamma}{\gamma+1}}(Q;\R^d)) }  + \norm{p_h}_{L^\infty (0,T; L^1(Q))} + \br
& \quad + \norm{\vuh}_{L^2(0,T; L^{2}(Q;\R^d))} + \norm{\Gradd \vuh}_{L^2((0,T)\times Q;\R^{d\times d})} + \norm{\Divh \vuh}_{L^2((0,T)\times Q)} \br
& \quad + \norm{\vBh}_{L^\infty(0,T;L^2(Q;\R^d))} + \norm{\Curlh \vBh}_{L^2((0,T)\times Q;\R^d)}  \leq C ,
\end{align}
\begin{align}\label{ap2}
\frac{\TS}{2} \int_0^T \intQB{ \Hc''(\xi_1)|D_t \vrh |^2 + \vrh^\triangleleft|D_t \vuh |^2  + |D_t \vBh|^2 } \dt \leq C,
\end{align}
\begin{align} \label{ap3}
&  \int_0^T\intfacesint{ \Hc''(\xi_2)  \left(h^\eps  +  \frac12 | \us | \right) \jump{  \vrh  } ^2} \dt
+  \int_0^T \intfacesint{ \left( \frac12 \vrh^{\rm up} |\us | +h^\eps \avs{ \vrh  } \right)      \abs{\jump{\vuh}}^2   } \dt \leq C,
\end{align}
\end{subequations}
where $\xi_1 \in \co{\vrh^{\triangleleft}}{\vrh }$ and  $\xi_2 \in \co{\vrh^{\rm in} }{\vrh^{\rm out} }$. Here $C$ is a generic constant, depending on $\frac1{\mu}$, $\zeta$, $\frac1\zeta$, $|\vb^{\pm}|$,  $|\vc{g}|$,  $\norm{\vr_0}_{L^1(Q)}$ and $\norm{E_0}_{L^1(Q)}$ with $E_0 = \frac12 \vr_0 \abs{\vu_0}^2 +  \mathcal{P}(\vr_0) + \frac12 \abs{\vB_0}^2$.
\end{Lemma}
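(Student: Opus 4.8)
The plan is to derive the uniform bounds \eqref{AP} directly from the energy balance \eqref{energy_stability} established in Lemma~\ref{EnBa}. First I would integrate \eqref{energy_stability} in time over $[0,t^k]$, using the telescoping property of the discrete time derivative $D_t$, to obtain a discrete energy identity: the energy at time $t^k$ plus the accumulated dissipation (the viscous terms $\mu\norm{\Gradd\vuh}^2$, $\nu\norm{\Divh\vuh}^2$, the resistive term $\zeta\norm{\Curlh\vBh}^2$, and the numerical dissipation $D_{num}$) equals the initial energy plus the time-integral of the forcing terms on the right-hand side of \eqref{energy_stability}. The positivity of the internal energy part is guaranteed by the convexity of $\Hc$ (via \eqref{cpp}) and by the positivity of the density from Lemma~\ref{lem_b}, so each term on the left is genuinely nonnegative; this is what will eventually yield \eqref{ap2} and \eqref{ap3} once the right-hand side is controlled.

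The main obstacle is the absorption of the right-hand side source terms, namely
\[
-\intQ{ (\vuh\times\vBh - \zeta\Curlh\vBh)\cdot\Curlh\PiB\vB_B} + \intQ{\vrh\vuh\cdot\vc{g}}.
\]
Here I would estimate $\Curlh\PiB\vB_B$ by a deterministic constant depending on $|\vc{b}^{\pm}|$ (since $\vB_B$ is the explicit linear profile \eqref{BB}), then apply Cauchy--Schwarz and Young's inequality to split each product. The resistive contribution $\zeta\,\Curlh\vBh\cdot\Curlh\PiB\vB_B$ is absorbed into a fraction of $\zeta\norm{\Curlh\vBh}^2_{L^2}$ on the left, leaving a term $\sim\zeta|\vc{b}^{\pm}|^2$ on the right. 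For the convective magnetic term $(\vuh\times\vBh)\cdot\Curlh\PiB\vB_B$ and the gravity term $\vrh\vuh\cdot\vc{g}$, the delicate point is that they involve products of the state variables that are not yet bounded; I would control $|\vuh\times\vBh|\le|\vuh|\,|\vBh|$ and bound these using Young's inequality so that $\norm{\vBh}^2_{L^2}$ and a small multiple of $\norm{\sqrt{\vrh}\vuh}^2_{L^2}$ (or the velocity gradient via a discrete Poincaré inequality for $\vuh$ vanishing on $\facesext$) can be absorbed into the already-present energy and dissipation terms on the left.

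Once the right-hand side is written as $C(\data) + \text{(small multiples of left-hand quantities)} + C\int_0^{t}(\text{energy})\dt$, I would invoke a discrete Gr\"onwall lemma to close the estimate, obtaining a bound on $\sup_{k}$ of the total energy and on the accumulated dissipation, with the constant $C$ depending only on $\tfrac1\mu$, $\zeta$, $\tfrac1\zeta$, $|\vc{b}^{\pm}|$, $|\vc{g}|$, $\norm{\vr_0}_{L^1}$ and $\norm{E_0}_{L^1}$ as claimed. From this master estimate the individual bounds in \eqref{ap1} follow: $\norm{\vrh}_{L^\infty_tL^\gamma}$ and $\norm{p_h}_{L^\infty_tL^1}$ come from the pressure-potential term in the energy (using $p(\vr)=a\vr^\gamma+b\vr$ so $\Hc(\vr)\sim\vr^\gamma$), the momentum bound in $L^{\frac{2\gamma}{\gamma+1}}$ follows from the H\"older interpolation $\norm{\vm}_{L^{2\gamma/(\gamma+1)}}\le\norm{\sqrt{\vrh}\vuh}_{L^2}\norm{\vrh}^{1/2}_{L^\gamma}$ already recorded after \eqref{r5}, the velocity gradient and divergence bounds come directly from the dissipation terms, and the $L^2_{t,x}$ bound on $\vuh$ itself follows by a discrete Poincar\'e inequality. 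I expect the magnetic convective/gravity absorption and the correct bookkeeping of the Gr\"onwall constant to be the only genuinely nontrivial steps; everything else is a routine unpacking of the energy identity.
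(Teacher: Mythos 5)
Your proposal is correct and follows essentially the same route as the paper's proof: both start from the energy balance \eqref{energy_stability}, estimate the boundary/gravity source terms by H\"older and Young's inequalities using $\norm{\vB_B}_{W^{1,\infty}} \aleq |\vb^{\pm}|$, absorb the velocity contributions into the viscous dissipation via the discrete Poincar\'e inequality $\norm{\vuh}_{L^2} \aleq \norm{\Gradd \vuh}_{L^2}$ with a small $\delta < \min(\mu,1)$, and close with Gronwall, after which \eqref{ap2} and \eqref{ap3} come for free from the nonnegativity of $D_{num}$. The only cosmetic difference is that the paper splits $\vuh \times \vBh$ relative to $\PiB \vB_B$ so that the Gronwall quantity is exactly the energy term $\norm{\vBh - \PiB\vB_B}_{L^2}^2$, whereas you absorb $\norm{\vBh}_{L^2}^2$ directly, which is equivalent up to a data constant (and your parenthetical fallback to the Poincar\'e route, rather than attempting absorption into $\norm{\sqrt{\vrh}\vuh}_{L^2}^2$, is indeed the variant that works).
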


\begin{proof}
In what follows we derive  a priori bounds from the the energy balance \eqref{energy_stability}.
Let us decompose the right-hand-side of \eqref{energy_stability} into four parts
 \begin{align*}
&-\intQ{ (\vuh  \times \vBh - \zeta \Curlh \vBh) \cdot   \Curlh \PiB\vB_B} + \intQ{\vrh \vuh \cdot \vc{g}}= \sum_{i=1}^4 I_i,
\\&
I_1=-\intQ{ \vuh  \times (\vBh - \PiB\vB_B) \cdot   \Curlh \PiB\vB_B},\quad
I_2 = -\intQ{ \vuh  \times   \PiB\vB_B \cdot   \Curlh \PiB\vB_B},
\\&
I_3 = \intQ{  \zeta \Curlh \vBh \cdot   \Curlh \PiB\vB_B}, \quad
I_4=\intQ{\vrh \vuh \cdot \vc{g}}.
\end{align*}
Applying H\"{o}lder inequality we shall control $I_i, \,  i=1,\dots,4,$ with
\begin{align*}
\abs{I_1} &\aleq \norm{\vB_B}_{W^{1,\infty}(Q;\R^{d})} \left( \delta \norm{\vuh}_{L^2(Q;\R^d)}^2 + \frac1\delta \norm{\vBh - \PiB\vB_B}_{L^2(Q;\R^d)}^2 \right),
\br
\abs{I_2} &\aleq \norm{\vB_B}_{W^{1,\infty}(Q;\R^{d})}^2 \norm{\vuh}_{L^1(Q;\R^d)} \aleq    \delta \norm{\vuh}_{L^2(Q;\R^d)}^2 + \frac1\delta \norm{\vB_B}_{W^{1,\infty}(Q;\R^{d})}^4,
\br
\abs{I_3} &\aleq \zeta \norm{\vB_B}_{W^{1,\infty}(Q;\R^{d})}  \norm{\Curlh \vBh}_{L^2(Q;\R^d)} 
\aleq \delta \zeta  \norm{\Curlh \vBh}_{L^2(Q;\R^d)}^2 + \frac{\zeta}{\delta} \norm{\vB_B}_{W^{1,\infty}(Q;\R^{d})}^2,\\
\abs{I_4} & \aleq |\vc{g}| \norm{\vrh \vuh}_{L^1(Q;\R^d)} \aleq |\vc{g}| \norm{\sqrt{\vrh}}_{L^2(Q)} \norm{\sqrt{\vrh} \vuh}_{L^2(Q;\R^d)} = |\vc{g}|  \norm{\vrh}_{L^1(Q)}^{1/2} \norm{\vrh \abs{\vuh}^2}_{L^1(Q)}^{1/2}\\
&\aleq |\vc{g}| \norm{\vrh}_{L^1(Q)} +  |\vc{g}| \norm{\vrh \abs{\vuh}^2}_{L^1(Q)}.
\end{align*}
Thanks to $\norm{\vB_B}_{W^{1,\infty}(Q;\R^{d})} \aleq |\vb^{\pm}|$  (see \eqref{BB}) and the Poincar\'e inequality
\[
 \norm{\vuh}_{L^2(Q;\R^d)} \aleq \norm{\Gradd \vuh}_{L^2(Q;\R^{d\times d})},
\]
we can choose a suitable $\delta$ such that $0 < \delta < \min(\mu,1).$ Consequently, the energy balance \eqref{energy_stability} implies
\begin{equation}
 \begin{aligned}
&	 D_t \intQ{ \left(\frac{1}{2}  \vrh  |\vuh |^2  + \Hc(\vr) +\frac12 |\vBh - \PiB \vB_{B}|^2 \right) }
	 +  \mu \norm{\Gradd \vuh }_{L^2(Q;\R^{d\times d})}^2  + \nu \norm{ \Divh \vuh }_{L^2(Q)}^2
	\\&+  \zeta \norm{\Curlh \vBh }_{L^2(Q;\R^{d})}^2 + D_{num} \aleq C+|\vc{g}| \norm{\vrh \abs{\vuh}^2}_{L^1(Q)} + \frac{|\vb^{\pm}|}{\delta}\norm{\vBh - \PiB \vB_{B}}_{L^2(Q;\R^d)}^2,
\end{aligned}
\end{equation}
where $C$ is a generic constant, depending on $\frac1{\delta}$,\, $\zeta$,\, $|\vb^{\pm}|$, \, $|\vc{g}|$, \, $\norm{\vr_0}_{L^1(Q)}$.
Applying Gronwall's lemma finishes the proof.
\end{proof}

\subsection{Consistency}\label{sec_Consistency}
In this section we show the consistency of our FV method \eqref{scheme}, i.e. the consistency of the  divergence-free property of the magnetic field,  the consistency of the density, momentum, and energy equations and the consistency of the discrete differential operators, cf.\ the compatibility condition. Note that from now on we only present the theoretical results for $d=3$,  the results for $d=2$ directly follow.

\begin{Lemma}[Divergence-free property]\label{lem:divfree}
Let $(\vrh,\vuh,\vBh)$ be a solution to the FV scheme \eqref{scheme}. Then  it holds
\begin{equation}\label{divfree}
(\Divh \vBh) _K =0 \quad \mbox{ for all } K \in \mesh.
\end{equation}
\end{Lemma}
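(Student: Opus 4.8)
The plan is to reduce the statement to the already-established discrete vector-calculus identity $\Divh \Curlh \vh = 0$ from \eqref{div-curl}, together with the divergence-free projection Lemma. First I would take the discrete divergence $\Divh$ of the strong form of the induction equation \eqref{schemeB1}. The argument $\vuh \times \vBh - \zeta \Curlh \vBh$ is a cross product and discrete curl of piecewise constant fields, hence itself an element of $Q_h^d$, so identity \eqref{div-curl} applies and yields $\Divh \Curlh (\vuh \times \vBh - \zeta \Curlh \vBh) = 0$. Consequently $\Divh (D_t \vBh) = 0$ for every cell.

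Next, since $\Divh$ is linear and acts only in space while $D_t$ is the backward difference quotient in time, the two operators commute, so $\Divh (D_t \vBh) = D_t (\Divh \vBh)$. Therefore $D_t (\Divh \vBh)_K = 0$ for every $K \in \mesh$, and by the definition of $D_t$ this gives $(\Divh \vBh)_K(t^k) = (\Divh \vBh)_K(t^{k-1})$. Iterating over the time levels $k = 1, \dots, N_T$ reduces the claim to the single identity $(\Divh \vBh^0)_K = 0$ at the initial time.

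It then remains to check the initial level. By Definition~\ref{def_fv} we have $\vBh^0 = \PiB \vB_0$, and the initial datum satisfies $\Div \vB_0 = 0$ with $\vB_0 \in W^{k,2}(Q;\R^3) \hookrightarrow C^1(Q;\R^3)$ for $k \geq 3$. Hence part $ii)$ of the divergence-free projection Lemma applies with $\vC = \vB_0$ and gives $\Divh \PiB \vB_0 = 0$, that is $(\Divh \vBh^0)_K = 0$ for all $K$. Combining this with the time-propagation step yields $(\Divh \vBh)_K = 0$ for every $K \in \mesh$ at every time level, which is the assertion \eqref{divfree}.

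I do not expect a serious obstacle here: the whole argument is a mimetic (structure-preserving) computation resting entirely on the discrete identity $\Divh \Curlh = 0$ and the compatibility of $\PiB$ with $\Divh$, both proved earlier. The only points requiring a little care are confirming that $\vuh \times \vBh - \zeta \Curlh \vBh$ genuinely lies in the space $Q_h^d$ on which \eqref{div-curl} is stated, and that the backward-Euler time discretization commutes with $\Divh$; both are immediate from the piecewise-constant structure and the linearity of the operators involved.
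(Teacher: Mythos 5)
Your argument is correct and matches the paper's proof for \emph{interior} cells: apply $\Divh$ to \eqref{schemeB1}, use $\Divh \Curlh = 0$, commute $\Divh$ with $D_t$, and reduce to the initial level, where part $ii)$ of the projection lemma with $\vC = \vB_0$ (using $\Div \vB_0 = 0$ and the Sobolev embedding into $C^1$) gives $\Divh \PiB \vB_0 = 0$. The gap is at the \emph{boundary} cells, precisely the point you flag and then dismiss as immediate. The issue is not whether $\vc{f}_h = \vuh \times \vBh - \zeta \Curlh \vBh$ belongs to $Q_h^d$ (it does), but that at an exterior face $\sigma \in \facesext$ the operators $\Divh$ and $\Curlh$ act through the averages $\avs{\cdot}$, whose outer traces are not intrinsic to $Q_h^d$ but are fixed by the prescribed boundary data. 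The identity \eqref{div-curl} is derived from the composition formulae $\Divh = \sumi \pdmeshi \PiWi$, $\Curlh = (\dots)$, which presuppose consistent values across faces; for a cell $K$ with $K \cap \pd Q \neq \emptyset$, computing $\Divh(\Curlh \vc{f}_h)|_K$ would require the outer trace of $\Curlh \vc{f}_h$ (equivalently, a ghost-cell version of the induction equation), and \eqref{schemeB1} holds only for $K \in \mesh$, so there is nothing to propagate the divergence across $\pd Q$ for free.

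This is exactly why the paper's proof splits into two cases and spends most of its length on the boundary cells: there, the representation of $(\Divh \vBh)_K$ uses the boundary condition $\vn \cdot \jump{\vBh}|_{\facesext} = 0$ from \eqref{bc-num} to replace the missing outer normal trace by the interior one (producing the one-sided quotient $(B_K^3 - B_S^3)/(2h)$ in \eqref{eq-divfree-1}), and the resulting residual terms after applying \eqref{schemeB1} do \emph{not} cancel by the algebraic identity alone. They vanish only because the artificial boundary condition $\vn \times \jump{\vc{f}_h}|_{\facesext} = 0$ from \eqref{extension} forces $f_N^i = f_K^i$, $i = 1,2$, in the ghost cell. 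So the structure-preservation at the boundary is a consequence of the specific (and deliberately chosen) discrete boundary conditions, not of $\Divh \Curlh = 0$; your proof as written establishes \eqref{divfree} only for cells not touching $\pd Q$, and needs the paper's explicit boundary-cell computation (or an equivalent one) to be complete.
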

\begin{proof}
Let us first analyze \eqref{divfree} for an interior cell $K, K\in \grid, K\cap \pd Q = \emptyset$.   By applying the $\Divh$ operator to \eqref{schemeB1} and recalling the identity \eqref{div-curl} we have
\[\Divh \vBh|_K =\Divh \vBh(t-\TS)|_K = \cdots =  \Divh \vBh^0|_K =0.\]
The rest is to prove \eqref{divfree} for a boundary cell.

Without loss of  generality, let us consider the upper boundary $x_3 = 1$ and denote the boundary cell by $K, K\in \grid, K\cap \pd Q \neq \emptyset$. Its external (resp. internal) neighbour in $x_3$-direction is denoted by $N$ (resp.\ $S$), 
see Figure \ref{figpib-1}.
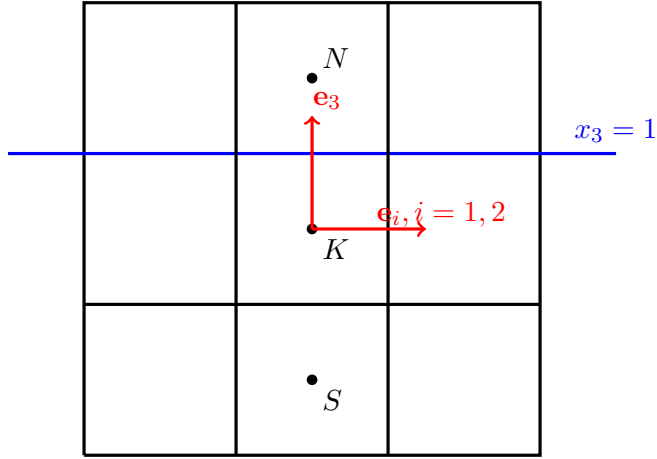
\begin{figure}[hbt]
\centering
\begin{subfigure}{0.45\textwidth}
\raggedright
\begin{tikzpicture}[scale=1.]
\draw[-,very thick](0,0)--(6,0)--(6,6)--(0,6)--(0,0);
\draw[-,very thick](0,2)--(6,2);
\draw[-,very thick,blue = 90!](-1,4)--(7,4);
\path node at (7,4.3) {$\cblue x_3 = 1$};
\draw[-,very thick](2,0)--(2,6);
\draw[-,very thick](4,0)--(4,6);

\fill (3,1) circle (2pt) node[below right]{$S$};
\fill (3,5) circle (2pt) node[above right]{$N$};
\fill (3,3) circle (2pt) node[below right]{$K$};


\draw[->,very thick, red = 90!] (3,3)--(4.5,3);
\path node at (4.7,3.2) {$\cred \ve_i, i = 1,2$};
\draw[->,very thick, red = 90!] (3,3)--(3,4.5);
\path node at (3.2,4.7) {$\cred \ve_3$};
\end{tikzpicture}
\end{subfigure}
\caption{Boundary cell $K$.}\label{figpib-1}
\end{figure}

Letting $\vc{f}_h = \vuh  \times \vBh  - \zeta \Curlh \vBh$, direct calculations give
\begin{align}\label{eq-divfree-1}
& D_t (\Divh \vBh)|_K = D_t \sum_{i=1}^{2} \pdmeshi\PiWi B_K^i + D_t \frac{  B_K^3 - B_S^3 }{2h} = \sum_{i=1}^{2} \pdmeshi\PiWi D_t  B_K^i +  \frac{  D_t  B_K^3 - D_t  B_S^3 }{2h}  \br
&=  \pdmesh^{(1)}\PiW^{(1)} \left( \pdmesh^{(2)}\PiW^{(2)} {f}_K^3 - \pdmesh^{(3)}\PiW^{(3)} {f}_K^2 \right)  + \pdmesh^{(2)}\PiW^{(2)} \left( \pdmesh^{(3)}\PiW^{(3)} {f}_K^1 - \pdmesh^{(1)}\PiW^{(1)} {f}_K^3 \right)  \br
&\quad + \frac{1}{h} \Big( \left( \pdmesh^{(1)}\PiW^{(1)} {f}_K^2 - \pdmesh^{(2)}\PiW^{(2)} {f}_K^1 \right)- \left( \pdmesh^{(1)}\PiW^{(1)} \vc{f}_S^2 - \pdmesh^{(2)}\PiW^{(2)} {f}_S^1 \right) \Big) \br
& = - \pdmesh^{(1)}\PiW^{(1)} \left( \pdmesh^{(3)}\PiW^{(3)} {f}_K^2 - \frac{{f}_K^2 - {f}_S^2}{2h} \right)+ \pdmesh^{(2)}\PiW^{(2)} \left( \pdmesh^{(3)}\PiW^{(3)} {f}_K^1 - \frac{{f}_K^1 - {f}_S^1}{2h}  \right),
\end{align}
where we have applied the boundary condition $\vn \cdot \jump{\vBh}|_{\facesext} = 0$ for the first equality,  and the equation \eqref{schemeB1} for the third equality. Here, $f^i$ (resp.\ $B^i$) means the $i$-th component of $\vc{f}$ (resp.\ $\vB$). On the other hand, the boundary conditions \eqref{extension}, i.e. $\vn \times \jump{\vc{f}_h}|_{\facesext} = 0$, give $f^{i}_N = f^{i}_K, i =1,2$. Consequently, we obtain $D_t (\Divh \vBh)|_K = 0$ with
\begin{align*}
& \pdmesh^{(3)}\PiW^{(3)} {f}_K^2 - \frac{{f}_K^2 - {f}_S^2}{2h} = \frac{1}{2h} \left( {f}_N^2 - {f}_S^2 - ({f}_K^2 - {f}_S^2)\right) =  \frac{1}{2h}  \left( {f}_N^2  - {f}_K^2 \right) = 0, \\
 &\pdmesh^{(3)}\PiW^{(3)} {f}_K^1 - \frac{{f}_K^1 - {f}_S^1}{2h}  = \frac{1}{2h} \left( {f}_N^1 - {f}_S^1 - ({f}_K^1 - {f}_S^1)\right) =  \frac{1}{2h}  \left( {f}_N^1  - {f}_K^1 \right)  = 0,
\end{align*}
and finish the proof of the divergence-free constraint.
\end{proof}

 \begin{Lemma}[Consistency of the density, momentum and energy equations]\label{Tm2}
 Let $(\vrh, \vu_h,\vBh)$ be a solution obtained by the FV method \eqref{scheme} with $(\TS,h)\in (0,1)^2$. 
 Then for any $t^n$, $n=0,\cdots,N_T$, there exist some positive constants $\beta_D, \beta_M$ such that
\begin{subequations}\label{AcP}
 \begin{equation} \label{AcP1}
\intTnO{ \Big( \vrh \partial_t \phi + \vrh \vuh \cdot \Gradh \phi \Big)}
= \left[  \intQ{ \vrh \phi }  \right]_0^{t^n-} + e_{\vr} (t^n, \phi),
 \end{equation}
 for any $\phi \in C^2([0,T] \times \Ov{Q})$;
 \begin{equation} \label{AcP2}
 \begin{split}
  &
\intTnO{ \Big( \vrh \vuh \cdot \partial_t \vv + \vrh \vuh \otimes \vuh  : \Grad \vv  + p_h \Div \vv \Big)}
  -   \intTnO{   \left( \mu \Gradd \vuh + \nu \Divh \vuh \bI \right) : \Grad \vv}
 \\& +  \intTnO{ (\Curlh \vBh \times \vBh ) \cdot  \vv} + \intTnO{\vrh \vc{g} \cdot \vv}
=\left[\intQ{ \vrh {\vuh} \cdot \vv } \right]_0^{t^n-}  + e_{\vm} (t^n, \vv) ,
 \end{split}
 \end{equation}
 for any $\vv \in C^2([0,T] \times \Ov{Q}; \R^3)$, $\vv|_{\pd Q}=0$;
 \begin{equation} \label{AcP3}
   \intTnO{ \Big( \vBh \cdot \pdt \vC -  \zeta  \Curlh \vBh \cdot \Curl\vC +( \vuh \times \vBh )\cdot \Curl\vC \Big)}
=  \left[\intQ{ \vBh \cdot \vC}\right]_0^{t^n-} +
 e_{\vB} (t^n, \vC)
 \end{equation}
  for any $\vC \in C^2([0,T]\times \Ov{Q};\R^3 ),\, \vn \times \vC |_{\p Q}=0$, where
  \begin{equation}\label{CS1}
\abs{e_\vr(t^n, \phi) } \leq   C_\vr \big(\TS  + h^{1+\eps}  +  h^{1+\beta_D} \big),
\end{equation}
\begin{align}\label{CS2}
& \abs{e_{\vm}(t^n,  \vv) } \leq C_{\vm}  \big(\TS + h + h^{1+\eps}  + h^{1+\beta_M}   \big),
\end{align}
\begin{align}\label{CS3}
& \abs{e_{\vB}(t^n,  \vC) } \leq C_{\vB}  h
\end{align}
with $\beta_D
,\beta_M$ defined by
\begin{align*}
& \beta_D =
\begin{cases}
\min\left\{ \frac{\eps+2}{3} , 1 \right\} \cdot \frac{3(\gamma-2)}{2\gamma} & \mbox{if } \gamma \in(1,2), \\
0 &\mbox{if } \gamma \geq 2,
\end{cases}
\quad
\beta_M =
\begin{cases}
\max \left\{ - \frac{\eps+2}{2\gamma}, \frac{\gamma-3}{\gamma},-\frac{3}{2\gamma} \right\}
&
\mbox{ if }  \gamma \leq 2, \\
 \frac{\gamma-3}{\gamma}
&  \mbox{ if } \gamma \in (2,3),\\
0
& \mbox{ if }  \gamma \geq 3.
\end{cases}
\end{align*}
Here $C_{\vr}, C_{\vm}, C_{\vB}$ are generic constants that depend on $\mu$, $\frac1{\mu}$, $\nu$,  $\zeta$, $\frac1\zeta$, $|\vb^{\pm}|$,  $|\vc{g}|$,  $\norm{\vr_0}_{L^1(Q)}$, and $\norm{E_0}_{L^1(Q)}$  but  are independent of the disretization parameters $(\TS, \, h)$.
\end{subequations}
 \end{Lemma}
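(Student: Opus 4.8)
The plan is to test the three discrete identities \eqref{schemeD}, \eqref{schemeM}, \eqref{schemeB} against projections of smooth test functions, sum over the time levels $t^0, \dots, t^n$, and compare the result with the weak formulation of \eqref{pde}. Concretely, I would take $\phi_h = \PiQ \phi$, $\vh = \PiQ \vv$, and $\vCh = \PiQ \vC$ for smooth $\phi$, $\vv$, $\vC$ satisfying the respective boundary conditions, perform discrete summation by parts in time on the $D_t$ terms, and collect all mismatches into the errors $e_\vr$, $e_\vm$, $e_\vB$. Each error then decomposes into three universal pieces: a time-discretization error from the backward Euler operator $D_t$, controlled by $\order(\TS)$ through a Taylor expansion combined with the dissipation bound \eqref{ap2}; a numerical-diffusion/upwind error coming from the $h^\eps \jump{\cdot}$ part of $\Fup$ and the upwind bias, controlled by Cauchy--Schwarz together with \eqref{ap3}; and a spatial operator-consistency error measuring the gap between $\Gradh$, $\Divh$, $\Curlh$ and their continuous counterparts on the projected $C^2$ test functions, which is $\order(h)$ by standard interpolation estimates.

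The induction equation \eqref{AcP3} is the cleanest and I would treat it first. Since its right-hand side is linear in $\vBh$ via $\Curlh$, with no artificial diffusion and no density weight, only the time error and the $\Curlh$-consistency error survive; using $\TS \approx h$ both collapse into the single bound $\order(h)$, giving \eqref{CS3}. For the continuity equation \eqref{AcP1} I would rewrite the upwind part as $\Up[\vrh,\vuh] = \avg{\vrh}\, \us - \tfrac12 |\us| \jump{\vrh}$, match the leading term with the continuous convective flux $\vrh \vuh \cdot \Gradh \phi$ up to $\order(h)$ through operator consistency, and absorb the remaining upwind and $h^\eps$ contributions via \eqref{ap3} and the uniform bounds \eqref{ap1}. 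The subtle point is that the density is controlled only in $L^\gamma$, so for $\gamma \in (1,2)$ the Hölder balancing between the jump terms and the available integrability is sub-optimal; tracking the sharpest achievable exponent, with the diffusion strength $h^\eps$ as a free parameter, is exactly what produces $\beta_D$.

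The momentum equation \eqref{AcP2} combines all these difficulties and is the main obstacle. The viscous terms $\mu \Gradd \vuh : \Gradd \vh + \nu \Divh \vuh \, \Divh \vh$ are converted to the continuous form by the integration-by-parts formulae \eqref{InByPa-1}, \eqref{InByPa-2} and the $H^1$-type control on $\vuh$ in \eqref{ap1}, yielding the explicit $\order(h)$ contribution in \eqref{CS2}. The genuinely hard part is the pressure term $\intQ{p_h \Divh \vh}$ together with the convective flux: since $p_h = p(\vrh) \sim \vrh^\gamma$ is bounded only in $L^1$ and the momentum only in $L^{2\gamma/(\gamma+1)}$, estimating the projection and upwind errors of these nonlinear quantities forces an interpolation between the weak a priori bounds \eqref{ap1} and the dissipation estimate \eqref{ap3}, with the parameter $\eps$ entering through the $h^\eps$ term. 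Optimizing this interpolation over the three competing scalings is precisely what yields the maximum defining $\beta_M$, and the crux is to carry out this balancing while keeping the constant $C_\vm$ independent of $(\TS, h)$.
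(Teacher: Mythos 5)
Your plan is correct and is essentially the paper's own argument: test \eqref{schemeD}--\eqref{schemeB} with $\PiQ$-projections of the smooth test functions, sum by parts in time, and split each mismatch into a backward-Euler error of order $\TS$, an upwind/artificial-diffusion error absorbed via \eqref{ap2}--\eqref{ap3}, and a spatial operator-consistency error of order $h$, with the H\"older balancing against the $L^\gamma$ and dissipation bounds producing exactly the exponents $\beta_D$ and $\beta_M$. The only presentational difference is that the paper imports the density and momentum consistency (including $\beta_D$, $\beta_M$) wholesale from \cite[Lemma 2.9]{FLS_IEE} and writes out in detail only the genuinely new MHD contributions --- the Lorentz-force and gravity projection errors $\intTnO{ (\Curlh \vBh \times \vBh ) \cdot (\vv - \PiQ\vv)}$ and $\intTnO{\vrh (\vv-\PiQ\vv) \cdot \vc{g}}$ in $e_{\vm}$, and the induction equation tested with $\vCh = \PiQ \vC$ under the boundary extension $\vn \times \avs{\vCh}|_{\facesext}=0$ --- all of which your generic decomposition subsumes.
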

 \begin{Remark}
Note that above consistency formulations also hold for $d=2$ but with different $\beta_D$ and $\beta_M$. For more details we refer to  \cite[Chapter 11]{FeLMMiSh}.
 \end{Remark}

\begin{proof}
Following \cite[Lemma 2.9]{FLS_IEE} we obtain \eqref{AcP1} and \eqref{CS1} for the density and
\begin{equation*}
\begin{aligned}
 0&= \intTnO{ D_t (\vrh  \vuh) \cdot  \vh   } -
 \intTn \intfacesint{  \Fup(\vrh   \vuh,\vuh  ) \cdot \jump{\vh}   } \dt + \mu \intTnO{  \Gradd \vuh  : \Gradd \vh  }
 \\&\quad
 +  \intTnO{( \nu \Divh \vuh  -p_h ) \Divh \vh  }
 - \intTnO{ (\Curlh \vBh   \times  \vBh ) \cdot \vh}
 -\intTnO{\vrh \vh \cdot \vc{g}}
 \\ &= \left[ \intQ{ \vrh \vuh \cdot \vv }\right]_0^{t^n-}
 - \intTnO{ \left[ \vrh \vuh \cdot \partial_t \vv + \vrh \vuh \otimes \vuh  : \Grad \vv  + p_h \Div \vv \right]},
\\& \quad
 +   \intTnO{  \left( \mu \Gradd \vuh + \nu \Divh \vuh \bI \right) : \Grad \vv}
  - \intTnO{ (\Curlh \vBh   \times  \vBh ) \cdot \vv}
\\& \quad -\intTnO{\vrh \vv \cdot \vc{g}}
  + e_{\vm}
\end{aligned}
\end{equation*}
with $\vv_h = \PiQ \vv$ and
\begin{align*}
\abs{e_{\vm}(\tau,  \vv) } &\aleq \TS + h + h^{1+\eps}  + h^{1+\beta_M}    + \abs{\intTnO{ (\Curlh \vBh   \times  \vBh ) \cdot (\vv - \vv_h)}} \br
& \quad + \abs{\intTnO{\vrh (\vv-\vv_h) \cdot \vc{g}} } \leq  C_{\vm}  \big(\TS + h + h^{1+\eps}  + h^{1+\beta_M}   \big).
\end{align*}
Here $C_{\vm}$ is a generic constant depending on model parameters $\mu, \zeta$ and uniform bounds stated in Lemma~\ref{UnBo}.

\medskip

We are left with the proof of \eqref{AcP3} and \eqref{CS3}. To proceed, we set $\vCh =\PiQ \vC$ together with the extension $\vn \times \avs{\vCh} |_{\facesext}=0$ 
as the test function in \eqref{schemeB} and analyze each term. First, for the time derivative term we have
\[ \begin{aligned}
&  \intTnO{D_t \vBh \cdot \PiQ \vC}
=
- \intTnO{ \vBh\cdot \pdt \vC }
+  \left[\intQ{ \vBh \cdot \vC}\right]_0^{t^n-}
+I_1+I_2,
\end{aligned}
\]
where
\[I_1 = \intTnO{ \vBh(t)\cdot (\pdt \vC - D_t  \PiQ \vC) }, \quad
I_2 =   \intQ{ \vBh^0\cdot \left(\vC(0) - \int_0^{\TS}\PiQ \vC(t)\dt \right)}.
\]
By H\"older's inequality, the estimates \eqref{ap1} and interpolation error estimate we have
\[
\abs{ I_1 } + \abs{I_2}\aleq  \TS \norm{\vBh}_{L^{\infty}L^2} \norm{\vC}_{C^2} \aleq \TS.
\]
Here, we have used the abbreviation $L^p L^q$ for the space $L^p(0,T; L^q(Q))$ for any $p,q\geq 1$. Similar notation holds for $C^p$.
Next, the advection term and the diffusion term shall be controlled analogously with 
\[
 \intTO{ ( \vuh\times \vBh ) \cdot (\Curlh \PiB \vC -  \Curl \vC)}
 \aleq  h \norm{\vuh}_{L^2L^2}  \norm{\vBh}_{L^\infty L^2} \norm{\vC}_{C^2} \aleq h
\]
and
\[
 \int_0^T  \intQ{ \Curlh \vBh  \cdot ( \Curlh \PiB \vC -  \Curl \vC)}\dt
 \aleq  h\norm{\vC}_{C^2} \norm{\Curlh \vBh}_{L^2L^2}
 \aleq h.
\]
Consequently, summing up the above estimates finishes the proof of \eqref{AcP3} and \eqref{CS3}.
\end{proof}

\begin{Lemma}[Compatibility condition]\label{Tm3}
Let $\vBh \in L_{\TS}(0, T; Q_h^3)$ satisfy following boundary condition
\begin{equation}
\vn \times \avs{\vBh}_{\sigma } =  \vb^{\pm},  \quad 
\sigma \in \facesext
\end{equation}
and stability conditions
\[
\norm{\vBh}_{L^\infty(0,T; L^2(Q;\R^3))} \aleq 1, \quad \norm{\Curlh \vBh}_{L^2((0,T)\times Q;\R^3)} \aleq 1.
\]
Let $\vB\in L^2(0,T; W^{1,2}(Q;\R^3))$ satisfy the boundary condition
\begin{equation}
	(\vn \times \vB)|_{x_3 = \pm 1} = \vb^{\pm}.
\end{equation}

Then it holds  for any $\tau \in [0,T]$ that
%
 \begin{align}\label{AcP4}
 \intTauO{\Big((\vB-\vBh) \cdot  \Curl \vC -  (\Curl \vB-\Curlh \vBh) \cdot   \vC  \Big)}  = e_{\Curl \vB} \quad \mbox{with} \quad
\abs{e_{\Curl \vB}} \leq C_{\Curl \vB} h^{1/2}
\end{align}
 for any $\vC \in L^2(0,T; W^{2,2}( Q;\R^3))$.
Here $C_{\Curl \vB}$ is a generic constant depending on  $\norm{\vBh}_{L^\infty(0,T; L^2(Q;\R^3))}$ and  $\norm{\Curlh \vBh}_{L^2((0,T)\times Q;\R^3)}$ but  independent of the discretization parameters $(\TS, \, h)$.
\end{Lemma}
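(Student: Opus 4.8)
The plan is to reduce everything to boundary contributions and to show that these, governed by the mere $L^2$ control of $\vBh$, force the half-power $h^{1/2}$. I would first split the spatial integrand into a continuous and a discrete bracket,
\[
(\vB-\vBh)\cdot\Curl\vC-(\Curl\vB-\Curlh\vBh)\cdot\vC
=\big[\vB\cdot\Curl\vC-\Curl\vB\cdot\vC\big]-\big[\vBh\cdot\Curl\vC-\Curlh\vBh\cdot\vC\big].
\]
For the continuous bracket, the identity $\Div(\vB\times\vC)=\Curl\vB\cdot\vC-\vB\cdot\Curl\vC$ and the divergence theorem give
\[
\intQ{\big(\vB\cdot\Curl\vC-\Curl\vB\cdot\vC\big)}=-\int_{\partial Q}(\vn\times\vB)\cdot\vC\ds=-\int_{\partial Q}\vb^{\pm}\cdot\vC\ds,
\]
using the hypothesis $\vn\times\vB|_{x_3=\pm1}=\vb^{\pm}$ (the only boundary, since $Q$ is periodic in the horizontal variables). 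This is exact and requires only $\vB\in W^{1,2}$, $\vC\in W^{2,2}$.

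For the discrete bracket I would test with a projection $\vCh\in Q_h^3$ of $\vC$ (for instance $\vCh=\PiB\vC$ with the boundary extension $\vn\times\avs{\vCh}|_{\facesext}=\vn\times\vC|_{\facesext}$) and invoke the discrete integration-by-parts formula \eqref{InByPa-4} applied to $\vBh$ and $\vCh$, together with the boundary condition $\vn\times\avs{\vBh}|_{\facesext}=\vb^{\pm}$. Since $\vBh$ and $\Curlh\vBh$ are piecewise constant, replacing $\vC$ by $\vCh$ inside $\intQ{\vBh\cdot\Curl\vC}$ and $\intQ{\Curlh\vBh\cdot\vC}$ costs only interpolation errors $\aleq h\,\norm{\vC}_{W^{2,2}}$, controlled by the bounds $\norm{\vBh}_{L^\infty(0,T;L^2)}\aleq1$ and $\norm{\Curlh\vBh}_{L^2((0,T)\times Q)}\aleq1$. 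Likewise the interior curl-consistency $\norm{\Curl\vC-\Curlh\vCh}_{L^2}\aleq h\,\norm{\vC}_{W^{2,2}}$ away from $\partial Q$, already exploited in the proof of \eqref{AcP3}, contributes at order $h$. After these reductions the discrete bracket reduces, modulo $O(h)$, to the two boundary contributions $-\int_{\facesext}\vb^{\pm}\cdot\vCh^{\mathrm{in}}\ds$ and $\tfrac12\int_{\facesext}(\vn\times\jump{\vCh})\cdot\vBh^{\mathrm{in}}\ds$.

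Combining the two brackets, the continuous term $-\int_{\partial Q}\vb^{\pm}\cdot\vC\ds$ and the discrete term $-\int_{\facesext}\vb^{\pm}\cdot\vCh^{\mathrm{in}}\ds$ nearly cancel: their difference $\int_{\partial Q}\vb^{\pm}\cdot(\vCh^{\mathrm{in}}-\vC)\ds$ is $O(h)$, being a fixed vector $\vb^{\pm}$ paired with a face-versus-cell-center mismatch. The decisive contribution is the boundary layer. Both the jump term $\tfrac12\int_{\facesext}(\vn\times\jump{\vCh})\cdot\vBh^{\mathrm{in}}\ds$ and the degradation of the discrete curl at boundary cells (where replacing one face value of $\vC$ by a cell-centred value is an $O(h)$ defect that the factor $|\sigma|/|K|=1/h$ in $\Curlh$ does not damp) pair an $O(h)$ geometric defect against the interior trace $\vBh^{\mathrm{in}}$. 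Summing over $\facesext$ and applying Cauchy--Schwarz with the discrete inverse trace inequality $\norm{\vBh}_{L^2(\partial Q)}\aleq h^{-1/2}\norm{\vBh}_{L^2(Q)}$ (valid because $\vBh$ is piecewise constant) turns $O(h)\times h^{-1/2}$ into $O(h^{1/2})$; integrating over $[0,\tau]$ and using $\norm{\vBh}_{L^\infty(0,T;L^2)}\aleq1$ then yields $|e_{\Curl\vB}|\le C_{\Curl\vB}\,h^{1/2}$. The main obstacle is precisely this boundary term: since $\vBh$ is only bounded in $L^2$ (no discrete $H^1$ control is available, only $\Curlh\vBh\in L^2$), its boundary trace can be estimated solely through the inverse inequality, so the loss of half a power is unavoidable and fixes the stated rate; the interior estimates, by contrast, are routine once the interpolation and curl-consistency bounds behind Lemma~\ref{Tm2} are in hand.
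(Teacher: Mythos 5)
Your proof is correct and follows essentially the same route as the paper's (Appendix B): the same split into a continuous bracket handled exactly via $\Div(\vC\times\vB)$ and Stokes' theorem, and a discrete bracket handled by the discrete integration-by-parts formula \eqref{InByPa-4} with the matching boundary data $\vn \times \avs{\vBh} = \vb^{\pm}$, interior interpolation errors of order $h$, and the rate $h^{1/2}$ produced by the boundary layer. The only cosmetic difference is bookkeeping: the paper kills the jump term by choosing the extension $\vn \times \jump{\vCh}|_{\facesext} = 0$ and extracts $h^{1/2}$ from H\"older's inequality over the strip $O$ of boundary cells with $\abs{O} \aleq h$, whereas you retain the jump term and invoke the piecewise-constant inverse trace inequality $\norm{\vBh}_{L^2(\partial Q)} \aleq h^{-1/2} \norm{\vBh}_{L^2(Q)}$ --- algebraically the same estimate.
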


\begin{proof}
The proof can be done by applying the Stokes theorem, the integration by part formula \eqref{InByPa-4} as well as the interpolation error estimates, for more details see Appendix  \ref{compatibility}.
\end{proof}

\begin{Remark}\label{rmk:compat}
Following the proof of Lemma \ref{Tm3} in Appendix  \ref{compatibility}, we shall also control $e_{\Curl \vB}$ with
\begin{equation}
\abs{e_{\Curl \vB}} \aleq  h + \intTauO{\RE \Big((\vrh,\vuh,\vBh)\,\Big|\,(\vr, \vu,\vB)\Big)}
\end{equation}
since
\begin{align*}
\abs{I_2} & \aleq h+  \int_0^{\tau} \int_{O}{\abs{\vBh}} \dxdt \aleq h+  \int_0^{\tau} \int_{O}{\left( 1+ \abs{\vBh - \vB}^2 \right)} \dxdt \br
&\aleq h + \intTauO{\RE \Big((\vrh,\vuh,\vBh)\,\Big|\,(\vr, \vu,\vB)\Big)}
\end{align*}
where $O$ is the set of boundary cells and satisfies $\abs{O} \aleq h$.
\end{Remark}

\begin{Remark}\label{Tm4}
Note that the compatibility conditions also hold for $\Gradd \vuh$ and $\Divh \vuh$. Specifically,
letting $\vuh \in L_{\TS}(0, T;Q_h^3)$ satisfy
\begin{equation*}
\avs{\vuh}_{\sigma } = 0, \quad
\sigma \in \facesext; \quad \norm{\vuh}_{L^2((0,T)\times Q;\R^3)} \aleq 1, \quad \norm{\Gradd \vuh}_{L^2((0,T)\times Q;\R^{3\times 3})} \aleq 1,
\end{equation*}
we have for any $\tau \in [0,T]$ that
\begin{align*}
& \intTauO{ \big( \Gradd \vuh: \Grad \vv  +  \vuh \cdot \Lap \vv   \big)} = e_{\Grad \vu} \quad \mbox{with} \quad
\abs{e_{\Grad \vu}} \leq C_{\Grad \vu} h, \\
& \intTauO{   \big(  v \, \Divh \vuh   + \Grad v  \cdot \vuh   \big) } = e_{\Div \vu} \quad \mbox{with} \quad
\abs{e_{\Div \vu}} \leq C_{\Div \vu} h
\end{align*}
for any $\vv \in L^2(0,T; W^{2,2}( Q;\R^3)), v\in L^2(0,T; W^{2,2}( Q))$, and $C_{\Grad \vu}, C_{\Div \vu}$ are generic constants depending on  $\norm{\vuh}_{L^2((0,T)\times Q;\R^3)}$ and $\norm{\Gradd \vuh}_{L^2((0,T)\times Q;\R^{3\times 3})}$ but  independent of the discretization parameters $(\TS, \, h)$.
For more details see \cite[Section 3]{FLS_IEE}.
\end{Remark}

\section{Error estimates for the deterministic problem}
\label{ERE}

%
%

Having shown the stability and consistency of the FV method \eqref{scheme}, we proceed to analyze the error between a numerical approximation $(\vrh,\vuh,\vBh)$ and the strong solution $(\vr,\vu,\vB)$. To this end, we use the Bregman  distance in term of the relative energy, see \eqref{S21}, introduced in Section \ref{REE}.
Here, it is more convenient to write the relative energy in terms of the velocity rather than the momentum, i.e.
\begin{equation*}
\RE\Big((\vrh,\vuh,\vBh)\,\Big|\,(\vr,\vu,\vB)\Big)
=\frac{1}{2}\vrh |\vuh-\vu|^2
+\Hc(\vrh)-\Hc(\vr)-\Hc'(\vr)(\vrh-\vr)+\f{1}{2} |\vBh-\vB|^2.
\end{equation*}

\begin{Theorem}[{\bf Deterministic error estimates}] \label{Tdis}
Let $Q \subset \R^3$ be a bounded domain of class $C^{k+1}$, $k \geq 6$ and the initial data satisfy
\[(\vr_0,\vu_0,\vB_0) \in W^{k,2}(Q;\R^7), \ \vr_0>0, \ \Div \vB_0 =0, \ \vu_0|_{\pd Q} = 0, \ \vn \times\vB_0|_{\pd Q} = \vb^{\pm}\]
 and the compatibility conditions \eqref{S8}. 
 Let $(\vr,\vu,\vB) \in C([0,T_M];W^{k,2}(Q;\R^7))
$  be the strong solution to the MHD system \eqref{pde}--\eqref{ini} specified in Proposition \ref{SP3}.
Let $(\vrh,\vuh,\vBh)$ be a numerical solution obtained by the FV method \eqref{scheme}.

Then for any  $(h,\TS)\in(0,1)^2$ and any $M>0$ the following error estimate holds
\begin{equation}\label{re0}
\begin{aligned}
 \RE \Big((\vrh,\vuh,\vBh)\,\Big|\,(\vr,\vu,\vB)\Big)(t)
 \leq C (\TS + h^{\alpha}) \mbox{ for } t \in [0,T_M  \wedge T ],
\end{aligned}
\end{equation}
where $C=C\left( \mu, \frac1{\mu}, \nu, \zeta, \frac1\zeta,  |\vc{b}^{\pm}|,  |\vc{g}|, \| \vr_0^{-1} \|_{L^\infty(Q)}, \norm{\vr_0,\vu_0,\vB_0}_{W^{k,2}(Q;\R^7)},M,T\right)$, $T_M$ is the stopping time of the exact solution at the level $M$ introduced in \eqref{S14} and
\begin{equation}
\label{alpha}
\alpha = \min \{ 1, 1+\eps, 1+\beta_M\}.
\end{equation}
\end{Theorem}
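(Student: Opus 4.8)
The plan is to treat the total relative energy $\intQ{\RE}$ (whose integrand is defined in \eqref{S21}) as a numerical stability functional and to derive a discrete Gronwall inequality for it, mimicking at the fully discrete level the weak--strong uniqueness argument. I would track the increment of $\intQ{\RE}$ over a time step and split it into a \emph{diagonal} part, the genuine discrete energy of the numerical solution, and the \emph{cross} terms coupling the numerical and the strong solution.

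For the diagonal part I would invoke the discrete energy balance of Lemma~\ref{EnBa}: its increment equals the prescribed forcing minus the numerical dissipation $D_{num}\ge 0$, so the dissipation may be retained with its favourable sign and discarded from the upper bound. The cross terms are the heart of the proof. I would test the consistency relations against the \emph{strong} solution regarded as a smooth test object: the density relation \eqref{AcP1} against $\tfrac12|\vu|^2-\Hc'(\vr)$, the momentum relation \eqref{AcP2} against $\vu$, and the induction relation \eqref{AcP3} against $\vB-\vB_B$. Because $(\vr,\vu,\vB)$ solves the MHD system pointwise, equivalently satisfies the energy equality \eqref{S20}, the leading-order convective, pressure, viscous, Lorentz and induction contributions cancel against their discrete counterparts. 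What remains are: the consistency errors $e_\vr,e_\vm,e_\vB$, bounded in \eqref{CS1}--\eqref{CS3} by $\TS+h+h^{1+\eps}+h^{1+\beta_M}$; the curl--curl and gradient compatibility remainders furnished by Lemma~\ref{Tm3} and Remark~\ref{Tm4}; and a quadratic remainder that I would estimate by $\intQ{\RE}$ itself, using the convexity of $\Hc$ and the strict positivity of the density from Lemma~\ref{lem_b}.

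The regularity of the limit solution enters precisely in estimating this quadratic remainder: each constant multiplying a $\RE$-term is a polynomial in $\norm{(\vr,\vu,\vB)}_{W^{k,2}}$ and $\norm{\vr^{-1}}_{L^\infty}$, and by the conditional regularity of Proposition~\ref{SP4} these are bounded on $[0,T_M\wedge T]$ in terms of $M$, the data norm and the physical parameters alone. This produces a discrete inequality of the schematic form $\intQ{\RE}(t^n)\le \intQ{\RE}(0)+C(M)(\TS+h^{\alpha})+C(M)\,\TS\sum_{k\le n}\intQ{\RE}(t^k)$, where $\alpha=\min\{1,1+\eps,1+\beta_M\}$ is dictated by the slowest-decaying consistency rate, and a discrete Gronwall lemma then closes \eqref{re0}. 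The initial term $\intQ{\RE}(0)$ is of order $h^2$ by the interpolation accuracy of $\PiQ$ and $\PiB$ applied to the smooth data, hence harmless.

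I expect the principal obstacle to be the magnetic coupling in the presence of the non-homogeneous boundary data $\vb^\pm$. Handling the Lorentz force $\Curlh\vBh\times\vBh$ tested against $\vu-\vuh$ and the induction term requires reproducing the exact curl--curl integration by parts \eqref{InByPa-4} at the discrete level while absorbing the boundary contributions through the solenoidal extension $\vB_B$ and the divergence-free property of Lemma~\ref{lem:divfree}. This is exactly where the compatibility estimate of Lemma~\ref{Tm3}, and in particular the refined bound of Remark~\ref{rmk:compat} whose remainder is itself of relative-energy type, must be folded into the Gronwall argument rather than discharged as a pure discretization error.
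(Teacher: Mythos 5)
Your proposal follows essentially the same route as the paper's proof: expand the relative energy so that the discrete energy balance of Lemma~\ref{EnBa} combines with the consistency relations \eqref{AcP1}--\eqref{AcP3} tested with $\tfrac12|\vu|^2-\Hc'(\vr)$, $-\vu$ and $\vB_B-\vB$, cancel the leading convective, pressure, viscous and Lorentz/induction terms using the strong equations, absorb the magnetic cross terms (the paper's $R_7$, reformulated in Appendix~\ref{sec-R7}) via Lemma~\ref{Tm3} and the refined relative-energy bound of Remark~\ref{rmk:compat}, and close with Gronwall at the rate $\alpha=\min\{1,1+\eps,1+\beta_M\}$, with constants controlled on $[0,T_M\wedge T]$ by the conditional regularity of Proposition~\ref{SP4}. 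The only step you omit is the passage from the discrete times $\tau=t^n-$ to arbitrary $t\in[t^{n-1},t^n)$, which the paper settles by bounding $\intQ{\left(\RE(t)-\RE(t^n-)\right)}$ by $\mathcal{O}(\TS)$ using the time regularity of the strong solution --- a routine addition that does not affect the substance of your argument.
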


\begin{Remark}
For plasma models $\gamma=5/3$ is typically used. The optimal convergence rate is then
$\alpha= 7/13$ with $\eps = -6/13$.
\end{Remark}
\begin{Remark}
Analogously as in \cite[Lemma C.1]{FLS_IEE}, estimates \eqref{re0} and \eqref{REb} imply the following error estimates in the energy space
\begin{equation}\label{error}
\begin{aligned}
&\norm{\vrh - \vr}_{L^\infty L^\gamma}\aleq \TS^{1/r}+ h^{\alpha /r}, \ \  r= \max\{2, \gamma \},
\\&
\norm{\vm_h -\vm}_{L^\infty L^{\frac{2\gamma}{\gamma+1}}} + \norm{\vBh -\vB}_{L^\infty L^{2}} \aleq \TS^{1/2}+  h^{\alpha/2},
\\&
\norm{\vu_h -\vu}_{L^2 L^2} + \norm{\Gradd\vuh - \Grad\vu}_{L^2 L^2} + \norm{\Curlh\vBh - \Curl\vB}_{L^2 L^2} \aleq \TS^{1/2}+  h^{\alpha/2}.
\end{aligned}
\end{equation}

\end{Remark}

\begin{proof}[Proof of Theorem \ref{Tdis}.]
First, let us rewrite the relative energy
\begin{align*}
\RE\Big((\vrh,\vuh,\vBh)\,\Big|\,(\vr,\vu,\vB)\Big)
&= \left(\frac{1}{2}  \vrh  |\vuh |^2  +
\Hc(\vrh) +\frac12 |\vBh - \PiB \vB_{B}|^2 \right) + \vrh \cdot \left( \frac12 \abs{\vu}^2 - \Hc'(\vr)\right) \br
&+ \vrh \vuh \cdot (-\vu) + \vB_h\cdot \left( \vB_B - \vB \right) + \vB_h\cdot \left(  \PiB \vB_{B} - \vB_B \right) \br
&+p(\vr) + \frac{\vB^2 - (\PiB \vB_{B})^2}{2}.
\end{align*}
Accordingly, we collect the energy balance \eqref{energy_stability}, the density consistency formulation \eqref{AcP1} with the test function $\phi = \frac12 \abs{\vu}^2 - \Hc'(\vr)$, the momentum consistency formulation \eqref{AcP2} with the test function $\vv = -\vu$, and the magnetic consistency formulation \eqref{AcP3} with the test function $\vC = \vB_B - \vB$.
Summing them up, we obtain for any $\tau = t^n-, n=1,\cdots, N_T$
\begin{align}\label{REb-1}
 &\left[\intQ{ \RE\Big((\vrh,\vuh,\vBh)\,\Big|\,(\vr,\vu,\vB)\Big)}  \right]_{t=0}^{t=\tau}  +  \intTauO{ \left( \mu \abs{\Gradd \vuh}^2 +  \nu \abs{\Divh \vuh}^2 \right)}
 \br
& \hspace{2cm} + \intTauO{ \zeta \abs{\Curlh \vBh }^2} + \int_0^{\tau} D_{num}\, \dt
\br
 & = -\intTauO{ (\vuh  \times \vBh - \zeta \Curlh \vBh) \cdot   \Curlh \PiB\vB_B} + \intTauO{\vrh \vuh \cdot \vc{g}}\br
&+ \intTauO{\left(\vrh \pd_t \frac{\abs{\vu}^2}{2} + \vrh \vuh \cdot \Grad \frac{\abs{\vu}^2}{2} \right)}
- \intTauO{\Big( \vrh \pd_t \Hc'(\vr) + \vrh \vuh \cdot \Grad \Hc'(\vr) \Big)} - e_{\vr} \left(\tau, \phi \right)
\br
& - \intTauO{\Big( \vrh \vuh  \cdot \pd_t \vu + \vrh \vuh \otimes \vuh : \Grad \vu + p_h \Div \vu -   \mu \Gradd \vuh : \Grad \vu - \nu \Divh \vuh \Div \vu \Big)}
\br
& - \intTauO{\Big(  \Curlh \vBh \times \vBh \cdot \vu \Big)} - \intTauO{\vrh \vu \cdot \vc{g}}  - e_{\vm} (\tau,  \vv)
\br
& + \intTauO{\Big(  \vBh \cdot \pdt (\vB_B - \vB)
   -  \zeta  \Curlh \vBh \cdot \Curl (\vB_B - \vB) +( \vuh \times \vBh )\cdot \Curl (\vB_B - \vB) \Big)} - e_{\vB}(\tau, \vC)
\br
& +\left[\intQ{\vB_h\cdot \left(  \PiB \vB_{B} - \vB_B \right)}  \right]_{t=0}^{t=\tau}
+\intTauO{ \pd_t \Big( p(\vr) +  \frac{\vB^2 - (\PiB \vB_{B})^2}{2}\Big)}.
\end{align}

Second, we follow the calculations in  \cite[Section 3]{FLS_IEE} and use
\begin{align*}
&\pd_t \vr + \Div(\vr \vu)=0, \ \  \vr(\pd_t  \vu + \vu \cdot \Grad \vu ) + \Grad p= \Div \bS+\Curl\vB\times \vB + \vr \vc{g}, \ \
 \pd_t \vB=\Curl (\vu \times \vB -  \zeta \Curl \vB), \\
& \intQ{\Big( \vu \cdot \Grad p(\vr) + p(\vr) \Div \vu \Big)} = 0, \quad \intQ{ \Big( \Div \bS(\Grad\vu) \cdot \vu + \bS(\Grad\vu) : \Grad \vu \Big) } = 0
\end{align*}
to obtain
\begin{align*}
 &\left[\intQ{ \RE\Big((\vrh,\vuh,\vBh)\,\Big|\,(\vr,\vu,\vB)\Big)}  \right]_{t=0}^{t=\tau}  +  \intTauO{ \left( \mu \abs{\Gradd \vuh- \Grad \vu}^2 +  \nu \abs{\Divh \vuh- \Div \vu}^2 \right)}
 \br
& \hspace{2cm} + \intTauO{ \zeta\abs{\Curlh \vBh - \Curl \vB}^2} + \int_0^{\tau} D_{num}\, \dt  = e_S + \sum_{i=1}^7 R_i
\end{align*}
with
\begin{equation*}
\begin{aligned}
e_S &= -e_{\vr} \left(\tau, \phi \right) - e_{\vm} (\tau,  \vv) - e_{\vB}(\tau, \vC)+ \left[\intQ{\vB_h\cdot \left(  \PiB \vB_{B} - \vB_B \right)}  \right]_{t=0}^{t=\tau},\\
R_1 & =
\intTauO{(\vrh-\vr)  (\vu - \vuh) \cdot \left(\pd_t \vu + \vu \cdot \Grad \vu  + \frac{\Grad p(\vr)}{\vr} \right) },
\\R_2 & =
- \intTauO{ \vrh (\vuh-\vu) \otimes (\vuh-\vu) : \Grad \vu},
\\ R_3 & =
 -  \mu \intTauO{ \big( \Gradd \vuh: \Grad \vu  +  \vuh \cdot \Lap \vu   \big)},
\\ R_4 & =
- \nu \intTauO{   \big(   \Divh \vuh \Div \vu + \vuh \cdot \Grad \Div \vu  \big) },
\\  R_5 & =
- \intTauO{\Big(p_h - p'(\vr) (\vrh -\vr) -p(\vr) \Big) \Div \tvu},\br
R_6 &= \intTauO{\vrh (\vuh-\vu) \cdot \vc{g}} + \intTauO{(\vu - \vuh) \cdot \vr \vc{g}} = \intTauO{(\vrh-\vr) (\vuh-\vu) \cdot \vc{g}} ,
\end{aligned}
\end{equation*}
and
\begin{align}
R_7 =& -\intTauO{ (\vuh  \times \vBh - \zeta  \Curlh \vBh) \cdot   \Curlh \PiB\vB_B}  - \intTauO{\Big(  \Curlh \vBh \times \vBh \cdot \vu \Big)}  \br
&+ \intTauO{\Big(  \vBh \cdot \pdt (\vB_B - \vB)
   -  \zeta  \Curlh \vBh \cdot \Curl ( \vB_B - \vB) +( \vuh \times \vBh )\cdot \Curl (\vB_B - \vB) \Big)}
   \br
&+ \intTauO{ \pd_t \Big(  \frac{\vB^2 - |\PiB \vB_{B}|^2}{2}\Big)}
\br
&+\intTauO{\Big( \zeta \abs{ \Curl \vB}^2 - 2 \zeta \Curlh \vBh \Curl \vB \Big)}+ \intTauO{(\vu - \vuh) \cdot \Curl \vB \times \vB}
\label{R7}
\\
=&-\intTauO{ (\vuh  \times \vBh - \zeta \Curlh \vBh) \cdot   \Big( \Curlh \PiB\vB_B - \Curl \vB_B\Big)} \br
&+ \intTauO{\Big( \vu  \times (\vBh-\vB) \cdot  (\Curlh \vBh-\Curl \vB) + (\vu  -  \vuh) \times (\vBh-\vB) \cdot \Curl \vB  \Big)}
    \br
&+ \intTauO{\Big((\vB-\vBh) \cdot  \Curl (\vu \times \vB-  \zeta \Curl  \vB) -  (\Curl \vB-\Curlh \vBh) \cdot   (\vu \times \vB-  \zeta \Curl  \vB)  \Big)}. \label{R7-1}
\end{align}
Compared to \cite[Section 3]{FLS_IEE}, the term $R_7$ is new and need to be estimated. Suitable reformulations are  presented in Appendix~\ref{sec-R7}.  

\medskip

Third, thanks to the regularity of the strong solution, uniform bounds \eqref{AP}, the  H\"older inequality and interpolation error estimates we derive the following estimates: 
\begin{align*}
\abs{e_S} & \aleq  \TS+ h^{\alpha} +h, \; \alpha= \min\{1, 1+\eps, 1+\beta_M \}, \mbox{ see consistency errors  \eqref{CS1}--\eqref{CS3}}, \br
\abs{R_1} &+ \abs{R_6} \aleq h^2 + \intTauO{\RE \Big((\vrh,\vuh,\vBh)\,\Big|\,(\vr, \vu,\vB)\Big)} + \delta \norm{\Gradd \vuh - \Grad \vu }_{L^2((0,T)\times Q;\R^d)}^2, \br
\abs{R_2} & + \abs{R_5}\aleq \intTauO{\RE \Big((\vrh,\vuh,\vBh)\,\Big|\,(\vr, \vu,\vB)\Big)}, \br
\abs{R_3} & + \abs{R_4}  \aleq h,  \mbox{ see Remark  \ref{Tm4}}, \br
\abs{R_7} & \aleq h + \intTauO{\RE \Big((\vrh,\vuh,\vBh)\,\Big|\,(\vr, \vu,\vB)\Big)}  + \norm{\vBh - \vB}_{L^2((0,T)\times Q;\R^d)}^2 \br
&\hspace{1cm} + \delta \norm{\Curlh\vBh - \Curl\vB}_{L^2((0,T)\times Q;\R^d)}^2+ \delta \norm{\Gradd\vuh - \Grad\vu}_{L^2((0,T)\times Q;\R^d)}^2 ,
\end{align*}
where we have applied \cite[Lemma B.4]{FLS_IEE} in the estimates of $R_1, R_6$, and Remark~\ref{rmk:compat}  for the estimates of the third term of $R_7$.

\medskip

Finally, choosing a fixed positive $\delta < \min(\mu, \zeta,1)$ we obtain the following relative energy inequality
\begin{align}\label{REb}
 &\left[\intQ{ \RE\Big((\vrh,\vuh,\vBh)\,\Big|\,(\vr,\vu,\vB)\Big)}  \right]_{t=0}^{t=\tau}  +  \intTauO{ \left( \mu \abs{\Gradd \vuh- \Grad \vu}^2 +  \nu \abs{\Divh \vuh- \Div \vu}^2 \right)}
 \br
& + \intTauO{ \zeta\abs{\Curlh \vBh - \Curl \vB}^2} \,  \aleq  \intTauO{\RE \Big((\vrh,\vuh,\vBh)\,\Big|\,(\vr, \vu,\vB)\Big)} +\TS + h^{\alpha} .
\end{align}
Applying Gronwall's lemma leads to  \eqref {re0}  for $\tau = t^n-$. In order to get  \eqref {re0}  for any $t  \in [t^{n-1},t^n), n=1, \cdots, N_T+1$, we need to  estimate
\begin{align*}
&\intQ{\left(\RE(t) - \RE(t^n-)  \right)}
\\&=
\intQB{\frac{1}{2}\vrh^{n-1} |\vuh^{n-1}-\vu(t)|^2 +\Hc(\vrh^{n-1})-\Hc(\vr(t))-\Hc'(\vr(t))(\vrh^{n-1}-\vr(t))+\f{1}{2} |\vBh^{n-1}-\vB(t)|^2}
\\&  - \intQB{\frac{1}{2}\vrh^{n-1} |\vuh^{n-1}-\vu(t^n-)|^2 +\Hc(\vrh^{n-1})-\Hc(\vr(t^n-))-\Hc'(\vr(t^n-))(\vrh^{n-1}-\vr(t^n-))+\f{1}{2} |\vBh^{n-1}-\vB(t^n-)|^2} .
\end{align*}
This can be done by using the triangular inequality and  regularity of the strong solution, which gives the estimate $\abs{\intQ{\left(\RE(t) - \RE(t^n-)  \right)}} \aleq {\cal O} (\TS)$. Combining above estimates finishes the proof.
\end{proof}

\section{Convergence and error estimates for the Monte Carlo FV method}
\label{G}

In this section we present the main results on the error estimates of the Monte Carlo FV method.
They are obtained as a direct combination of the statistical error estimates \eqref{r3a}
and \eqref{r5}, and the discretization error bounds established in Theorem
\ref{Tdis}.

\subsection{Error estimates up to a stopping time}

Consider $(D_n)_{n=1}^\infty$ a sequence of i.i.d.\ copies of random data $D$, with
the associate exact solution $(\vr, \vm = \vr \vu, \vB)[D]$ and the FV approximate sequence $(\vr, \vm = \vr \vu, \vB)_h [D]$. Note that strong solution
$(\vr, \vu, \vB)[D]$ is {\it a priori} defined up to the time $T_{\rm max}$, while the approximate solutions $(\vr, \vu, \vB)_h [D]$ are evaluated on a given
time interval $[0,T]$.

Our goal is to estimate  the total error
\[
\frac{1}{N} \sum_{n=1}^N (\vr, \vu, \vB)_h ( t \wedge T_M ) [D_n] - \expe{
(\vr, \vu, \vB) ( t \wedge T_M )[D] },\ 0 \leq t \leq T,
\]
where $T_M$ is the stopping time of the exact solution at the level $M$ introduced in \eqref{S14}.
We proceed by the decomposition of the total error into two  parts: the statistical and discretization errors
\begin{align}
\frac{1}{N} &\sum_{n=1}^N (\vr, \vu, \vB)_h ( t \wedge T_M ) [D_n] - \expe{
	(\vr, \vu, \vB)( t \wedge T_M ) [D] } \br &=
\frac{1}{N} \sum_{n=1}^N (\vr, \vu, \vB)_h ( t \wedge T_M ) [D_n]
- \frac{1}{N} \sum_{n=1}^N (\vr, \vu, \vB) ( t \wedge T_M ) [D_n]
\br&+ \frac{1}{N} \sum_{n=1}^N (\vr, \vu, \vB) ( t \wedge T_M ) [D_n] - \expe{
	(\vr, \vu, \vB)( t \wedge T_M ) [D] }.
	\nonumber	
\end{align}
On the one hand, by virtue of \eqref{r3a},
\begin{align} \label{ess1}
&\expe{ \left\| \frac{1}{N} \sum_{n=1}^N (\vr, \vu, \vB) ( t \wedge T_M ) [D_n]
- \expe{
	(\vr, \vu, \vB)( t \wedge T_M ) [D]  } \right\|_{L^\gamma \times L^{\frac{2 \gamma}{\gamma + 1}} \times L^2 (Q; \mathbb \R^7)}} \br
&\leq \expe{ \left\| \frac{1}{N} \sum_{n=1}^N (\vr, \vu, \vB) ( t \wedge T_M ) [D_n]
- \expe{
	(\vr, \vu, \vB)( t \wedge T_M ) [D]  } \right\|_{W^{k,2}(Q; \R^7) }} \br
& \leq c(K,M) N^{-\frac{1}{2}}, \  k \geq 3.
\end{align}
On the other hand, for the discretization error, we have
\begin{align}
&\left\| \frac{1}{N} \sum_{n=1}^N (\vr, \vm = \vr \vu, \vB)_h ( t \wedge T_M ) [D_n]
- \frac{1}{N} \sum_{n=1}^N (\vr, \vm = \vr \vu, \vB) ( t \wedge T_M ) [D_n]	
\right\|_{L^\gamma \times L^{\frac{2 \gamma}{\gamma + 1}} \times L^2 (Q; \mathbb \R^7)} \br
&\leq \frac{1}{N} \sum_{n=1}^N \left\| (\vr, \vm = \vr \vu, \vB)_h ( t \wedge T_M ) [D_n] - (\vr, \vm = \vr \vu, \vB) ( t \wedge T_M ) [D_n]	
\right\|_{L^\gamma \times L^{\frac{2 \gamma}{\gamma + 1}} \times L^2 (Q; \mathbb \R^7)}.
\nonumber	
	\end{align}	
Passing to expectation and using the fact that $D_n$, $n=1,\cdots$, are identically distributed and using the error estimates \eqref{error}, we obtain
\begin{align}
	&\expe{ \left\| \frac{1}{N} \sum_{n=1}^N (\vr, \vm = \vr \vu, \vB)_h ( t \wedge T_M ) [D_n]
	- \frac{1}{N} \sum_{n=1}^N (\vr, \vm = \vr \vu, \vB) ( t \wedge T_M ) [D_n]	
	\right\|_{L^\gamma \times L^{\frac{2 \gamma}{\gamma + 1}} \times L^2 (Q; \mathbb \R^7)} } \br
\leq &\expe{ 	 \left\| (\vr, \vm = \vr \vu, \vB)_h ( t \wedge T_M ) [D] - (\vr, \vm = \vr \vu, \vB) ( t \wedge T_M ) [D]	
	\right\|_{L^\gamma \times L^{\frac{2 \gamma}{\gamma + 1}} \times L^2 (Q; \mathbb \R^7)} }
	\nonumber
	\\ \leq & c(K,M) (\TS^{1/r}+ h^{\alpha /r}),\  r= \max\{2, \gamma \}, \ 0 \leq t \leq T, \ k \geq 6.
	\label{ess2}
	\end{align}

Summing up \eqref{ess1} and \eqref{ess2} we obtain the following result.

\begin{Theorem}[{\bf Error estimates of the Monte Carlo FV scheme}] \label{thm1}
Let $D : \Omega \to X_D$ be random data satisfying	
\begin{align}
	& \norm{(\vr_0,\vu_0,\vB_0)}_{W^{k,2}(Q; \mathbb{R}^7 )} + \|\vr_0^{-1} \|_{L^\infty(Q)} \leq K, \   \ |\vc{g}| \leq K,\  \abs{\vc{b}^{\pm}} \leq K, \br
	& \frac{1}{K} \leq \mu \leq K,\
0 \leq {\lambda + \frac{2}{3} \mu } \leq K,\ \frac{1}{K} \leq \zeta \leq K, \quad \prst\mbox{-a.s.},
\label{ass1}
\end{align}
where $k \geq 6$, and $K>0$ is a deterministic constant. Let $(D_n)_{n=1}^\infty$ be a sequence of i.i.d.\ copies of $D$. Let $(\vr, \vm = \vr \vu, \vB)[D]$ be the strong solution defined on $[0, T_{\rm max}[D])$, and let $(\vr, \vm = \vr \vu,
\vc{B})_h[D]$ be obtained by the FV scheme \eqref{scheme} with the discretization steps $(\Delta t,h) \in (0,1)^2$. Let $T_M = T_M[D]$ be the stopping time of the strong solution at the level $M > 0$ introduced in \eqref{S14}.

Then for any $0 \leq t \leq T$ it holds
\begin{align}
	&\expe{ \left\| \frac{1}{N} \sum_{n=1}^N (\vr, \vm, \vB)_h ( t \wedge T_M ) [D_n]
	- \expe{(\vr, \vm, \vc{B}) ( t \wedge T_M ) [D]}	
	\right\|_{L^\gamma \times L^{\frac{2 \gamma}{\gamma + 1}} \times L^2 (Q; \mathbb \R^7)} }\br &\leq c(K,M,T) \left( N^{-\frac{1}{2}} + \TS^{1/r}+ h^{\alpha /r}  \right),
	\nonumber
	\end{align}
where  $r= \max\{2, \gamma \}$  and  $\alpha$ is given by \eqref{alpha}.
	\end{Theorem}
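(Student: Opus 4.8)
The plan is to reproduce the decomposition already displayed before the statement: insert the empirical mean of the \emph{exact} solutions evaluated along the same samples, writing
\[
\frac{1}{N} \sum_{n=1}^N (\vr,\vm,\vB)_h(t \wedge T_M)[D_n] - \expe{(\vr,\vm,\vB)(t\wedge T_M)[D]}
\]
as the sum of the \emph{discretization} part $\frac1N\sum_n\big[(\vr,\vm,\vB)_h - (\vr,\vm,\vB)\big](t\wedge T_M)[D_n]$ and the \emph{statistical} part $\frac1N\sum_n (\vr,\vm,\vB)(t\wedge T_M)[D_n] - \expe{(\vr,\vm,\vB)(t\wedge T_M)[D]}$. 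These two pieces are estimated by entirely different tools — the second by the Hilbert-space SLLN rate of Section~\ref{r}, the first by the deterministic relative-energy estimate of Theorem~\ref{Tdis} — and the final bound is obtained by adding them.

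For the statistical term I would first invoke the continuous embedding $W^{k,2}(Q;\R^7) \hookrightarrow L^\gamma \times L^{\frac{2\gamma}{\gamma+1}} \times L^2(Q;\R^7)$, which holds since $k \geq 3$ gives $W^{k,2} \hookrightarrow L^\infty$ and $Q$ is bounded, to dominate the target norm by the $W^{k,2}$ norm. Because $W^{k,2}$ is a Banach algebra for $k \geq 2$, the conservative variable $\vm = \vr\vu$ lies in $W^{k,2}$ with controlled norm, so the rate \eqref{r3a} applies to $(\vr,\vm,\vB)$ and yields $c(K,M)N^{-1/2}$, i.e.\ \eqref{ess1}. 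The one thing to verify is that $(\vr,\vm,\vB)(t\wedge T_M)[D_n]$ is bounded in $W^{k,2}$ by a \emph{deterministic} constant $c(K,M)$; this is supplied by the conditional-regularity bound \eqref{S12} together with the definition \eqref{S14} of $T_M$, which keeps the $L^\infty$ norm of the solution below $M$ on $[0,T_M]$, and by the deterministic data bound \eqref{ass1}.

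For the discretization term I would pull the norm inside the sum by the triangle inequality, take expectations, and use that the $D_n$ are identically distributed, so $\expe{\|\cdot[D_n]\|}$ is independent of $n$ and equals the single-copy quantity $\expe{\|[(\vr,\vm,\vB)_h - (\vr,\vm,\vB)](t\wedge T_M)[D]\|}$. At this point the estimate is purely deterministic for each fixed $\omega$: since $t \wedge T_M \le T_M \wedge T$, Theorem~\ref{Tdis} and its energy-space corollary \eqref{error} apply on the admissible time range and deliver $\TS^{1/r} + h^{\alpha/r}$ with $r = \max\{2,\gamma\}$. The decisive observation is the uniformity of the constant: the constant $C$ in Theorem~\ref{Tdis} depends on the data only through $\|\vr_0^{-1}\|_{L^\infty(Q)}$, $\|(\vr_0,\vu_0,\vB_0)\|_{W^{k,2}(Q;\R^7)}$, the physical parameters, $M$, and $T$, all of which are bounded by $K$ (together with $M,T$) under \eqref{ass1}; hence the per-sample bound is $c(K,M,T)(\TS^{1/r} + h^{\alpha/r})$ almost surely, and integrating preserves it, giving \eqref{ess2}.

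Summing \eqref{ess1} and \eqref{ess2} then closes the argument. The hard part is not any single computation but the justification of the \emph{uniformity in $\omega$} of the deterministic error constant: one must check that the hypotheses of Theorem~\ref{Tdis} hold $\prst$-a.s.\ with a single constant $c(K,M,T)$, which rests on the deterministic data bound \eqref{ass1}, on the lower semicontinuity (hence measurability) of $T_M[D]$ recorded in \eqref{S17}, and on the fact that on $[0,T_M]$ the solution stays in the regime where \eqref{S12} controls its higher Sobolev norm. A secondary technical point is the Borel measurability of all the random maps — data, stopping time, and both exact and numerical solutions — so that the expectations above and the application of the SLLN in \eqref{r3a} are legitimate.
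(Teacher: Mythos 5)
Your proposal is correct and follows essentially the same route as the paper: the identical splitting into a statistical part bounded by the Hilbert-space SLLN rate \eqref{r3a} after embedding $W^{k,2}$ into the energy space, and a discretization part reduced by the triangle inequality and the identical-distribution of the $D_n$ to a single-copy application of Theorem \ref{Tdis} and \eqref{error}, with the constant made uniform in $\omega$ through the deterministic data bound \eqref{ass1} and the definition of $T_M$. Your explicit remarks on the Banach-algebra property ensuring $\vm = \vr \vu \in W^{k,2}$ and on the measurability of the stopping time via \eqref{S17} are points the paper leaves implicit, but they do not alter the argument.
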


\subsection{Convergence on the life-span of strong solution}

Similarly to the preceding part, we write
\begin{align}
	\frac{1}{N} &\sum_{n=1}^N (\vr, \vm = \vr \vu, \vB)_h \mathds{1}_{[0, T_{\rm max})} [D_n] - \expe{
		(\vr, \vm = \vr \vu, \vB)\mathds{1}_{[0, T_{\rm max})} [D]} \br &=
	\frac{1}{N} \sum_{n=1}^N (\vr, \vm, \vB)_h \mathds{1}_{[0, T_{\rm max})} [D_n]
	- \frac{1}{N} \sum_{n=1}^N (\vr, \vm, \vB) \mathds{1}_{[0, T_{\rm max})} [D_n]
	\br&+ \frac{1}{N} \sum_{n=1}^N (\vr, \vm, \vB) \mathds{1}_{[0, T_{\rm max})} [D_n] - \expe{
		(\vr, \vm, \vB)\mathds{1}_{[0, T_{\rm max})} [D] }.
	\nonumber	
\end{align}
Applying SLLN we get
\begin{align}
\left\| \frac{1}{N} \sum_{n=1}^N (\vr, \vm, \vB) \mathds{1}_{[0, T_{\rm max})} [D_n] - \expe{
	(\vr, \vm, \vB)\mathds{1}_{[0, T_{\rm max})} [D] } \right\|_{L^\gamma \times L^{\frac{2 \gamma}{\gamma + 1}} \times L^2 (Q; \mathbb \R^7)} \to 0
	\ \prst - \mbox{a.s. as}\ N \to \infty.
	\nonumber
	\end{align}
However, as the data and consequently the solutions are deterministically bounded in the energy space $L^\gamma \times L^{\frac{2 \gamma}{\gamma + 1}} \times L^2 (Q;  \R^7)$, we may pass to expectation obtaining
\begin{equation} \label{sss1}
\expe{\left\| \frac{1}{N} \sum_{n=1}^N (\vr, \vm, \vB) \mathds{1}_{[0, T_{\rm max})} [D_n] - \expe{
	(\vr, \vm, \vB)\mathds{1}_{[0, T_{\rm max})} [D] } \right\|_{L^\gamma \times L^{\frac{2 \gamma}{\gamma + 1}} \times L^2 (Q; \mathbb \R^7)} } \to 0
\end{equation}
as $N \to \infty$.

Finally, the approximation error can be estimated analogously as in \eqref{ess2}:
\begin{align}
	&\expe{ \left\| \frac{1}{N} \sum_{n=1}^N (\vr, \vm , \vB)_h \mathds{1}_{[0, T_{\rm max})} [D_n]
		- \frac{1}{N} \sum_{n=1}^N (\vr, \vm , \vB) \mathds{1}_{[0, T_{\rm max})} [D_n]
		\right\|_{L^\gamma \times L^{\frac{2 \gamma}{\gamma + 1}} \times L^2 (Q; \mathbb \R^7)} }
	\to 0 \br \mbox{as}\ h &\to 0, \ \TS \to 0\  \mbox{for any}\ t \in [0,T].
	\label{sss3}
\end{align}

We have proved the following result.

\begin{Theorem}[{\bf Convergence of the Monte Carlo FV scheme}] \label{thm2}
	Let $D : \Omega \to X_D$ be random data satisfying	
	\begin{align}
		& \norm{(\vr_0,\vu_0,\vB_0)}_{W^{k,2}(Q; \mathbb{R}^7 )} + \|\vr_0^{-1} \|_{L^\infty(Q)} \leq K, \  \ |\vc{g}| \leq K, \ \abs{\vc{b}^{\pm}} \leq K, \br
		& \frac{1}{K} \leq \mu \leq K,\
0 \leq {\lambda + \frac{2}{3} \mu } \leq K,\ \frac{1}{K} \leq \zeta \leq K,\quad \prst\mbox{-a.s.},
		\nonumber	
	\end{align}
	where $k \geq 6$, and $K>0$ is a deterministic constant. Let $(D_n)_{n=1}^\infty$ be a sequence of i.i.d copies of $D$. Let $(\vr, \vm = \vr \vu, \vB)[D]$ be the strong solution defined on $[0, T_{\rm max}[D])$, and let $(\vr, \vm = \vr \vu,
	\vc{B})_h[D]$ be its approximation by the FV scheme \eqref{scheme} with the discretization steps $(\Delta t, h) \in (0,1)^2$.
	
	Then
	\begin{align}
		&\expe{ \left\| \frac{1}{N} \sum_{n=1}^N (\vr, \vm, \vB)_h \mathds{1}_{[0, T_{\rm max})} [D_n]
			- \expe{(\vr, \vm, \vc{B}) \mathds{1}_{[0, T_{\rm max})} [D]}	
			\right\|_{L^\gamma \times L^{\frac{2 \gamma}{\gamma + 1}} \times L^2 (Q; \mathbb \R^7)} } \to 0 \br &\mbox{as}\ N \to \infty,\ h \to 0, \ \Delta t \to 0,
		\label{conv}
	\end{align}
	for any $0 \leq t \leq T$.	
\end{Theorem}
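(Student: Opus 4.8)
The plan is to reduce everything to the two-term decomposition of the total error already displayed before the statement, namely into the \emph{statistical} part $\frac1N\sum_n (\vr,\vm,\vB)\mathds{1}_{[0,T_{\rm max})}[D_n] - \expe{(\vr,\vm,\vB)\mathds{1}_{[0,T_{\rm max})}[D]}$ and the \emph{approximation} part $\frac1N\sum_n\big[(\vr,\vm,\vB)_h - (\vr,\vm,\vB)\big]\mathds{1}_{[0,T_{\rm max})}[D_n]$, and to show that each tends to zero in expectation. Writing $X = L^\gamma \times L^{\frac{2\gamma}{\gamma+1}} \times L^2(Q;\R^7)$ for the energy space, the statistical part is exactly \eqref{sss1}: the Banach-space SLLN \eqref{r5} yields $\prst$-a.s.\ convergence in $X$, and since the data are deterministically bounded as in \eqref{Br1}, the energy estimate \eqref{r4} bounds all moments of the $X$-norm uniformly in $N$; uniform integrability (dominated convergence) then upgrades the a.s.\ convergence to convergence of the expectation.

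The real work is the approximation part \eqref{sss3}. Here the obstruction is that the deterministic rate \eqref{error} supplied by Theorem \ref{Tdis} comes with a constant $c(K,M)$ that degenerates as $M \to \infty$, i.e.\ as the stopping level $T_M \nearrow T_{\rm max}$ (recall \eqref{S15}). Since the copies $D_n$ are identically distributed, it suffices to estimate the single-copy quantity $\expe{ \norm{\big[(\vr,\vm,\vB)_h - (\vr,\vm,\vB)\big]\mathds{1}_{[0,T_{\rm max})}[D]}_X }$ at each fixed $t \in [0,T]$. I would introduce an auxiliary level $M$ and split the sample space according to the stopping time,
\[
\mathds{1}_{\{t < T_{\rm max}\}} = \mathds{1}_{\{t < T_M\}} + \mathds{1}_{\{T_M \leq t < T_{\rm max}\}}.
\]

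On the event $\{t < T_M\}$ the exact solution stays bounded by $M$ in $L^\infty$, so Theorem \ref{Tdis}, through \eqref{error}, controls this contribution by $c(K,M)\big(\TS^{1/r} + h^{\alpha/r}\big)$ with $r = \max\{2,\gamma\}$. On the complementary event $\{T_M \leq t < T_{\rm max}\}$ the rate is unavailable, but both the numerical and the exact solution are bounded in $X$ uniformly in $(h,\TS)$ --- deterministically by Lemma \ref{UnBo} and in all moments by \eqref{r4} --- so H\"older's inequality in $\omega$ gives, for some $m>1$,
\[
\expe{ \norm{(\vr,\vm,\vB)_h - (\vr,\vm,\vB)}_X \, \mathds{1}_{\{T_M \leq t < T_{\rm max}\}} } \leq \Big( \expe{ \norm{(\vr,\vm,\vB)_h - (\vr,\vm,\vB)}_X^{\,m} } \Big)^{1/m} \mathbb{P}\big(T_M \leq t < T_{\rm max}\big)^{1-1/m}.
\]
Because $T_M \nearrow T_{\rm max}$, the indicator $\mathds{1}_{\{T_M \leq t < T_{\rm max}\}} \to 0$ pointwise a.s.\ for fixed $t$, whence $\mathbb{P}(T_M \leq t < T_{\rm max}) \to 0$ as $M \to \infty$ by dominated convergence, so this tail term vanishes.

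The three limits must then be taken in the correct order, which is also where the single genuine difficulty lies: the level $M$ is coupled to the rate and must be frozen before the mesh is refined. Given $\varepsilon > 0$, I would first choose $M$ so large that the tail contribution above is below $\varepsilon/3$ uniformly in $(h,\TS)$; then, with this $M$ fixed, let $h,\TS \to 0$ so that the $c(K,M)(\TS^{1/r}+h^{\alpha/r})$ term falls below $\varepsilon/3$; and finally send $N \to \infty$ to annihilate the statistical term via \eqref{sss1}. Collecting these estimates yields \eqref{conv}. The main obstacle throughout is precisely the degeneration of the error constant near $T_{\rm max}$; it is resolved by the stopping-time split together with the two inputs $\mathbb{P}(T_M \leq t < T_{\rm max}) \to 0$ and the deterministic/moment energy bounds, which render the uncontrolled tail negligible without ever invoking the rate there.
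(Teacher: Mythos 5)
Your proposal is correct and follows essentially the same route as the paper: the same decomposition into a statistical part, handled via the SLLN \eqref{r5} upgraded to expectations by the deterministic energy bounds as in \eqref{sss1}, and an approximation part reduced by identical distribution to a single-copy estimate via Theorem \ref{Tdis}. In fact you supply more detail than the paper, which asserts \eqref{sss3} only ``analogously as in \eqref{ess2}'': your stopping-time split $\mathds{1}_{\{t<T_{\rm max}\}}=\mathds{1}_{\{t<T_M\}}+\mathds{1}_{\{T_M\le t<T_{\rm max}\}}$ with $\prst(T_M\le t<T_{\rm max})\to 0$ as $M\to\infty$ is precisely the argument needed to tame the degenerating constant $c(K,M)$, and since \eqref{Br1} renders the energy bounds deterministic (Lemma \ref{UnBo} and \eqref{S20}), you could even dispense with the H\"older step in $\omega$ and use the cruder bound $\expe{\|\cdot\|_X\,\mathds{1}_{\{T_M\le t<T_{\rm max}\}}}\le c(K,T)\,\prst\left(T_M\le t<T_{\rm max}\right)$.
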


\begin{Remark} \label{Rass1}
By virtue of \eqref{ass1}, the energy inequality \eqref{S20} together with its discrete
counterpart stated in Lemma \ref{EnBa}, yield boundedness of both the approximate and the exact solution in the energy space $L^\gamma \times L^{\frac{2 \gamma}{\gamma + 1}} \times L^2 (Q; \mathbb \R^7)$ by a deterministic constant depending only on $K$ and $T$. Consequently, the convergence stated in \eqref{conv} can be strengthened to
		\begin{align}
		&\expe{ \left\| \frac{1}{N} \sum_{n=1}^N (\vr, \vm, \vB)_h \mathds{1}_{[0, T_{\rm max})} [D_n]
			- \expe{(\vr, \vm, \vc{B}) \mathds{1}_{[0, T_{\rm max})} [D]}	
			\right\|^m_{L^\gamma \times L^{\frac{2 \gamma}{\gamma + 1}} \times L^2 (Q; \mathbb \R^7)} } \to 0 \br &\mbox{as}\ N \to \infty,\ h \to 0, \ \TS \to 0
		\nonumber
	\end{align}
	for any $0 \leq t \leq T$, and any $m \geq 1$.
	\end{Remark}

\section{Numerical simulations}
In this section we present three two-dimensional experiments to illustrate theoretical error estimates results, cf.\ Theorem \ref{thm1}. We will calculate  the error of mean values, denoted by $E_1$
\begin{align}
&E_1(U,h,N) =  \frac1{L} \sum_{\ell=1}^L \left(  \left\| \frac{1}{N} \sum_{n = 1}^N U_h^{\ell,n} (T, \cdot)- \expe{ U (T, \cdot) }  \right\|_{L^p(Q)}  \right),
\end{align}
as well as the error of deviations,  denoted by $E_2$
\begin{align}
E_2(U,h,N) =
\frac1{L} \sum\limits_{\ell=1}^L \left(   \left\| \frac 1 N \sum\limits_{n=1}^N \Big| U_h^{\ell,n}(T, \cdot)   - \frac{1}{N}  \sum\limits_{j=1}^N U_h^{\ell,j}(T, \cdot)   \Big| \ -  \Dev{U(T, \cdot) }  \right\|_{L^p(Q)}  \right).
\end{align}
Here $U_h^{\ell,n} = U[D_{\ell,n}], \, U \in \{ \vr_h, \, \vm_h,\, \vc{B}_h,\,  \vu_h,\, \Gradd \vu_h,\, \Curlh \vc{B}_h  \}$ is the Monte Carlo FV approximation associated to the discrete sample $D_{\ell,n}$, and then $p\in \{\gamma,\, 2\gamma/(\gamma+1), \, 2, \,  2,\,  2, \,  2\}$ shall be chosen according to integrability of the corresponding component.
The exact expectation $\expe{ U (T, \cdot) } $ and the deviation $\Dev{U(T, \cdot) }$ are approximated by numerical solutions on the finest grid $h_{ref}$ emanating from $M_{ref}$ samples of data.
In the simulations we set $L$ to $40$, and  $M_{ref}$ to $500$.

\subsection{Sine wave problem}
Consider $Q =[-1,1]|_{\{-1,1\}} \times [-1, 1]$.  The initial data are taken as follows
 \begin{align*}
\vr_0(x) = 2 + \cos\big(2\pi (x_1+x_2)\big), \ \
\vu_0(x) =Y_1(\omega) \, \big(0,\, \sin(2\pi x_2) \big),\ \
\vc{B}_0(x) = \left(x_2+ Y_2(\omega) \sin \left( \frac{\pi x_2}{2}\right),\, 0 \right),
\end{align*}
the boundary data are
\[
\vu|_{x_2 = \pm 1} = \vc{0}, \quad B_1|_{x_2=1} = 1+Y_2(\omega), \quad B_1|_{x_2=-1} = -1-Y_2(\omega).
\]
Here $Y_1$ and $Y_2$ are  i.i.d.\ random variables obeying the uniformly distribution $\mathcal{U}([-0.1,0.1]^2)$.
The model parameters in the MHD system \eqref{pde} are taken as $\mu = 0.01, \; \lambda = 0, \;\zeta = 0.01$, $\gamma = 5/3$, $a=1$, and $b=0$.

Figure \ref{sine} shows deterministic numerical solutions $(\vr, m_1, B_1)_{h_{ref}}(\omega=0,T)$ obtained on a uniform squared  mesh with the mesh parameter $h_{ref}=2/320$  at the final time $T = 0.6$. We present also  time evolution of $\| \Divh \vc{B}_h\|_{L^{1}(Q)}(\omega=0, t)$ with $ h = 2/(10 \cdot 2^i), i = 1,\dots,5.$  This numerical result confirms the divergence-free property of our FV method shown in Lemma~\ref{lem:divfree};  $\| \Divh \vc{B}_h\|_{L^{1}(Q)}(\omega=0, t)$ are around  machine error.
Furthermore, Figure~\ref{sine-1} shows the mean and deviations of statistical numerical solutions $(\vr, m_1, B_1)_{h_{ref}}(\omega,T)$ with $M_{ref} = 500$ samples.

The statistical errors of numerical solutions obtained by the Monte Carlo FV scheme on a grid with  $h= 2/320$ and $N$ samples, $N=10 \cdot 2^n, n = 1,2,3,4$ are shown in  Figure \ref{sine-err-1}.
The total errors of statistical numerical solutions obtained with $(h,N) = (2/(10 \cdot 2^n), 10 \cdot 2^n), n = 1,2,3,4,$ are shown in
Figure~\ref{sine-err-2}.
The numerical results show half-order convergence rate with respect to $N^{-1}$, and a convergence rate between $1/2$ and $1$ with respect to $(h,N) = (2/(10 \cdot 2^n), 10 \cdot 2^n).$

\begin{figure}[htbp]
	\setlength{\abovecaptionskip}{0.cm}
	\setlength{\belowcaptionskip}{0.2cm}
	\centering
	\begin{subfigure}{0.42\textwidth}
	\includegraphics[width=\textwidth]{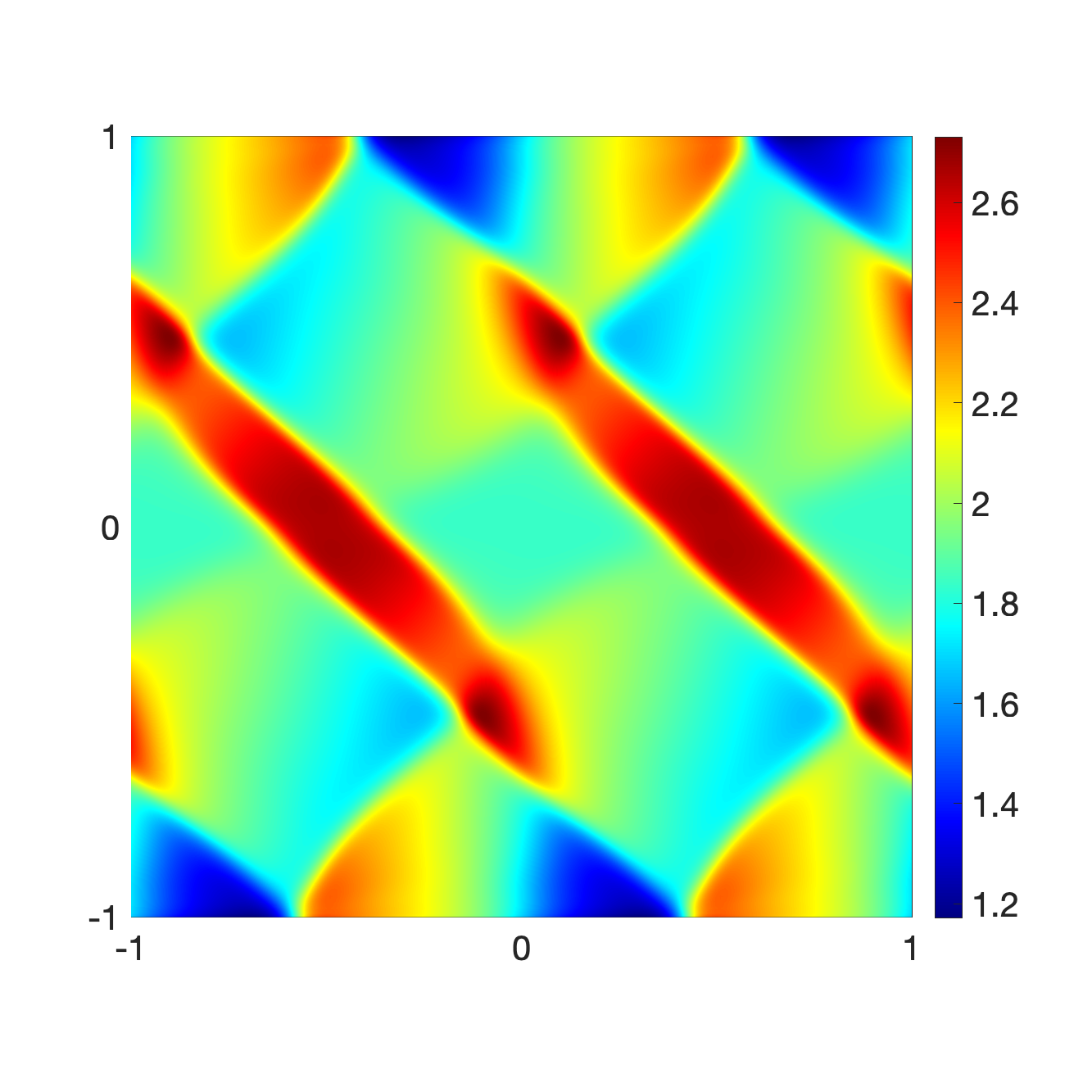}
\caption{ $\vr_{h_{ref}}(\omega=0,T)$}
	\end{subfigure}	
	\begin{subfigure}{0.42\textwidth}
	\includegraphics[width=\textwidth]{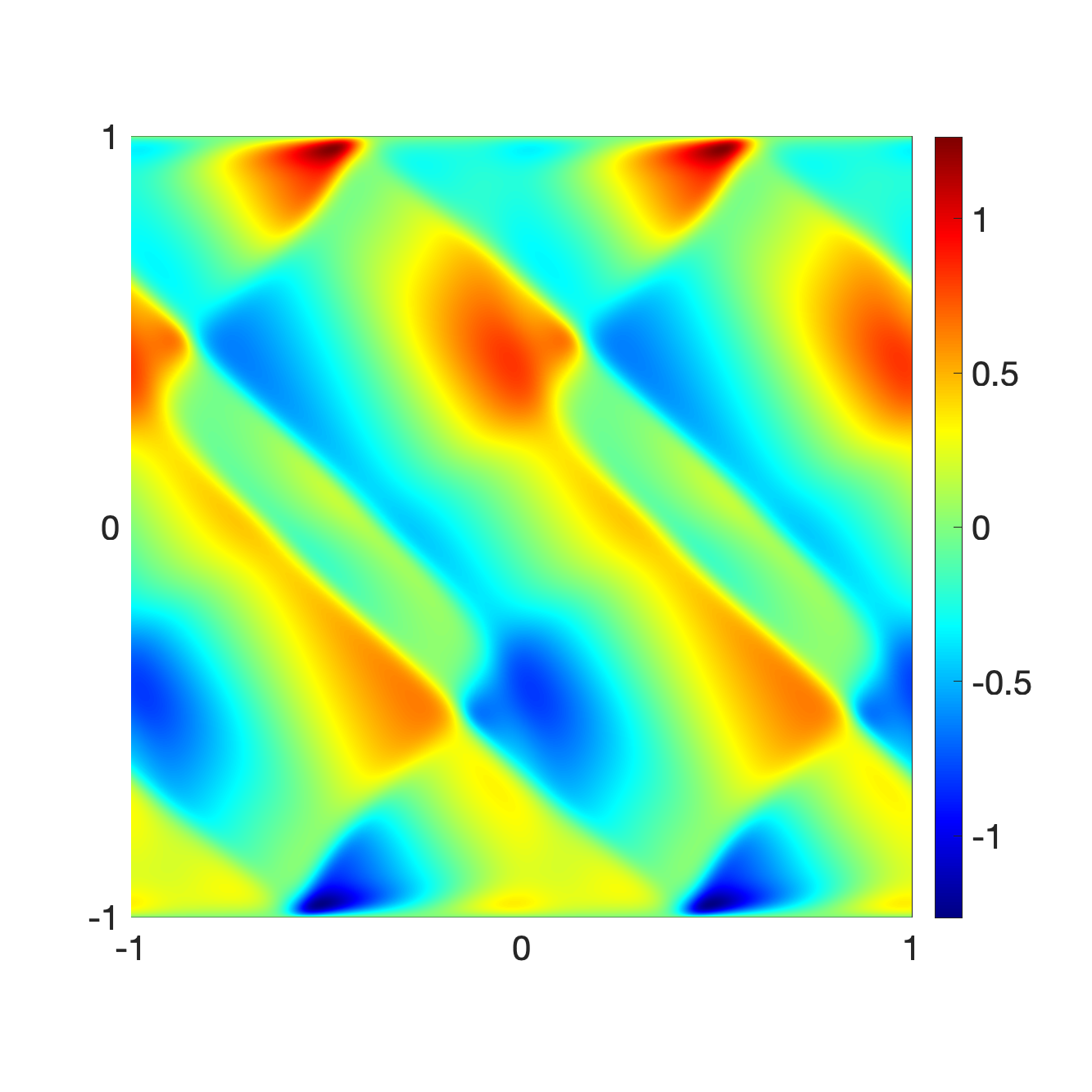}
\caption{ $(m_1)_{h_{ref}}(\omega=0,T)$}
	\end{subfigure}	\\
	\begin{subfigure}{0.42\textwidth}
	\includegraphics[width=\textwidth]{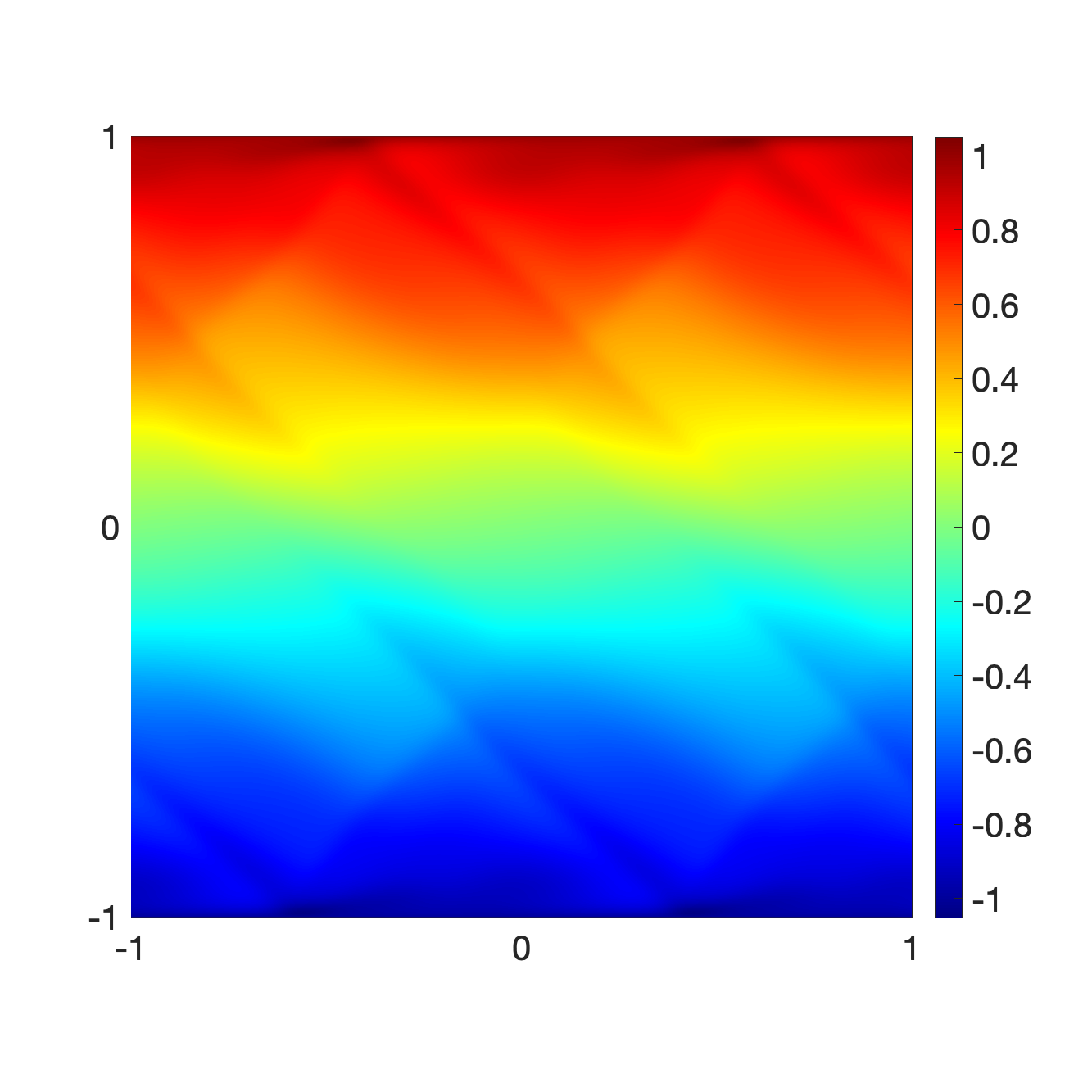}
\caption{ $(B_1)_{h_{ref}}(\omega=0,T)$}
	\end{subfigure}	
	\begin{subfigure}{0.42\textwidth}
	\includegraphics[width=\textwidth]{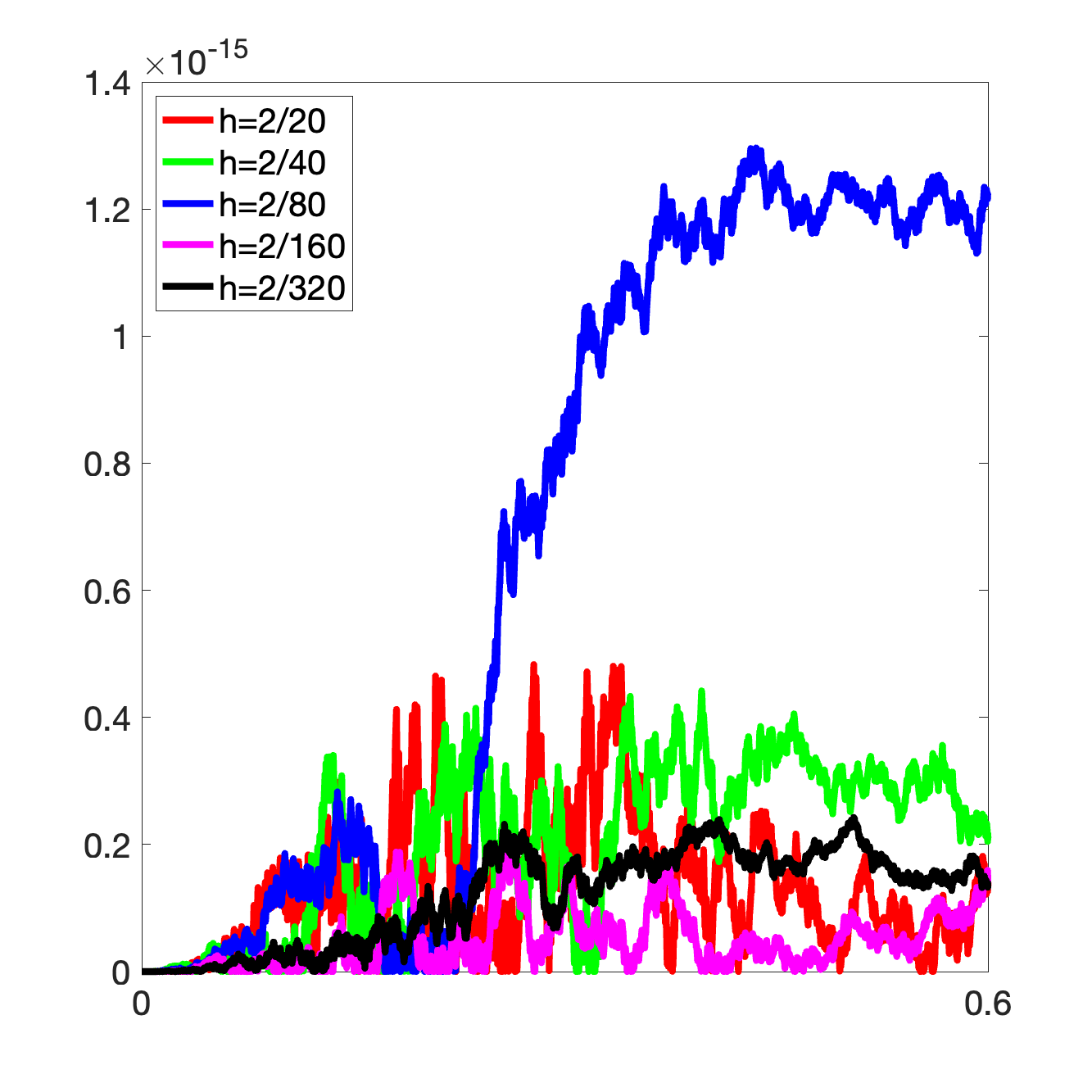}
	\caption{ $\| \Divh \vc{B}_h\|_{L^{1}(Q)}(\omega=0, t)$ }
	\end{subfigure}	
	\caption{{\bf Sine wave problem}. Deterministic FV  solutions $(\vr, m_1, B_1)_{h_{ref}}(\omega=0, T)$ with $h_{ref} = 2/320$ at $T=0.6$ and  the time evolution of $\| \Divh \vc{B}_h\|_{L^{1}(Q)}(\omega=0, t)$ with $h = 2/(10 \cdot 2^i), i = 1,\dots,5$.}\label{sine}
\end{figure}

\begin{figure}[htbp]
	\setlength{\abovecaptionskip}{0.cm}
	\setlength{\belowcaptionskip}{-0.cm}
	\centering
	\begin{subfigure}{0.32\textwidth}
	\includegraphics[width=\textwidth]{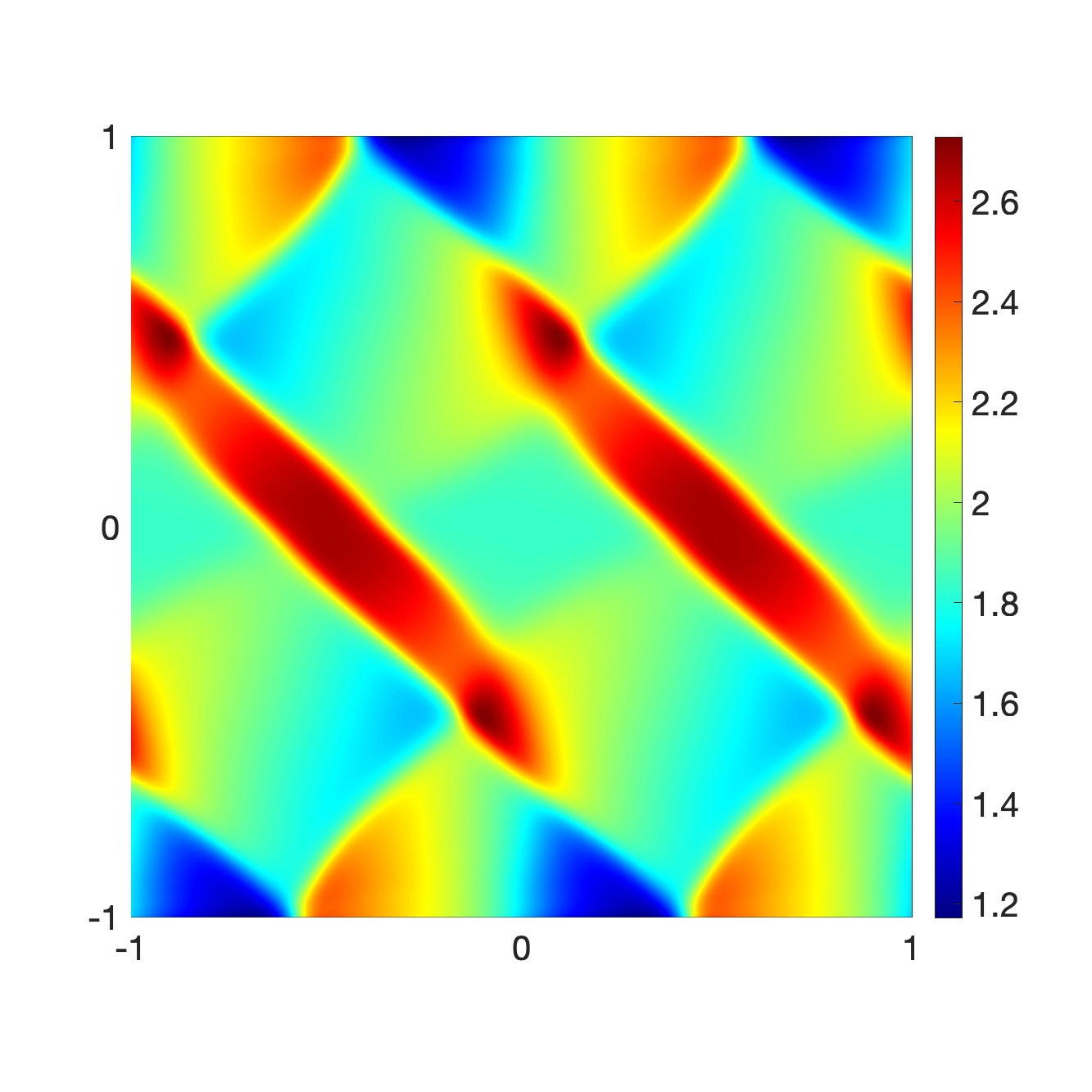}
	\end{subfigure}	
	\begin{subfigure}{0.32\textwidth}
	\includegraphics[width=\textwidth]{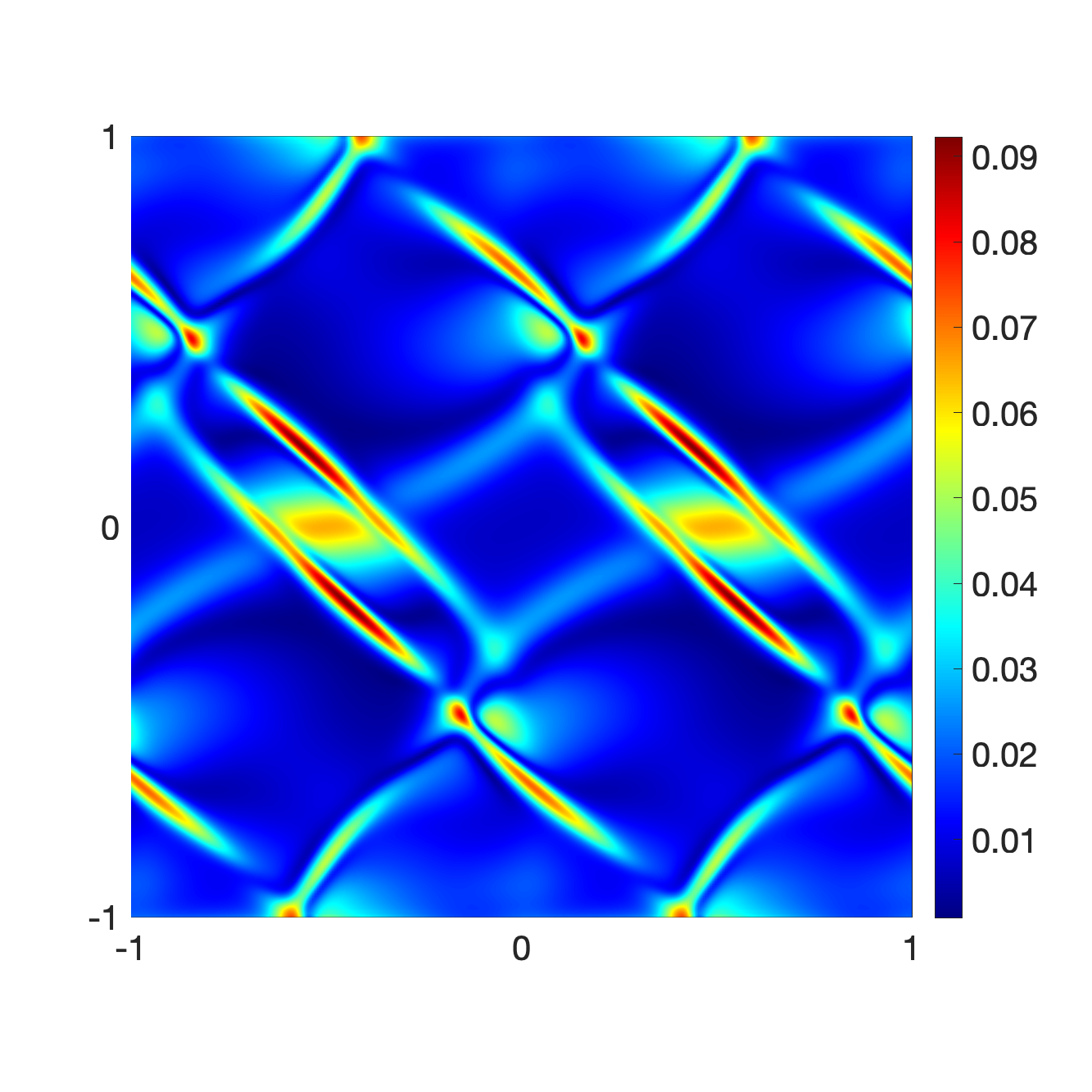}
	\end{subfigure}	
	\begin{subfigure}{0.32\textwidth}
	\includegraphics[width=\textwidth]{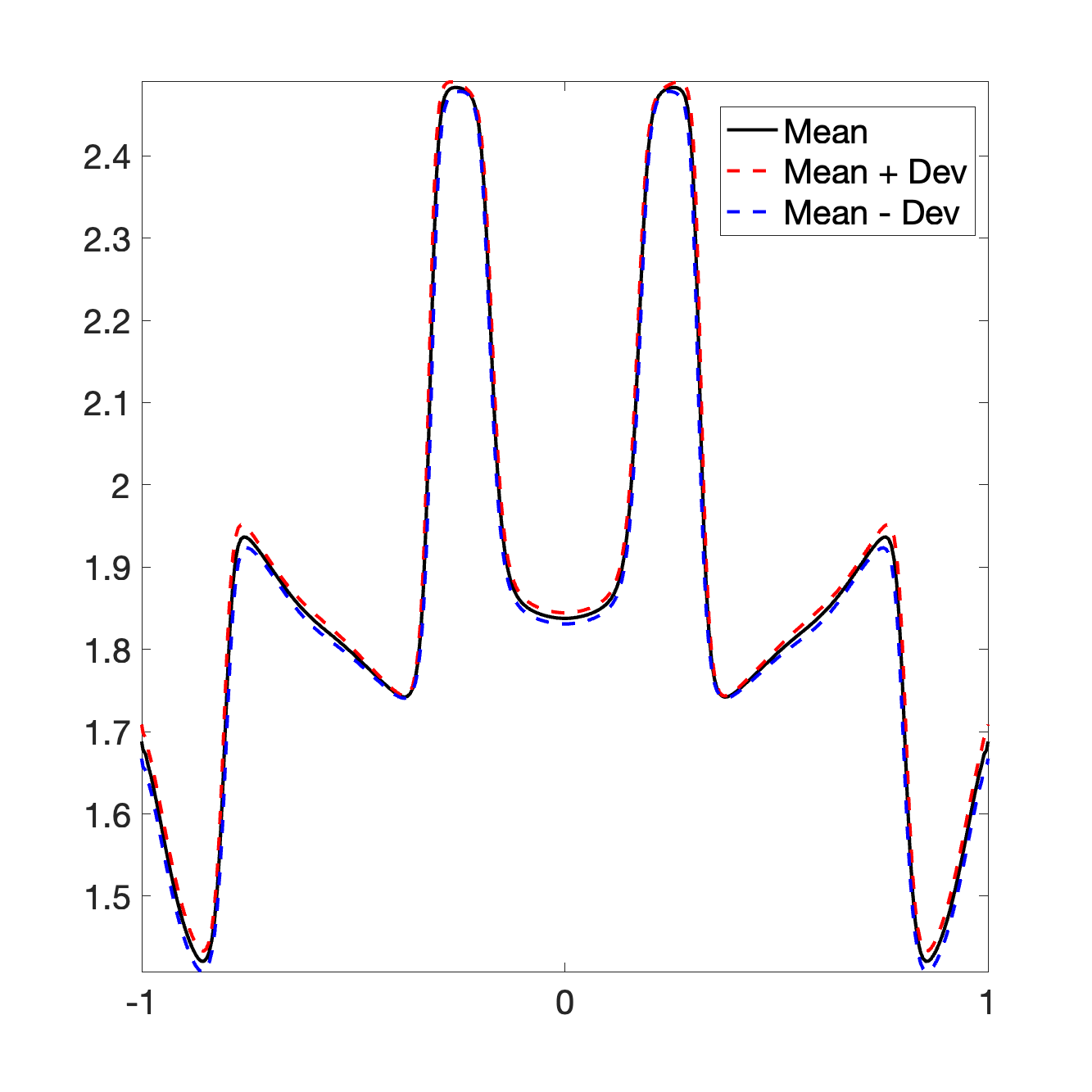}
	\end{subfigure}	\\
	\begin{subfigure}{0.32\textwidth}
	\includegraphics[width=\textwidth]{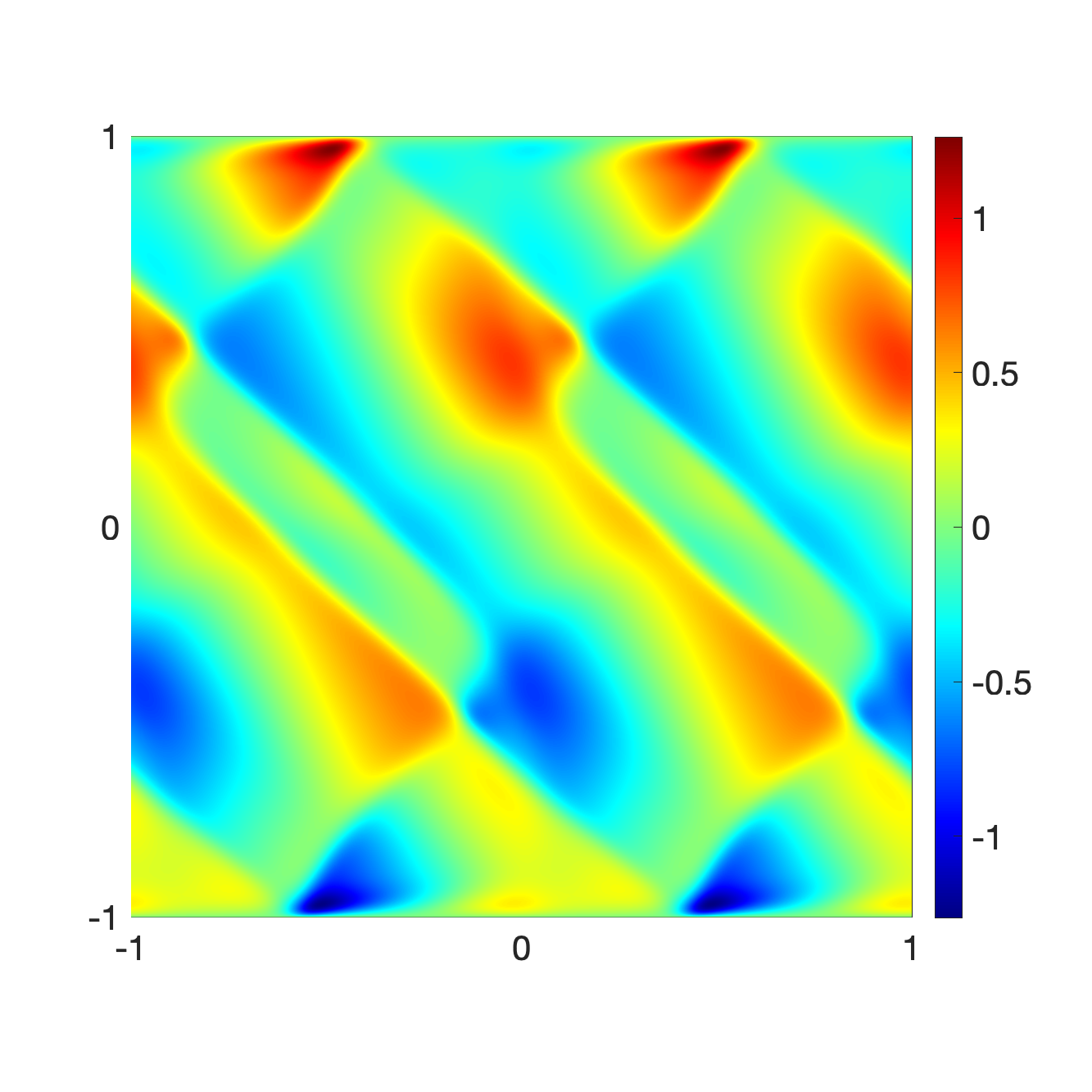}
	\end{subfigure}	
	\begin{subfigure}{0.32\textwidth}
	\includegraphics[width=\textwidth]{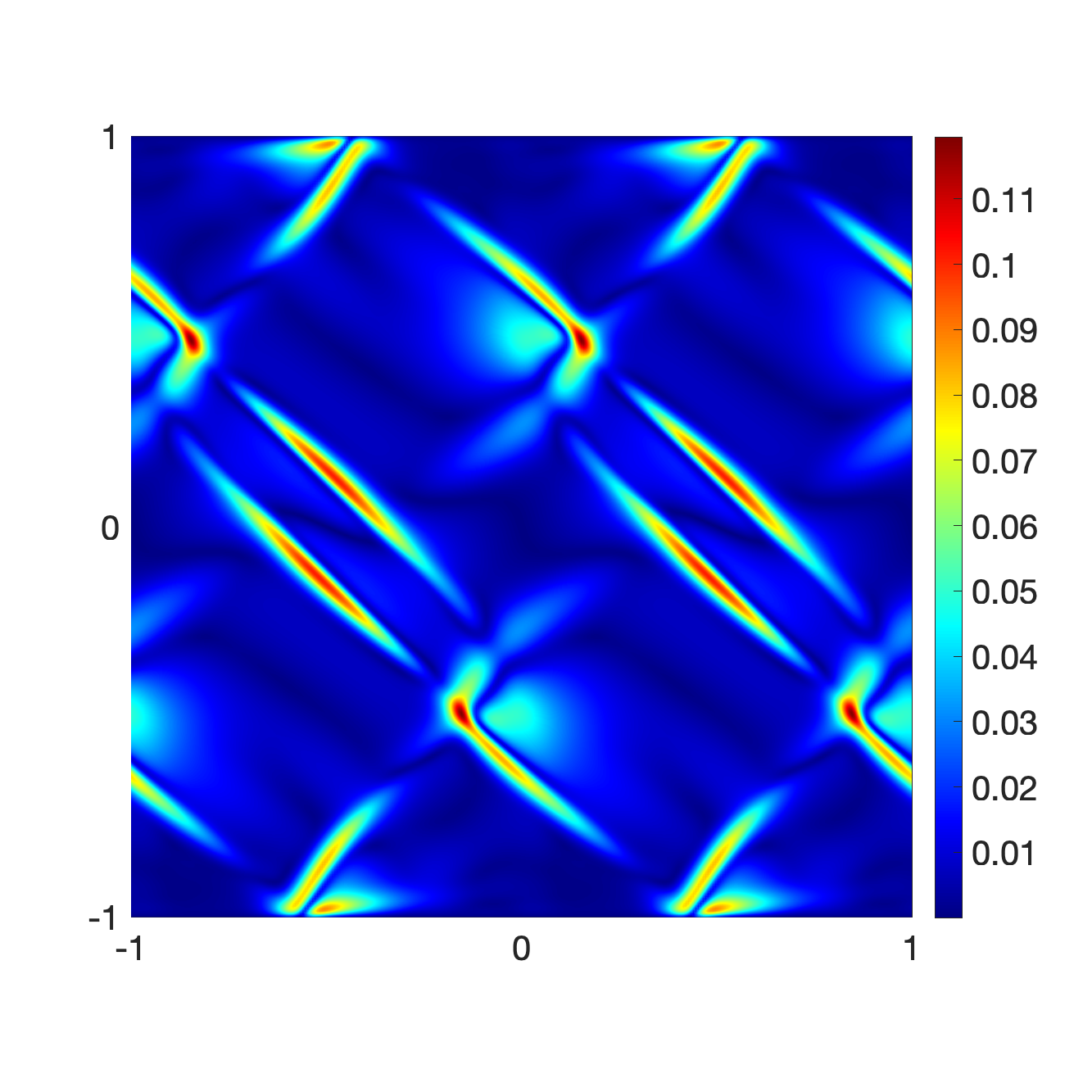}
	\end{subfigure}	
	\begin{subfigure}{0.32\textwidth}
	\includegraphics[width=\textwidth]{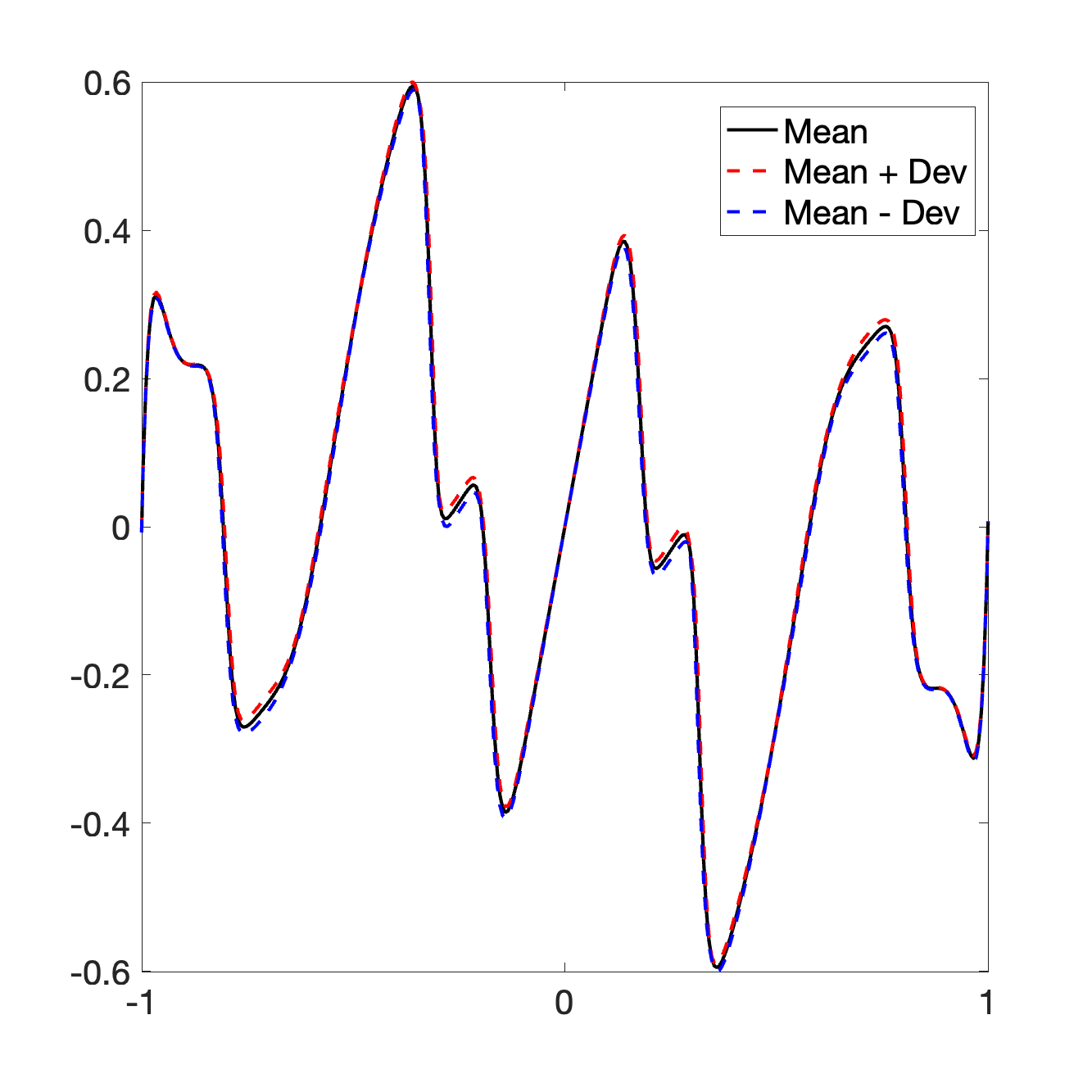}
	\end{subfigure}	\\
	\begin{subfigure}{0.32\textwidth}
	\includegraphics[width=\textwidth]{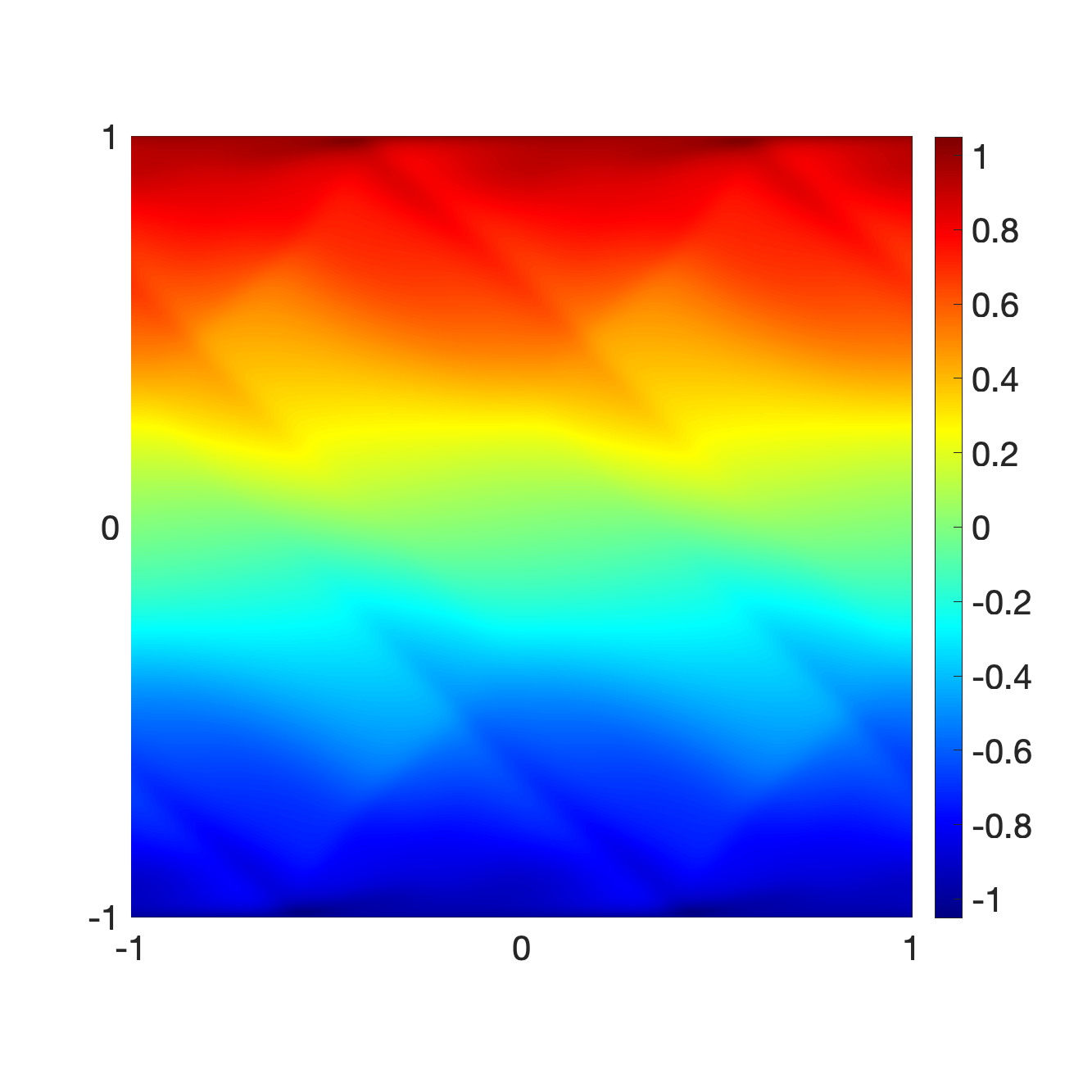}
	\end{subfigure}	
	\begin{subfigure}{0.32\textwidth}
	\includegraphics[width=\textwidth]{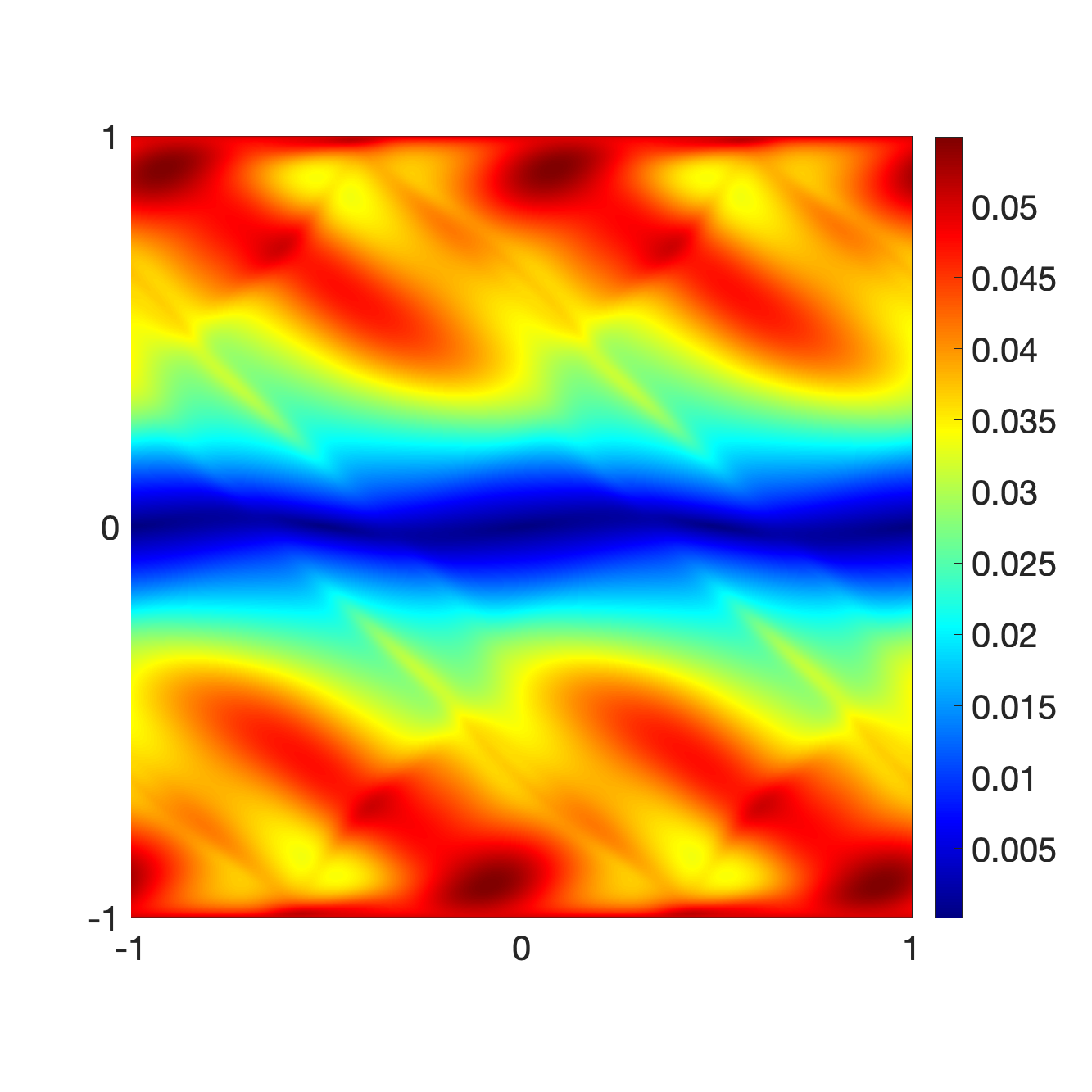}
	\end{subfigure}	
	\begin{subfigure}{0.32\textwidth}
	\includegraphics[width=\textwidth]{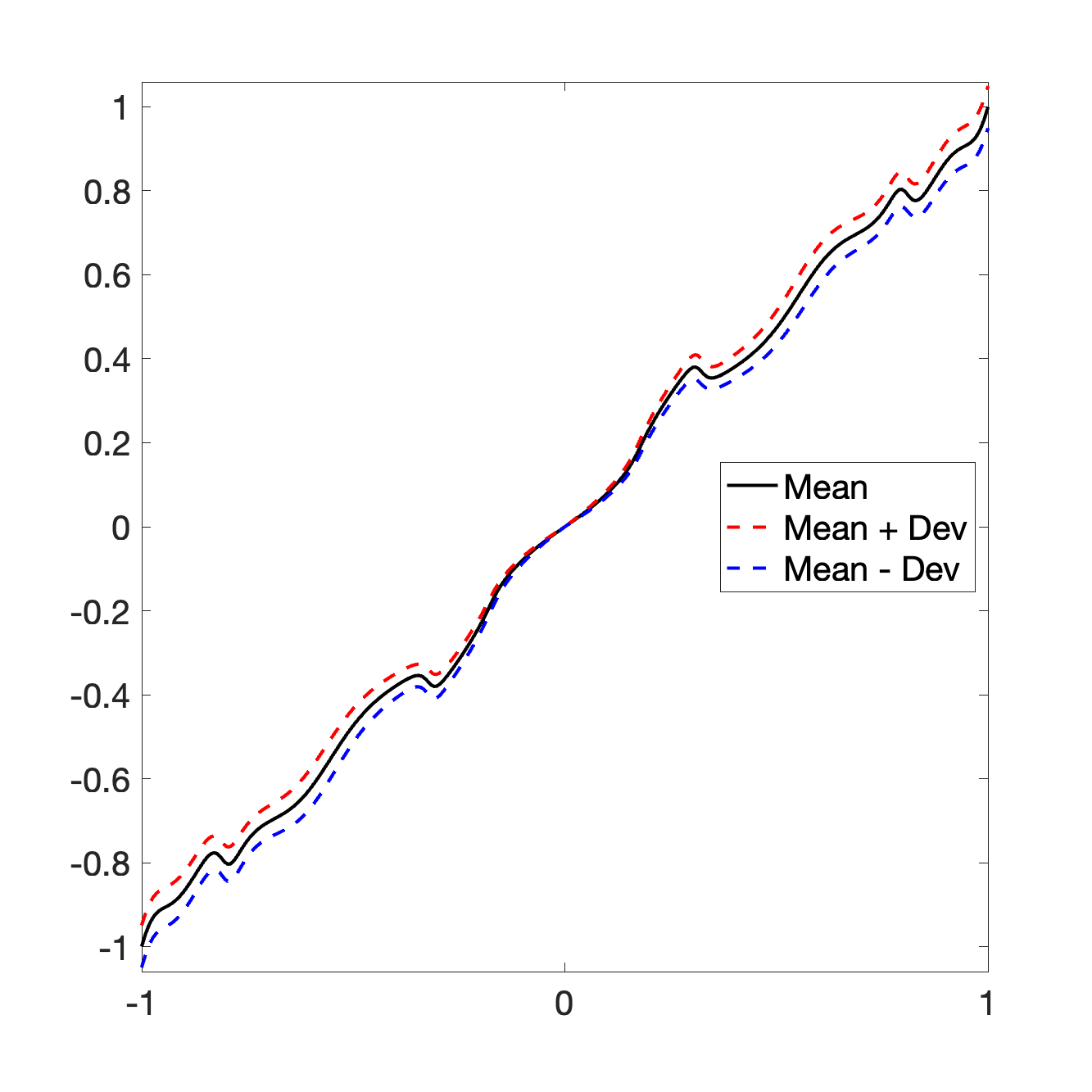}
	\end{subfigure}	
	\caption{ {\bf Random sine wave problem}. FV  solutions $(\vr, m_1, B_1)_{h_{ref}}(\omega,T)$ (from top to bottom) with $h_{ref} = 2/320$ and $500$ samples at $T=0.6$. From left to right: mean, deviation, and mean and deviations along $x=y$.}\label{sine-1}
\end{figure}

\begin{figure}[htbp]
	\setlength{\abovecaptionskip}{0.cm}
	\setlength{\belowcaptionskip}{-0.cm}
	\centering
	\begin{subfigure}{0.45\textwidth}
	\includegraphics[width=\textwidth]{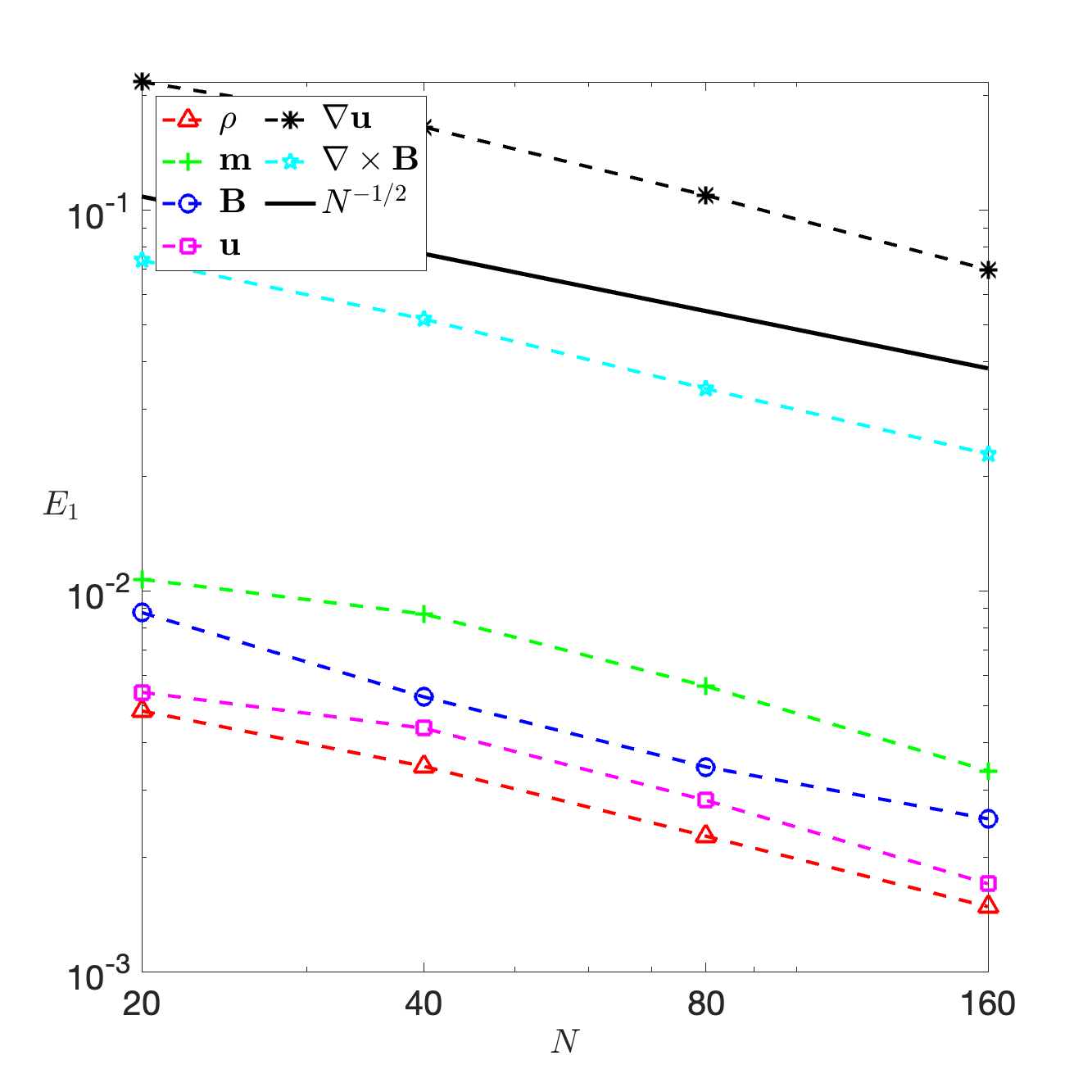}
	\end{subfigure}	
	\begin{subfigure}{0.45\textwidth}
	\includegraphics[width=\textwidth]{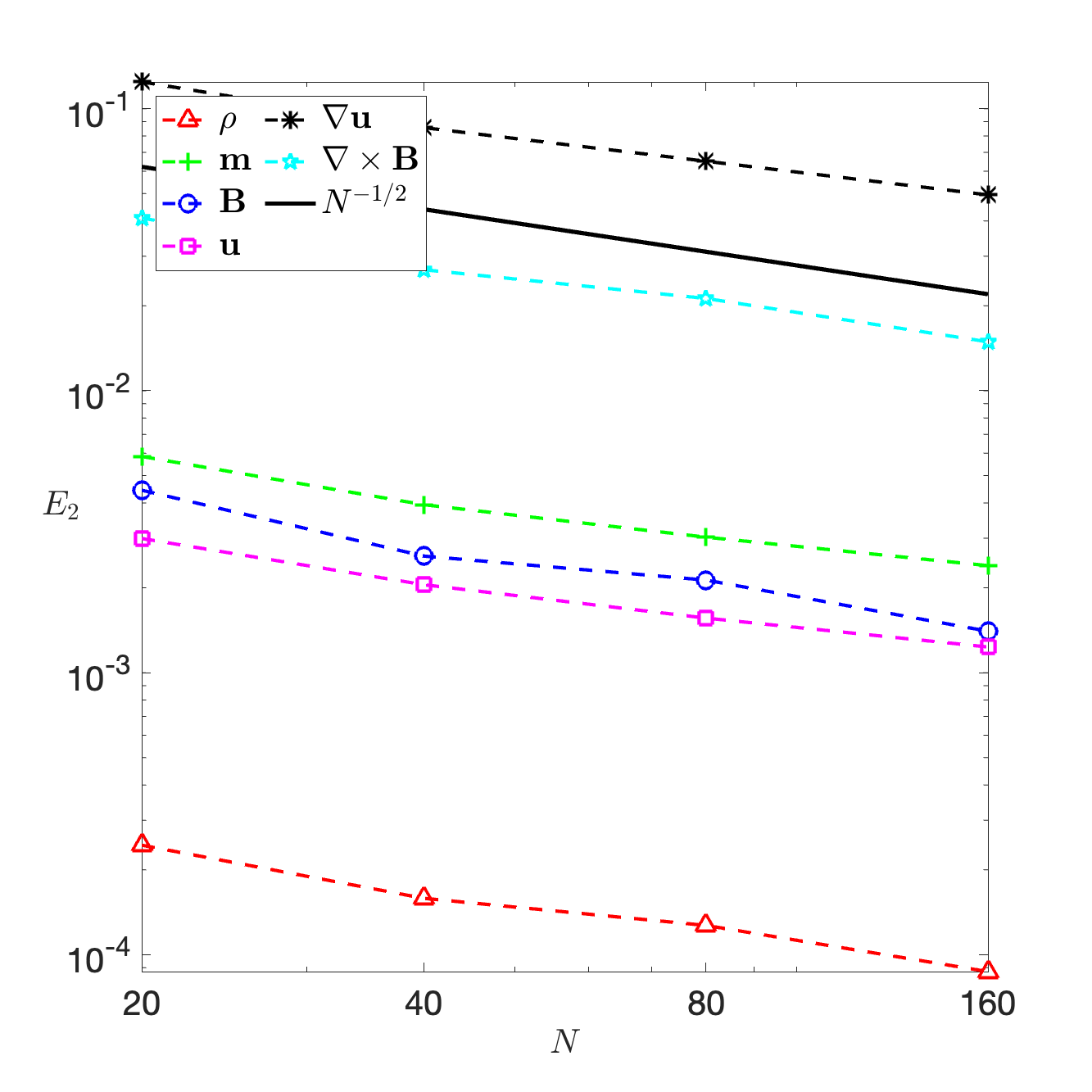}
	\end{subfigure}	
	\caption{ {\bf Random sine wave problem}. {\bf Statistical errors}: $E_1$ (left), $E_2$ (right). In the legend notations $\nabla \vc{u},$ $\nabla \times \vc{B}$ are used for the discrete operators $\Gradd \vu_h,\, \Curlh \vc{B}_h,$ respectively.}\label{sine-err-1}
\end{figure}

\begin{figure}[htbp]
	\setlength{\abovecaptionskip}{0.cm}
	\setlength{\belowcaptionskip}{-0.cm}
	\centering
	\begin{subfigure}{0.45\textwidth}
	\includegraphics[width=\textwidth]{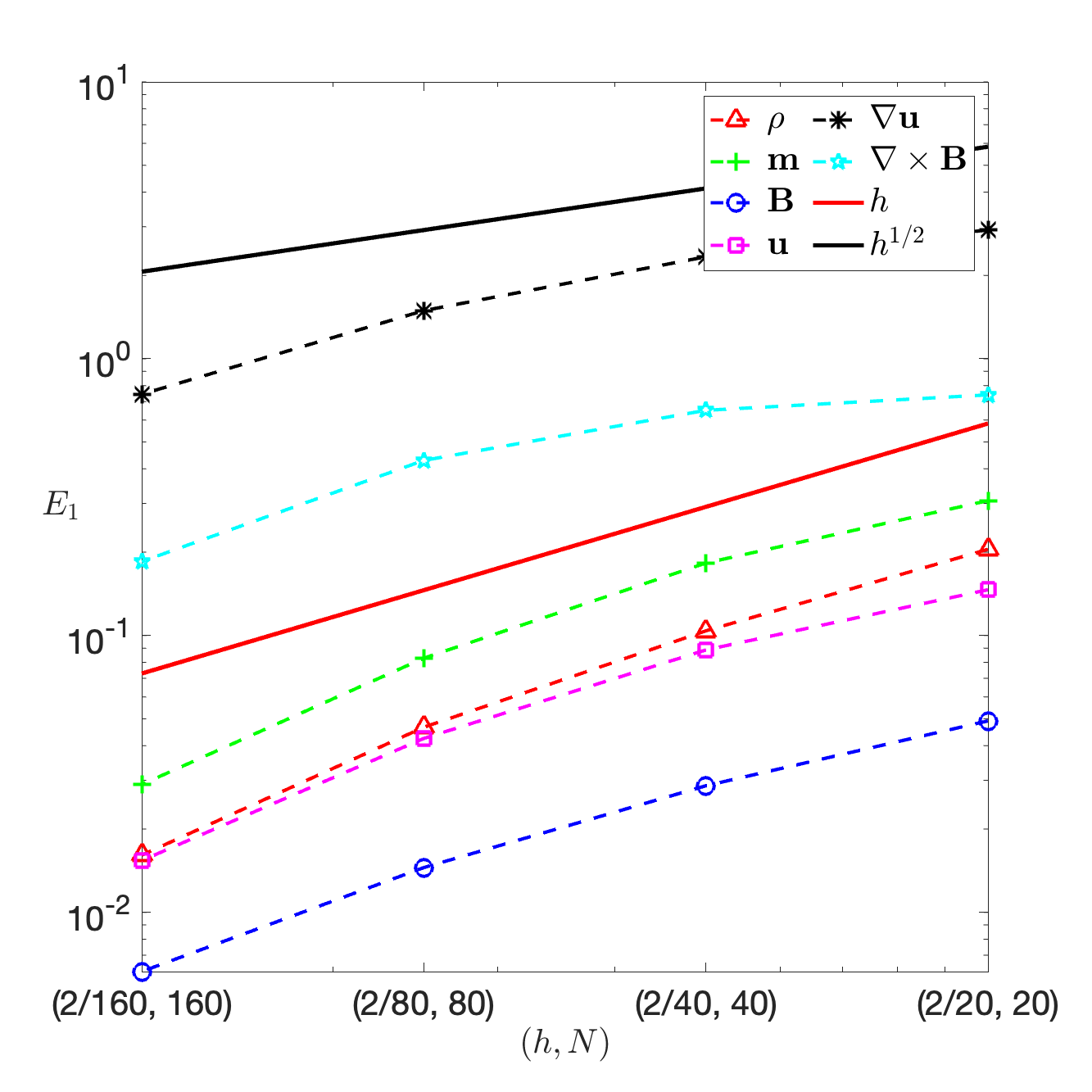}
	\end{subfigure}	
	\begin{subfigure}{0.45\textwidth}
	\includegraphics[width=\textwidth]{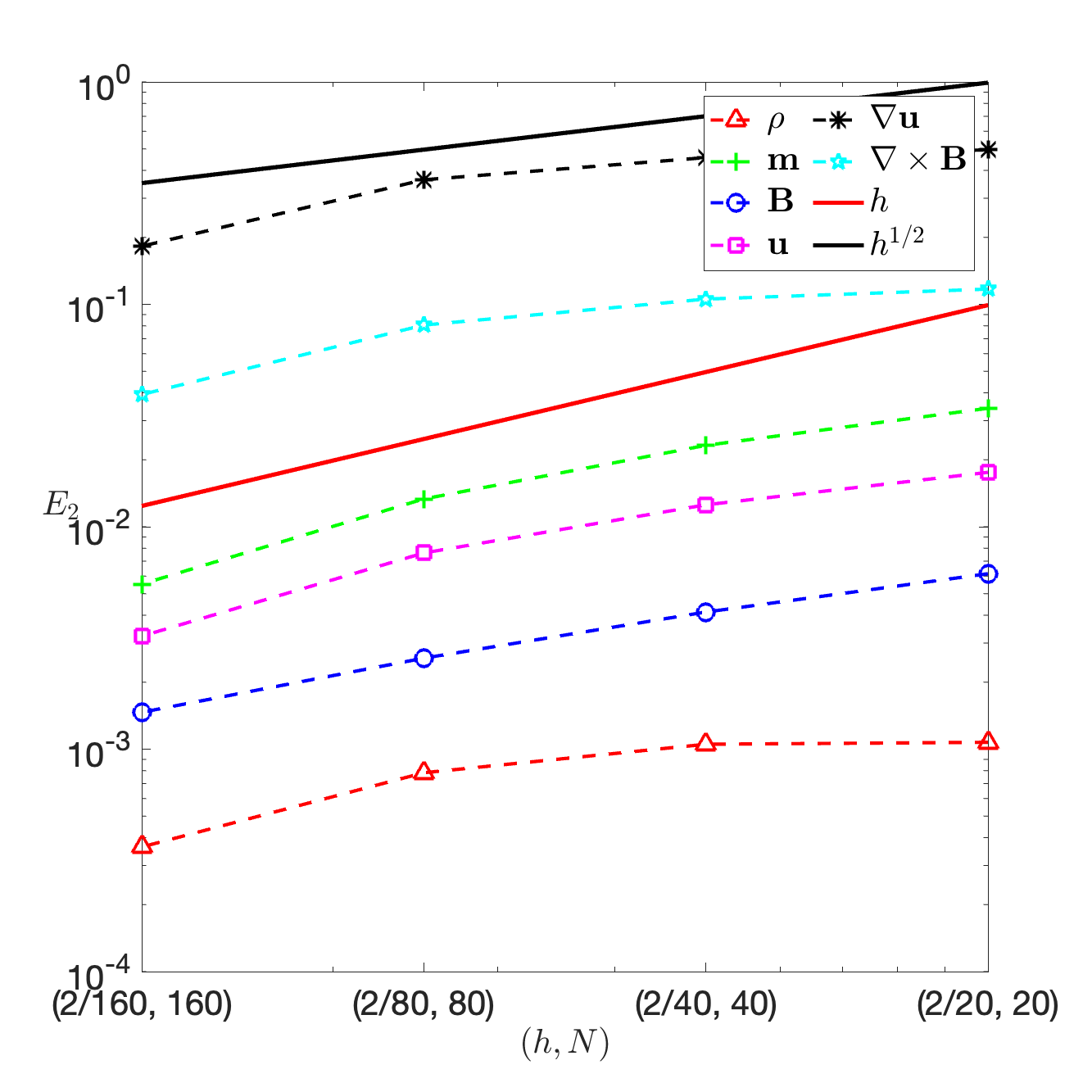}
	\end{subfigure}	
	\caption{ {\bf Random sine wave problem}. {\bf Total errors}: $E_1$ (left), $E_2$ (right).}\label{sine-err-2}
\end{figure}

\subsection{Kelvin-Helmholtz-type problem}
We consider the following computational domain $Q = [0,2]|_{\{0,2\}}\times[-0.5,0.5]$.
The initial data are taken as
\begin{align*}
& \rho(x,0) = \gamma, \quad  u_1(x,0) =  -0.1 \cos(2 \pi x_1) \sin(2 \pi x_2)  + Y_1(\omega) \sin(2 \pi  x_2), \\
& u_2(x,0) = 0.1\sin(2\pi x_1) [1+\cos(2\pi x_2)], \quad B_1 = 0.1 + Y_2(\omega) \sin \left( \pi x_2 \right), \quad  B_2(x,0) = 0
\end{align*}
and the boundary data are
\[
\vu|_{\partial Q} = \vc{0},\quad B_1|_{x_2=-0.5} = 0.1-Y_2(\omega), \quad B_1|_{x_2=0.5} = 0.1 + Y_2(\omega),
\]
where $Y_1, Y_2$ are i.i.d.\ uniform distributed  $Y_1, Y_2 \sim \mathcal{U}(-0.05,0.05)$.
The model parameters are taken as $\mu = 0.001, \, \lambda = 0, \, \zeta = 0.001, \gamma = 5/3$, $a=1$, and $b=0$.

%
%


Figure \ref{KH2} shows deterministic numerical solutions $(\vr, m_1, B_1)_{h_{ref}}(\omega=0,T)$  and streamlines of $\vu_{h_{ref}}$ and $\vB_{h_{ref}}$ obtained on a uniform squared mesh with the parameter  $h_{ref}=1/320$  at the final time $T = 2.2$. We also present the time evolution of $\| \Divh \vc{B}_h\|_{L^{\infty}(Q)}(\omega=0, t)$ with $ h = 1/(10 \cdot 2^i), i = 1,\dots,5$.  The random approximations are displayed in Figures \ref{KH2-1} and \ref{KH2-2}.

Further, Figure~\ref{KH2-err-1} shows  the statistical errors obtained using $N$ samples, $N=10 \cdot 2^n, n = 1,2,3,4$.
The total errors of statistical solutions $(\vr, \vu, \vB)_h^N$  with $(h,N) = (1/(10 \cdot 2^n), 10 \cdot 2^n), n = 1,2,3,4$ are displayed in  Figure~\ref{KH2-err-2}.

\begin{figure}[htbp]
	\setlength{\abovecaptionskip}{0.cm}
	\setlength{\belowcaptionskip}{0.2cm}
	\centering
	\begin{subfigure}{0.42\textwidth}
	\includegraphics[width=\textwidth]{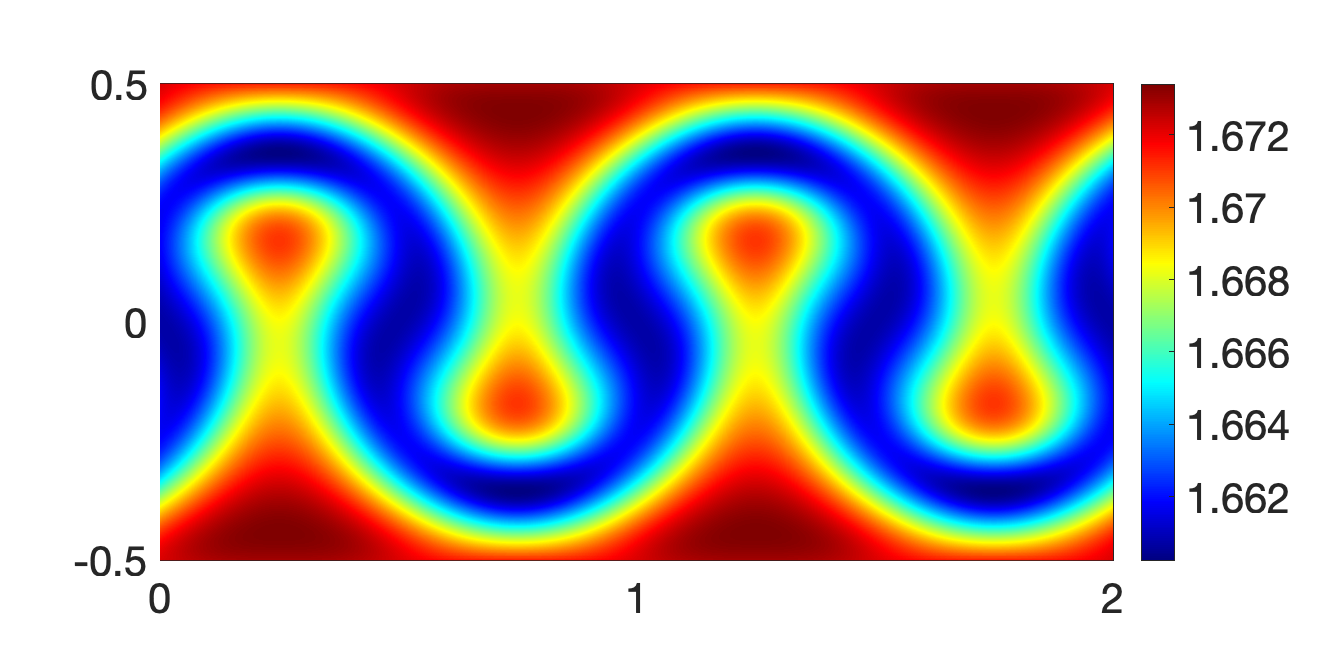}
\caption{ $\vr_{h_{ref}}(\omega=0,T)$}
	\end{subfigure}	
	\begin{subfigure}{0.42\textwidth}
	\includegraphics[width=\textwidth]{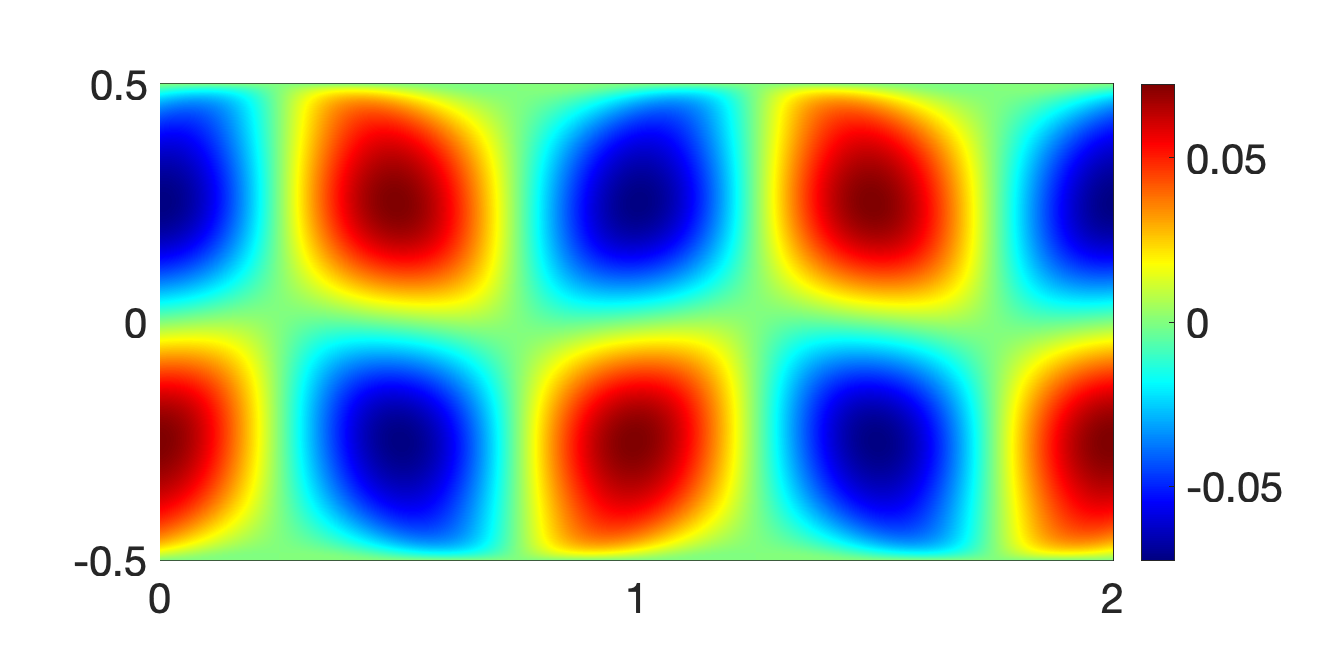}
\caption{ $(m_1)_{h_{ref}}(\omega=0,T)$}
	\end{subfigure}	\\	
	\begin{subfigure}{0.42\textwidth}
	\includegraphics[width=\textwidth]{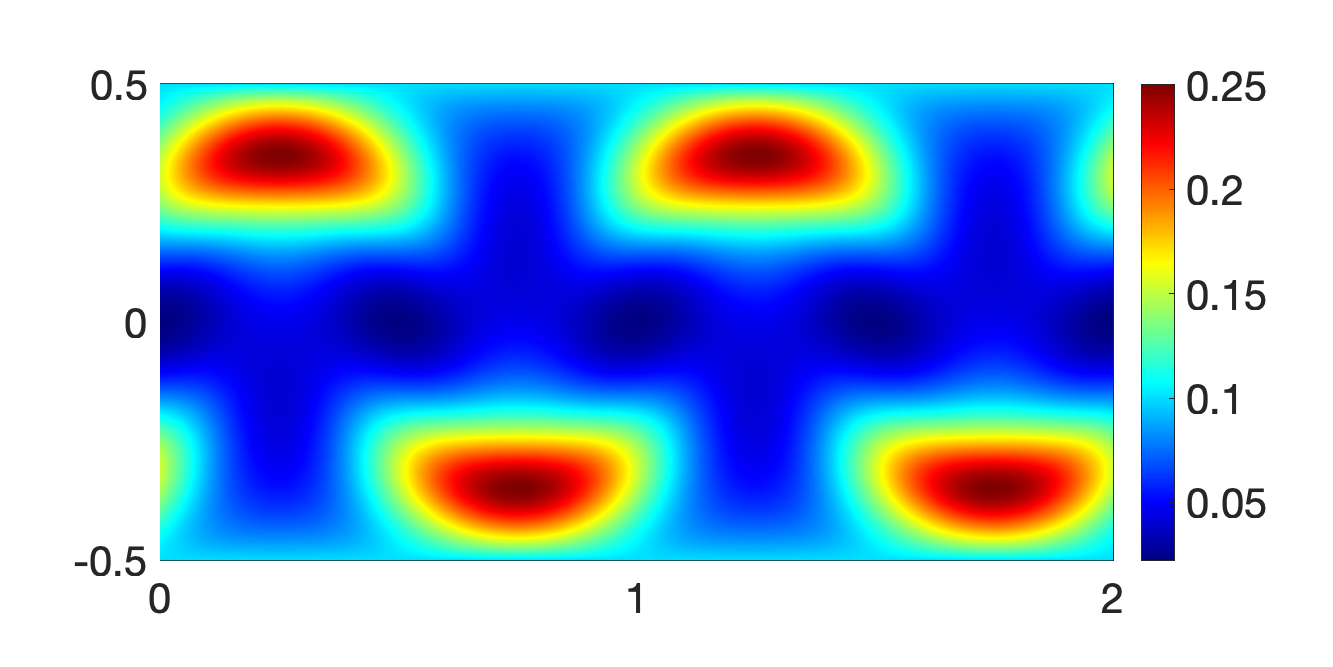}
\caption{ $(B_1)_{h_{ref}}(\omega=0,T)$}
	\end{subfigure}	
	\begin{subfigure}{0.42\textwidth}
	\includegraphics[width=\textwidth]{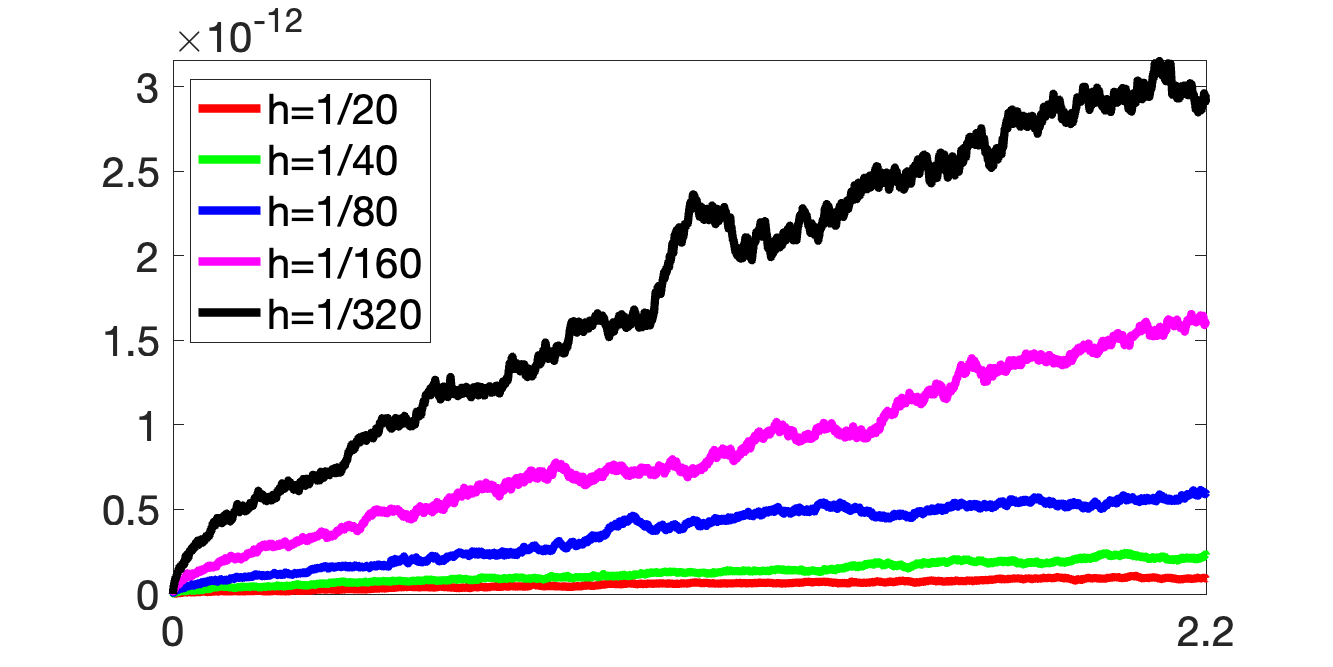}
	\caption{ $\| \Divh \vc{B}_h\|_{L^{\infty}(Q)}(\omega=0, t)$ }
	\end{subfigure}	\\
	\begin{subfigure}{0.42\textwidth}
	\includegraphics[width=\textwidth]{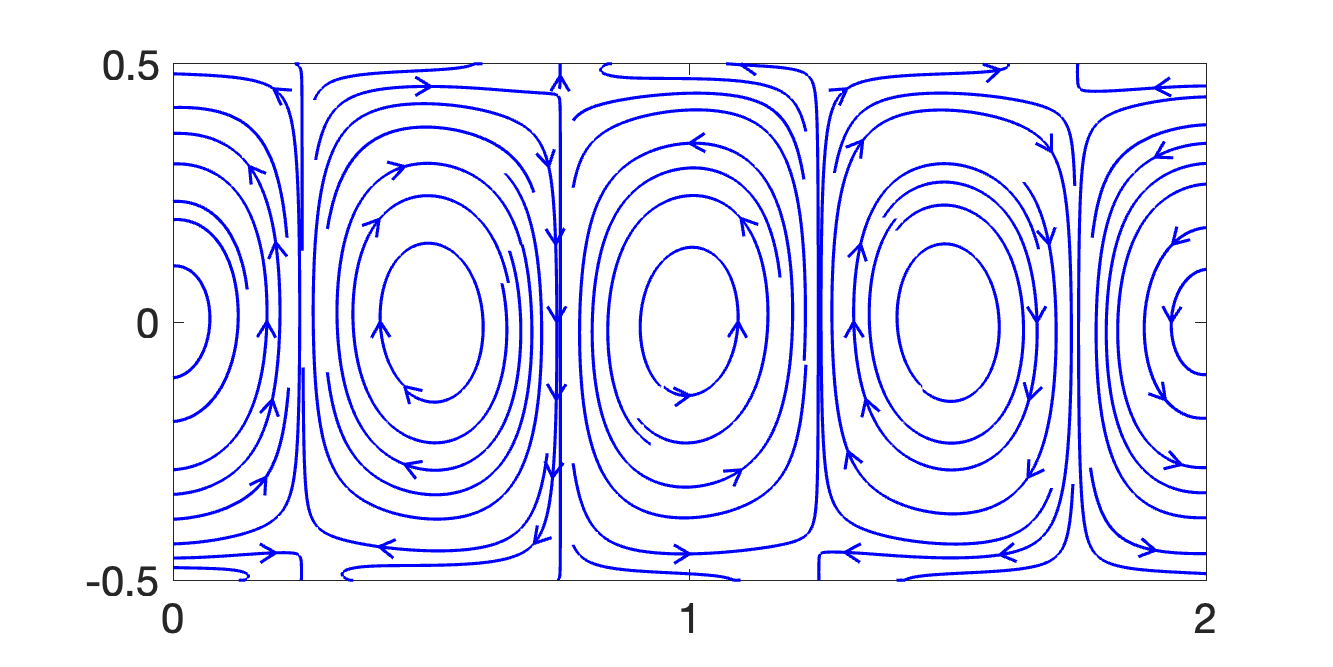}
\caption{ streamline of $\vu_{h_{ref}}(\omega=0,T)$}
	\end{subfigure}	
	\begin{subfigure}{0.42\textwidth}
	\includegraphics[width=\textwidth]{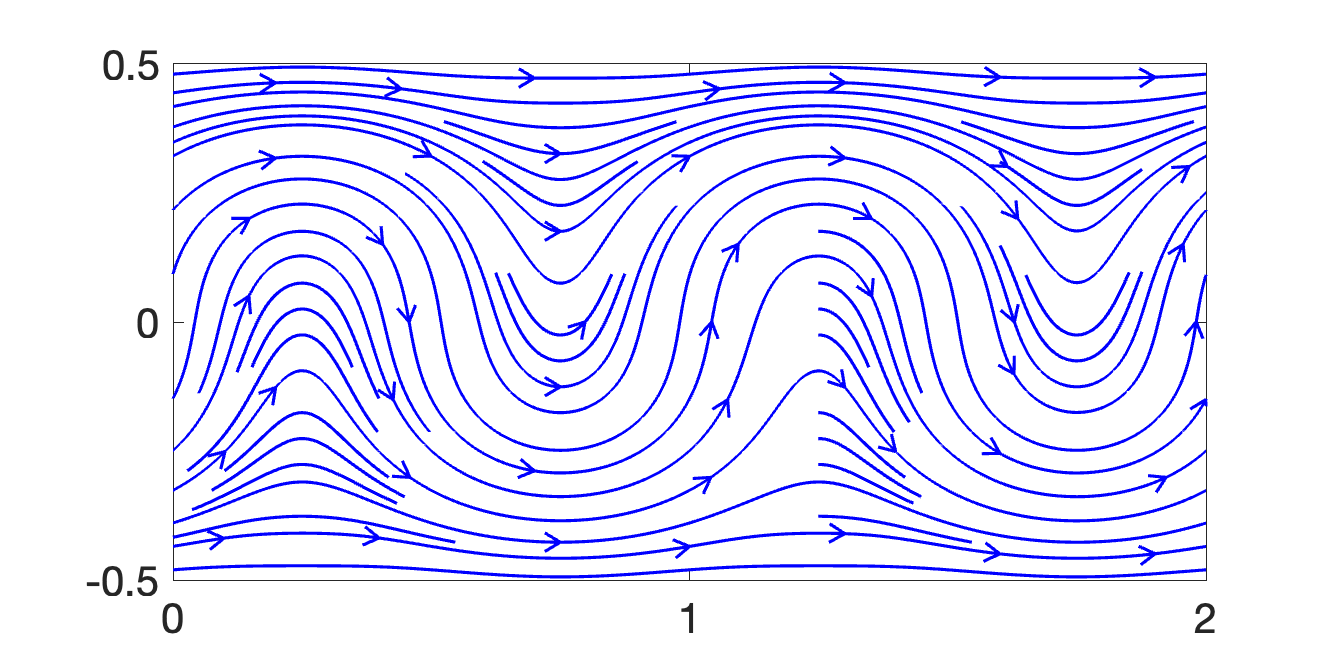}
\caption{ streamline of $\vB_{h_{ref}}(\omega=0,T)$}
	\end{subfigure}	
	\caption{{\bf Kelvin-Helmholtz-type problem}. Deterministic FV  solutions $(\vr, m_1, B_1)_{h_{ref}}(\omega=0,T)$ and streamlines of $\vu_{h_{ref}}$ and $\vB_{h_{ref}}$ with $h_{ref} = 1/320$ at $T=2.2$ and the time evolution of $\| \Divh \vc{B}_h\|_{L^{\infty}(Q)}(\omega=0, t)$ with $h = 1/(10 \cdot 2^i), i = 1,\dots,5$.}\label{KH2}
\end{figure}

\begin{figure}[htbp]
	\setlength{\abovecaptionskip}{0.cm}
	\setlength{\belowcaptionskip}{-0.cm}
	\centering
	\begin{subfigure}{0.32\textwidth}
	\includegraphics[width=\textwidth]{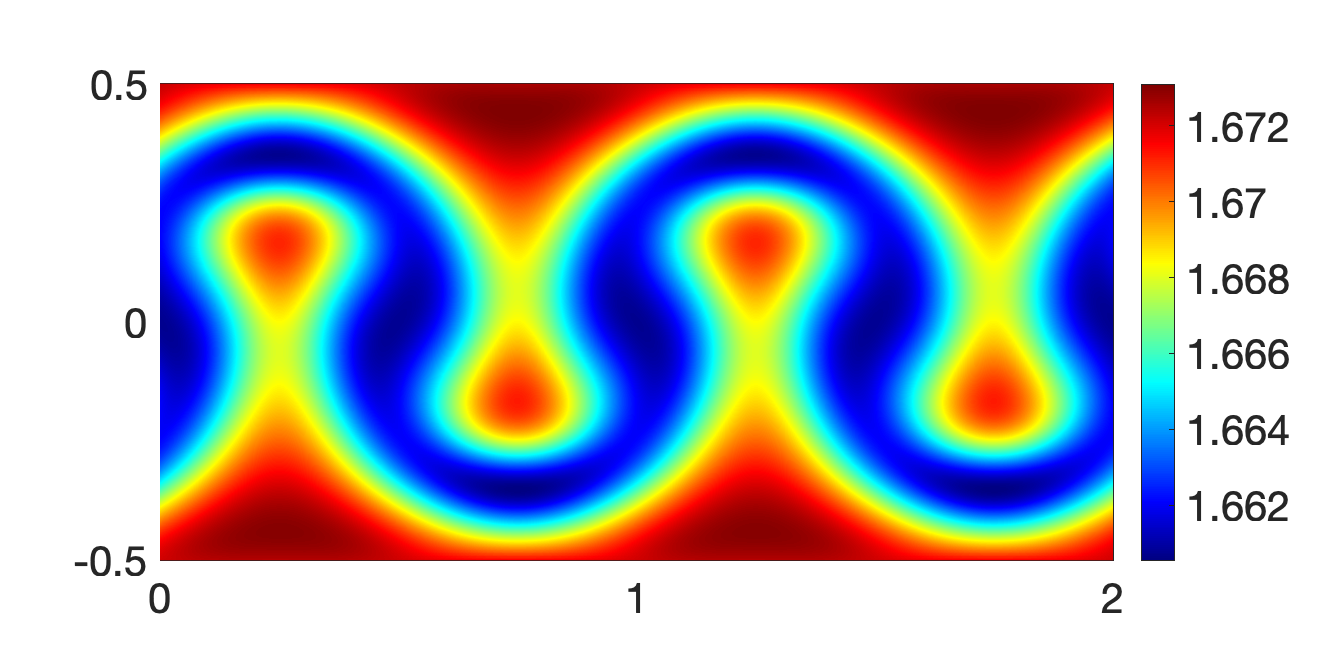}
	\end{subfigure}	
	\begin{subfigure}{0.32\textwidth}
	\includegraphics[width=\textwidth]{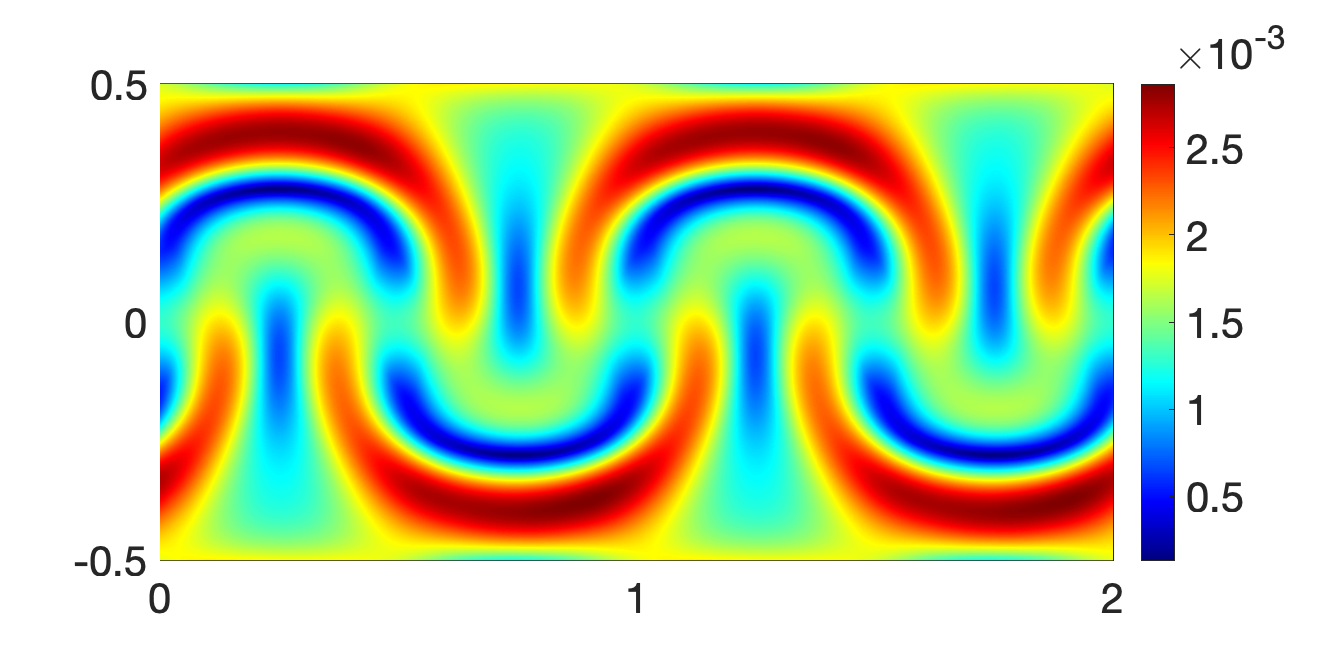}
	\end{subfigure}	
	\begin{subfigure}{0.32\textwidth}
	\includegraphics[width=\textwidth]{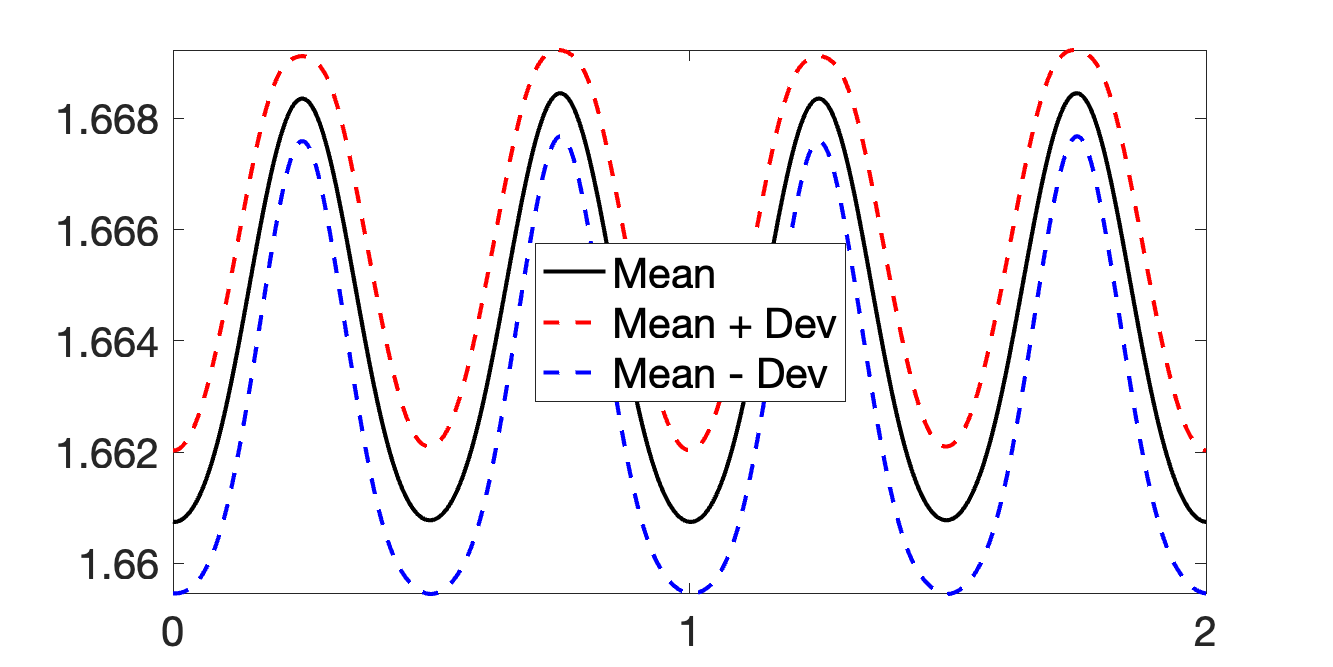}
	\end{subfigure}	\\
	\begin{subfigure}{0.32\textwidth}
	\includegraphics[width=\textwidth]{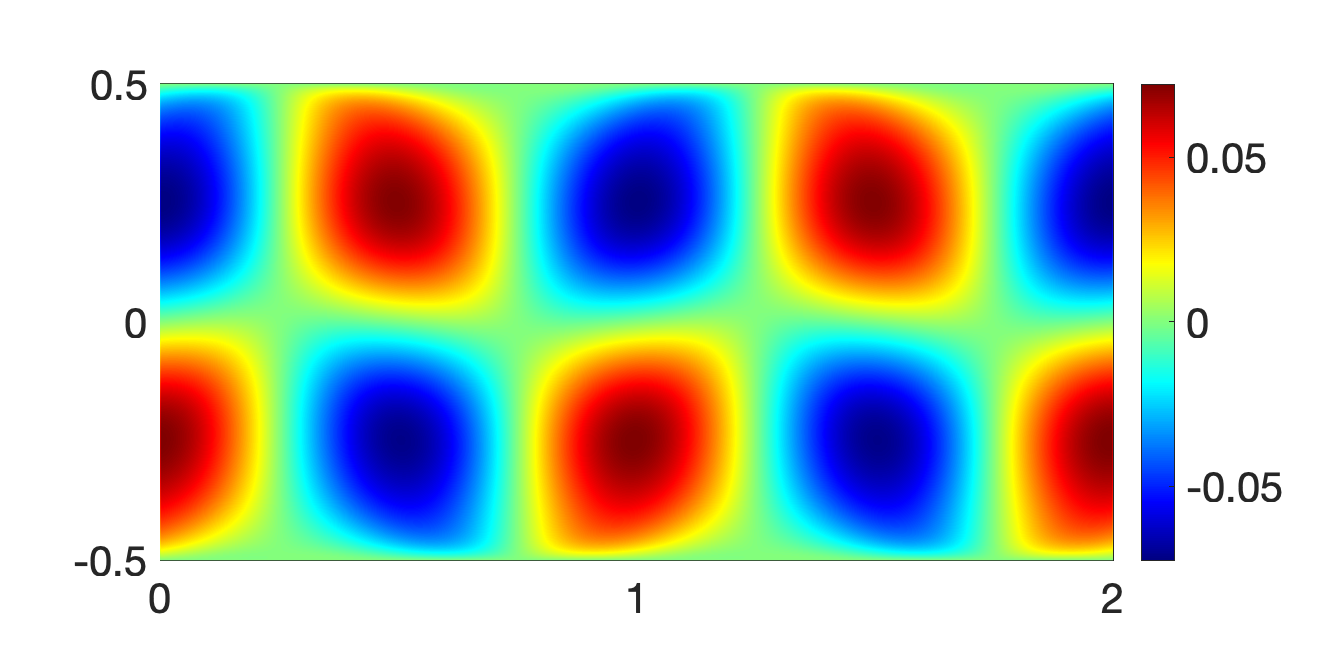}
	\end{subfigure}	
	\begin{subfigure}{0.32\textwidth}
	\includegraphics[width=\textwidth]{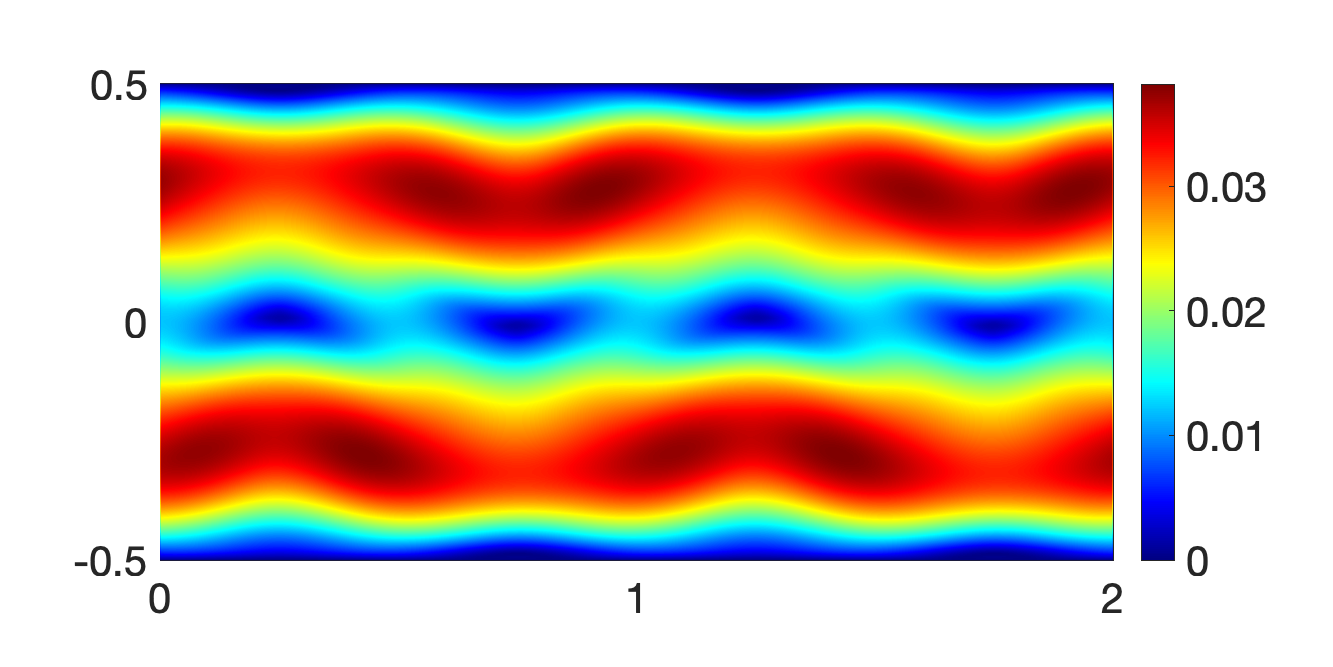}
	\end{subfigure}	
	\begin{subfigure}{0.32\textwidth}
	\includegraphics[width=\textwidth]{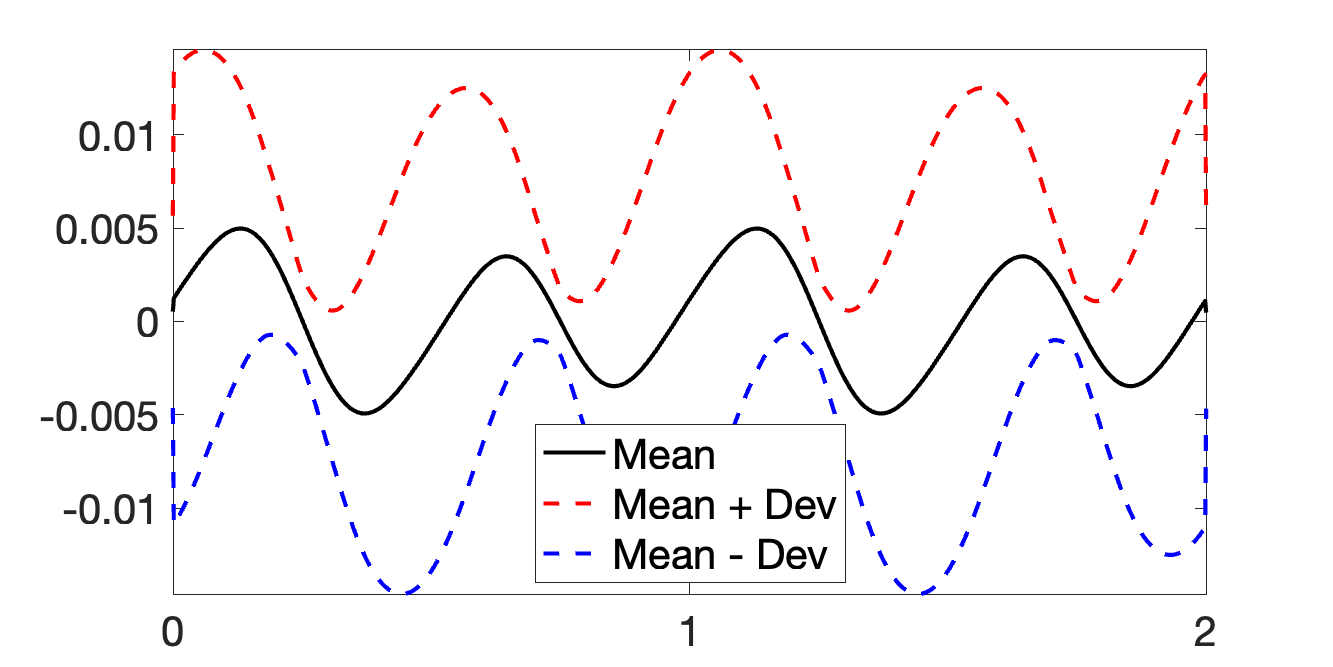}
	\end{subfigure}	\\
	\begin{subfigure}{0.32\textwidth}
	\includegraphics[width=\textwidth]{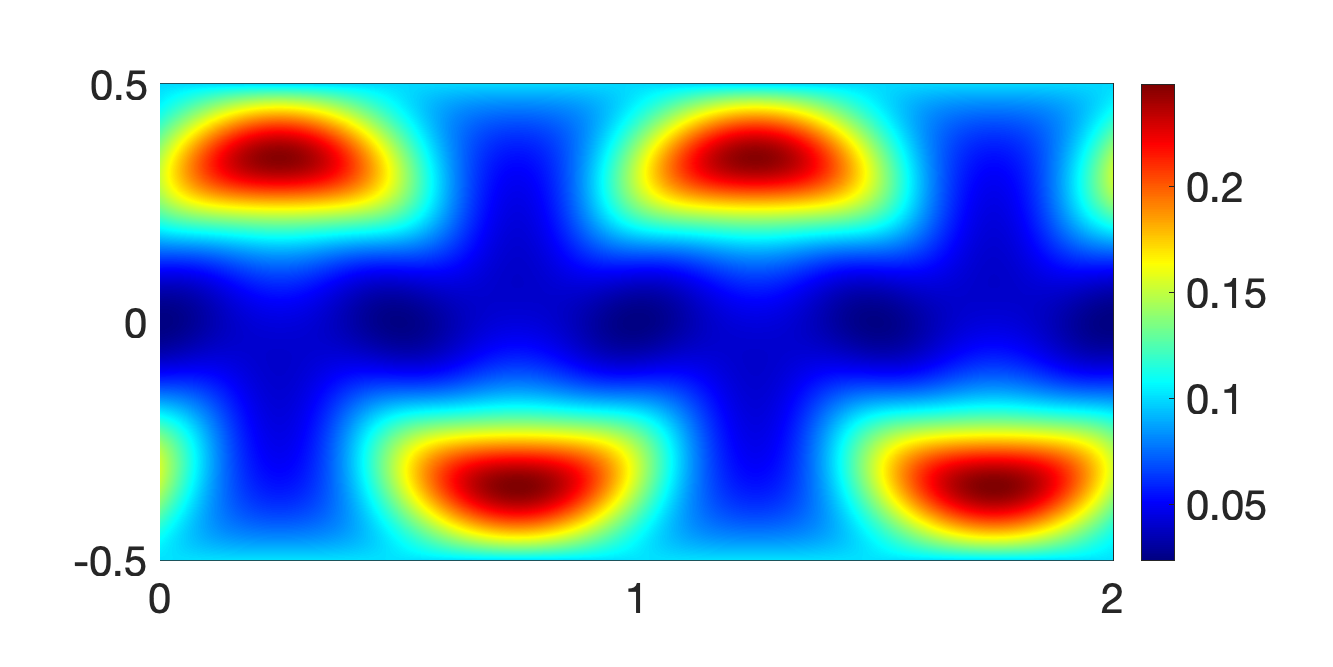}
	\end{subfigure}	
	\begin{subfigure}{0.32\textwidth}
	\includegraphics[width=\textwidth]{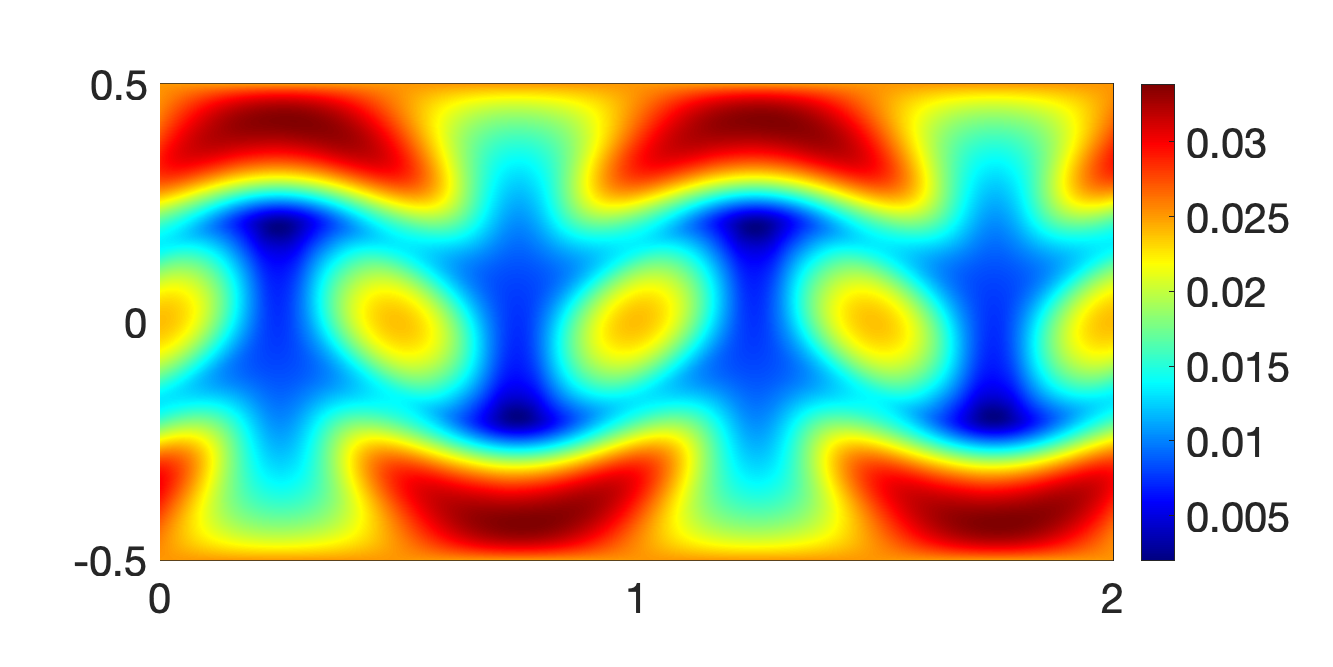}
	\end{subfigure}	
	\begin{subfigure}{0.32\textwidth}
	\includegraphics[width=\textwidth]{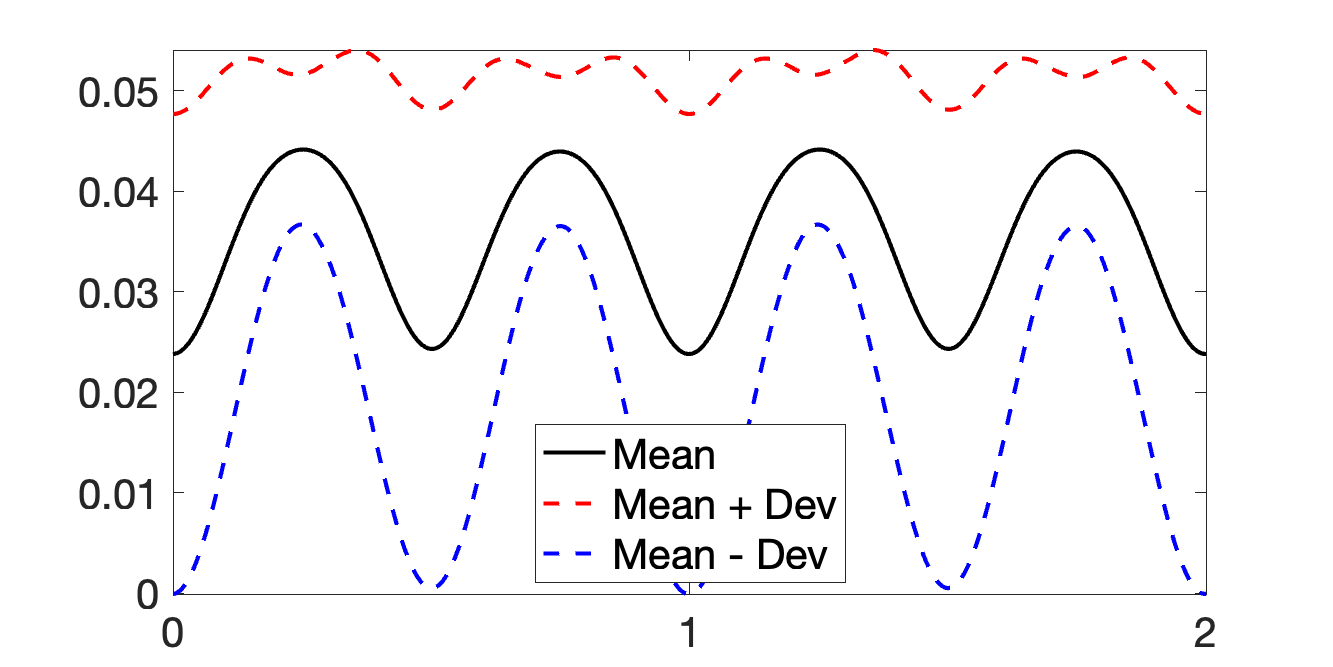}
	\end{subfigure}	
	\caption{ {\bf Random Kelvin-Helmholtz-type problem}. FV  solutions $(\vr, m_1, B_1)_{h_{ref}}(\omega,T)$ (from top to bottom) with $h_{ref} = 1/320, \, T=2.2$ and $500$ samples. From left to right: mean, deviation, mean and deviations along $y=0$.}\label{KH2-1}
\end{figure}

\begin{figure}[htbp]
	\setlength{\abovecaptionskip}{0.cm}
	\setlength{\belowcaptionskip}{-0.cm}
	\centering
	\begin{subfigure}{0.45\textwidth}
	\includegraphics[width=\textwidth]{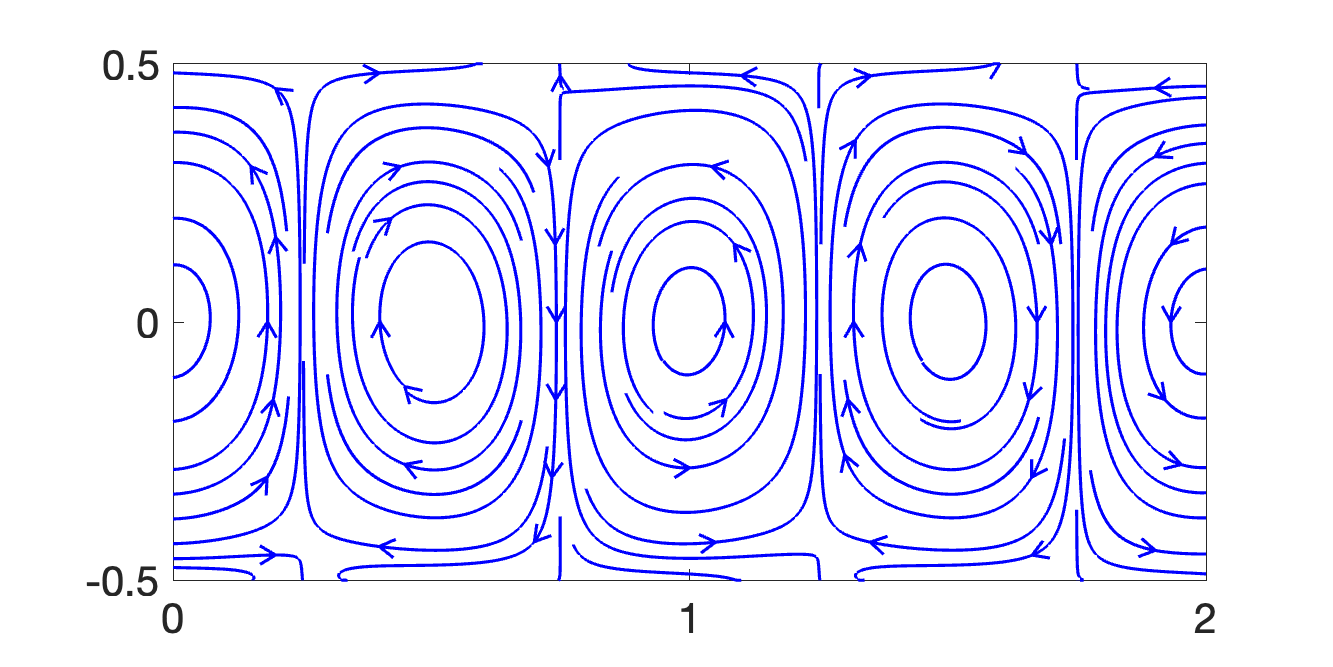}
	\caption{streamline of $\expe{ \vu_{h_{ref}} (T) } $}
	\end{subfigure}	
	\begin{subfigure}{0.45\textwidth}
	\includegraphics[width=\textwidth]{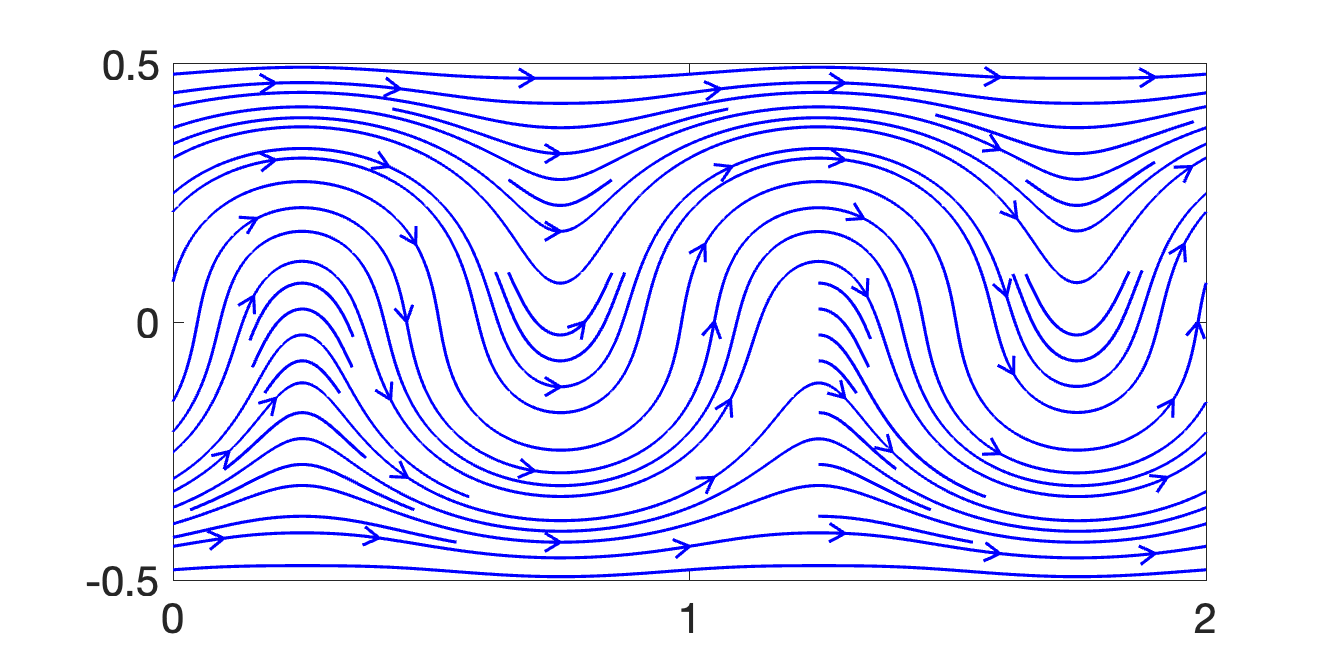}
	\caption{streamline of  $\expe{ \vB_{h_{ref}} (T) } $}
	\end{subfigure}	
	\caption{ {\bf Random Kelvin-Helmholtz-type problem}.  Streamlines of mean values of  $(\vu, \vB)_{h_{ref}}(\omega,T)$ with $h_{ref} = 1/320, \, T=2.2$ and $500$ samples. }\label{KH2-2}
\end{figure}

\begin{figure}[htbp]
	\setlength{\abovecaptionskip}{0.cm}
	\setlength{\belowcaptionskip}{-0.cm}
	\centering
	\begin{subfigure}{0.45\textwidth}
	\includegraphics[width=\textwidth]{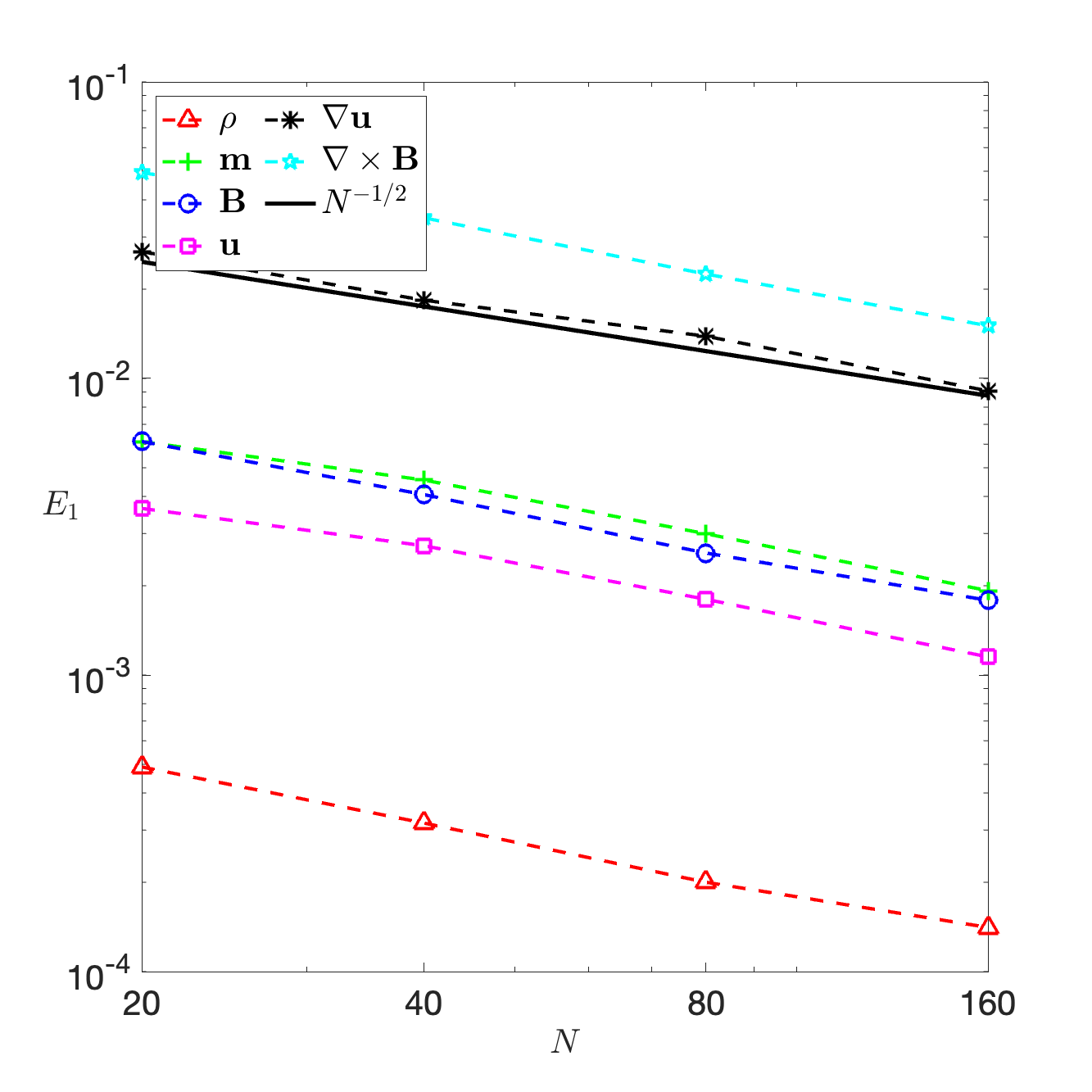}
	\end{subfigure}	
	\begin{subfigure}{0.45\textwidth}
	\includegraphics[width=\textwidth]{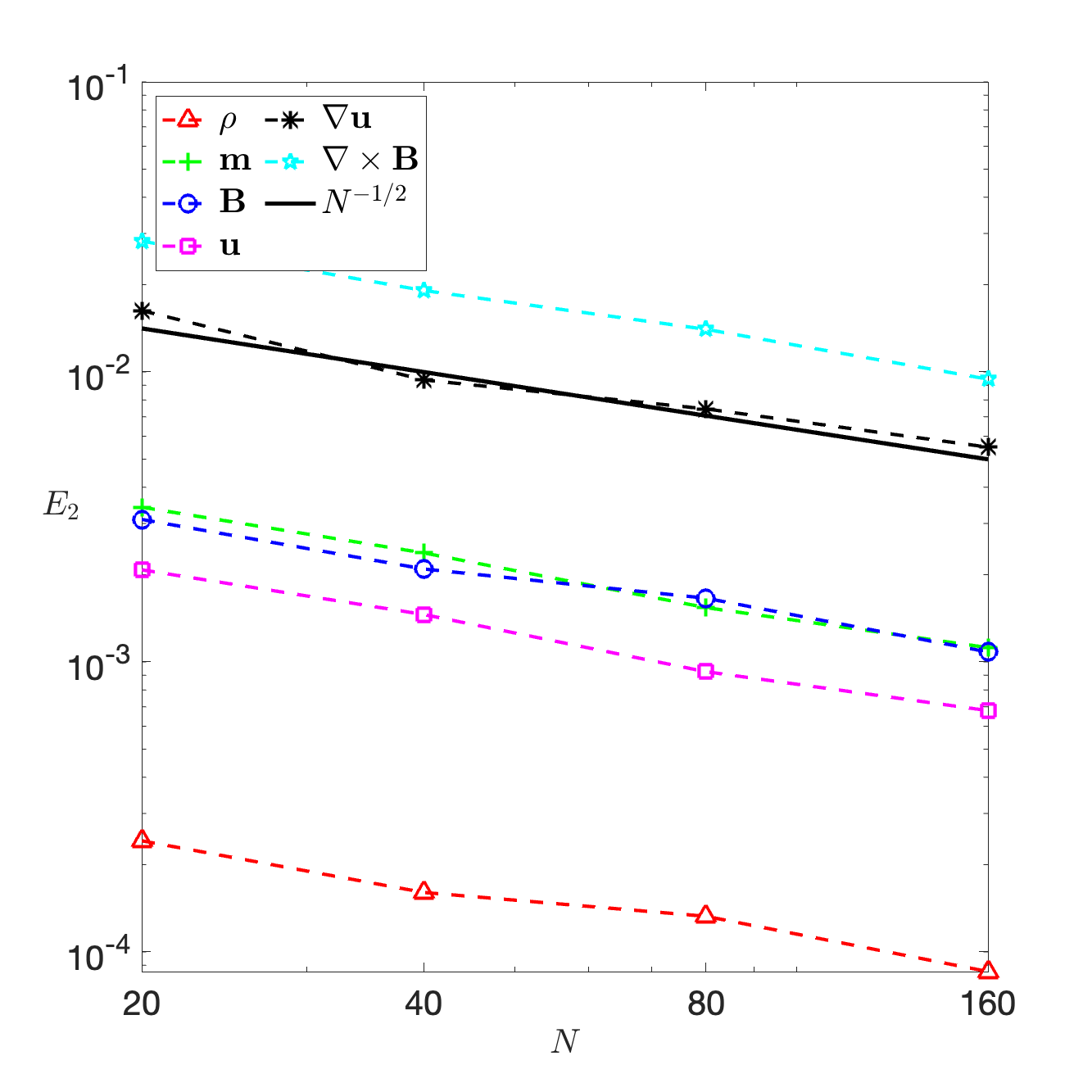}
	\end{subfigure}	
	\caption{ {\bf Random Kelvin-Helmholtz-type problem}. {\bf Statistical errors}: $E_1$ (left), $E_2$ (right).}\label{KH2-err-1}
\end{figure}

\begin{figure}[htbp]
	\setlength{\abovecaptionskip}{0.cm}
	\setlength{\belowcaptionskip}{-0.cm}
	\centering
	\begin{subfigure}{0.45\textwidth}
	\includegraphics[width=\textwidth]{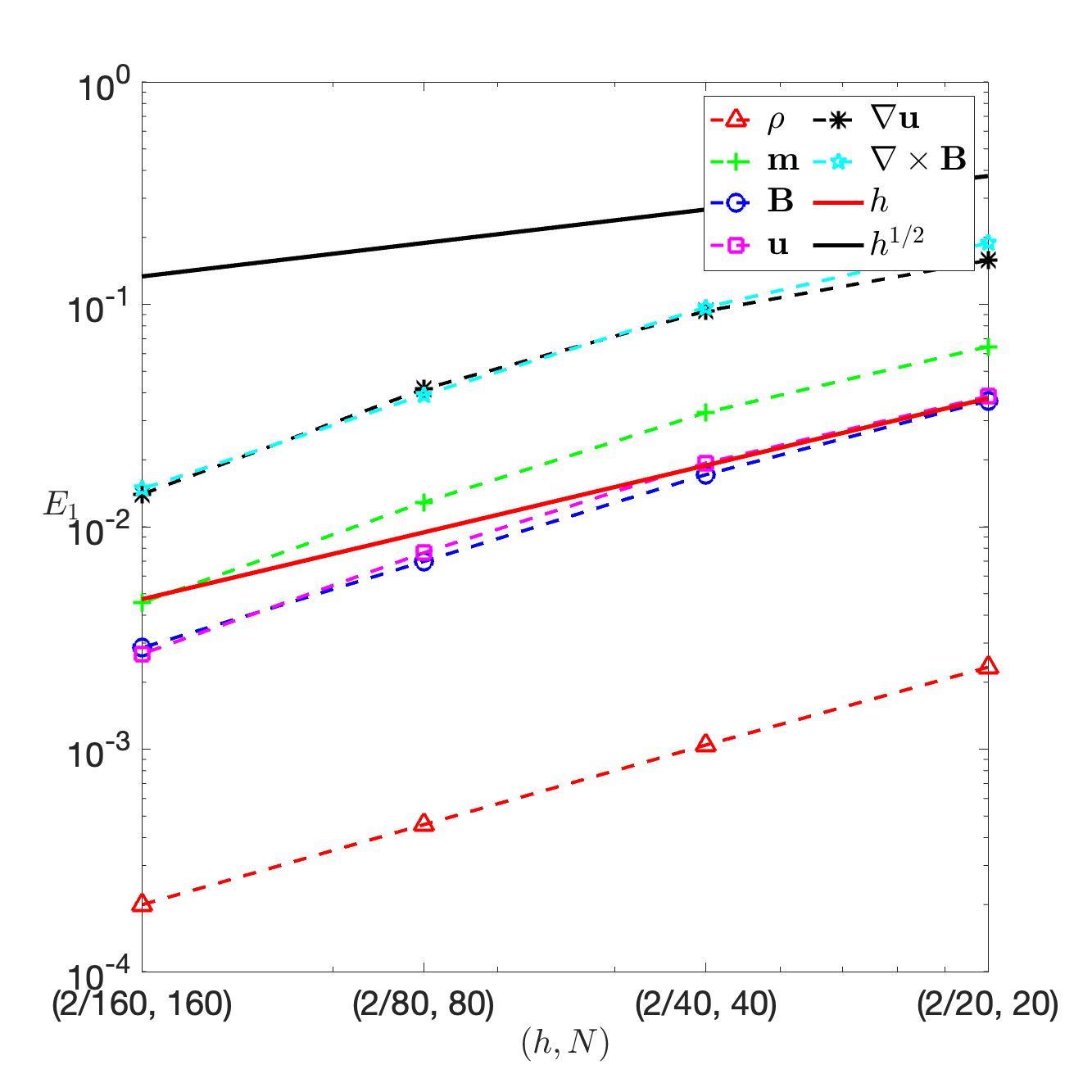}
	\end{subfigure}	
	\begin{subfigure}{0.45\textwidth}
	\includegraphics[width=\textwidth]{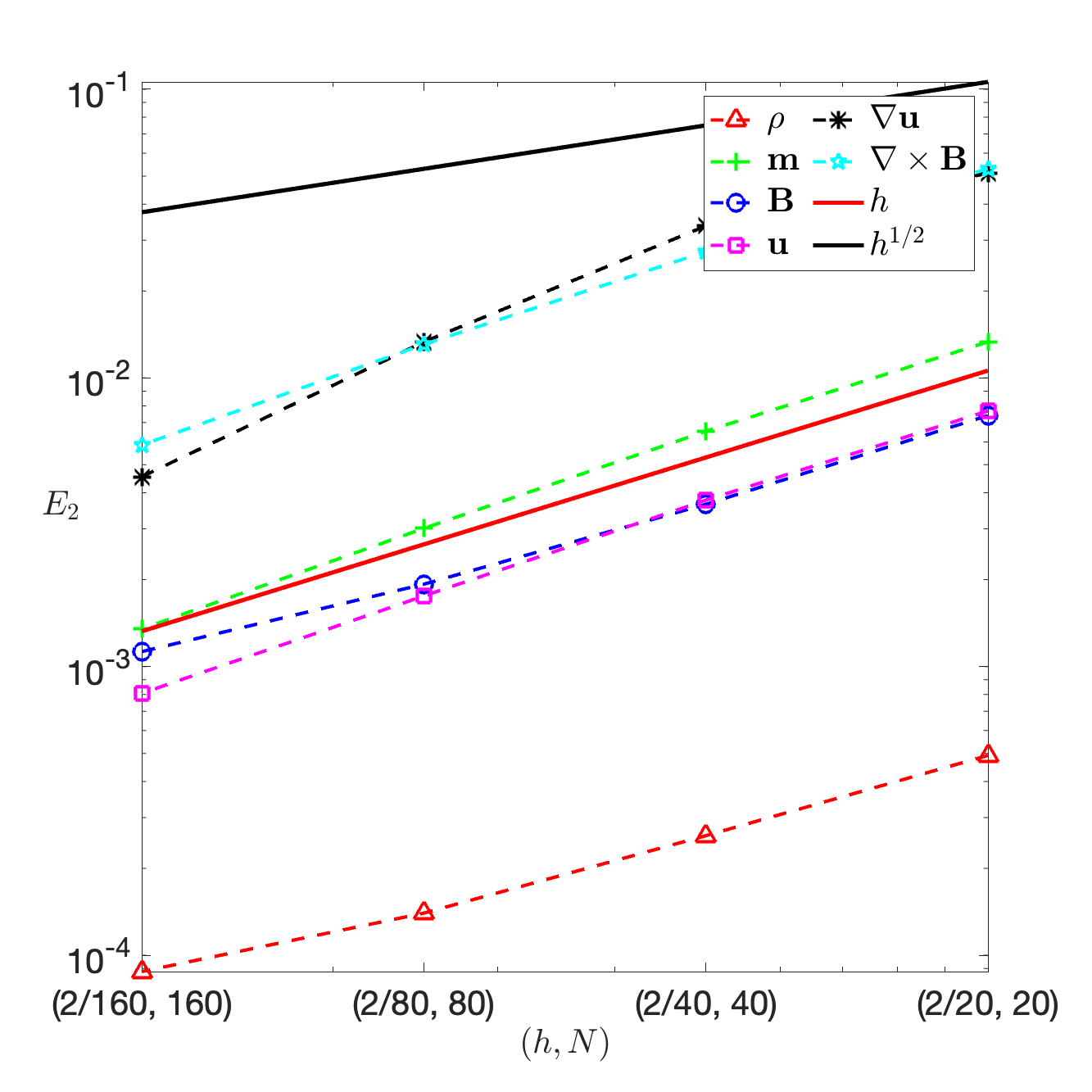}
	\end{subfigure}	
	\caption{ {\bf Random Kelvin-Helmholtz-type problem}. {\bf Total errors}: $E_1$ (left), $E_2$ (right).}\label{KH2-err-2}
\end{figure}


\subsection{Orszag-Tang-type problem}
In this experiment we
consider $Q = [0,2\pi]|_{\{0,2\pi\}}\times[0,2\pi]$. The initial data are given as
\begin{align*}
& \vr(x,0) = \gamma^2, \quad u_1(x,0) = -\sin(x_2) + Y_1(\omega) \sin(x_2), \quad  u_2(x,0) = 0,\\
& B_1(x,0) = -\sin(x_2) + Y_2(\omega) \sin \left( \frac{x_2}{4} \right),\quad B_2(x,0) = \sin( 2 x_1),
\end{align*}
and the boundary data are
\[
\vu|_{\partial Q} = \vc{0},\quad B_1|_{x_2=2\pi} = Y_2(\omega), \quad B_1|_{x_2=0} = 0,
\]
where $Y_1, Y_2$ are i.i.d.\ Gaussian variables and $Y_1, Y_2 \sim \mathcal{N}(0,0.1)$.
The model parameters are taken as $\mu = 0.01, \, \lambda = 0, \, \zeta = 0.01, \gamma = 5/3$, $a=1$, and $b=0$.

Figure \ref{OT} shows deterministic numerical solutions $(\vr, m_1, B_1)_{h_{ref}}(\omega=0,T)$ obtained on a uniform squared mesh with $h_{ref}=2\pi/400$  at the final time $T = 3$, as well as  the time evolution of $\| \Divh \vc{B}_h\|_{L^{\infty}(Q)}(\omega=0, t)$ with $ h = 2\pi/(25 \cdot 2^i), i = 0,\dots,4$.
The mean and deviations of statistical numerical solutions $(\vr, m_1, B_1)_{h_{ref}}(\omega,T)$ with $M_{ref}$ samples are displayed in Figure \ref{OT-MC-1}.

\begin{figure}[htbp]
	\setlength{\abovecaptionskip}{0.cm}
	\setlength{\belowcaptionskip}{0.2cm}
	\centering
	\begin{subfigure}{0.42\textwidth}
	\includegraphics[width=\textwidth]{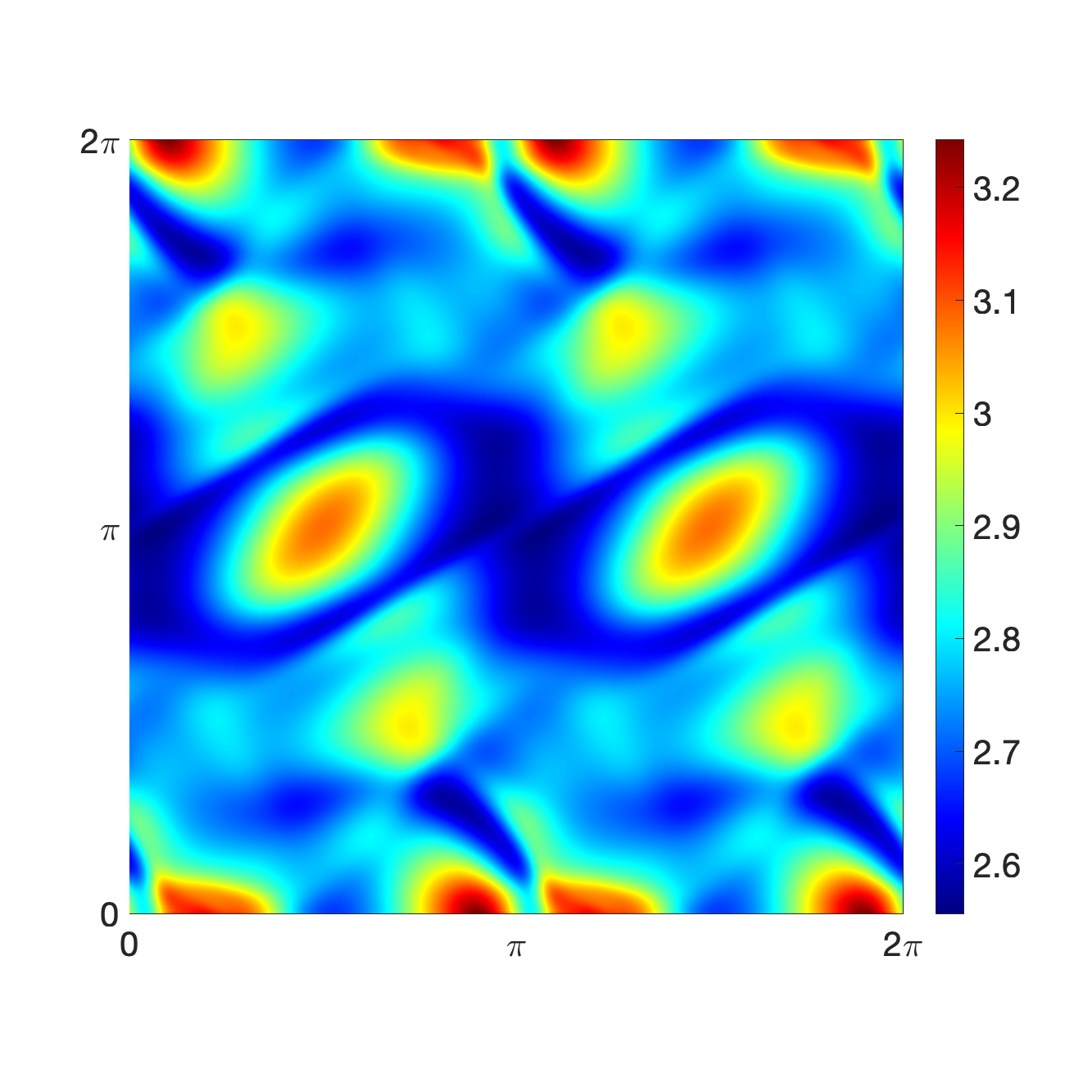}
\caption{ $\vr_{h_{ref}}(\omega=0, T)$}
	\end{subfigure}	
	\begin{subfigure}{0.42\textwidth}
	\includegraphics[width=\textwidth]{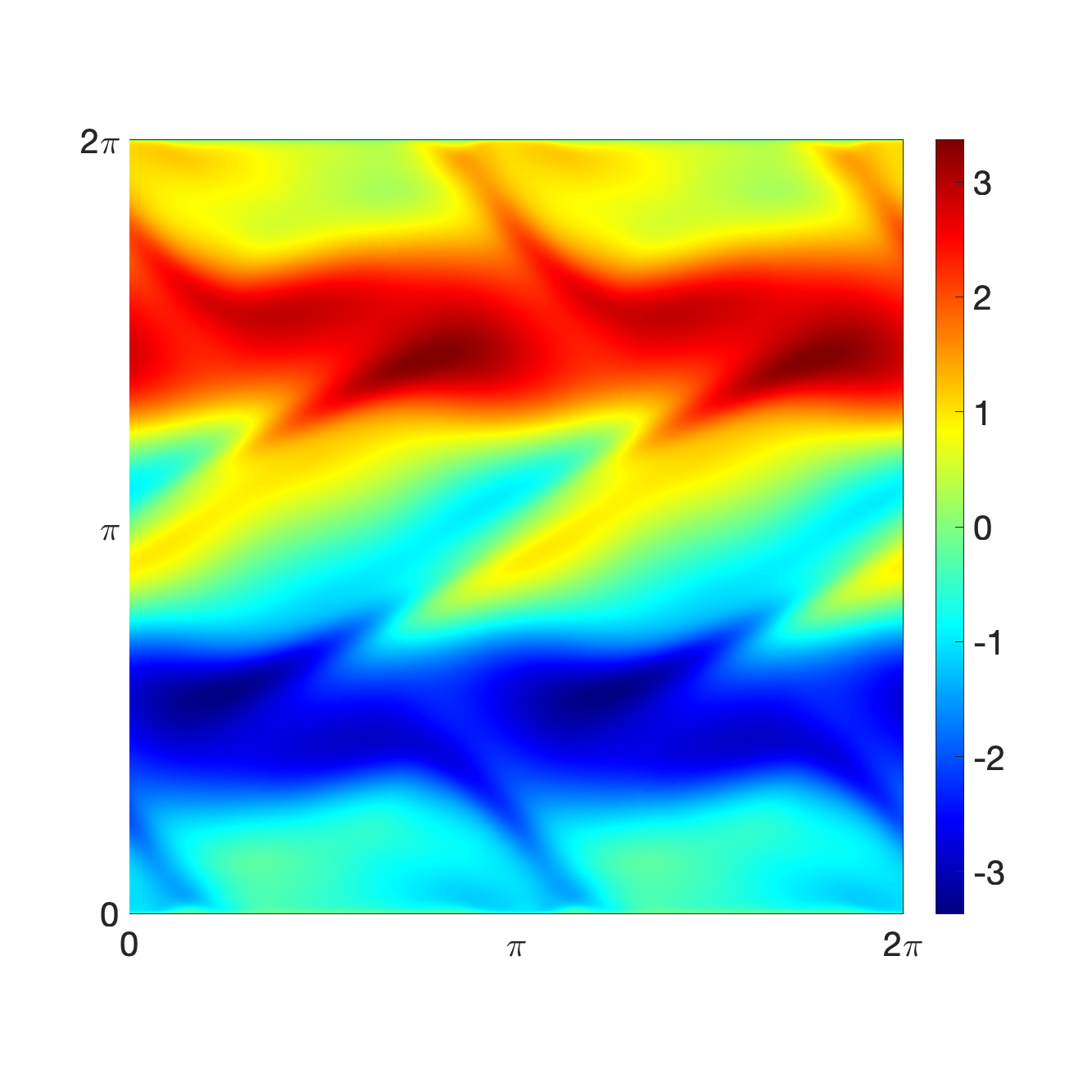}
\caption{ $(m_1)_{h_{ref}}(\omega=0, T)$}
	\end{subfigure}	\\
	\begin{subfigure}{0.42\textwidth}
	\includegraphics[width=\textwidth]{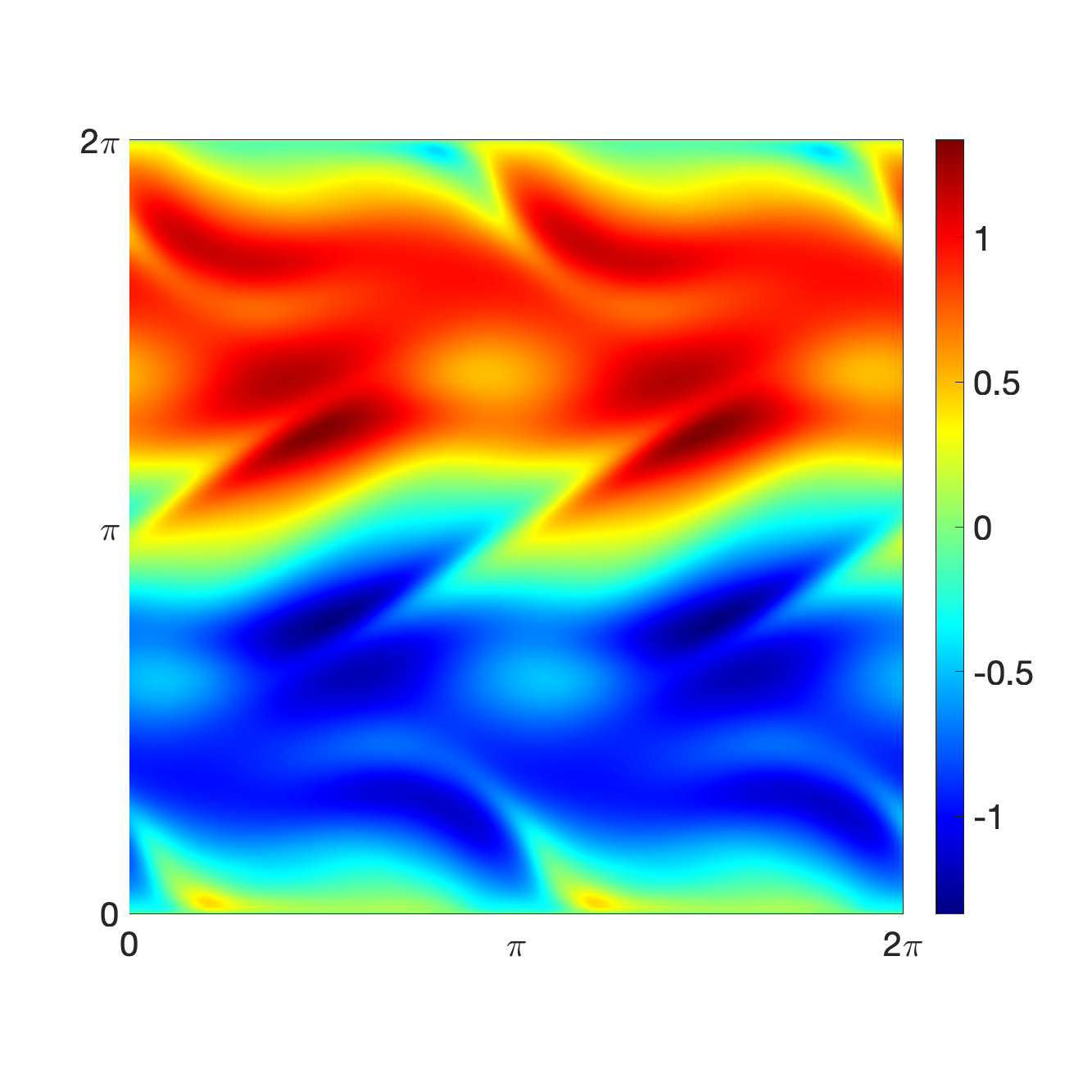}
\caption{ $(B_1)_{h_{ref}}(\omega=0, T)$}
	\end{subfigure}	
	\begin{subfigure}{0.42\textwidth}
	\includegraphics[width=\textwidth]{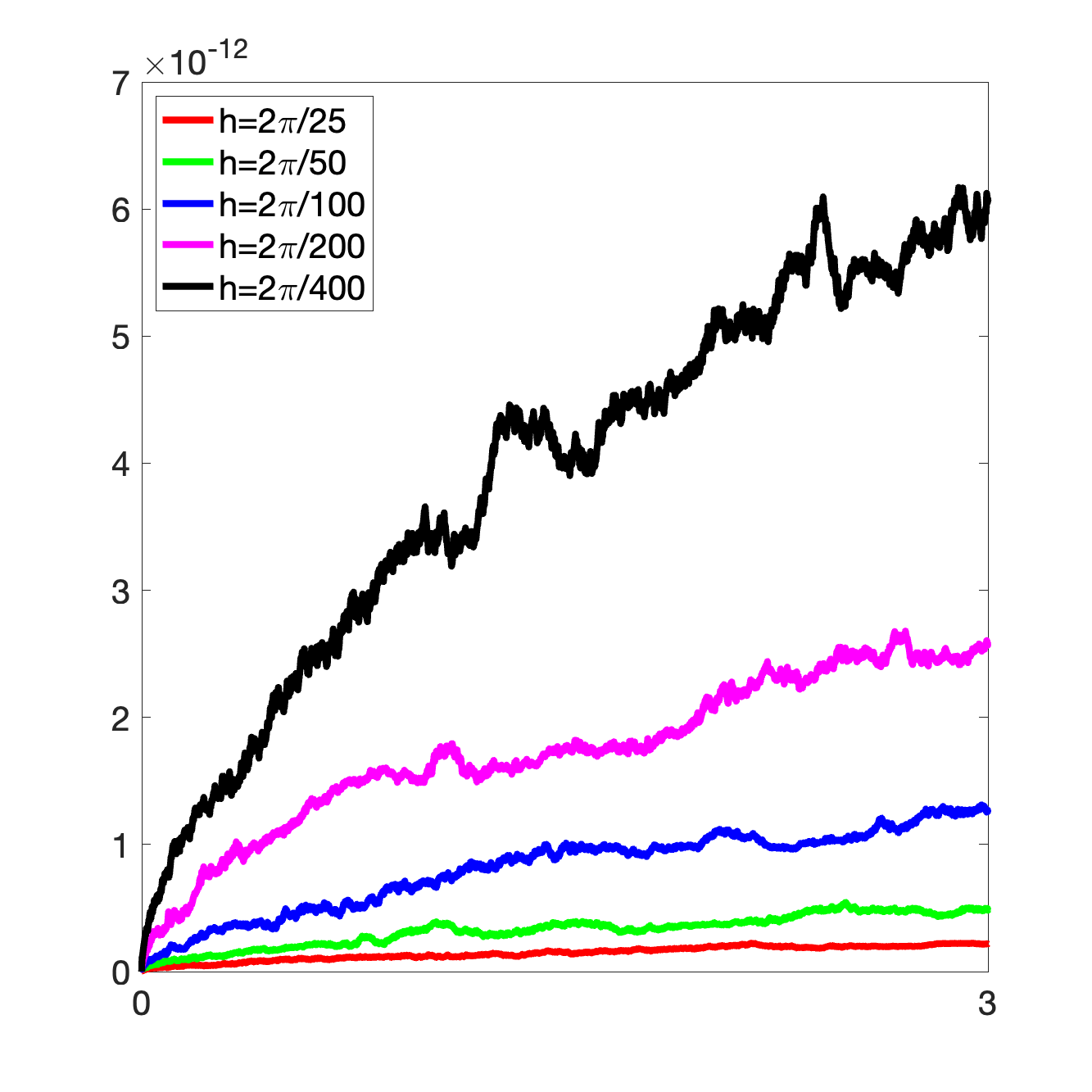}
	\caption{ $\| \Divh \vc{B}_h\|_{L^{\infty}(Q)}(\omega=0, t)$ }
	\end{subfigure}	
	\caption{{\bf Orszag-Tang-type problem}. Deterministic FV  solutions $(\vr, m_1, B_1)_{h_{ref}}(\omega=0,T)$ with $h_{ref} = 2\pi /400$ at  $T=3$ and the time evolution of $\| \Divh \vc{B}\|_{L^{\infty}(Q)}(\omega=0, t)$ with $h = 2\pi/(25 \cdot 2^i), i = 0,\dots,4$.}\label{OT}
\end{figure}

\begin{figure}[htbp]
	\setlength{\abovecaptionskip}{0.cm}
	\setlength{\belowcaptionskip}{-0.cm}
	\centering
	\begin{subfigure}{0.32\textwidth}
	\includegraphics[width=\textwidth]{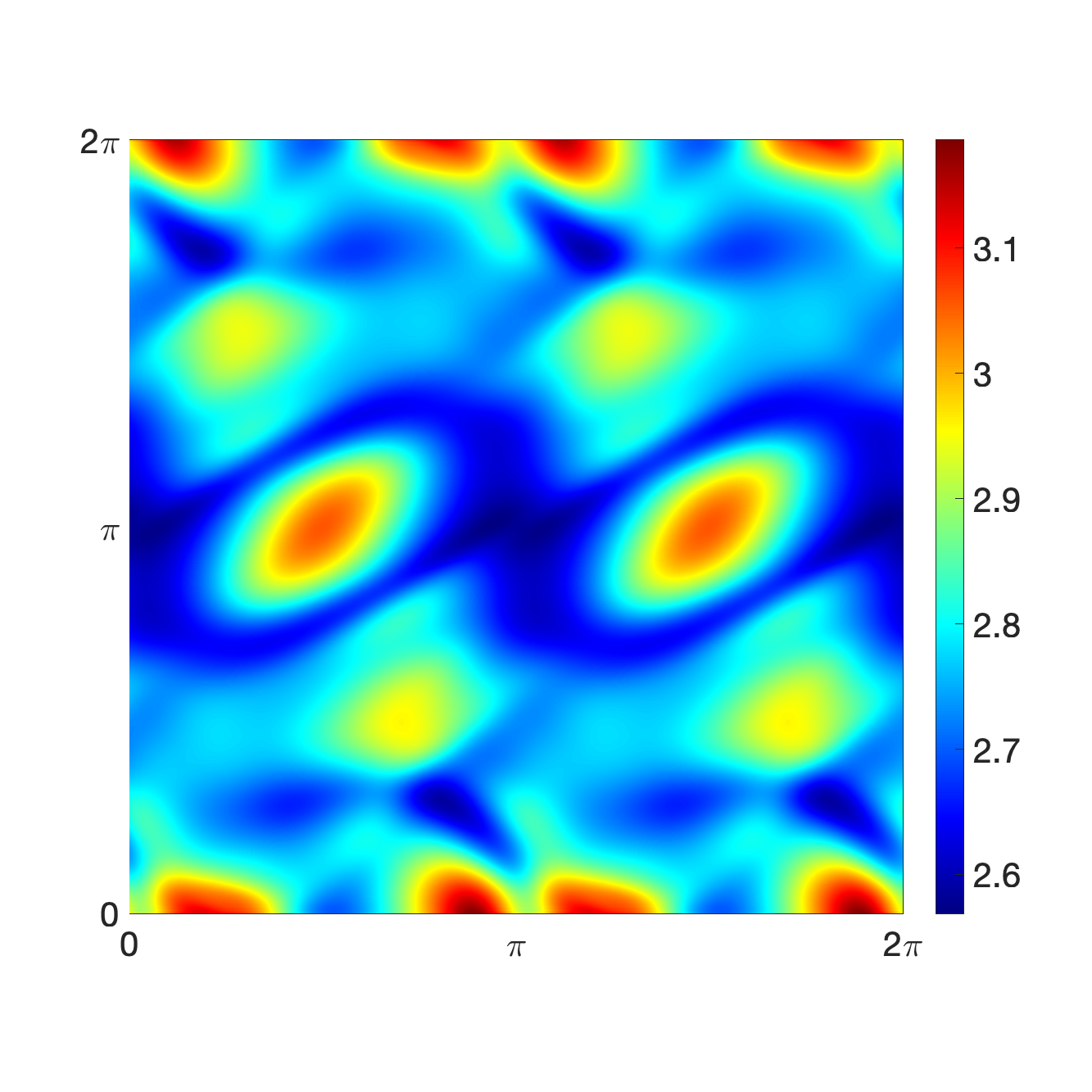}
	\end{subfigure}	
	\begin{subfigure}{0.32\textwidth}
	\includegraphics[width=\textwidth]{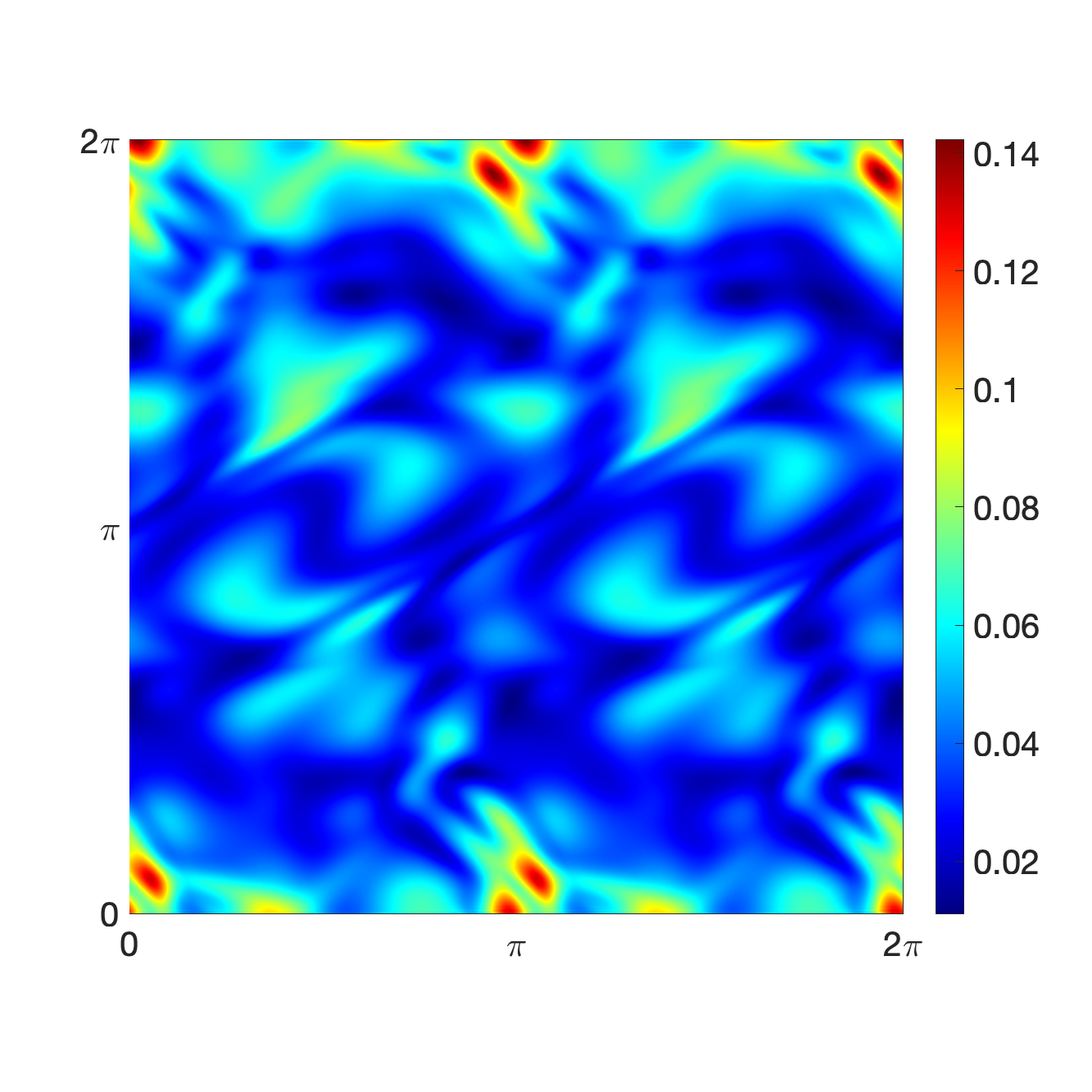}
	\end{subfigure}	
	\begin{subfigure}{0.32\textwidth}
	\includegraphics[width=\textwidth]{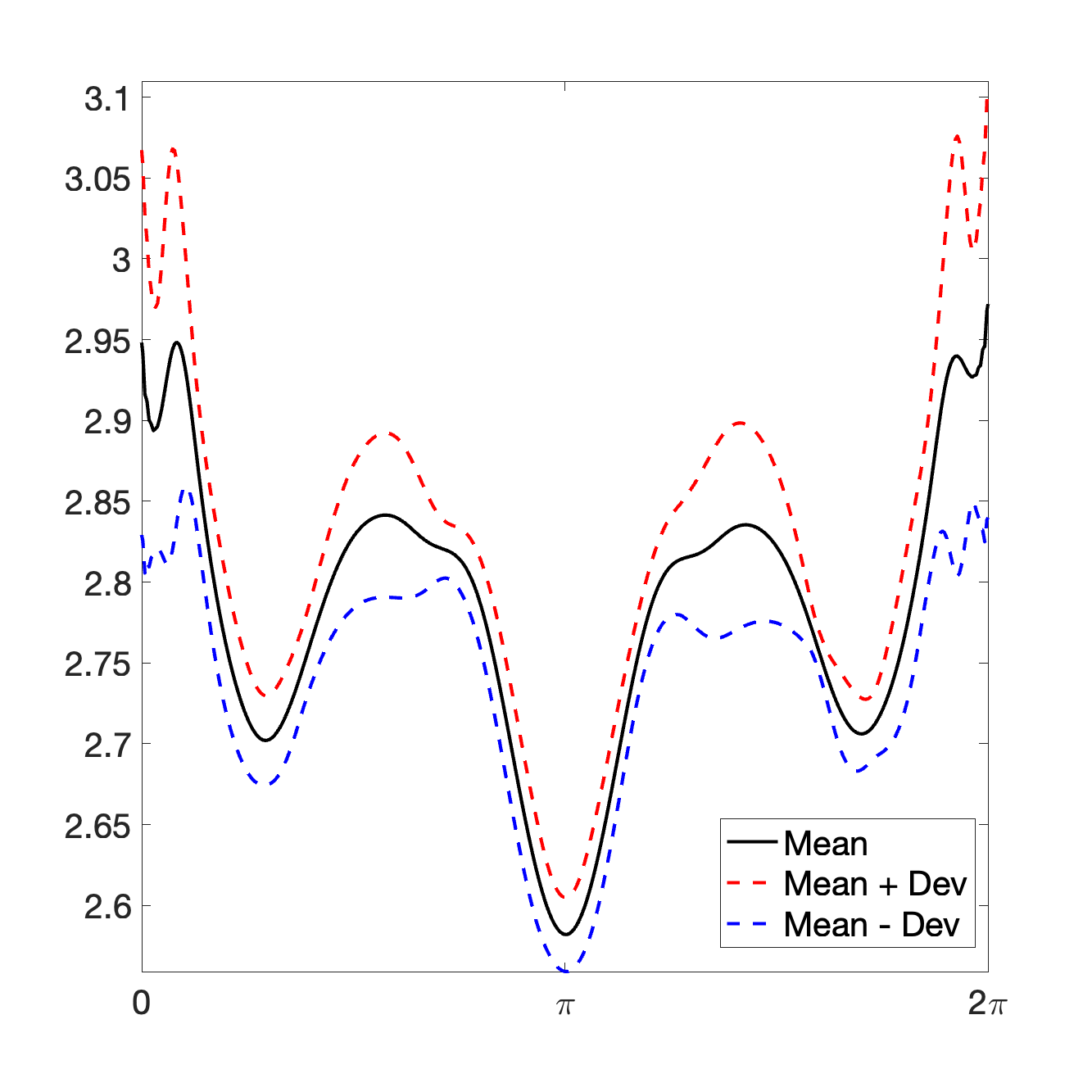}
	\end{subfigure}	\\
	\begin{subfigure}{0.32\textwidth}
	\includegraphics[width=\textwidth]{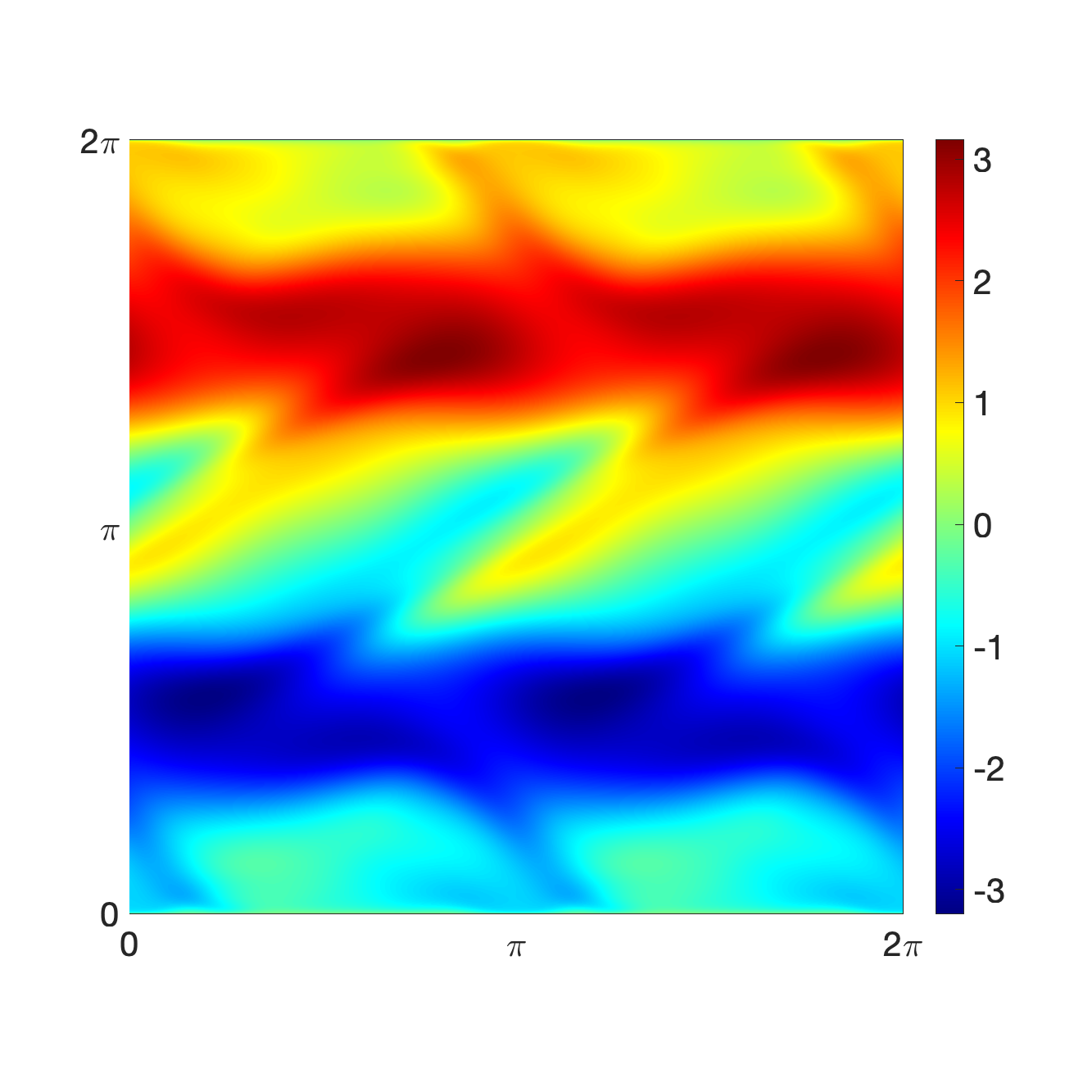}
	\end{subfigure}	
	\begin{subfigure}{0.32\textwidth}
	\includegraphics[width=\textwidth]{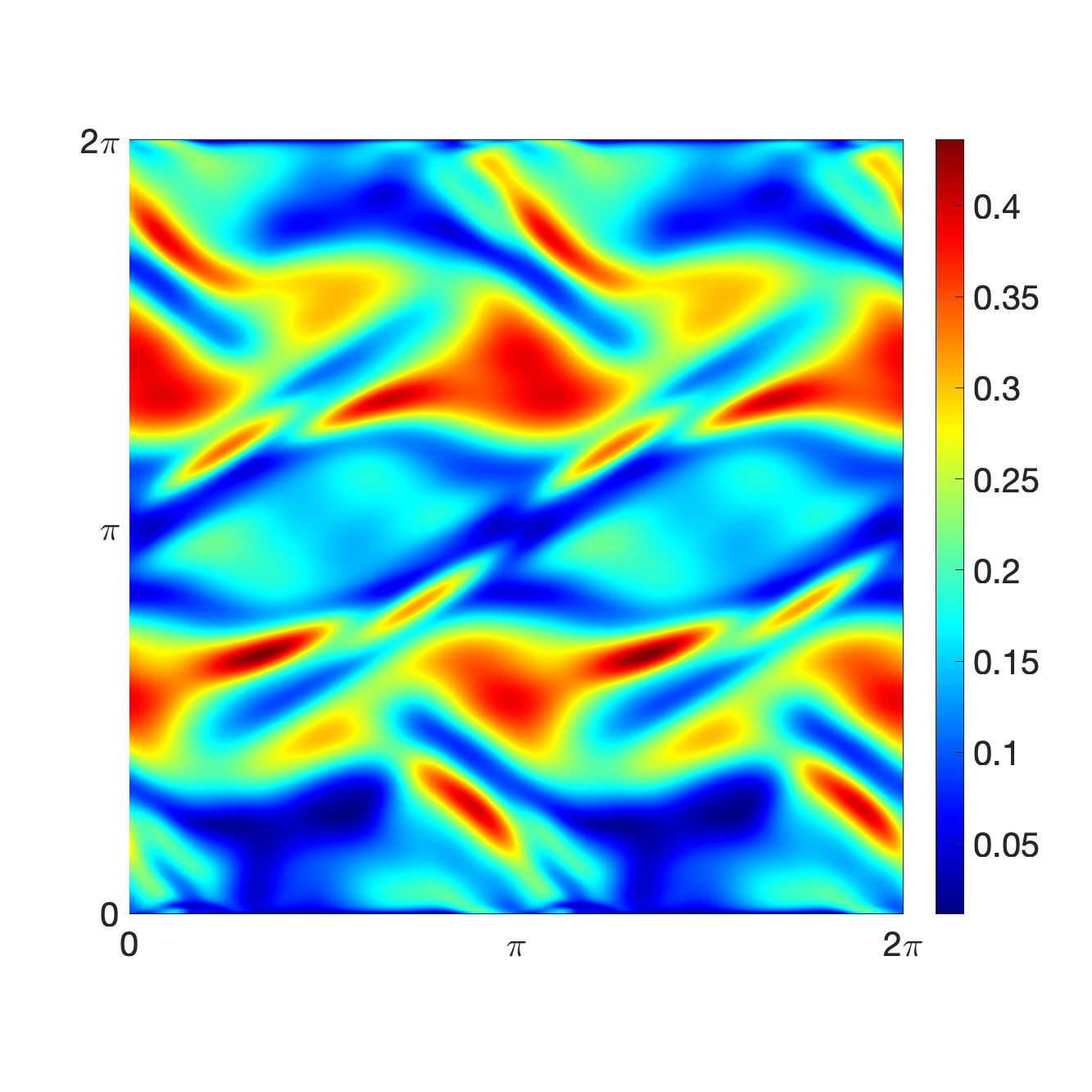}
	\end{subfigure}	
	\begin{subfigure}{0.32\textwidth}
	\includegraphics[width=\textwidth]{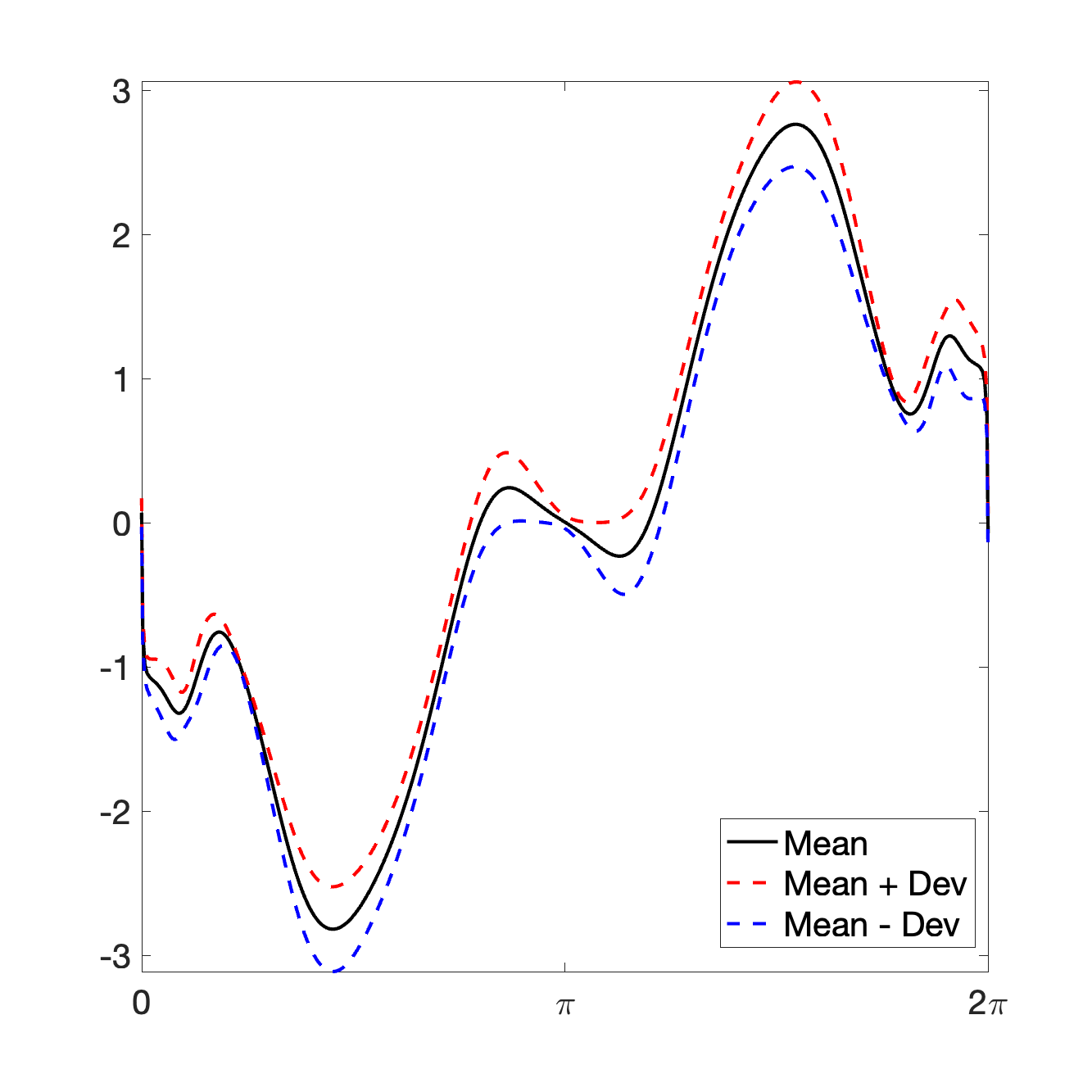}
	\end{subfigure}	\\
	\begin{subfigure}{0.32\textwidth}
	\includegraphics[width=\textwidth]{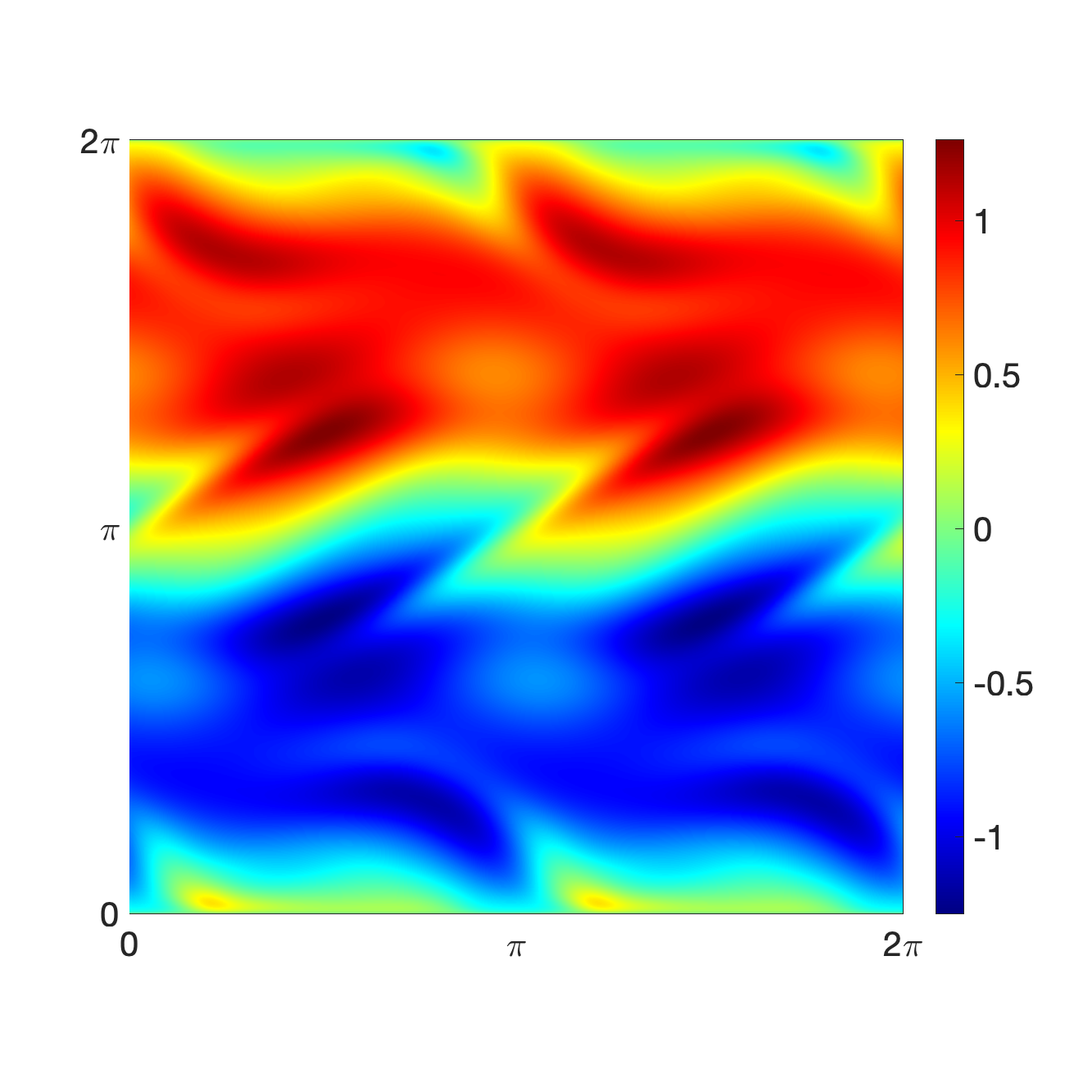}
	\end{subfigure}	
	\begin{subfigure}{0.32\textwidth}
	\includegraphics[width=\textwidth]{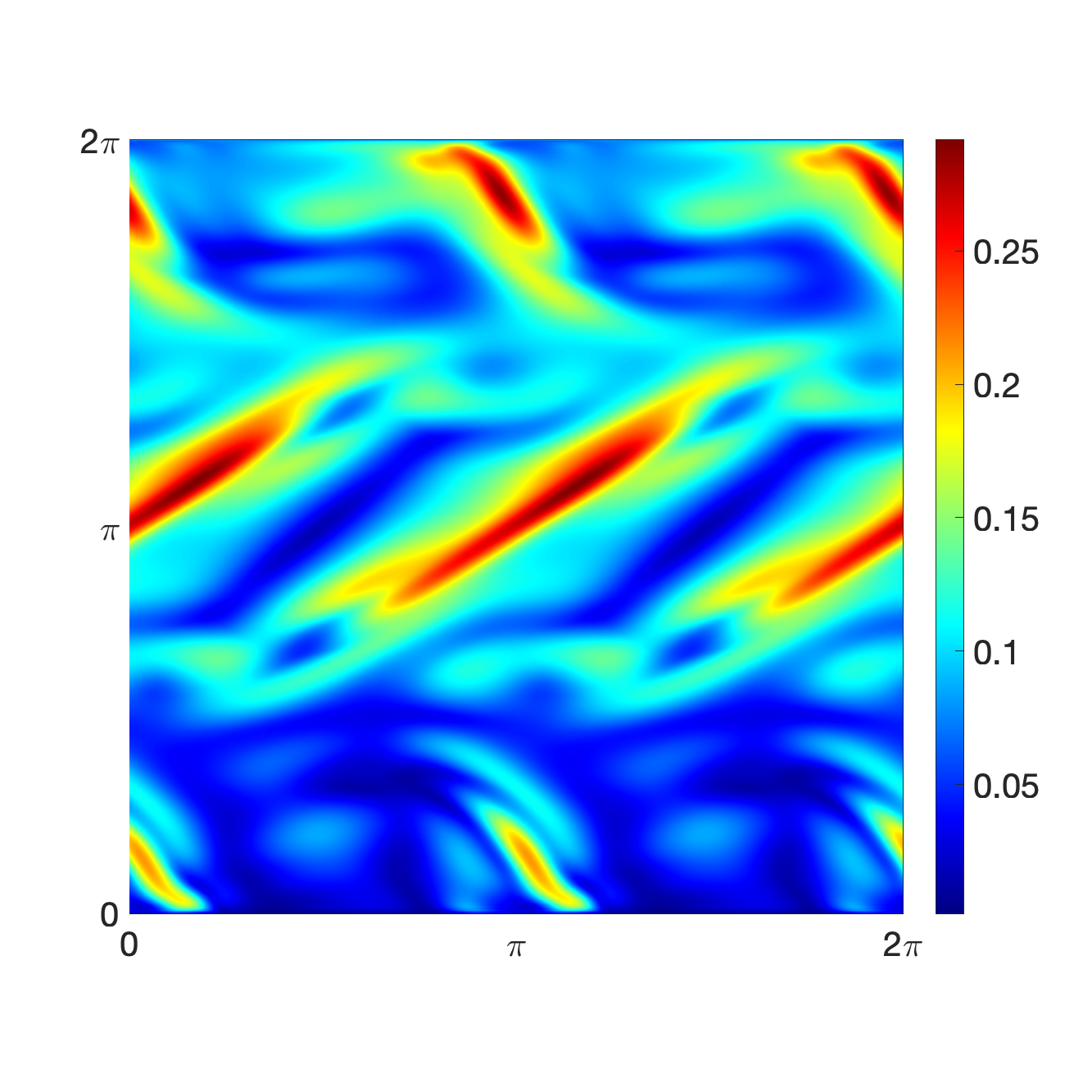}
	\end{subfigure}	
	\begin{subfigure}{0.32\textwidth}
	\includegraphics[width=\textwidth]{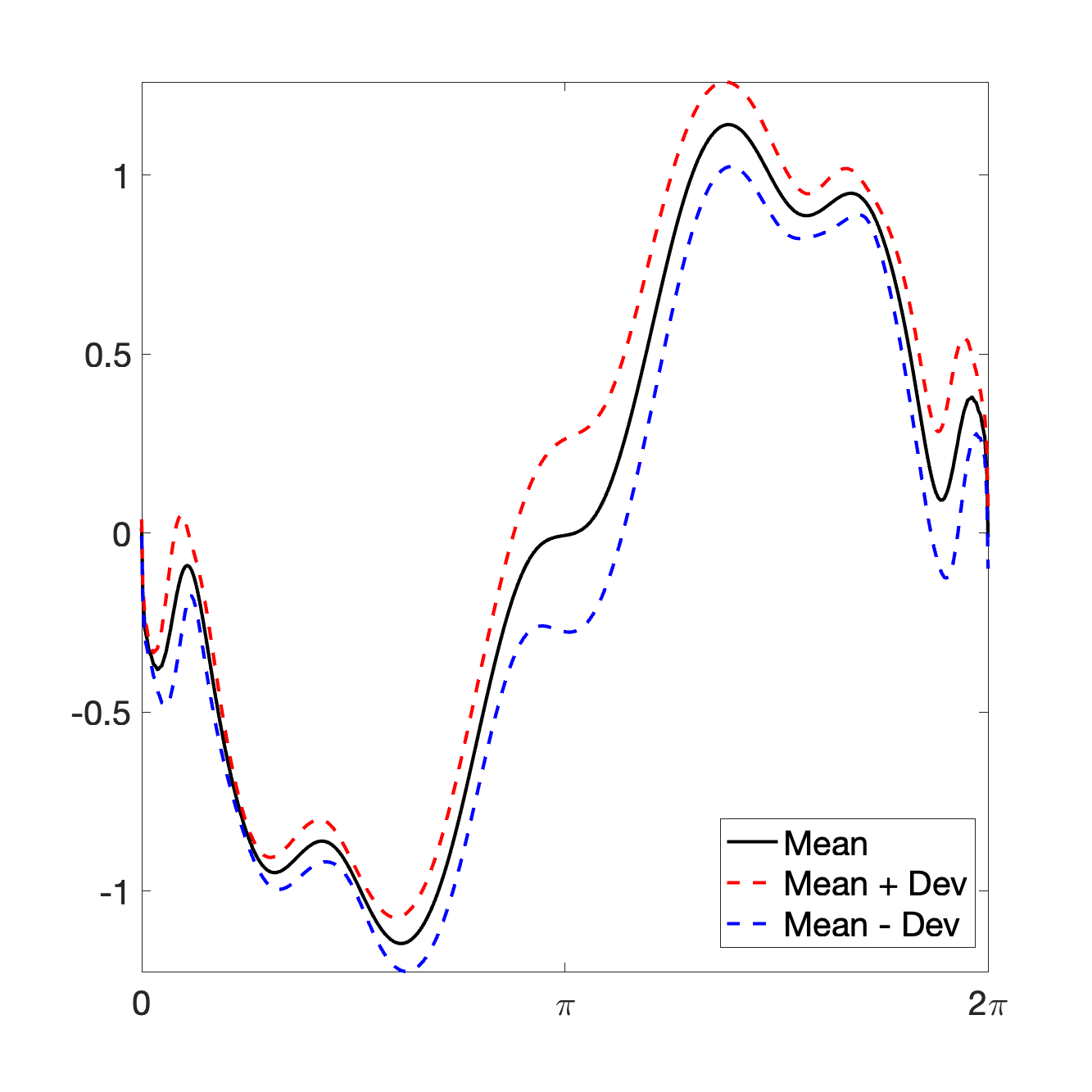}
	\end{subfigure}	
	\caption{ {\bf Random Orszag-Tang-type problem}. FV  solutions $(\vr, m_1, B_1)_{h_{ref}}(\omega,T)$ (from top to bottom) with $h_{ref} = 2\pi/400, \, T=3$ and $M_{ref}=500$ samples. From left to right: mean, deviation, mean and deviations along $x=y$.}\label{OT-MC-1}
\end{figure}

\def\cprime{$'$} \def\ocirc#1{\ifmmode\setbox0=\hbox{$#1$}\dimen0=\ht0
  \advance\dimen0 by1pt\rlap{\hbox to\wd0{\hss\raise\dimen0
  \hbox{\hskip.2em$\scriptscriptstyle\circ$}\hss}}#1\else {\accent"17 #1}\fi}

\appendix


\section{Proof of existence of a numerical solution}\label{exist}

We start by recalling the following version of fixed point theorem, see Gallou\"{e}t, Maltese , Novotn\'y \cite[Theorem A.1]{GMN}.

\begin{Theorem}[{\bf A fixed point theorem}]\label{fixedpoint}
Let $M$ and $N$ be positive integers. Let $  C_1>\ep>0$  and $C_2>0$ be real numbers. Let
\begin{align*}
&V = \{ (r,u) \in  \R ^M \times \R^N, \ r_i >0,\ i=1,\dots, M  \}, \\
&W = \{ (r,u) \in \R ^M \times  \R^N, \, \ep < r_i < C_1, \ i=1, \dots, M   \text{ and }  | u | \le C_2 \}.
\end{align*}
Let  $F$ be a continuous function mapping $V \times [0,1]$ to $\R^M \times \R^N$ and satisfying:
\begin{enumerate}
\item If $ f \in V $  satisfies $ F(f,\xi)=0$ for all $ \xi \in [0,1] $ then $ f \in W $;
\item The equation $F(f, 0)=0$ is a linear system with respect to $f$ and admits a solution in $W$.
\end{enumerate}

Then there exists $ f \in W$ such that $F(f,1) = 0$.
\end{Theorem}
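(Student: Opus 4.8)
The plan is to prove the statement via the Brouwer (topological) degree together with its homotopy invariance, using the given family $\{F(\cdot,\xi)\}_{\xi\in[0,1]}$ as the connecting homotopy between the explicitly solvable linear problem at $\xi=0$ and the target problem at $\xi=1$. First I would record the geometric fact that $\overline{W}\subset V$: every point of the closure of $W$ satisfies $r_i\ge\ep>0$, so $F$ is well defined and continuous on $\overline{W}\times[0,1]$. The decisive consequence of the first hypothesis, read as the a priori bound that any zero of $F(\cdot,\xi)$ lying in $V$ must in fact belong to $W$, is that $F(\cdot,\xi)=0$ has no solution on the boundary $\partial W$ for any $\xi\in[0,1]$; indeed a zero $f\in\overline{W}\subset V$ is forced into the interior $W$. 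Hence the degree $\deg(F(\cdot,\xi),W,0)$ is well defined for each $\xi$.

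Since $(f,\xi)\mapsto F(f,\xi)$ is continuous and nonvanishing on $\partial W\times[0,1]$, homotopy invariance of the degree gives $\deg(F(\cdot,1),W,0)=\deg(F(\cdot,0),W,0)$. I would then evaluate the right-hand side. By the second hypothesis, $F(\cdot,0)$ is affine, say $F(f,0)=\mathbb{A}f-b$, and admits a solution $f_0\in W$. By the previous step the entire solution set of $F(\cdot,0)=0$ is contained in the bounded open set $W$; but an affine solution set is either a single point or an unbounded affine subspace, and an unbounded branch would necessarily meet $\partial W$, which is excluded. Thus the solution set reduces to the single point $f_0$, so $\mathbb{A}$ is invertible, and for an invertible affine map one has $\deg(F(\cdot,0),W,0)=\mathrm{sign}(\det\mathbb{A})=\pm1\ne0$.

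Combining the two displays yields $\deg(F(\cdot,1),W,0)=\pm1\ne0$, and the solution property of the Brouwer degree then guarantees the existence of $f\in W$ with $F(f,1)=0$, which is exactly the assertion. The main obstacle is the nonvanishing of the degree at $\xi=0$: everything hinges on showing that the affine system has a genuinely nondegenerate linear part $\mathbb{A}$, and the a priori bound of the first hypothesis—ruling out solution branches that escape toward $\partial W$—is precisely the ingredient that converts "a solution exists in $W$" into "the solution is unique, hence $\mathbb{A}$ invertible." The remaining work is routine bookkeeping: identifying $\partial W$ (points where some $r_i\in\{\ep,C_1\}$ or $|u|=C_2$) and verifying that $F$ remains continuous up to this boundary so that the degree and its homotopy invariance apply.
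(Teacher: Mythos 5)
Your proof is correct: reading hypothesis~1 in its intended sense (the a priori bound holds for each fixed $\xi$, not only for simultaneous zeros), you rightly conclude that $F(\cdot,\xi)$ has no zeros on $\partial W\subset \overline{W}\subset V$, homotopy invariance of the Brouwer degree reduces everything to $\xi=0$, and the boundedness of $W$ plus the exclusion of boundary zeros forces the affine solution set to be a single point, giving an invertible linear part and degree $\pm 1\neq 0$. The paper itself does not prove this theorem but quotes it from Gallou\"et--Maltese--Novotn\'y \cite[Theorem A.1]{GMN}, whose proof is exactly this degree-theoretic homotopy argument, so your route coincides with the standard one behind the citation.
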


\noindent We are now ready to prove the existence of a solution to the FV scheme \eqref{scheme}.

\begin{proof}[Proof of Lemma \ref{lem-existence}]
Let us denote
\begin{align*}
V &=\left\{ ( \vrh^k, \vuh^k , \vBh^k) \in \Qh \times \bQh\times \bQh;\ \eqref{bc-num} \mbox{ holds
and } \vrh^k>0, \ \Divh \vBh^k=0
\right\},
\\
W &=\Big\{ ( \vrh^k, \vuh^k , \vBh^k  ) \in \Qh \times \bQh \times \bQh;\ \eqref{bc-num} \mbox{ holds and } 0 < \epsilon \leq \vrh^k \leq C_1,
\\
&\hspace{6cm}\norm{\vuh^k} \leq C_2 , \ \norm{\vBh^k} \leq C_3,\  \Divh \vBh^k=0 \Big\},
\end{align*}
where $\norm{\vuh } \equiv \norm{\Gradd \vuh}_{L^2(Q;\R^{d\times d})} + \norm{\vuh }_{L^2(Q;\R^d)}$ and
$\norm{\vBh } \equiv \norm{\Curlh \vBh}_{L^2(Q;\R^d)} + \norm{\vBh }_{L^2(Q;\R^d)}$.
Note that by writing  $\vrh>c$ we mean $\vr_K>c$ for all $K\in \mesh .$

It is obvious that the dimension of the spaces $\Qh$ and $\bQh$ is finite. Indeed the space $\Qh$ can be identified as $\Qh\subset \R^M$, where $M$ is the total number of elements of $\grid$. Analogously, $\bQh \subset \R^{N}$, where $N=d \, M$.

\medskip

{\bf Step 1:} we define the following mapping
  \begin{align*}
   F : \ V  \times [0,1]\rightarrow  \Qh  \times \bQh \times \bQh,   \quad
        (\vrh^k,\vuh^k, \vBh^k, \xi)\longmapsto ( \vr^\star, \vu^\star,\vB^\star) =F(\vrh^k,\vuh^k,\vBh^k , \xi),
  \end{align*}
  where $ \xi \in [0,1]$ and $( \vr^\star, \vu^\star,\vBh^\star) \in  \Qh \times \bQh\times \bQh $ satisfies
\begin{equation} \label{VNS_q1}
\intQ{ \vr^\star \phi_h } = \intQ{D_t \vrh \cdot \phi_h}
- \xi\intfacesint{  \Fup(\vrh  ,\vuh  )\jump{\phi_h}   } ,
\end{equation}
\begin{multline} \label{VNS_q2}
\intQ{  \vu^{\star} \cdot \vh}  = \intQ{D_t (\vrh \vuh) \cdot \vh}
+  \mu \intQ{ \Gradd \vuh^k : \Gradd  \vh }
 - \xi  \intfacesint{  \Fup(\vrh   \vuh,\vuh  ) \cdot\jump{\vh}   }
\\+  \xi \intQ{\left( \nu\, \Divh \vuh^{k} - p(\vrh^{k}) \right) \Divh  \vh}
 - \xi \intQ{ (\Curlh \vBh   \times  \vBh ) \cdot \vh}
 - \xi \intQ{\vrh \vh \cdot \vc{g}}  ,
 \end{multline}
 \begin{equation}\label{VNS_q2B}
\intQ{  \vB^\star  \cdot  \vCh}
=
\intQ{ D_t \vBh  \cdot  \vCh}
+ \zeta \intQ{   \Curlh \vBh \cdot   \Curlh \vCh}
-  \xi \intQ{ \vuh  \times \vBh \cdot   \Curlh \vCh}
\end{equation}
for any $\phi_h \in \Qh$, $\vh\in \bQh$, $\avs{\vh}|_{ \facesext} = 0,$   $\vCh \in \bQh$, $\vn \times \avs{\vCh}|_{\facesext}=0. $

Obviously $F$ is well-defined and continuous since the values of $\vr^\star$, $\vu^\star$ and $\vB^{\star}$ can be determined by setting $\phi_h =  \mathds{1}_{K}$ in \eqref{VNS_q1},  $v_{i,h}= \mathds{1}_K, v_{j,h}=0$ for $j\neq i, \; i,j \in(1,\ldots, d)$ in \eqref{VNS_q2},
 and $C_{i,h}= \mathds{1}_K, C_{j,h}=0$ for $j\neq i, \; i,j \in(1,\ldots, d)$ in  \eqref{VNS_q2B}.

\medskip

{\bf Step 2:} we show  $(\vrh^k, \vuh^k, \vBh^k) \in W$ if
$(\vrh^k, \vuh^k, \vBh^k) \in V$ satisfying $F(\vrh^k, \vuh^k, \vBh^k, \xi)=\bf{0}$ for all $\xi \in [0,1],$ cf.~Hypothesis~1 from Theorem~\ref{fixedpoint}. The system $F(\vrh^k, \vuh^k, \vBh^k, \xi)=\bf{0}$ reads
\begin{subequations}
\begin{equation} \label{VNS_q3}
\intQ{D_t \vrh \, \phi_h}
  - \xi\intfacesint{  \Fup(\vrh  ,\vuh  )\jump{\phi_h}   }   =0,
\end{equation}
\begin{multline} \label{VNS_q4}
\intQ{D_t (\vrh \vuh) \cdot \vh}
+  \mu \intQ{ \Gradd \vuh^k  : \Gradd \vh } - \xi  \intfacesint{  \Fup(\vrh   \vuh,\vuh  ) \cdot\jump{\vh}   }
\\
+ \xi \intQ{\left(\nu \, \Divh \vuh^{k}  - p(\vrh^{k}) \right) \Divh \bfphi_h }  - \xi \intQ{ (\Curlh \vBh   \times  \vBh ) \cdot \vh}
 - \xi \intQ{\vrh \vh \cdot \vc{g}}  =0,
\end{multline}
 \begin{equation}\label{VNS_q4B}
\intQ{ D_t \vBh  \cdot  \vCh}
+ \zeta \intQ{   \Curlh \vBh \cdot   \Curlh \vCh}
-  \xi \intQ{ \vuh  \times \vBh \cdot   \Curlh \vCh} =0.
\end{equation}
\end{subequations}

Taking $\phi_h=1$ as a test function in \eqref{VNS_q3}  we obtain
\begin{equation}\label{VNS_q5}
\norm{\vrh^k}_{L^1(Q)}= \intQ{\vrh^k} =  \intQ{\vrh^{k-1}}  =\intQ{ \vr_0 }>0,
\end{equation}
implying $ \vrh^k \leq \frac{1}{h^d} \intQ{ \vr_0 } \equiv C_1$.

Further, setting $\phi_h =\Hc'(\vrh) -\frac{\vuh^2}{2}$, $ \vh=\vuh^k$  and $\vCh = \vBh - \PiB \vB_B$ as the test functions in \eqref{VNS_q3}, \eqref{VNS_q4}  and \eqref{VNS_q4B}, respectively, it follows from the standard proof of energy estimates, see Lemma \ref{EnBa},  that
 \begin{equation}\label{VNS_q5B}
 \begin{aligned}
&	 D_t \intQ{ \left(\frac{1}{2}  \vrh  |\vuh |^2  + \Hc(\vr) +\frac12 |\vBh - \PiB \vB_B|^2 \right) }
	 +  \mu \norm{\Gradd \vuh }_{L^2(Q;\R^{d\times d})}^2  + \xi \nu \norm{ \Divh \vuh }_{L^2(Q)}^2
	\\&
	+  \zeta \norm{\Curlh \vBh }_{L^2(Q;\R^{d})}^2 +  \widetilde{D_{num}}
=
-\intQ{ ( \xi \vuh  \times \vBh - \zeta \Curlh \vBh) \cdot   \Curlh \PiB\vB_B} + \xi \intQ{\vrh \vuh \cdot \vc{g}},
\end{aligned}
\end{equation}
with
\begin{align*}
\widetilde{D_{num}} = &  \frac{\TS}{2} \intQB{ \Hc''(\xi_1)|D_t \vrh |^2 + \vrh^\triangleleft|D_t \vuh |^2  + |D_t \vBh|^2 }
	+\xi \intfacesint{ \Hc''(\xi_2)  \left(h^\eps  +  \frac12 | \us | \right) \jump{  \vrh  } ^2} \br
	&+ \xi \intfacesint{ \left( \frac12 \vrh^{\rm up} |\us | +h^\eps \avs{ \vrh  } \right)      \abs{\jump{\vuh}}^2   }.
\end{align*}
Repeating the proof of Lemma~\ref{UnBo} we get the the following estimates
\begin{equation}\label{VNS_q6}
\norm{\vuh } \equiv \norm{\Gradd \vuh}_{L^2(Q)} + \norm{\vuh }_{L^2(Q)} \leq C_2,
\end{equation}
and
\begin{equation}\label{VNS_q6B}
\norm{\vBh } \equiv \norm{ \vBh}_{L^2(Q)} + \norm{\Curlh \vuh }_{L^2(Q)} \leq C_3,
\end{equation}
where $C_2$ and $C_3$ depends on the data of the problem.

Further, let $K\in \grid$ be such that $\vr_K^k = \min_{L \in \grid} \vr_L^k$. Then we have $\jump{\vrh^k}_{\sigma \in \facesK} \geq 0 $ and obtain by setting $\phi_h =  \mathds{1}_K$ in \eqref{VNS_q3} that
\begin{align*}
&\frac{ \abs{K} }{\TS} (\vr_K^k  - \vr^{k-1}_{K})  
=  - \xi \sum_{\sigma \in \facesK  \cap \facesint} \abs{\sigma} \vrh^{k,\up} \avg{\vuh^k} \cdot \vn +
\xi \sum_{\sigma \in \facesK  \cap \facesint} \abs{\sigma}  h^{\eps} \jump{\vrh^k}
\\&
\geq  - \xi \sum_{\sigma \in \facesK \cap \facesint} \abs{\sigma}  \vr_K^k \avg{\vuh^k} \cdot \vn
+ \xi \sum_{\sigma \in \facesK \cap \facesint} \abs{\sigma}  (\vr_K^k - \vrh^{k,\up} ) \avg{\vuh^k} \cdot \vn
\\&
= - \xi \abs{K} \vr_K^k  (\Divh \vuh^k)_K
- \xi \sum_{\sigma \in \facesK  \cap \facesint} \abs{\sigma}  \jump{\vrh^k} \left[\avg{\vuh^k} \cdot \vn \right]^-
\\& \geq - \xi \abs{K} \vr_K^k  (\Divh \vuh^k)_K
\geq - \xi \abs{K} \vr_K^k  \abs{\left( \Divh \vuh^k \right)_K}.
\end{align*}
Thus
$
\vrh^k \geq  \vr_K^k \geq  \frac{\vr^{k-1}_{K} }{1 + \TS \xi  \abs{(\Divh \vuh^k)_K} } .
$
Consequently, by virtue of \eqref{VNS_q6}
it holds $\abs{(\Divh \vuh^k)_K} \leq C_2/h^d$ and $ \vrh^k \geq \frac{\vr^{k-1}_{K}  h^d}{1 + \TS \xi  C_2 } \equiv \epsilon  >0 $.
Therefore,  $(\vrh^k, \vuh^k, \vBh^k) \in W.$

\medskip

 {\bf Step 3:} we proceed to show that $F(\vrh^k,\vuh^k, \vBh^k, 0)=\bf{0}$ is a linear system admitting a solution in $W,$ cf.~Hypothesis~2 of Theorem~\ref{fixedpoint}.
Thus, for $ \xi=0$  the system $F(\vrh^k,\vuh^k, \vBh^k, 0)=\bf{0}$ reads
\begin{subequations}\label{VNS_q78}
\begin{equation} \label{VNS_q7}
\vrh^k = \vrh^{k-1},
\end{equation}
\begin{equation}\label{VNS_q8}
\intQ{ \frac{\vrh^k \vuh^{k}  - \vrh^{k-1} \vuh^{k-1}  }{\TS}  \cdot \vh }
+  \mu \intQ{ \Gradd \vuh^k :   \Gradd \vh  }
=0,
\end{equation}
\begin{equation}\label{VNS_q8B}
\intQ{ D_t \vBh^k  \cdot  \vCh}
+ \zeta \intQ{  \Curlh \vBh^k \cdot   \Curlh \vCh} =0.
\end{equation}
\end{subequations}
From \eqref{VNS_q7} it is obvious that $\vrh^k = \vrh^{k-1}>0$.
Substituting \eqref{VNS_q7} into \eqref{VNS_q8} we arrive at a linear system for $\vuh^k$ with a symmetric positive definite matrix. Thus, \eqref{VNS_q8} admits a unique solution.
Further, it is obvious that \eqref{VNS_q8B} is  a linear and coercive equation. The Lax-Milgram theorem  then implies the existence and uniqueness of a solution.
Consequently, we have proved that  $ F(\vrh^k, \vuh^k, \vBh^k, 0)=\bf{0}$ is a linear system admitting a solution in  $W.$

Applying Theorem~\ref{fixedpoint} finishes the proof.
\end{proof}

\section{Proof of compatibility condition}\label{compatibility}

\begin{proof}[Proof of Lemma \ref{Tm3}]
First, thanks to $\vB\cdot \Curl \vC   -  \Curl \vB\cdot  \vC  = \Div(\vC \times \vB)$  and the Stokes theorem we have
\begin{align*}
\intQ{\Big(   \vB\cdot \Curl \vC  -  \Curl \vB\cdot  \vC \Big)}
= \intfacesext{\vn \cdot( \vC \times \vB)} = - \intfacesext{\vn \times \vB_B \cdot \vC}.
\end{align*}
Second, denoting $\vCh = \PiQ \vC$ with the extension $\vn \times \jump{\vCh}|_{\facesext} = 0$ we derive from the integration by part formula \eqref{InByPa-4}  that
\begin{align*}
&\intQB{\Curlh \vBh \cdot  \vC - \vBh \cdot  \Curl \vC  } = \intQB{  \Curlh \vBh \cdot  \vCh - \vBh \cdot  \Curl \vC  } \br
&=\intQB{ \Curlh \vBh \cdot   \vCh - \vBh \cdot  \Curlh  \vCh  } + \intQ{\vBh \cdot  (\Curlh  \vCh -  \Curl \vC  ) }\br
&=\intfacesext{\vn \times \avs{\vBh}\cdot \vCh^{in}} + \intQ{\vBh \cdot  (\Curlh  \vCh -  \Curl \vC   ) }.
\end{align*}
 Summing up above two equalities and thanks to the boundary condition $\vn \times \avs{\vBh}|_{\facesext} = \vn \times \vB_B|_{\facesext}$ we have
\begin{align*}
e_{\Curl \vB} =  \int_0^{\tau} \intfacesext{\vn \times \vB_B \cdot (\vCh^{in} - \vC )} \dt +  \intTauO{\vBh \cdot  ( \Curlh \PiQ \vC - \Curl \vC  ) } := I_1 + I_2,
\end{align*}
and shall control $I_1$ by the interpolation estimates
\begin{align*}
\abs{I_1} \aleq h \norm{\vC}_{L^1(0,T;W^{1,2}(Q;\R^d)}.
\end{align*}

Lastly, we estimate $I_2$ by analyzing $\Curlh \vCh - \Curl \vC $ in two cases -- interior and boundary cells, i.e.
\begin{equation}
\abs{\Curlh \vCh - \Curl \vC } \aleq
\begin{cases}
h, & K \cap \facesext = \emptyset, \\
1,  &K \cap \facesext \neq \emptyset.
\end{cases}
\end{equation}
This can be done by means of the regularity of $\vC$ and the interpolation estimates:
\begin{align*}
& \frac1{|K|}\intK{(\Curlh \vCh - \Curl \vC)}  = \frac{1}{|K|}\sum_{\sigma\in \facesK}|\sigma| \vc{n} \times (\avs{\vCh} - \PiF \vC) \br
&= \frac{1}{h}\left( \sum_{\sigma\in \facesK \cap \facesint}  \vc{n} \times (\avs{\vCh} - \PiF \vC)  + \sum_{\sigma\in \facesK \cap \facesext}  \vc{n} \times (\vCh^{in} - \PiF \vC)  \right)
\end{align*}
and
\[
\abs{\avs{\vCh} - \PiF \vC} \aleq h^2, \ \sigma \in \facesint; \quad
\abs{\vCh^{in} - \PiF \vC} \aleq h, \  \sigma \in \facesext.
\]
Here we have used the extension $\vn \times \jump{\vCh}|_{\facesext} = 0$.
Consequently, denoting by $O$ the set of boundary cells, we have $\abs{O} \aleq h$ and shall control $I_2$ with the  H\"older inequality
\begin{align*}
\abs{I_2} & = \abs{\intTauO{\vBh \cdot  ( \Curlh \PiQ \vC - \Curl \vC  ) } }\aleq h \int_0^{\tau} \int_{Q\setminus O}{\abs{\vBh}} \dxdt+  \int_0^{\tau} \int_{O}{\abs{\vBh}} \dxdt \br
& \aleq h \norm{\vBh}_{L^\infty(0,T;L^2(Q;\R^d))}  + \left( \int_0^{\tau} \int_{O}{\abs{\vBh}^2} \dxdt  \right)^{1/2} \left(  \int_0^{\tau} \int_{O}{ 1 } \dxdt  \right)^{1/2} \br
&\aleq h + h^{1/2} \aleq h^{1/2}.
\end{align*}
Altogether, we finish the proof.
\end{proof}

\section{Calculation of $R_7$}\label{sec-R7}
In this section we reformulate $R_7$ in the form of \eqref{R7} into the form of \eqref{R7-1}.
Noticing that $\vB_B$ is independent of time $t$ and $\vB$ satisfies the MHD problem \eqref{pde}, i.e.
\[
 \pd_t \vB=\Curl (\vu \times \vB -  \zeta \Curl \vB),
\]
we shall simplify \eqref{R7} as follows
\begin{align*}
R_7 =&  -\intTauO{ (\vuh  \times \vBh - \zeta \Curlh \vBh) \cdot   \Curlh \PiB\vB_B}  - \intTauO{\Big(  \Curlh \vBh \times \vBh \cdot \vu \Big)}
\br
&+ \intTauO{ (\vB-\vBh) \cdot \pdt  \vB}  + \intTauO{\Big(  \vuh \times \vBh -  \zeta  \Curlh \vBh \Big) \cdot \Curl \vB_B }
 \br
&  - \intTauO{ \vuh \times \vBh \cdot \Curl  \vB }  + \intTauO{(\vu - \vuh) \cdot \Curl \vB \times \vB}
    \br
&+\intTauO{\Big( \zeta \abs{ \Curl \vB}^2 -  \zeta \Curlh \vBh \cdot \Curl \vB \Big)}
\br
=&-\intTauO{ (\vuh  \times \vBh - \zeta \Curlh \vBh) \cdot   \Big( \Curlh \PiB\vB_B - \Curl \vB_B\Big)} \br
&+ \intTauO{\Big( \vu  \times \vBh \cdot  \Curlh \vBh -  \vuh \times \vBh \cdot \Curl \vB - (\vu - \vuh) \times \vB\cdot \Curl \vB \Big)}
    \br
&+\intTauO{ \zeta \Curl \vB \cdot  \Big( \Curl \vB  -\Curlh \vBh \Big)}+ \intTauO{(\vB-\vBh) \cdot  \Curl (\vu \times \vB -  \zeta \Curl \vB)}
\br
=&-\intTauO{ (\vuh  \times \vBh - \zeta \Curlh \vBh) \cdot   \Big( \Curlh \PiB\vB_B - \Curl \vB_B\Big)} \br
&+ \intTauO{\Big( \vu  \times \vBh \cdot  \Curlh \vBh -  \vuh \times \vBh \cdot \Curl \vB - (\vu - \vuh)  \times \vB \cdot \Curl \vB + \vu \times \vB \cdot (\Curl \vB-\Curlh \vBh)  \Big)}
    \br
&+\intTauO{\zeta \Curl \vB \cdot  \Big( \Curl \vB  -\Curlh \vBh \Big) -  \zeta   (\Curl \vB-\Curlh \vBh) \cdot   \Curl  \vB  \Big)} \br
&+ \intTauO{\Big((\vB-\vBh) \cdot  \Curl (\vu \times \vB-  \zeta \Curl  \vB) -  (\Curl \vB-\Curlh \vBh) \cdot   (\vu \times \vB-  \zeta \Curl  \vB)  \Big)}
\br
=&-\intTauO{ (\vuh  \times \vBh - \zeta \Curlh \vBh) \cdot   \Big( \Curlh \PiB\vB_B - \Curl \vB_B\Big)} \br
&+ \intTauO{\Big( \vu  \times (\vBh-\vB) \cdot  (\Curlh \vBh-\Curl \vB) + (\vu  -  \vuh) \times (\vBh-\vB) \cdot \Curl \vB  \Big)}
    \br
&+ \intTauO{\Big((\vB-\vBh) \cdot  \Curl (\vu \times \vB-  \zeta \Curl  \vB) -  (\Curl \vB-\Curlh \vBh) \cdot   (\vu \times \vB-  \zeta \Curl  \vB)  \Big)}.
\end{align*}

\end{document}